\documentclass[letterpaper, 11pt]{amsart}
\usepackage[margin=1.2in]{geometry}
\usepackage{amssymb,latexsym,eufrak,amsmath,amscd, graphics}
\usepackage{xspace,xcolor}
\usepackage[breaklinks,colorlinks,citecolor=teal,linkcolor=teal,urlcolor=teal,pagebackref,hyperindex]{hyperref}
\usepackage[alphabetic]{amsrefs} 
\usepackage{mathrsfs}
\usepackage{amsfonts}
\usepackage{tikz-cd} 
\usepackage{mathtools} 
\usepackage{stmaryrd} 
\usetikzlibrary{calc, intersections} 

\tikzset{symbol/.style={draw=none,every to/.append style={edge node={node [sloped, allow upside down, auto=false]{$#1$}}}}} 

\newcommand{\into}{\hookrightarrow}
\newcommand{\onto}{\twoheadrightarrow}
\newcommand{\from}{\leftarrow}

\tikzset{
    labl/.style={anchor=north, rotate=90, inner sep=1mm}
}

\let\proof\noindentitproof 

\def\Mustata{Mus\-ta\-\c{t}\u{a}\xspace}

\DeclareMathOperator{\bbc}{\mathbb{C}}

\DeclareMathOperator{\bbz}{\mathbb{Z}}
\DeclareMathOperator{\bbq}{\mathbb{Q}}

\DeclareMathOperator{\bba}{\mathbb{A}}

\DeclareMathOperator{\os}{\mathcal{O}}
\DeclareMathOperator{\vs}{\mathcal{V}}
\DeclareMathOperator{\as}{\mathcal{A}}
\DeclareMathOperator{\bs}{\mathcal{B}}
\DeclareMathOperator{\ls}{\mathcal{L}}
\DeclareMathOperator{\cs}{\mathcal{C}}
\DeclareMathOperator{\es}{\mathcal{E}}
\DeclareMathOperator{\ds}{\mathcal{D}}
\DeclareMathOperator{\hs}{\mathcal{H}}
\DeclareMathOperator{\rs}{\mathcal{R}}
\DeclareMathOperator{\ms}{\mathcal{M}}
\DeclareMathOperator{\ns}{\mathcal{N}}
\DeclareMathOperator{\fs}{\mathcal{F}}
\DeclareMathOperator{\gs}{\mathcal{G}}
\DeclareMathOperator{\qs}{\mathcal{Q}}

\DeclareMathOperator{\is}{\mathcal{I}}
\DeclareMathOperator{\ks}{\mathcal{K}}

\DeclareMathOperator{\bff}{\bf{f}}

\DeclareMathOperator{\bft}{\bf{t}}
\DeclareMathOperator{\bfh}{\bf{h}}

\DeclareMathOperator{\sEnd}{\mathcal{E}{\it nd}}
\DeclareMathOperator{\sDer}{\mathcal{D}{\it er}}

\DeclareMathOperator{\BS}{\operatorname{BS}}

\DeclareMathOperator{\lct}{\operatorname{lct}}

\DeclareMathOperator{\res}{\operatorname{res}}

\DeclareMathOperator{\FT}{\operatorname{FT}}

\DeclareMathOperator{\Ann}{\operatorname{Ann}}

\DeclareMathOperator{\ord}{\operatorname{ord}}

\DeclareMathOperator{\lcm}{\operatorname{lcm}}

\DeclareMathOperator{\im}{\operatorname{im}}

\DeclareMathOperator{\coker}{\operatorname{coker}}

\DeclareMathOperator{\gr}{\operatorname{gr}}

\DeclareMathOperator{\GL}{\operatorname{GL}}

\DeclareMathOperator{\Spec}{\operatorname{Spec}}

\DeclareMathOperator{\Der}{\operatorname{Der}}

\DeclareMathOperator{\rSpec}{\mathcal{S}\mkern-1mu\it{pec}}

\newtheorem{lemma}{Lemma}[section]
\newtheorem{theorem}[lemma]{Theorem}
\newtheorem{corollary}[lemma]{Corollary}

\theoremstyle{definition}
\newtheorem{definition}[lemma]{Definition}

\newtheorem{remark}[lemma]{Remark}

\begin{document}

\author[Jonghyun Lee]{Jonghyun Lee}
\address{Department of Mathematics, University of Michigan,
Ann Arbor, MI 48109, USA}
\email{{nuyhgnoj@umich.edu}}

\vspace*{-3em}

\begin{abstract}
We obtain estimates for the roots of the Bernstein-Sato polynomial in terms of a log resolution by an orbifold.
\end{abstract}

\title[Estimates for roots of Bernstein-Sato polynomials]{Estimates for roots of Bernstein-Sato polynomials via log resolutions by orbifolds}

\maketitle

\vspace{-2em}

\section{Introduction}

Given a nonzero regular function $f\in \os_X(X)$ on an irreducible smooth complex variety $X$, the Bernstein-Sato polynomial of $f$ is the monic polynomial $b_f(s)$ of minimal degree satisfying
$$
b_f(s)  f^s \in \ds_X[s] \cdot f^{s+1},
$$
where $\ds_X$ is the sheaf of differential operators on $X$, which acts on the formal symbol $f^s$ in the expected way. The roots of $b_f(s)$ are negative rational numbers \cite{Kas76}. If $f$ is not invertible, then by specializing $s$ to $-1$, it is easy to see that $b_f(-1)=0$. The negative of the largest root of $b_f(s)/(s+1)$ is the \textit{minimal exponent} $\widetilde{\alpha}(f)$ (with the convention this is $\infty$ if $b_f(s)=s+1$). Introduced independently by Bernstein \cite{Ber72} and Sato, the Bernstein-Sato polynomial is an important invariant of singularities. For example, the log canonical threshold of $f$ is given by $\lct(f) = \min(1,\widetilde{\alpha}(f))$ \cite[Theorem 10.6]{Kol97}, and $\widetilde{\alpha}(f)>1$ if and only if the hypersurface defined by $f$ has rational singularities \cite{Sai93}. Moreover, the minimal exponent characterizes the higher rational singularities of the hypersurface defined by $f$ (see \cite{JKSY22} and \cite{MOPW23}) as well as the higher Du Bois singularities (see \cite[Appendix]{FL24} and \cite{MP25}).

By refining a method introduced by Kashiwara in \cite{Kas76}, Lichtin \cite{Lic89} obtained estimates for the roots of $b_f(s)$ in terms of a strong log resolution of the pair $(X,D)$, where $D$ is the divisor defined by $f$, as follows. Given a projective morphism $\pi: Y \to X$ that is an isomorphism over the complement of $D$ such that $Y$ is smooth and $\pi^*D = \sum_{i\in I} a_i E_i$ is a simple normal crossings divisor, we can write $K_{Y/X}=\sum_{i\in I} k_i E_i$, where $K_{Y/X}$ is the relative canonical divisor. Then every root of $b_f(s)$ is of the form $- \frac{k_i+1+\ell}{a_i}$ for some $i$ and $\ell \in \bbz_{\geq 0}$. Moreover, if $D$ is reduced and its strict transform on $Y$ is smooth, then by using the theory of Hodge ideals, \Mustata and Popa \cite{MP20} showed that $\widetilde{\alpha}(f)\geq \min_{i\in I, k_i>0} \frac{k_i+1}{a_i}$ (see also \cite{DM22} for a proof that builds on Kashiwara and Lichtin's method).  

In this paper, we extend Kashiwara and Lichtin's method to the case when $Y$ is an orbifold. Our approach is also robust enough to obtain similar estimates for the Bernstein-Sato polynomial of an arbitrary subvariety that was introduced in \cite{BMS06}. Before we state our main result, we give one of its implications for hypersurfaces. 

Let $X$ be an irreducible smooth affine variety with global coordinates $x_1,\dots,x_n\in \os_X(X)$. Given a nonzero regular function $f \in \os_X(X)$, let $\pi\colon Y \to X$ be a projective morphism that is an isomorphism over the complement of the divisor defined by $f$ such that $Y$ is irreducible and has quotient singularities. Let $U/G \onto Y$ be an \'{e}tale cover, where $G$ is a finite group acting on a smooth affine variety $U$ (such an \'{e}tale cover always exists; see Appendix \ref{quotient singularities}). Suppose there exists a finite open cover $U=\bigcup_{i\in I} U_i$, so that if we denote the morphism $U_i \to X$ by $\pi_i$, then each $U_i$ has global coordinates $y_{1},\dots,y_{n}\in \os_U(U_i)$ and invertible functions $u,v\in \os_U(U_i)$ such that $\pi_i^*f = u y_{1}^{a_{i1}} \cdots y_{n}^{a_{in}}$ and the Jacobian of $\pi_i$ is given by $v y_{1}^{k_{i1}}\cdots y_{n}^{k_in}$.

\begin{theorem}\label{main result for hypersurfaces}
With the above notation, every root of $b_f(s)$ is of the form $- \frac{k_{ij}+1+\ell}{a_{ij}}$ for some $i\in I$, $j\in \{1,\dots,n\}$, and $\ell\in \bbz_{\geq 0}$. Moreover, if $a_{i1}\in \{0,1\}$ and $k_{i1}=0$ for every $i\in I$, then
$$
\widetilde{\alpha}(f) \geq \min_{i\in I,j\geq 2,a_{ij}\neq 0}\frac{k_{ij}+1}{a_{ij}}.
$$
\end{theorem}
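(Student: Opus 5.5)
Put $\rho\colon U\to X$ for the composite $U\to U/G\to Y\to X$; it is a $G$-invariant morphism of smooth varieties. The plan is to run Kashiwara--Lichtin on $U$ and then take $G$-invariants and push forward, replacing the direct image from a smooth $Y$ by a direct image from $U$ followed by invariants.

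\textbf{Step 1 (local computation on $U$).} On each chart $U_i$ we have $\rho^*f=u\,y^{a_i}$ with $u$ a unit. Consider $\mathcal D_{U}[s]\cdot (\rho^*f)^{s}\,\omega$, where $\omega=\rho^*(dx_1\wedge\dots\wedge dx_n)=v\,y^{k_i}dy$ is the pulled-back volume form, viewed in the right-module or density picture. An explicit monomial computation produces the operator identity
\[
\prod_{j}\prod_{m=1}^{a_{ij}}\bigl(a_{ij}s+k_{ij}+m\bigr)\,(\rho^*f)^s\omega\in \mathcal D_{U_i}[s]\,(\rho^*f)^{s+1}\omega ,
\]
and more generally it shows that $\mathcal D_U[s](\rho^*f)^s\omega$ has a $b$-function whose roots are of the form $-(k_{ij}+1+\ell)/a_{ij}$. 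To absorb the unit $u$ I would change coordinates, replacing $y_j$ by $u^{1/a_{ij}}y_j$ after shrinking, or use the standard Lichtin argument. The charts are then glued, either through a coherence/Noetherian argument on the $\mathcal D_U[s]$-module $\mathcal N$ generated by $(\rho^*f)^s\omega$ or by taking the lcm over the finite cover.

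\textbf{Step 2 (descent to $X$).} Next take the $\mathcal D$-module direct image $\rho_+$, or equivalently the integration map of densities, followed by $G$-invariants. As in Kashiwara's argument, the key properties are these. Since $\rho$ is projective onto its image up to the finite quotient, and $U/G\to Y$ is étale and finite, $\rho$ is proper. Then $\rho_+\mathcal N$ carries a $b$-function action, and the roots of the $b$-function of the image of $f^s$ are roots of the $b$-function of $\mathcal N$ shifted by nonnegative integers, via Kashiwara's lemma on the $s$-stable lattice. Since $G$ is finite, taking $G$-invariants is exact in characteristic zero, so it preserves this. Finally, because $\rho$ is birational of degree $|G|$ generically over $X\smallsetminus D$, the section $f^s$ on $X$ is recovered from the pushforward of $(\rho^*f)^s\omega$ by the trace map. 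This should give the first assertion, with $\ell\geq 0$ coming from the shifts.

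\textbf{Step 3 (minimal exponent).} For the bound on $\widetilde\alpha(f)$, follow the strategy of \cite{DM22}. The direction $y_1$ has $k_{i1}=0$ and $a_{i1}\in\{0,1\}$; it only contributes factors $(s+1)$, which are the strict-transform factors. One shows that after dividing by $s+1$, the remaining factors come only from $j\geq 2$ with $a_{ij}\neq 0$. To do this, work with $\mathcal D[s]f^s/\mathcal D[s]f^{s+1}$ modulo the part supported on the smooth divisor $\{y_1=0\}$, and use the criterion that $\widetilde\alpha(f)$ is the largest root of the reduced $b$-function $b_f(s)/(s+1)$, attached to $f^s/(\mathcal D[s]f^{s+1}+\ldots)$.

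\textbf{Main obstacle.} I expect Step 3, together with the trace/invariants compatibility in Step 2, to be hardest. In particular, one must check that the $(s+1)$ factor from $y_1$ does not, after pushforward, generate shifted roots $-1-\ell$ that would interfere with the minimal exponent. I would first try to prove the finer local statement that $(s+1)\prod_{j\geq2}(\cdots)$ annihilates the relevant quotient on each $U_i$, and then push this forward.
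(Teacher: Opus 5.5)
There are two genuine gaps.

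\textbf{Step 2: $\rho$ is not proper.} You claim $\rho\colon U\to X$ is proper because $U/G\to Y$ is ``\'{e}tale and finite''. It is only an \'{e}tale surjection, for example a disjoint union of affine \'{e}tale charts. It cannot be finite: $U/G$ is affine, while $Y$ is not affine as soon as $\pi$ is not an isomorphism. Already for trivial $G$, $\rho$ is typically a disjoint union of charts of $Y$ mapped to $X$. So $\rho_+$ does not preserve coherence, and Kashiwara's finiteness argument has nothing to act on. There is also no trace map recovering $f^s$; note that ``birational of degree $|G|$'' is self-contradictory.

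The only proper map available is $\pi\colon Y\to X$, with $Y$ singular, so the direct-image argument has to be run on $Y$ itself. This is what the paper does:
\begin{itemize}
\item it works with the right module $B^{\omega}_{\pi^*\mathbf{f}}$ on the normal variety $Y$;
\item it uses that $\mathrm{gr}^F\mathcal{D}_Y$ is of finite type for quotient singularities, which makes the $V^0$- and Rees algebras noetherian;
\item it proves coherence of projective direct images and an Artin--Rees bound, and concludes that $b_v$ divides a product of shifts of $b_{v^*}$ for $v^*=\pi^*v$ on $Y$.
\end{itemize}
Only then are $b$-functions transferred to $U$. They are unchanged under the \'{e}tale map $U/G\to Y$. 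For $q\colon U\to U/G$ one has $b_w\mid b_{q^*w}$, because $\Gamma(U,\omega_U)^G\subset\Gamma(U,\omega_U)$ is a differential direct summand. Your ``take $G$-invariants'' idea is the shadow of this last step, but it has to happen on $U/G$, not after pushing forward from $U$.

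\textbf{Step 3: working with $\delta_f$ cannot bound $\widetilde{\alpha}(f)$.} Descent only gives $b_u\mid b(s)b(s+1)\cdots b(s+N)$. So the factor $s+1$ coming from $y_1$ produces possible roots $-1$ (with multiplicity) and $-2,-3,\dots$. You cannot exclude these from $b_f(s)/(s+1)$; the first assertion of the theorem already permits them.

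The missing idea is to apply the comparison to $u=\partial_t^q\delta_f$, writing the bound as $q+\gamma$ with $q\in\mathbb{Z}_{\ge 0}$ and $\gamma\in(0,1]$.
\begin{itemize}
\item By Saito, $\widetilde{\alpha}(f)\ge q+\gamma$ if and only if $\partial_t^q\delta_f\in V^{\gamma}B_f$.
\item By Sabbah's description of the $V$-filtration, this holds if and only if all roots of $b_{\partial_t^q\delta_f}$ are $\le-\gamma$.
\item Locally, $b_{\pi_i^*(\partial_t^q\delta_f)}(s)$ divides $(s+1)\prod_{j\ge 2}\prod_{\ell=1}^{a_{ij}}\bigl(s-q+\frac{k_{ij}+\ell}{a_{ij}}\bigr)$.
\item The shifts $b(s+\ell)$ only move roots to the left, and $-1\le-\gamma$, so the largest root is at most $\max\{-1,-\gamma\}=-\gamma$.
\end{itemize}
The $(s+1)$ factors are harmless exactly because $\gamma\le 1$. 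No quotient by a part supported on $\{y_1=0\}$ is needed.
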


The above theorem recovers \cite[Theorem 5]{Lic89} and \cite[Corollary D]{MP20} by taking $G$ to be the trivial group.

We now turn to the main result of this paper. As before, let $X$ be an irreducible smooth affine variety with global coordinates $x_1,\dots,x_n\in \os_X(X)$. Given nonzero regular functions $f_1,\dots,f_r\in \os_X(X)$, we denote by $Z$ the closed subscheme of $X$ defined by the ideal generated by $f_1,\dots,f_r$. Let $\pi\colon Y\to X$ be a projective morphism that is an isomorphism over the complement of $Z$ such that $Y$ is irreducible and has quotient singularities with \'{e}tale cover $U/G \onto Y$, where $G$ is a finite group acting on a smooth affine variety $U$. Given a finite open cover $U=\bigcup_{i\in I} U_i$ such that each $U_i$ has global coordinates $y_1,\dots,y_n\in \os_U(U_i)$, we denote by $\pi_i$ the morphism $U_i \to X$ and by $g_i=\det ( (\frac{\partial (\pi_i^* x_p)}{\partial y_q})_{pq} )\in \os_U(U_i)$ the Jacobian of $\pi_i$. We consider the $\ds_X\langle t_1,\dots,t_r,\partial_{t_1},\dots,\partial_{t_r}\rangle$-module
$$
B_{\bff} = \bigoplus_{\beta \in \bbz_{\geq 0}^r} \os_X \partial_t^{\beta}\delta_{\bff},
$$
where $\bff$ denotes the $r$-tuple $(f_1,\dots,f_r)$ (for the precise definition of $B_{\bff}$, see Section \ref{bf}). Similarly, we consider the $\ds_{U_i}\langle t_1,\dots,t_r,\partial_{t_1},\dots,\partial_{t_r}\rangle$-module $B_{\pi_i^*\bff}=\bigoplus_{\beta \in \bbz_{\geq 0}^r} \os_{U_i} \partial_t^{\beta}\delta_{\pi_i^*\bff}$ for each $i\in I$, where $\pi_i^*\bff$ denotes the $r$-tuple $(\pi_i^*f_1,\dots,\pi_i^*f_r)$.

Given a section $u= \sum_{\beta} u_{\beta}\partial_t^{\beta}\delta_{\bff} \in \Gamma(X,B_{\bff})$, we consider its $b$-function $b_u(s)\in \bbc[s]$ (for the definition, see Section \ref{b-functions}). For example, if $u=\delta_{\bff}$, then $b_{\delta_{\bff}}(s)$ is the Bernstein-Sato polynomial of the subvariety $Z$ \cite{BMS06}. In particular, if $r=1$, then $b_{\delta_{\bff}}(s)$ is the Bernstein-Sato polynomial $b_{f_1}(s)$ of $f_1$. For each $i\in I$, we put
$$
\pi_i^*u := \sum_{\beta}  \pi_i^*(u_{\beta}) g_i \partial_t^{\beta}\delta_{\pi_i^*\bff} \in \Gamma(U_i,B_{\pi_i^*\bff})
$$
and consider its $b$-function $b_{\pi^*_iu}(s)\in \bbc[s]$. The main result of this paper relates the $b$-function of $u$ with those of each $\pi_i^*u$.

\begin{theorem}\label{main result}
With the above notation, if we set $b(s) = \lcm_{i\in I} b_{\pi_i^*u}(s)$, then
$$
b_u(s) \ | \ b(s)b(s+1)\cdots b(s+N)
$$
for some integer $N \geq 0$.
\end{theorem}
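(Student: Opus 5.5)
We follow Kashiwara and Lichtin: push the $b$-function relations on the $U_i$ forward to $X$ via the direct image along $Y\to X$, and use the degeneracy of the direct image to bound the shifts.

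The $b$-function $b_u(s)$ is the minimal polynomial of the action of $s=-\sum_j \partial_{t_j}t_j$ on
$$M_u=\frac{V^0\ds_{X\times\bba^r}\,u}{V^1\ds_{X\times\bba^r}\,u},$$
where $V^\bullet$ is the Kashiwara--Malgrange filtration along $t_1=\dots=t_r=0$. On each chart $U_i$ we have, by definition of $b_{\pi_i^*u}$, a relation
$$b_{\pi_i^*u}(s)\,\pi_i^*u\in V^1\ds_{U_i\times\bba^r}\cdot \pi_i^*u.$$

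\textbf{Step 1: descend to $Y$.} Since $b(s)$ is divisible by every $b_{\pi_i^*u}$, the relation $b(s)\,\pi_i^*u\in V^1\ds\cdot\pi_i^*u$ holds on every $U_i$. The $U_i$ cover $U$, so it holds on $U$; here we use that the Jacobian factor $g_i$ transforms as a cocycle, so that the $\pi_i^*u$ glue to a section $\tilde u$ of $B_{\pi^*\bff}\otimes\omega$ on $U$. The relation on $U$ is a local statement and is $G$-equivariant up to replacing $b$ by nothing larger. Passing to $G$-invariants via the averaging (Reynolds) operator, which is exact because $|G|$ is invertible, gives the same relation on $U/G$. Étale descent along $U/G\to Y$ then gives it on $Y$ for the orbifold right $\ds$-module, i.e.\ a module over the sheaf of $G$-invariant differential operators.

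\textbf{Step 2: pass to $X$.} Use the right-module, or $\omega$-twisted, side so that $u$ corresponds to the image of $\tilde u$ under the trace or integration map
$$\int_\pi:\ \pi_*\bigl(\tilde{\mathcal M}\otimes_{\ds_Y}\ds_{Y\to X}\bigr)\to B_{\bff}.$$
This is where $g_i$ enters. Running Kashiwara's argument, the $\ds_Y[s]$-submodule $N$ generated by $\tilde u$ carries a filtration $N\supset V^1N\supset\cdots$ on which $s$ acts, and $b(s)$ annihilates $N/V^1N$, hence $b(s+k)$ annihilates $V^kN/V^{k+1}N$. The direct image $\mathcal H^0\pi_+$ applied to $N$ produces a module containing $\ds_X[s]u$. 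By properness of $\pi$, the image of $V^{N+1}$ in $\mathcal H^0\pi_+N$ lies in the image of $V^1\ds_X[s]\,u$ for $N\gg0$; this is the analogue of Kashiwara's finiteness via coherence of proper direct images of $V$-filtered modules. Then $\prod_{k=0}^N b(s+k)$ kills $\ds_X[s]u$ modulo $V^1$, which is the claim.

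\textbf{Main obstacle.} The hard step is Step 2 on a singular $Y$. Coherence of $\pi_+$ for filtered modules must be established in the orbifold setting. We would try first to do everything $G$-equivariantly on $U$, working with the composite $U\to U/G\to Y\to X$: take the direct image as $\pi_*\bigl((\cdot)\otimes\ds_{U\to X}\bigr)^G$, which is coherent because taking $G$-invariants is exact. We would then use projectivity of $\pi$ and Grauert-type coherence for the pushforward of the good $V$-filtration to obtain the uniform $N$.
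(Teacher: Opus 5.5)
Your Step~1 agrees in spirit with the paper's reduction to $b_{v^*}(s)\mid b_{(\pi\circ p\circ q)^*v}(s)$. However, the real input there is not exactness of the Reynolds operator. It is the fact that an averaged operator $P=\frac{1}{|G|}\sum_g g\cdot Q$ acts on $G$-invariant forms through its restriction $\res(P)\in\ds_{U/G}$, under the identification $\omega_{U/G}\simeq(q_*\omega_U)^G$, compatibly with local cohomology. That is what puts the relation over $\ds_Y$, which is what any pushforward to $X$ needs. A sheaf of ``$G$-invariant operators'' only makes sense on the chart $U/G$, not on $Y$.

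The genuine gap is in Step~2, exactly at the obstacle you name, and your proposed fix fails because the composite $U\to U/G\to Y\to X$ is not proper. $U/G$ is affine, while $Y$ (projective over $X$) usually is not. So the \'etale surjection $U/G\to Y$ is not finite, hence not proper. Pushing forward along $U\to X$, with or without taking $G$-invariants, therefore does not preserve coherence: for $G=1$, $U\to Y$ can be a disjoint union of affine opens. The orbifold structure is only \'etale-local, so there is no global equivariant model to work on.

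The missing idea is to work with Grothendieck's $\ds_Y$ on the singular $Y$ itself. The paper first shows that $\gr^F_\bullet\ds_Y$ is of finite type, by reducing \'etale-locally to $\bba^n/G$ and applying a theorem of Kantor. Hence $V^0\rs_Y$ and the Rees ring $R^+_V\rs_Y$ are noetherian on affines. It then uses projectivity of $\pi$ to resolve coherent modules by $\es\otimes V^0$ with $\es$ locally free; on singular $Y$ one cannot rely on $\ds_Y$ being locally free over $\os_Y$. This shows that $\widetilde{\pi}_+$ preserves coherence and good filtrations. For $r>1$ these $V^0$-rings, which contain $t_i\partial_{t_j}$, must replace your $\ds_Y[s]$, since holonomicity is no longer available.

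Second, even granting coherence, properness alone does not put the image of $V^{N+1}$ inside $V^1\cdot u$. Let $\ns=v^*\cdot V^0\rs_Y$, and let $\widetilde v$ be the image of $1$ under $V^0\rs_X=\widetilde{\pi}_+(V^0\rs_Y)\to\widetilde{\pi}_+(\ns)$. Artin--Rees for the good filtration $\im\widetilde{\pi}_+(v^*\cdot V^k\rs_Y)$ only gives containment in $\widetilde{\pi}_+(\ns)\cdot V^{M+1}\rs_X$. In general $\widetilde{\pi}_+(\ns)$ is larger than $\widetilde v\cdot V^0\rs_X$.

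You need the extra step $\widetilde{\pi}_+(\ns)\cdot V^M\rs_X\subset\widetilde v\cdot V^0\rs_X$. This is where the hypothesis that $\pi$ is an isomorphism over $X\setminus Z$ enters, and your sketch never uses it. The cokernel of $V^0\rs_X\to\widetilde{\pi}_+(\ns)$, $1\mapsto\widetilde v$, is the pushforward of a coherent $V^0\ds_{X\times\bba^r}$-module supported on $\Gamma_{\bff}\cap(Z\times\bba^r)\subset X\times 0$. Hence it is killed by $V^M$.

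Finally, $\widetilde{\pi}_+(\ns)$ need not contain $V^0\rs_X\cdot u$, since it may have sections supported over $Z$. What is actually used is a map $\widetilde v\cdot V^0\rs_X\to v\cdot V^0\rs_X$ sending $\widetilde v\mapsto v$. The paper gets this cheaply from $\os_X$-torsion-freeness of $B^\omega_{\bff}$ and the isomorphism off $Z$, with no trace map on a singular $Y$.
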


The argument used to prove Theorem \ref{main result} extends, with only minor modifications, to Bernstein-Sato ideals. We first recall the definition of Bernstein-Sato ideals and then state the corresponding theorem. 

Let $g,f_1,\dots, f_r\in \os_X(X)$ be nonzero regular functions on a smooth variety $X$ and let $\bff$ denote the $r$-tuple $(f_{1}, \dots,f_{r})$. The Bernstein-Sato ideal $\BS_{\bff,g}$ of $\bff$ relative to $g$ is the ideal in $\bbc[s_{1}, \dots,s_{r}]$ consisting of all polynomials $b(s_1,\dots, s_{r})$ such that 
$$
b(s_1,\dots,s_{r}) gf_1^{s_1}\cdots f_r^{s_{r}} \in \ds_X[s_1,\dots,s_{r}] \cdot gf_1^{s_1+1}\cdots f_r^{s_{r}+1}.
$$
When $g=1$, $\BS_{\bff,1}$ is the usual Bernstein-Sato ideal of $\bff$, which has been extensively studied in, for example, \cite[]{May97}, \cite[]{BM99}, \cite[]{BVWZ21a}, and \cite[]{BVWZ21b}. Sabbah \cite{SabI} showed that $\BS_{\bff,1}$ contains a nonzero product of certain linear forms (see also \cite{Gyo93}). Also, Budur, van der Veer, and Van Werde \cite{BVVW24} showed that the codimension one irreducible components of the zero locus of $\BS_{\bff,1}$ in $\bbc^{r}$ are affine hyperplanes that can be expressed in terms of a strong log resolution of $(X,D)$, where $D$ is the divisor defined by $\prod_{i=1}^rf_i$.

Before stating our result for Bernstein-Sato ideals, we introduce the following setup. Let $X$ be an irreducible smooth affine variety with global coordinates $x_{1},\dots,x_{n}\in\os_{X}(X)$. Given nonzero regular functions $g,f_1,\dots, f_r\in \os_X(X)$, let $D$ be the divisor defined by $\prod_{i=1}^rf_i$ and let $\pi\colon Y \to X$ be a projective morphism that is an isomorphism over the complement of $D$ such that $Y$ is irreducible and has quotient singularities. Let $p\colon U/G \onto Y$ be an \'{e}tale cover, where $G$ is a finite group acting on a smooth affine variety $U$, and let $q$ denote the morphism $U\to U/G$. Suppose that $(\pi\circ p\circ q)^{*}D+K_{U/X}$ is a simple normal crossings divisor, where $K_{U/X}$ is the relative canonical divisor, locally defined by the Jacobian of the morphism $\pi\circ p\circ q\colon U\to X$. For each prime divisor $E\subset U$, let $k_{E}\in \bbz_{\geq 0}$ denote the coefficient of $E$ in $K_{U/X}$, and, for a regular function $h\in \os_{X}(X)$, let $\ord_{E}(h)\in \bbz_{\geq 0}$ denote the order of vanishing of $(\pi\circ p\circ q)^{*}h$ along $E$. Finally, we denote the $r$-tuple $(f_{1},\dots,f_{r})$ by $\bff$.

\begin{theorem}\label{main result for bernstein-sato ideals}
With the above notation, $\BS_{\bff, g}$ contains a nonzero polynomial of the form
$$
\prod_{E}\prod_{\ell=0}^{N}\prod_{\ell'=0}^{N'}\prod_{m=1}^{\sum_{i=1}^{r}\ord_{E}(f_{i})}(\ord_{E}(f_{1})s_{1}+\cdots + \ord_{E}(f_{r})s_{r}+ \ord_{E}(g)+k_{E}+m+\ell+\ell')
$$
for some integers $N,N'\geq 0$, where the outermost product runs over the irreducible components $E$ of $(\pi\circ p\circ q)^{*}D$. Moreover, if $g=1$, then we may take $N'=0$.
\end{theorem}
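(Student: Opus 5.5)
We follow the Kashiwara--Lichtin strategy behind Theorem \ref{main result}, now for the multivariable module $\ds_X[s_1,\dots,s_r]\, g\bff^{s}$, where $\bff^{s}=f_1^{s_1}\cdots f_r^{s_r}$ and $s=(s_1,\dots,s_r)$.

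\emph{Step 1: local functional equations upstairs.} On each chart $U_i$ with coordinates $y_1,\dots,y_n$ we have, since $(\pi\circ p\circ q)^*D+K_{U/X}$ is simple normal crossings,
\[
\pi_i^*f_k = u_k\, y^{a_k},\qquad \pi_i^*g=w\,y^{c}\,h,\qquad g_i=v\,y^{\kappa},
\]
where the $u_k,v$ are units and $h$ is a unit on a neighbourhood of the relevant locus. Because $\pi$ is an isomorphism off $D$, the components of $g$ that are not components of $D$ are harmless; this is the source of $N'$. Putting $L_j(s)=\sum_k a_{kj}s_k$, the monomial case gives the explicit identity
\[
\prod_j\prod_{m=1}^{\sum_k a_{kj}}\bigl(L_j(s)+c_j+\kappa_j+m\bigr)\; g_i\, \pi_i^*(g\bff^{s}) \in \ds_{U_i}[s]\cdot g_i\,\pi_i^*(g\bff^{s+1}).
\]
To prove it, apply $\prod_k\partial_{y_j}^{a_{kj}}$ in a suitable order; the units are absorbed by a standard change of coordinates $y_j\mapsto y_j\cdot(\text{unit})^{1/a}$ after shrinking, or by the argument of \cite{Lic89}. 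The shift by $\kappa_j$ appears because we work with the twisted section $g_i\cdot(\cdot)$, i.e. with the right-module / top-form version $\omega_U\otimes$, exactly as in the definition of $\pi_i^*u$ above.

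\emph{Step 2: descent through $q$, $p$ and $\pi$.} Next we transport this to $X$ via the direct image argument already used for Theorem \ref{main result}. We take the $G$-invariants of the upstairs identity (by averaging, which is possible since $|G|$ is invertible and the operators can be symmetrized). This yields a relation on $U/G$, which is then pushed forward by the projective morphism $\pi$. Kashiwara's argument then shows that the multivariable $b$-polynomial upstairs, shifted by integers $\ell=0,\dots,N$, annihilates the relevant quotient of $\pi_+$ and hence the quotient
\[
\ds_X[s]\, g\bff^{s}\big/\ds_X[s]\, g\bff^{s+1}.
\]
The mechanism is the same as before: a filtration and coherence argument on $\pi_+$ produces finitely many shifts $b(s+\ell)$, because the natural map from $\ds_X[s]\,g\bff^{s}$ to the $0$th cohomology of the direct image is only injective up to a finite chain. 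Doing this multivariably only requires replacing $\bbc[s]$ by $\bbc[s_1,\dots,s_r]$ throughout and using that $\bbc[s_1,\dots,s_r]$-torsion is controlled by a product of the linear forms.

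\emph{Step 3: the role of $g$ and of $N'$.} The extra shifts $\ell'$ come from having to move between $g\bff^{s}$ and $\bff^{s}$: locally $g$ is a monomial times a unit only after the log resolution. Comparing the module generated by $g\bff^{s}$ with that generated by $g\bff^{s+1}$ costs a second finite chain of shifts. When $g=1$ this chain is trivial, so $N'=0$.

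\emph{Main obstacle.} The hardest step is Step 2 in the multivariable, orbifold setting. One must justify that the $G$-invariant pushforward still computes $\ds_X[s]\,g\bff^{s}$, and that the nonvanishing of the resulting product survives. The upstairs product is visibly nonzero, being a product of nonzero linear forms, so the real point is to show that the descent only multiplies by shifts and does not require passing to a radical or to a larger ideal. Here we would first try to reduce to the one-parameter case by restricting to lines $s=\lambda t$ in general position, and then recover the ideal statement from Theorem \ref{main result} via Sabbah-type generic arguments. If that fails, we would redo the Kashiwara--Lichtin estimate directly over $\bbc[s_1,\dots,s_r]$.
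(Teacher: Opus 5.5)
\textbf{There is a genuine gap: the push-forward along $\pi$ is only gestured at, and the local step misplaces the source of $N'$.} Your outline has the right overall shape: a monomial computation on charts of $U$, descent along $q$ and $p$, then a push-forward along $\pi$ that costs finitely many shifts $\ell$. But the decisive push-forward step is not supplied, and the tool you invoke does not apply here. The Kashiwara--Lichtin argument rests on holonomicity. For $r\ge 2$ the modules $\ds_X[s_1,\dots,s_r]\,g\bff^{s}/\ds_X[s_1,\dots,s_r]\,g\bff^{s+1}$ are not holonomic in general. Moreover $Y$ is singular, so $\ds_Y$ has no a priori finiteness properties and there is no off-the-shelf coherent direct image along $\pi$.

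The missing idea, which is what the paper does, is to replace holonomicity by coherence over $W^0\ds_{Y\times\bba^r}$ and over the Rees ring $R^+_W\rs_Y$:
\begin{enumerate}
\item These rings are noetherian, by Gordan's lemma together with finite generation of $\gr^F_\bullet\ds_Y$ for quotient singularities (Lemma \ref{quot sing finite type}).
\item Projective direct image preserves this coherence.
\item The cokernel of $W^0\rs_X=\widetilde{\pi}_+(W^0\rs_Y)\to\widetilde{\pi}_+(v^*\cdot W^0\rs_Y)$, computed on $X\times\bba^r$, is coherent and supported on $X\times V(t_1\cdots t_r)$, hence killed by $W^M$.
\item Artin--Rees for the pushed-forward good filtration $\im\widetilde{\pi}_+(v^*\cdot W^k\rs_Y)$ produces $N$.
\item One concludes via a map $\widetilde v\cdot W^0\rs_X\to v\cdot W^0\rs_X$ with $\widetilde v\mapsto v$. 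It exists because $B^{\omega}_{\bff}$ is $\os_X$-torsion-free and $\pi$ is an isomorphism off $D$.
\end{enumerate}
Your phrases ``injective up to a finite chain'' and ``torsion controlled by a product of linear forms'' do not amount to such a mechanism.

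Your fallback also fails. Theorem \ref{main result} controls the $b$-function for the total $V$-filtration, with the single operator $s=-\sum_i\partial_{t_i}t_i$. Neither it nor restriction to lines can certify that a given multivariable polynomial lies in $\BS_{\bff,g}$.

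Two smaller points in Step 2. First, $\pi$ is defined on $Y$, not on $U/G$, so you need a separate faithfully flat descent along the \'etale cover $p$. Second, the averaging along $q$ must be done on top forms, using $\omega_{U/G}\simeq(q_*\omega_U)^G$ and the compatibility of the averaged operators with the $\ds_{U/G}$-action (Theorem \ref{canonical direct summand}). It then yields only a containment of ideals, not an equality.

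Step 1 has a gap at exactly the point that produces $N'$. The hypotheses only make $(\pi\circ p\circ q)^*D+K_{U/X}$ simple normal crossings, so on a chart $\pi^*g=h\,y^{c}$ with $h$ an arbitrary regular function. Such an $h$ can vanish at points of the exceptional locus, and you cannot shrink to make it a unit, because the polynomial must work on all of $U$. So the shift-free identity in Step 1 is unjustified. Step 3 is also inconsistent with it: if $\pi^*g$ were a monomial times a unit, no $\ell'$ would be needed at all, and Step 3 gives no mechanism for them.

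In the paper the $\ell'$ are produced locally upstairs (Lemma \ref{BS coordinates}). On a chart, set $\ns=W^0\rs\cdot \pi^*(g)g_P\,\delta_{\pi^*\bff}\subset W^0\rs\cdot y^{b}\delta_{\pi^*\bff}$. The filtration $\ns\cap (W^k\rs\cdot y^b\delta_{\pi^*\bff})$ is good, and Artin--Rees gives $N'$ with $\ns\cap (W^{N'+1}\rs\cdot y^b\delta_{\pi^*\bff})\subset W^1\rs\cdot \pi^*(g)g_P\,\delta_{\pi^*\bff}$. Lemma \ref{BS left moving} then turns the monomial polynomial into the product over $\ell'$. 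One has $N'=0$ when $h$ is a unit, in particular when $g=1$, since then $g_P$ alone is a unit times a monomial.

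Finally, absorbing units by rescaling coordinates cannot make several $f_i$ monomial at once. For example, $y_1$ and $(1+y_2)y_1$ cannot both be monomials in any coordinates. The paper instead uses the twisting automorphism $\theta\mapsto\theta+\sum_i s_i\theta(u_i)u_i^{-1}$, $t_i\mapsto u_it_i$ of $\rs_X$ (Lemma \ref{BS hypersurface}).
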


When $g=1$ and the group $G$ is trivial, the above theorem  implies a special case of \cite[Theorem 1.2]{BVVW24}.

We now explain how our approach differs from that of Kashiwara \cite{Kas76} and Lichtin \cite{Lic89}. Their approach relies crucially on the theory of holonomic $\ds$-modules. We expect that the arguments in \cite{Kas76} and \cite{Lic89}, together with the auxiliary results established in Sections \ref{direct summands of canonical modules} and \ref{etale pullback}, can be used to prove Theorem \ref{main result for hypersurfaces}. However, in the setting of Theorem \ref{main result} when $r >1$, the previous approaches break down because the relevant $\ds_{X}$-modules that were holonomic in the case $r=1$, are no longer so. The main idea of our approach is to replace holonomicity with finite generation over certain sheaves of rings associated to the $V$-filtration.

For the proof of Theorem \ref{main result}, it is important to consider Bernstein-Sato polynomials and $\ds$-modules on singular varieties. There has been extensive work on Bernstein-Sato polynomials on singular varieties; see, for example, \cite{HM18}, \cite{AHN17},\cite{BJNB19}, \cite{AHJNTW22}, and \cite{Dir25}. In particular, we use the theory of differential direct summands, as introduced and studied in \cite{AHN17}, to relate $b$-functions on $U/G$ with those on the smooth variety $U$. While the sheaf $\ds_Z$ of differential operators on a singular variety $Z$ typically lacks finiteness properties, it turns out that the associated graded of $\ds_Y$ with respect to its order filtration is finitely generated (see Lemma \ref{quot sing finite type}), which allows us to relate $b$-functions on $X$ with those on the orbifold $Y$. 

As mentioned above, Theorem \ref{main result for hypersurfaces} is obtained as a consequence of Theorem \ref{main result} in this paper. In a companion paper \cite{Lee26}, we obtain an analogue of Theorem \ref{main result for hypersurfaces} for the minimal exponent of a local complete intersection, as introduced and studied in \cite{CDMO24}, which we use to compute the minimal exponents of semi-quasihomogeneous and Khovanskii non-degenerate complete intersections.

\subsection{Outline of the paper}

Section \ref{background} covers various background material on quasicoherent bimodules and algebras, rings of differential operators, the $V$-filtration, the canonical sheaf, the $\ds$-module $B_{\bff}$, and the theory of differential direct summands. In Section \ref{bfomega} we define the right $\ds$-module analogue $B^{\omega}_{\bff}$ of $B_{\bff}$ on normal varieties. In Section \ref{direct summands of canonical modules} we show that the natural pullback map on canonical modules associated to a finite group quotient of a smooth affine variety is a differential direct summand, which allows us to relate $b$-functions on the quotient with those on the smooth variety. In Section \ref{etale pullback} we show that $b$-functions of sections of $B_{\bff}^{\omega}$ are invariant under pullback by surjective \'{e}tale morphisms. In Section \ref{coherent modules and good filtrations} we discuss formal results on coherent modules and good filtrations over certain sheaves of rings associated to the $V$-filtration. In Section \ref{transfer bimodule and direct image} we introduce transfer bimodules and direct images adapted for the sheaves of rings discussed in Section \ref{coherent modules and good filtrations}, and in Section \ref{projective direct image} we show that projective direct image preserves coherence over these sheaves of rings. In Section \ref{proof of the main theorem} we prove Theorems \ref{main result for hypersurfaces} and \ref{main result}. Finally, in Section \ref{Estimates of zero loci of Bernstein-Sato ideals} we carry over the necessary definitions and results from Bernstein-Sato polynomials to Bernstein-Sato ideals and then prove Theorem \ref{main result for bernstein-sato ideals}.

\subsection{Conventions}

All varieties in this paper are reduced (and not necessarily irreducible) schemes of finite type over $\bbc$. Given $\beta\in \bbz^r$, we denote its components by $\beta=(\beta_1,\dots,\beta_r)$ and put $|\beta|=\sum_{i=1}^r \beta_i$ and $\beta! = \prod_{i=1}^r \beta_i!$.

\subsection{Acknowledgements}

I am grateful to Mircea \Mustata for many valuable discussions. I would also like to thank Gary Hu for helpful feedback on an earlier version of the introduction.


\section{Background}\label{background}

In this section, we review the necessary background and definitions for this paper.

\subsection{Quasicoherent bimodules and sheaves of rings}

For a sheaf of rings $\rs$ on a topological space $X$, we say that an $(\rs,\rs)$-bimodule is an $\rs$-bimodule.

\begin{definition}
We say that an $\os_X$-bimodule $\ms$ on a scheme $X$ is quasicoherent if it is quasicoherent both as a left and a right $\os_X$-module.
\end{definition}

\begin{definition}
We say that a sheaf $\as$ of (possibly noncommutative) rings on a scheme $X$ is quasicoherent if it is equipped with a ring morphism $\os_{X}\to\as$ such that $\as$ is quasicoherent as an $\os_X$-bimodule, where the bimodule structure is given by left and right multiplication by $\os_X$.
\end{definition}

Given a quasicoherent sheaf of rings $\as$ on a scheme $X$ and a (left or right) $\as$-module $\ms$, we say that $\ms$ is quasicoherent if it is so after restricting scalars along $\os_X \to \as$.

\subsection{Rings of differential operators}

We use Grothendieck's definition of rings of differential operators \cite[Section 16.8]{Gro67}. Given a morphism of schemes $f: X \to Y$, let $\ds_{X/Y}$ denote the sheaf of $f^{-1}\os_Y$-linear differential operators on $X$, and denote its order filtration by $(F_p\ds_{X/Y})_{p\in \bbz}$. Given a ring map $k \to R$, let $D_{R/k}$ denote the ring of $k$-linear differential operators of $R$, and denote its order filtration by $(F_pD_{R/k})_{p\in \bbz}$. For a variety $X$, we write $\ds_X := \ds_{X/\Spec \bbc}$, and for a $\bbc$-algebra $R$, we write $D_R =D_{\Spec R} := D_{R/\bbc}$. For general facts about $\ds_X$-modules when $X$ is a smooth variety, we refer to \cite{HTT08}.

For a variety $X$, we say that regular functions $x_1,\dots,x_n\in \os_X(X)$ are global coordinates of $X$ if the morphism $\phi: X \to \bba^n=\Spec \bbc[t_1,\dots,t_n]$ given by $\phi^*t_i=x_i$ is \'{e}tale, in which case we denote the corresponding partial derivatives by $\partial_{x_1},\dots,\partial_{x_n} \in \sDer_{\bbc}(\os_X)$ and put $\partial_x^{\alpha} = \prod_{i=1}^n \partial_{x_i}^{\alpha_i}$ for $\alpha\in \bbz_{\geq 0}^n$, so that $\ds_X= \bigoplus_{\alpha\in \bbz^n_{\geq 0}} \os_X\partial_x^{\alpha}$. 

We now consider $\bba^r=\Spec \bbc[t_1,\dots,t_r]$ and its ring of differential operators
$$
D_{\bba^r} = \bbc\langle t_1,\dots,t_r, \partial_{t_1},\dots,\partial_{t_r}\rangle = \bigoplus_{\alpha,\beta\in \bbz_{\geq 0}^r} \bbc t^{\alpha}\partial_t^{\beta},
$$
where $t^{\alpha}=\prod_{i=1}^r t_i^{\alpha_i}$ and $\partial_t^{\beta}=\prod_{i=1}^r \partial_{t_i}^{\beta_i}$. Given any $\bbc$-algebra $R$, we have the following natural isomorphism of $\bbc$-algebras by \cite[Proposition II.58]{QG21a}:
\begin{equation}\label{local product}
D_{R[t_1,\dots,t_r]} \simeq D_R \otimes_{\bbc} D_{\bba^r}=\bigoplus_{\alpha,\beta\in \bbz_{\geq 0}^r} D_R t^{\alpha}\partial_t^{\beta}.
\end{equation}
Thus, given any variety $X$, if we denote the projection $X\times \bba^r \to X$ by $p_X$, then 
$$
p_{X,*}\ds_{X\times \bba^r}= \ds_X \otimes_{\bbc} D_{\bba^r} = \bigoplus_{\alpha,\beta\in \bbz_{\geq 0}^r} \ds_X t^{\alpha}\partial_t^{\beta}.
$$
Note that if $\ns$ is a $\ds_{X\times \bba^r}$-module, then $p_{X,*}\ns$ is a $p_{X,*}\ds_{X\times \bba^r}$-module.

\begin{remark}\label{qcoh diff}
If $f: X \to Y$ is a morphism of schemes that is locally of finite presentation, then $\ds_{X/Y}$ is a quasicoherent sheaf of rings on $X$ by \cite[Proposition 16.8.6]{Gro67}.
\end{remark}

\begin{remark}\label{right action formula}
Given a variety $X$ and a right $\ds_X$-module $\ns$, observe that 
$$
(vf) \cdot \theta = v\cdot (f\cdot \theta) = v \cdot ([f,\theta] + \theta\cdot f) = - v \theta(f) + (v \cdot \theta) f
$$
for $v\in \ns$, $f\in \os_X$, and $\theta \in \sDer_{\bbc}(\os_X)$.
\end{remark}

\subsection{The $V$-filtration}\label{the v-filtration}

Throughout this section, we fix a variety $X$ and an integer $r\geq 1$. In this section, we recall the definition and some basic properties of the $V$-filtration on $\ds_{X\times \bba^r}$.

We denote the standard coordinates on $\bba^r$ by $t_1,\dots,t_r$ and the projection $X \times \bba^r \to X$ by $p_X$, and put
$$
\rs_X := p_{X,*}\ds_{X\times \bba^r} = \ds_X\otimes_{\bbc} D_{\bba^r}.
$$
We also denote by $\is \subset \os_{X\times \bba^r}$ the coherent sheaf of ideals generated by $t_1,\dots,t_r$.

The $V$-filtration on $\ds_{X\times \bba^r}$ is the decreasing filtration $(V^k\ds_{X\times \bba^r})_{k\in \bbz}$ given by
$$
V^k\ds_{X\times \bba^r} := \{P \in \ds_{X\times \bba^r} \ | \ P ( \is^j) \subset \is^{j+k} \text{ for all $j\in \bbz$}\},
$$
where $\is^j=\os_{X\times \bba^r}$ for $j\leq 0$. We also consider the $V$-filtration $(V^k\rs_X)_{k\in \bbz}$ on $\rs_X$ given by
$$
V^k\rs_X :=p_{X,*} V^k\ds_{X\times \bba^r}.
$$
We define the $\bbz_{\geq 0}$-graded Rees algebra of $\rs_X$ as
$$
R^+_V\rs_X := \bigoplus_{k\geq 0} V^k\rs_X T^k \subset \rs_X[T].
$$

\begin{lemma}\label{qcoh V filtration}
$V^k\ds_{X \times \bba^r}$ is a quasicoherent $\os_{X \times \bba^r}$-bimodule for each $k$, hence $V^0\ds_{X\times \bba^r}$, $V^0\rs_X$, and $R^+_V\rs_X$ are quasicoherent sheaves of rings.
\end{lemma}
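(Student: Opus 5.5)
The plan is to reduce to an affine computation and identify $V^k\ds_{X\times\bba^r}$ explicitly as a sub-bimodule of $\ds_{X\times\bba^r}$ cut out by $\os$-linear conditions.

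First, since $\ds_{X\times\bba^r}$ is a quasicoherent sheaf of rings by Remark \ref{qcoh diff}, it suffices to work on an affine open $\Spec R\subset X$. There, by \eqref{local product}, $\ds_{X\times\bba^r}$ corresponds to $D_R\otimes_\bbc D_{\bba^r}=\bigoplus_{\alpha,\beta} D_R t^\alpha\partial_t^\beta$.

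The key step is to show that on $\Spec R\times \bba^r$, $V^k$ is the span of the operators $Q\,t^\alpha\partial_t^\beta$ with $Q\in D_R$ and $|\alpha|-|\beta|\geq k$. This should be compatible with localization on $X$, and also with localization in the $R[t]$-direction after one checks the defining condition locally.

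For the inclusion $\supseteq$: $D_R$ preserves each $\is^j$, because $\is^j$ is generated by monomials in $t$ with coefficients in $R$ and $D_R$ commutes with $t$. Moreover, $t^\alpha\partial_t^\beta$ sends $\is^j$ into $\is^{j+|\alpha|-|\beta|}$.

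For the inclusion $\subseteq$: take $P=\sum_{\alpha,\beta} Q_{\alpha\beta}t^\alpha\partial_t^\beta$ and apply $P$ to monomials $r\,t^\gamma$. Grading by $t$-degree, $P$ decomposes into homogeneous pieces $P_d=\sum_{|\alpha|-|\beta|=d}Q_{\alpha\beta}t^\alpha\partial_t^\beta$, and $P$ lies in $V^k$ iff each $P_d$ with $d<k$ vanishes. This follows because $P_d$ shifts $t$-degree by exactly $d$, so one can separate pieces by the $t$-degree of outputs on homogeneous inputs. Showing that a homogeneous $P_d$ with $d<k$ killing the required things must be zero is a standard injectivity argument: evaluate on $t^\gamma$ for large $\gamma$ and use the Weyl-algebra action on $R[t]$. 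I expect this to be the main, though routine, obstacle.

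Given this description, $V^k$ is a direct sum of pieces $D_R\,t^\alpha\partial_t^\beta$. Each such piece is a left $R[t]$-submodule: left multiplication by $t_i$ raises $|\alpha|$, which is harmless. It is also a right $R[t]$-submodule, since commuting $t_i$ past $\partial_t^\beta$ produces terms with $|\alpha|-|\beta|$ preserved or increased. Hence $V^k$ is a sub-bimodule of the quasicoherent bimodule. Since the description is compatible with localization on $R$, the subsheaf is quasicoherent on both sides: its sections over $D(h)$ are the localizations, because $V^k$ is defined by conditions that localize (differential operators localize, and $\is^j$ localizes).

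For the consequences, $V^0$ is closed under composition and contains $\os$, so it is a quasicoherent sheaf of rings. Then $V^k\rs_X=p_{X,*}V^k$ is quasicoherent over $\os_X$, since $p_X$ is affine and pushforward of quasicoherent sheaves along an affine morphism is quasicoherent. Finally, $R^+_V\rs_X=\bigoplus_k V^k\rs_X T^k$ is a direct sum of quasicoherent $\os_X$-bimodules and is closed under multiplication because $V^kV^l\subset V^{k+l}$, which is clear from the definition.
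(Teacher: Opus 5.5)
Your computation over product affines $\Spec R\times\bba^r$ is fine. It proves the explicit description that the paper records as Lemma~\ref{explicit V filtration}, and the inclusion $\subseteq$ is even easier than you suggest: for $d<k$, the piece $P_d$ sends each element of $t$-degree $j$ to degree $j+d<j+k$, so it kills everything and is zero.

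\textbf{The gap.} It lies in the passage from this description to quasicoherence of $V^k\ds_{X\times\bba^r}$ on $X\times\bba^r$. Quasicoherence needs $V^k(D(h))=R[t]_h\cdot V^k(\Spec R\times\bba^r)$, together with the right-handed analogue, for \emph{every} $h\in R[t_1,\dots,t_r]$, not only for $h\in R$. For $h=1-t_1$, or $h=g-t_1$ with $g\in R$, the open set $D(h)$ is not a product, and your description says nothing about it. Your justification, that $V^k$ ``is defined by conditions that localize'', does not work. $V^k$ is cut out by infinitely many conditions $P(\is^j)\subset\is^{j+k}$, and infinite intersections need not commute with localization. Concretely, set $I=(t_1,\dots,t_r)$, and suppose $Q\in V^k(D(h))$ with $Q=h^{-N}P$ and $P\in D_{R[t]}$. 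For each $j$ you only learn that $h^{n_j}P(I^j)\subset I^{j+k}$ for some $n_j$, which a priori may grow with $j$. To conclude that $h^{n}P$ is a section of $V^k$ over $\Spec R\times\bba^r$, you need one $n$ that works for all $j$ simultaneously.

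\textbf{How to fill it.} The uniform bound can be supplied as follows.
\begin{enumerate}
\item[i)] If $P$ has order $\leq m$, then $P(I^j)\subset I^{j-m}$.
\item[ii)] $M_j=I^{j-m}/I^{j+k}$ is a free $R$-module annihilated by $I^{m+k}$. On $M_j$, $h$ differs from its constant term $h_0\in R$ (the image of $h$ under $t_i\mapsto 0$) by a nilpotent of index $\leq m+k$.
\item[iii)] Choose $c$ so that $h_0^c$ kills the $h_0$-power torsion of $R$. Then the $h$-power torsion of $M_j$ is killed by $h^{c+m+k-1}$, uniformly in $j$.
\end{enumerate}
You would still need two further checks. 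First, sections of $V^k$ over $\Spec R\times\bba^r$ restrict into $V^k(D(h))$; this follows by induction on the order, using $[P,h^n]\in V^k$. Second, right localization agrees with left localization, by the usual Ore rewriting.

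\textbf{Comparison with the paper.} The paper sidesteps all of this with the extended Rees algebra. It identifies $V^k\ds_{X\times\bba^r}$ with the degree-$k$ part of $\pi_*\ds_{Y/\bba^1}$, where $Y=\rSpec_{X\times\bba^r}(\bigoplus_{j\in\bbz}\is^jT^j)\to\bba^1$. This absorbs the conditions for all $j$ into differential operators on a single finite-type scheme, so quasicoherence follows from Remark~\ref{qcoh diff} and Lemma~\ref{qcoh direct summands}.

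Your conclusions about $V^k\rs_X$ and $R^+_V\rs_X$ only involve product opens, so they follow from the explicit description alone. The missing step is needed for the quasicoherence of $V^0\ds_{X\times\bba^r}$ on $X\times\bba^r$ itself. That is used later, for instance in Lemma~\ref{generation from zero} and, through Lemma~\ref{surjection}, in Corollary~\ref{torsion}.
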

\proof The following proof is based on the observations made in \cite[Section 1]{Sab87}. Let $Y=\rSpec_{X \times \bba^r} (\bigoplus_{j\in \bbz} \is^jT^j)$ and consider the natural morphism $\pi: Y \to X\times\bba^{r}$. Also consider the morphism $Y \to \bba^1=\Spec \bbc[t]$, where the coordinate $t$ pulls back to $T^{-1}$. Note that $V^k\ds_{X\times \bba^r}$ identifies with the subsheaf $(\pi_*\ds_{Y/\bba^1})_k\subset \pi_*\ds_{Y/\bba^1}$ of homogeneous differential operators of degree $k$. Since 
$\pi_*\ds_{Y/\bba^1} = \bigoplus_{k\in \bbz} (\pi_*\ds_{Y/\bba^1})_k$, it follows from Remark \ref{qcoh diff} and Lemma \ref{qcoh direct summands} that $V^k\ds_{X\times \bba^r} \simeq (\pi_*\ds_{Y/\bba^1})_k$ is a quasicoherent $\os_{X \times \bba^r}$-bimodule. \qed

We omit the proof of the following well-known lemma.

\begin{lemma}\label{explicit V filtration}
For $k\in \bbz$, we have
$$
V^k\rs_X =\bigoplus_{|\alpha|-|\beta|\geq k} \ds_X t^{\alpha}\partial_t^{\beta}=\bigoplus_{|\alpha|-|\beta|\geq k} \ds_X \partial_t^{\beta} t^{\alpha},
$$
where both direct sums run over all $\alpha,\beta\in \bbz^r_{\geq 0}$ such that $|\alpha|-|\beta|\geq k$.
\end{lemma}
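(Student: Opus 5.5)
We first reduce to a local computation on $\rs_X=\ds_X\otimes_{\bbc}D_{\bba^r}$ and then compute the $V$-filtration directly. By \eqref{local product}, every section of $\rs_X$ can be written uniquely as $\sum_{\alpha,\beta} P_{\alpha\beta}t^\alpha\partial_t^\beta$ with $P_{\alpha\beta}\in\ds_X$. Since $V^k\ds_{X\times\bba^r}$ is quasicoherent by Lemma \ref{qcoh V filtration}, both sides of the claimed identity are compatible with restriction to affine opens. We may therefore assume $X=\Spec R$ is affine and work with $D_R\otimes D_{\bba^r}$ acting on $R[t]$, where $\is=(t_1,\dots,t_r)$ and $V^k$ consists of those $P$ with $P(\is^j)\subset\is^{j+k}$ for all $j$.

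\textbf{The inclusion $\supseteq$.} Operators in $D_R$ commute with the $t_i$ and preserve each $\is^j$, because $\is^j=\bigoplus_{|\gamma|\ge j}Rt^\gamma$ and $D_R$ acts coefficientwise. Multiplication by $t^\alpha$ raises the $\is$-adic order by $|\alpha|$, and $\partial_t^\beta$ lowers it by at most $|\beta|$. Hence $D_R t^\alpha\partial_t^\beta\subset V^{|\alpha|-|\beta|}$. Since the $V$-filtration is decreasing, each summand with $|\alpha|-|\beta|\ge k$ lies in $V^k$, which gives $\supseteq$ for the first description.

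\textbf{The inclusion $\subseteq$.} Here we use the grading by the Euler-type weight. Set $\theta_i=t_i\partial_{t_i}$. The element $t^\alpha\partial_t^\beta$ sends a monomial $t^\gamma$ to a scalar multiple of $t^{\gamma+\alpha-\beta}$, so it is homogeneous of degree $|\alpha|-|\beta|$ for the $\bbz^r$-grading given by $\deg t_i=e_i$ and $\deg\partial_{t_i}=-e_i$. Decompose a given $P\in V^k$ into $\bbz^r$-homogeneous components $P=\sum_{\mu}P_\mu$, where $P_\mu=\sum_{\alpha-\beta=\mu}P_{\alpha\beta}t^\alpha\partial_t^\beta$. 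Each $P_\mu$ maps $Rt^\gamma$ into $Rt^{\gamma+\mu}$, and $\is^j$ is a direct sum of the spaces $Rt^\gamma$. It follows that $P(\is^j)\subset\is^{j+k}$ forces $P_\mu(\is^j)\subset\is^{j+k}$ for every $\mu$. So it suffices to treat a single homogeneous $P_\mu$ with $|\mu|<k$ and show it vanishes.

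\textbf{The main step.} Write $P_\mu=t^{\mu^+}Q\,\partial_t^{\mu^-}$, where $\mu=\mu^+-\mu^-$ is the decomposition into positive and negative parts and $Q$ is a polynomial in $\theta_1,\dots,\theta_r$ with coefficients in $D_R$. This is possible because $t^\alpha\partial_t^\beta$ with $\alpha-\beta=\mu$ equals $t^{\mu^+}\cdot(\text{monomial in }t_i^{c}\partial_{t_i}^{c})\cdot\partial_t^{\mu^-}$, and $t_i^c\partial_{t_i}^c=\theta_i(\theta_i-1)\cdots(\theta_i-c+1)$. On $t^\gamma$ with $\gamma\ge\mu^-$, the operator $P_\mu$ acts as $q(\gamma)\,t^{\gamma+\mu}$, where $q$ is a polynomial function of $\gamma$ with values in $D_R$ applied to coefficients. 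Take $\gamma\ge\mu^-$ with $|\gamma|=j$ large. Then $t^{\gamma+\mu}$ has degree $j+|\mu|<j+k$, so the condition $P_\mu\in V^k$ forces $q(\gamma)$ to annihilate $R$ for all such $\gamma$. A polynomial vanishing on a Zariski-dense set of lattice points is zero, so $Q=0$ and hence $P_\mu=0$.

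The delicate point is controlling the values $q(\gamma)$ as elements of $D_R$ rather than as scalars. I would handle this using the freeness of $D_R\otimes\bbc[\theta]$ over $\bbc[\theta]$ together with the fact that an operator in $D_R$ killing all of $R$ is zero.

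\textbf{The second description.} The equality with $\bigoplus\ds_X\partial_t^\beta t^\alpha$ follows from the commutation relation $\partial_t^\beta t^\alpha=\sum_{\gamma}c_\gamma t^{\alpha-\gamma}\partial_t^{\beta-\gamma}$. The leading term is $t^\alpha\partial_t^\beta$, and every correction term has the same weight $|\alpha|-|\beta|$. A triangular change of basis therefore identifies the two direct sums within each weight. The same computation shows that the sum $\bigoplus\ds_X\partial_t^\beta t^\alpha$ is direct.
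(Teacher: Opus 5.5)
Your proof is correct. There is no argument in the paper to compare it with, because the paper calls this lemma well known and omits the proof. Your direct computation on $R[t_1,\dots,t_r]$ therefore stands on its own: the $\mathbb{Z}^r$-weight decomposition handles $\subseteq$, and a unitriangular change of basis within each weight handles the second description. I have two remarks.

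First, your main step is heavier than necessary. You already know that $P_\mu(Rt^\gamma)\subset Rt^{\gamma+\mu}$, where this is read as $0$ unless $\gamma+\mu\in\mathbb{Z}^r_{\geq 0}$. Apply the condition $P_\mu\in V^k$ with $j=|\gamma|$. Since $0\le|\gamma+\mu|<|\gamma|+k$ when $|\mu|<k$, this gives $P_\mu(at^\gamma)\in Rt^{\gamma+\mu}\cap I^{|\gamma|+k}=0$ for every $\gamma$ and every $a\in R$. So $P_\mu$ is the zero endomorphism of $R[t]$, hence zero in $D_{R[t]}$. Uniqueness in \eqref{local product} then gives $P_{\alpha\beta}=0$ whenever $\alpha-\beta=\mu$. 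None of the following are needed: the factorization $t^{\mu^+}Q\,\partial_t^{\mu^-}$, the restriction to large $|\gamma|$, or the Zariski-density argument. The ``delicate point'' you flag about $D_R$-valued polynomials does not arise.

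Second, the condition defining $V^k\mathcal{D}_{X\times\mathbb{A}^r}$ is a sheaf condition. For $\supseteq$ you must show that $D_R\,t^\alpha\partial_t^\beta$ maps $\mathcal{I}^j(W)$ into $\mathcal{I}^{j+|\alpha|-|\beta|}(W)$ for every open $W\subset\operatorname{Spec}R[t]$, not only for the whole space. Quasicoherence of $V^k$ (Lemma~\ref{qcoh V filtration}) does not give this by itself. The fix is standard; either of the following works.
\begin{itemize}
\item Extend $P$ to $R[t]_g$. If $P$ has order $\le d$ and $P(I^j)\subset I^{j+k}$, then expanding $\operatorname{ad}(h)^{d+1}(P)=0$ with $h=g^m$ shows $g^{m(d+1)}P(a/g^m)\in I^{j+k}R[t]_g$ for $a\in I^j$. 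Since the opens $D(g)$ form a basis, this gives the sheaf condition.
\item Check directly on local sections that $t_i$, $\partial_{t_i}$, and elements of $\mathcal{D}_X$ shift the $\mathcal{I}$-adic order as claimed. For elements of $\mathcal{D}_X$ this uses that they commute with the $t_i$.
\end{itemize}
With this localization point stated explicitly, your argument is complete.
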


For $k\geq 0$, it is immediate from Lemma \ref{explicit V filtration} that
\begin{equation}\label{push generation from zero}
V^k\rs_X = (t_1,\dots,t_r)^k V^0\rs_X = \sum_{|\alpha|=k} t^{\alpha}V^0\rs_X.
\end{equation}

\begin{lemma}\label{generation from zero}
We have $V^k\ds_{X\times \bba^r} = \is^k\cdot V^0\ds_{X\times \bba^r}$ for $k\geq 0$.
\end{lemma}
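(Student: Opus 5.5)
The plan is to reduce the statement to the explicit description of $V^k\rs_X$ in Lemma \ref{explicit V filtration} and equation \eqref{push generation from zero}, using quasicoherence (Lemma \ref{qcoh V filtration}) to pass from global sections over products $W\times\bba^r$ to the sheaves themselves.

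First, the inclusion $\is^k\cdot V^0\ds_{X\times\bba^r}\subset V^k\ds_{X\times\bba^r}$ follows directly from the definition. If $h\in\is^k$ and $P\in V^0\ds_{X\times\bba^r}$, then for every $j$ we have $hP(\is^j)\subset h\,\is^j\subset\is^{j+k}$. So $hP\in V^k\ds_{X\times\bba^r}$, and the right-hand side is a subsheaf of the left-hand side.

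For the reverse inclusion, note that both sides are quasicoherent as left $\os_{X\times\bba^r}$-submodules of $\ds_{X\times\bba^r}$. For $V^k\ds_{X\times\bba^r}$ this is Lemma \ref{qcoh V filtration}. For $\is^k\cdot V^0\ds_{X\times\bba^r}$, it is the image of the multiplication map $\is^k\otimes_{\os_{X\times\bba^r}}V^0\ds_{X\times\bba^r}\to\ds_{X\times\bba^r}$. This is a morphism of left $\os_{X\times\bba^r}$-modules, and its source is quasicoherent because $V^0\ds_{X\times\bba^r}$ is a quasicoherent bimodule. Hence the image is quasicoherent. Two quasicoherent subsheaves of the same sheaf coincide once their sections agree on every member of an affine open cover. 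We therefore cover $X$ by affine opens $W=\Spec R$ and check equality on $W\times\bba^r=\Spec R[t_1,\dots,t_r]$, which is affine.

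On $W\times\bba^r$ the computation proceeds as follows.

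1. Sections of the image sheaf over an affine scheme are the image of the sections, so $\Gamma(W\times\bba^r,\is^k\cdot V^0\ds_{X\times\bba^r})=(t_1,\dots,t_r)^k\cdot\Gamma(W\times\bba^r,V^0\ds_{X\times\bba^r})$.
2. By definition, $\Gamma(W\times\bba^r,V^k\ds_{X\times\bba^r})=\Gamma(W,V^k\rs_X)$.
3. By \eqref{push generation from zero}, which rests on Lemma \ref{explicit V filtration} and the decomposition \eqref{local product} $D_{R[t]}\simeq D_R\otimes_{\bbc}D_{\bba^r}$, this equals $\sum_{|\alpha|=k}t^{\alpha}\,\Gamma(W,V^0\rs_X)$.
4. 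The expression in step 3 is exactly the sections computed in step 1.

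The main point requiring care is the quasicoherence step. The equality is only evident on the special affines $W\times\bba^r$, since Lemma \ref{explicit V filtration} is phrased through $p_{X,*}$. To conclude equality on arbitrary opens of $X\times\bba^r$, such as localizations at functions involving the $t_i$, we need both sides to be quasicoherent. This is where Lemma \ref{qcoh V filtration} is essential, together with the observation that the product $\is^k\cdot V^0\ds_{X\times\bba^r}$ is computed on affines as the product of sections.
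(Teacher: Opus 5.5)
Your proposal is correct and matches the paper's proof. The paper's argument is the same: both sides are quasicoherent left $\os_{X\times\bba^r}$-submodules of $\ds_{X\times\bba^r}$, so it suffices to compare sections over the affines $W\times\bba^r$ (that is, after applying $p_{X,*}$), where the equality is exactly \eqref{push generation from zero}. Your justification that $\is^k\cdot V^0\ds_{X\times\bba^r}$ is quasicoherent, as the image of the multiplication map, fills in a detail the paper leaves implicit. Your separate check of the inclusion $\is^k\cdot V^0\ds_{X\times\bba^r}\subset V^k\ds_{X\times\bba^r}$ is harmless but not needed once the sectionwise equality is established.
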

\proof Since both sides of the equation are left $\os_{X\times \bba^r}$-quasicoherent submodules of $\ds_{X\times \bba^r}$, it suffices to check that $p_{X,*}(V^k\ds_{X\times \bba^r}) = p_{X,*}(\is^k\cdot V^0\ds_{X\times \bba^r})$ in $p_{X,*}(\ds_{X\times \bba^r})$, but this follows from \eqref{push generation from zero}. \qed

We put
$$
V^kD_{\bba^r} := \bigoplus_{|\alpha|-|\beta|\geq k} \bbc t^{\alpha} \partial_t^{\beta} \subset D_{\bba^r}
\ \ \ \ 
\text{and}
\ \ \ \ 
R^+_VD_{\bba^r}:=\bigoplus_{k\geq 0}V^kD_{\bba^r}T^k \subset D_{\bba^r}[T].
$$
By Lemma \ref{explicit V filtration}, we have $V^k\rs_X = \ds_X\otimes_{\bbc} V^kD_{\bba^r}$ and $R^+_V\rs_X= \ds_X \otimes_{\bbc} R^+_VD_{\bba^r}$.

\subsection{The canonical sheaf}\label{dmod on normal}

Given a smooth variety $X$ of pure dimension $n$, the canonical sheaf $\omega_X=\wedge^n \Omega^1_X$ has a natural right $\ds_X$-module structure extending its $\os_X$-module structure, given as follows. If $U \subset X$ is an open subset with global coordinates $x_1,\dots,x_n\in \os_X(U)$, then for every $g\in \os_X(U)$ we have
\begin{equation}\label{canonical dmod}
gdx_1\wedge \cdots \wedge dx_n \cdot \partial_{x_i} = - \partial_{x_i}(g) dx_1\wedge \cdots \wedge dx_n.
\end{equation}

Now let $X$ be a normal variety of pure dimension $n$. We denote the smooth locus and singular locus of $X$ by $X_{sm}$ and $X_{sing}$, respectively, and the inclusion $X_{sm} \to X$ by $j$. The canonical sheaf of $X$ is defined as $\omega_X:=j_*\omega_{X_{sm}}$. Note that $j_*\omega_{X_{sm}}$ is a right $j_*\ds_{X_{sm}}$-module. As in \cite[Theorem 3.2]{QG21b}, we define the right $\ds_X$-module structure on $\omega_X$ by restricting scalars along $\ds_X \to j_*j^*\ds_X=j_*\ds_{X_{sm}}$. By \cite[Proposition II.2.2]{Lev81} (see also the proof of \cite[Theorem 3.2]{QG21b}), this morphism is in fact an isomorphism, which we record as follows:
\begin{equation}\label{pull push normal isom}
\ds_X \xrightarrow{\sim} j_*\ds_{X_{sm}}.
\end{equation}

Given a morphism $f: X \to Y$ between normal varieties of the same pure dimension such that $f^{-1}(Y_{sing})$ has codimension $\geq 2$, we define a $\bbc$-linear map 
\begin{equation}\label{pullback forms}
f^*: \Gamma(Y,\omega_Y) \to \Gamma(X,\omega_X)
\end{equation}
as follows. Let $U = X_{sm} \cap f^{-1}(Y_{sm})$ and denote the morphism $f|_U: U \to Y_{sm}$ by $f'$. Consider the morphism $f'^*\omega_{Y_{sm}} \to \omega_{U}$ obtained by taking the determinant of the natural morphism $f'^*\Omega^1_{Y_{sm}} \to \Omega^1_{U}$. Taking global sections of the corresponding morphism $\omega_{Y_{sm}} \to f'_*\omega_{U}$ yields a map $\Gamma(Y_{sm},\omega_{Y_{sm}}) \to \Gamma(U,\omega_U)$, which we denote by $f'^*$. Note that $\Gamma(X,\omega_X)=\Gamma(U,\omega_U)$ since $X\setminus U = X_{sing} \cup f^{-1}(Y_{sing})$ has codimension $\geq 2$. We then define the map in \eqref{pullback forms} as the composition
$$
\Gamma(Y,\omega_Y)= \Gamma(Y_{sm},\omega_{Y_{sm}}) \xrightarrow{f'^*} \Gamma(U,\omega_U)=\Gamma(X,\omega_X).
$$

It is easy to see that if $f: X \to Y$ and $g: Y \to Z$ are morphisms between normal varieties of the same pure dimension such that $f^{-1}(Y_{sing})$, $g^{-1}(Z_{sing})$, and $(gf)^{-1}(Z_{sing})$ all have codimension $\geq 2$, then for every section $v\in \Gamma(Z,\omega_Z)$ we have 
\begin{equation}\label{composition}
f^*(g^*v) = (g\circ f)^*v \in \Gamma(X,\omega_X).
\end{equation}

\subsection{The left $\ds$-module $B_{\bff}$}\label{bf}

In this section we recall the definition of the left $\ds$-module $B_{\bff}$ discussed in the Introduction (for details, we refer to \cite[Section 2]{CDMO24}).

Let $X$ be a smooth variety. Given regular functions $f_1,\dots,f_r\in \os_X(X)$, we denote the $r$-tuple $(f_1,\dots,f_r)$ by $\bff$ (when $r=1$, we simply denote the $1$-tuple $(f_1)$ by $f_1$), and consider the graph embedding
$$
i_{\bff}: X \to X \times \bba^r, x \mapsto (x,\bff(x))
$$
and the $\ds$-module theoretic pushforward $i_{\bff,+}\os_X$, which is a left $\ds_{X\times \bba^r}$-module. Note that $i_{\bff,+}\os_X = \hs^r_{\Gamma_{\bff}}(\os_{X\times \bba^r})$, where $\Gamma_{\bff}=i_{\bff}(X) \subset X \times \bba^r$. We denote the standard coordinates on $\bba^r$ by $t_1,\dots,t_r$ and the projection $X \times \bba^r \to X$ by $p_X$, and put $\rs_X = p_{X,*}\ds_{X\times \bba^r}$. We define the left $\rs_X$-module $B_{\bff}$ as
$$
B_{\bff} := p_{X,*}(i_{\bff,+}\os_X) = \bigoplus_{\beta\in \bbz_{\geq 0}^r} \os_X \partial_t^{\beta}\delta_{\bff}.
$$
Here, the actions of $\os_X$ and $\partial_{t_i}$ are the obvious ones, while the actions of $\theta \in \sDer_{\bbc}(\os_X)$ and $t_i$ are given by 
\begin{equation}\label{bf action}
\theta \cdot g\partial_t^{\beta}\delta_{\bff} = \theta(g)\partial_t^{\beta}\delta_{\bff} - \sum_{i=1}^r \theta(f_i) g \partial_t^{\beta+e_i}\delta_{\bff}
\ \ \
\text{and}
\ \ \
t_i \cdot g\partial_t^{\beta}\delta_{\bff} = f_i g \partial_t^{\beta}\delta_{\bff} - \beta_i g \partial_t^{\beta-e_i}\delta_{\bff},
\end{equation}
where $e_1,\dots,e_r$ is the standard basis of $\bbz^r$. By \cite[(6)]{CDMO24}, we have a natural isomorphism of $\rs_{X}$-modules
\begin{equation}\label{specialization}
B_{\bff}[1/f_{1}\cdots f_{r}]\simeq \os_{X}[1/f_{1}\cdots f_{r},s_{1},\dots,s_{r}]f_{1}^{s_{1}}\cdots f_{r}^{s_{r}}
\end{equation}
that maps $\delta_{\bff}$ to $f_{1}^{s_{1}}\cdots f_{r}^{s_{r}}$, where $\rs_{X}$ acts on the right-hand side in the expected way.

\subsection{$B$-functions}\label{b-functions}

Throughout this section, we fix a variety $X$ and an integer $r\geq 1$. In this section we recall the definition and some basic properties of $b$-functions. We denote the standard coordinates on $\bba^r$ by $t_1,\dots,t_r$ and consider the operator $s = - \sum_{i=1}^r \partial_{t_i} t_i \in V^0D_{\bba^r}$. We put $\rs_X = \ds_X\otimes_{\bbc} D_{\bba^r}$.

Given a left $\rs_X$-module $\ms$ and a section $u\in \Gamma(X,\ms)$, the $b$-function $b_u(s)\in \bbc[s]$ of $u$ is defined as the monic generator of the ideal
$$
\{b(s) \in \bbc[s] \ | \ b(s) \cdot u \in V^1\rs_X \cdot u\},
$$
with the convention that $0\in \bbc[s]$ is monic. Similarly, given a right $\rs_X$-module $\ns$ and a section $v\in \Gamma(X,\ns)$, the $b$-function $b_v(s)\in \bbc[s]$ of $v$ is defined as the monic generator of the ideal $\{b(s) \in \bbc[s] \ | \ v\cdot b(s) \in v\cdot V^1\rs_X\}$. 

\begin{remark}\label{b function local}
The $b$-function is local in the following sense. Given an open cover $X=\bigcup_i U_i$, a right $\rs_X$-module $\ns$, and a section $v\in \Gamma(X,\ns)$, we have $b_v(s) = \lcm_i b_{v|_{U_i}}(s)$, that is, $b_v(s)$ is the monic generator of the ideal $\bigcap_i (b_{v|_{U_i}}(s))$. 
\end{remark}

\begin{remark}\label{b function affine local}
The $b$-function is affine-local in the following sense. Given a quasicoherent right $\rs_X$-module $\ns$ and a section $v\in \Gamma(X,\ns)$, if $X$ is affine, then $b_v(s)$ is the monic generator of the ideal $\{b(s) \in \bbc[s] \ | \ v \cdot b(s) \in v\cdot (D_X \otimes_{\bbc}V^1D_{\bba^r})\}$.
\end{remark}

\begin{lemma}\label{right moving}
Given a right $\rs_X$-module $\ns$ and a section $v\in \Gamma(X,\ns)$, we have
$$
(v\cdot V^k\rs_X) b_v(s-k) \subset v\cdot V^{k+1}\rs_X
$$
for every $k\in \bbz$. 
\end{lemma}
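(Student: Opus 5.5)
The plan is to reduce to a commutation relation between $s$ and homogeneous elements of $V^k\rs_X$. By Lemma \ref{explicit V filtration}, $V^k\rs_X$ is spanned as a left $\ds_X$-module by monomials $t^{\alpha}\partial_t^{\beta}$ with $|\alpha|-|\beta|\geq k$. Moreover, $\ds_X$ commutes with $s=-\sum_i\partial_{t_i}t_i$, since $s\in D_{\bba^r}$ and $\rs_X=\ds_X\otimes_{\bbc}D_{\bba^r}$.

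First I will compute the commutator of $s$ with a monomial. Using $[\partial_{t_i}t_i, t_i]=t_i$ and $[\partial_{t_i}t_i,\partial_{t_i}]=-\partial_{t_i}$, I get
$$
t^{\alpha}\partial_t^{\beta}\, s = (s+|\alpha|-|\beta|)\, t^{\alpha}\partial_t^{\beta}.
$$
The sign should be checked once on $t_i$ alone: $t_i s = (s+1)t_i$ corresponds to $-t_i\partial_{t_i}t_i = -(\partial_{t_i}t_i - t_i)t_i$. Consequently, for $P=Q\,t^{\alpha}\partial_t^{\beta}$ with $Q\in\ds_X$ and $j:=|\alpha|-|\beta|$, and any polynomial $b$,
$$
P\, b(s-j) = b(s)\,P.
$$
So the $V$-grading by $j$ is exactly the eigenvalue shift.

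Next I handle the general case. Write an element of $V^k\rs_X$ locally as a finite sum $\sum_j P_j$ of homogeneous pieces of degree $j\geq k$. The degree-$j$ piece gives
$$
v P_j\, b_v(s-k) = v P_j\, b_v\bigl(s-j+(j-k)\bigr) = v\, b_v(s+j-k)\, P_j.
$$
The case $j=k$ is clean: $v\,b_v(s)\in v\cdot V^1\rs_X$ by definition, so $v\,b_v(s)P_k\in v\cdot V^1\rs_X\cdot V^k\rs_X\subset v\cdot V^{k+1}\rs_X$, using that the $V$-filtration is multiplicative. For $j>k$, $vP_j$ already lies in $v\cdot V^{k+1}\rs_X$. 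Multiplying on the right by $b_v(s-k)\in\bbc[s]\subset V^0\rs_X$ keeps it in $v\cdot V^{k+1}\rs_X$, because $V^{k+1}\rs_X\cdot V^0\rs_X\subset V^{k+1}\rs_X$.

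Since the statement is about sheaves, all of this should be done on local sections, or equivalently on sections over opens where the decomposition is available. The claim is an inclusion of subsheaves, so it can be checked locally.

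The main obstacle is only bookkeeping: fixing the correct sign convention in the commutation relation, and checking that $V^1\rs_X\cdot V^k\rs_X\subset V^{k+1}\rs_X$. The latter follows from Lemma \ref{explicit V filtration}, or directly from the definition via ideal powers. Negative $k$ needs no separate treatment, because the homogeneous decomposition argument is uniform in $k$.
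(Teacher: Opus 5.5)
Your proof is correct and follows essentially the same route as the paper. The paper also uses the relation $t^{\alpha}\partial_t^{\beta}\,b(s-|\alpha|+|\beta|)=b(s)\,t^{\alpha}\partial_t^{\beta}$, which it cites from the literature and you verify directly, to handle the degree-$k$ summand $\sum_{|\alpha|-|\beta|=k}\ds_X t^{\alpha}\partial_t^{\beta}$, and it disposes of the rest of $V^k\rs_X$ by noting that $V^{k+1}\rs_X$ is stable under right multiplication by $b_v(s-k)\in V^0\rs_X$.
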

\proof For $\alpha,\beta\in \bbz_{\geq 0}^r$, we have $t^{\alpha}\partial_t^{\beta} b_v(s-|\alpha|+|\beta|)=b_v(s)t^{\alpha}\partial_t^{\beta}$ by \cite[Lemmas 7.3 and 7.4]{CDMO24}. Since $V^k\rs_X = V^{k+1}\rs_X + \sum_{|\alpha|-|\beta|=k} \ds_X t^{\alpha}\partial_t^{\beta}$ by Lemma \ref{explicit V filtration}, we have
$$
V^k\rs_X b_v(s-k) \subset b_v(s) V^k\rs_X + V^{k+1}\rs_X.
$$
Thus
$$
(v\cdot V^k\rs_X)b_v(s-k) \subset v \cdot b_v(s) V^k\rs_X + v \cdot V^{k+1}\rs_X = v \cdot V^{k+1}\rs_X. \qed
$$

\begin{remark}\label{b function annihilator}
Given a right $\rs_X$-module $\ns$ and a section $v \in \Gamma(X,\ns)$, by Lemma \ref{right moving}, we see that $b_v(s)$ is the monic generator of the ideal $\Ann_{\bbc[s]}(v\cdot V^0\rs_X/v\cdot V^1\rs_X)$. Similarly, given a left $\rs_X$-module $\ms$ and a section $u \in \Gamma(X,\ms)$, $b_u(s)$ is the monic generator of the ideal $\Ann_{\bbc[s]}(V^0\rs_X\cdot u/V^1\rs_X \cdot u)$.
\end{remark}

\begin{remark}
Given a quasicoherent left $\ds_{X\times \bba^r}$-module $\ms$ and a section $u\in \Gamma(X\times \bba^r,\ms)$, if we denote the projection $X \times \bba^r \to X$ by $p_X$ and view $u$ as a global section of $p_{X,*}\ms$, then we have
$$
\Ann_{\bbc[s]}(V^0\ds_{X\times \bba^r}\cdot u/V^1\ds_{X\times \bba^r}\cdot u) = \Ann_{\bbc[s]}(V^0\rs_X \cdot u / V^1\rs_X \cdot u).
$$
Indeed, if we put $\qs=V^0\ds_{X\times \bba^r}\cdot u/V^1\ds_{X\times \bba^r}\cdot u$, then $\qs$ is scheme-theoretically supported on $X \times 0 = V(t_1,\dots,t_r) \subset X \times \bba^r$ and $p_{X,*}\qs
= V^0\rs_X \cdot u / V^1\rs_X \cdot u$.
Thus the $b$-function introduced in \cite{BMS06} agrees with ours.
\end{remark}

\begin{remark}\label{rational roots}
Let $f_1,\dots,f_r\in \os_X(X)$ be regular functions on a smooth variety $X$ and let $\bff$ denote the $r$-tuple $(f_1,\dots,f_r)$. It follows from the results in \cite{Kas83} (see also \cite{BMS06}) that for every $u\in \Gamma(X,B_{\bff})$, the $b$-function $b_u(s)$ is a nonzero polynomial such that all its roots are rational.
\end{remark}


\subsection{Differential direct summands}

In this section, we recall the theory of differential direct summands developed in \cite{AHN17}, adapted to the setting of right $D$-modules.

We say that $R \subset S$ is a direct summand of $\bbc$-algebras with splitting $\rho$ if $R\subset S$ is an inclusion of commutative $\bbc$-algebras that is split as $R$-modules by $\rho: S \to R$. Given a direct summand $R \subset S$ of $\bbc$-algebras with splitting $\rho$ and an element $Q\in D_S$, note that
$$
\rho \circ Q|_R: R \xrightarrow{Q|_R} S \xrightarrow{\rho} R
$$
is an element of $D_R$ (see \cite[Lemma 3.1]{AHN17}). 

\begin{definition}[Differential direct summand {\cite[Definition 3.2]{AHN17}}]
Let $R \subset S$ be a direct summand of $\bbc$-algebras with splitting $\rho$. A differential direct summand compatible with $\rho$ is a pair $(M\subset N, \tau)$, where $M$ is a right $D_R$-module, $N$ is a right $D_S$-module, $M$ is an $R$-submodule of $N$, and $\tau: N \to M$ is an $R$-module map, called a differential splitting, such that
$$
\tau(v \cdot Q) = v\cdot (\rho \circ Q|_R)
$$
for all $Q \in D_S$ and $v \in M$.
\end{definition}

For a differential direct summand $(M\subset N, \tau)$ compatible with $\rho$, we sometimes suppress the differential splitting $\tau$ and simply write $M\subset N$.

\begin{definition}[Morphism of differential direct summands {\cite[Definition 3.5]{AHN17}}]
Let $R \subset S$ be a direct summand of $\bbc$-algebras with splitting $\rho$. A morphism 
$$
(M\subset N,\tau) \to (M'\subset N', \tau')
$$
of differential direct summands compatible with $\rho$ consists of a $D_S$-module map $\phi: N \to N'$ such that $\phi(M) \subset M'$, $\phi|_M: M \to M'$ is $D_R$-linear, and the following diagram commutes:
\[
\begin{tikzcd}
M \arrow[r,hook] \arrow[d,"\phi|_M"] & N \arrow[r,"\tau"] \arrow[d,"\phi"] & M \arrow[d,"\phi|_M"] \\
M' \arrow[r,hook]& N'  \arrow[r,"\tau'"] & M'.
\end{tikzcd}
\]
\end{definition}

\begin{lemma}\label{Proposition 3.6}
Let $R\subset S$ be a direct summand of $\bbc$-algebras with splitting $\rho$. Given a differential direct summand $(M\subset N,\tau)$ compatible with $\rho$ and an element $f\in R$, the localization map $N\to N_{f}$ induces a morphism $(M\subset N,\tau) \to(M_{f}\subset N_{f},\tau_{f})$ of differential direct summands compatible with $\rho$.
\end{lemma}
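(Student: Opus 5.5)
The plan is to verify directly that the localized data again satisfy the definition of a differential direct summand, and that the localization maps satisfy the definition of a morphism. Throughout I use the standard fact that for a finitely generated $\bbc$-algebra $S$ (and similarly $R$, or more generally any commutative $\bbc$-algebra for which Grothendieck's localization statement holds) one has $D_{S_f} \simeq S_f\otimes_S D_S \simeq D_S\otimes_S S_f$. Two consequences follow. First, every $Q\in D_{S_f}$ extends uniquely to $S_f$ from its restriction to $S$. Second, every element of $D_{S_f}$ can be written as $P f^{-j}$ with $P\in D_S$, $j\ge 0$.

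\textbf{Step 1: the localized objects.} Write $\rho_f\colon S_f\to R_f$ for the localization of $\rho$. It is $R_f$-linear and splits $R_f\subset S_f$, so $R_f\subset S_f$ is a direct summand with splitting $\rho_f$. The right $D_S$-module $N$ localizes to a right $D_{S_f}$-module $N_f=N\otimes_S S_f$, and likewise $M_f$ is a right $D_{R_f}$-module. The right action is determined by requiring $(w f^{-m})\cdot P = w\cdot(f^{-m}P)$ for $w\in N$, and rewriting $f^{-m}P$ in the form $P'f^{-j}$. Since $M\subset N$ is an $R$-submodule and localization is exact, $M_f\subset N_f$ is an $R_f$-submodule. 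We put $\tau_f=\tau\otimes_R R_f$.

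\textbf{Step 2: the compatibility $\tau_f(v\cdot Q)=v\cdot(\rho_f\circ Q|_{R_f})$.} Let $v=wf^{-m}$ with $w\in M$, and write $f^{-m}Q=Pf^{-j}$ with $P\in D_S$. On the left side,
$$
\tau_f(v\cdot Q)=\tau_f\bigl((w\cdot P)f^{-j}\bigr)=\tau(w\cdot P)f^{-j}=\bigl(w\cdot(\rho\circ P|_R)\bigr)f^{-j}.
$$
On the right side, $\rho_f$ is $R_f$-linear, so
$$
f^{-m}\bigl(\rho_f\circ Q|_{R_f}\bigr)=\rho_f\circ(f^{-m}Q)|_{R_f}=\rho_f\circ(Pf^{-j})|_{R_f}=\bigl(\rho_f\circ P|_{R_f}\bigr)f^{-j}.
$$
It therefore remains to show that $\rho_f\circ P|_{R_f}\in D_{R_f}$ is the unique extension of $\rho\circ P|_R\in D_R$ to $R_f$, i.e. that it coincides with the image of $\rho\circ P|_R$ under $D_R\to D_{R_f}$. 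Both operators agree on $R$, since $\rho_f$ restricts to $\rho$ there.

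\textbf{Step 3: uniqueness of extension.} This step, together with the identification $D_{S_f}=D_S[f^{-1}]$ that makes the computation in Step 2 legitimate, is the main obstacle. I plan to prove that a differential operator $E$ on $R_f$ vanishing on $R$ is zero, by induction on its order. Order $0$ is multiplication by $E(1)=0$. For higher order, $[E,f]$ has lower order and vanishes on $R$, so $[E,f]=0$ by induction. Then
$$
E(r f^{-k})=f^{-k}E(r)=0,
$$
which is obtained by commuting $f^{k}$ past $E$. Hence $E=0$.

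\textbf{Step 4: the morphism conditions.} The localization map $\phi\colon N\to N_f$ is $D_S$-linear, where $N_f$ is viewed as a $D_S$-module by restriction along $D_S\to D_{S_f}$. Its restriction $M\to M_f$ is $D_R$-linear. Finally, $\tau_f(v/1)=\tau(v)/1$ holds by construction, which gives commutativity of the diagram.
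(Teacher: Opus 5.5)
Your argument is correct when $D_{S_f}\simeq D_S\otimes_S S_f$ and $D_{R_f}\simeq D_R\otimes_R R_f$ hold (e.g.\ when $R$ and $S$ are both of finite type), but it takes a different route from the paper and does not reach the generality in which the lemma is stated.

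The paper, following \cite[Proposition 3.6]{AHN17}, never leaves $D_R$ and $D_S$. It defines the right $D_R$-action on $M_f$ (and the $D_S$-action on $N_f$) recursively on the order by $\frac{v}{f^t}\cdot\delta=\frac{v\cdot\delta+\frac{v}{f^t}\cdot[\delta,f^t]}{f^t}$. It then proves $\tau_f(\frac{v}{f^t}\cdot Q)=\frac{v}{f^t}\cdot(\rho\circ Q|_R)$ for $Q\in D_S$ by induction on the order of $Q$. The induction uses only three facts: the $R_f$-linearity of $\tau_f$, the compatibility of $\tau$, and the identity $\rho\circ[Q,f^t]|_R=[\rho\circ Q|_R,f^t]$, which holds because $\rho$ is $R$-linear and $f\in R$. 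That argument works for arbitrary $\bbc$-algebras. It needs neither the localization theorem for differential operators nor a uniqueness-of-extension step.

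Your route buys a stronger conclusion: $(M_f\subset N_f,\tau_f)$ is a differential direct summand for $R_f\subset S_f$, compatible with $\rho_f$ with respect to all of $D_{S_f}$. The cost is a finiteness hypothesis on both $R$ and $S$ that the statement does not include. This is harmless for the paper's application in Corollary \ref{finite group b function}, where everything is of finite type. However, it means your proof does not support Lemma \ref{local coh direct summand}, which is stated for arbitrary noetherian algebras and uses this lemma at that generality.

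Also, in Step 4 you should say explicitly that the target pair is compatible with $\rho$ itself, not only with $\rho_f$, since the definition of a morphism requires this. It follows by applying Step 2 to the image of $Q\in D_S$ in $D_{S_f}$, and then Step 3 to identify $\rho_f\circ Q|_{R_f}$ with the image of $\rho\circ Q|_R$.
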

\proof After noting that the $D_{R}$-action on $M_f$ is defined inductively on the order by the rule
$$
\frac{v}{f^t} \cdot \delta = \frac{v \cdot \delta + \tfrac{v}{f^t}\cdot [\delta,f^t]}{f^t}
$$
for $\delta \in D_{R}$, $v\in M$, and $t\geq 0$, the proof is identical to that of \cite[Proposition 3.6]{AHN17}, so we omit it. \qed

\begin{lemma}\label{Lemma 3.7}
Let $R\subset S$ be a direct summand of $\bbc$-algebras with splitting $\rho$. Given a morphism of differential direct summands $(M\subset N,\tau) \to (M'\subset N', \tau')$ given by $\phi: N \to N'$, we have that $\ker(\phi|_{M})\subset\ker(\phi)$ and $\coker(\phi|_{M})\subset\coker(\phi)$ are differential direct summands compatible with $\rho$. Furthermore, the inclusion and projection maps are morphisms of differential direct summands.
\end{lemma}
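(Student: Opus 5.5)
We verify directly that kernels and cokernels inherit the differential direct summand structure, with splittings obtained by restricting, respectively descending, the splittings $\tau$ and $\tau'$. This is the right-module analogue of \cite[Lemma 3.7]{AHN17}. Throughout, write $\phi_M := \phi|_M$. Since $\phi$ is $D_S$-linear and $\phi_M$ is $D_R$-linear, $\ker\phi$ is a right $D_S$-submodule of $N$ and $\ker\phi_M$ is a right $D_R$-submodule of $M$. Likewise $\coker\phi = N'/\phi(N)$ is a right $D_S$-module and $\coker\phi_M = M'/\phi(M)$ is a right $D_R$-module.

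\emph{Kernels.} Clearly $\ker\phi_M = M\cap\ker\phi$ is an $R$-submodule of $\ker\phi$. As the splitting we take $\tau|_{\ker\phi}$. It lands in $\ker\phi_M$: for $v\in\ker\phi$, commutativity of the right square gives $\phi_M(\tau(v)) = \tau'(\phi(v)) = 0$. The compatibility $\tau(v\cdot Q) = v\cdot(\rho\circ Q|_R)$ for $v\in\ker\phi_M$ and $Q\in D_S$ holds because it already holds in $N$ and all operations are restrictions. The inclusion $\ker\phi\into N$ is then a morphism of differential direct summands. Indeed, it is $D_S$-linear, its restriction $\ker\phi_M\into M$ is $D_R$-linear, and the two squares commute by construction.

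\emph{Cokernels.} The map $M'/\phi(M)\to N'/\phi(N)$ is injective. To see this, suppose $m'\in M'$ satisfies $m' = \phi(n)$ for some $n\in N$. Then, using that $\tau'$ restricts to the identity on $M'$ (as a splitting of $M'\subset N'$) together with the right square, we get
\[
m' = \tau'(m') = \tau'(\phi(n)) = \phi_M(\tau(n)) \in \phi(M).
\]
So $\coker\phi_M$ is an $R$-submodule of $\coker\phi$, which justifies the notation $\coker(\phi|_M)\subset\coker(\phi)$.

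For the splitting on cokernels, define $\bar\tau'\colon N'/\phi(N)\to M'/\phi(M)$ by $\bar\tau'(\bar n') = \overline{\tau'(n')}$. This is well defined because $\tau'(\phi(N)) = \phi_M(\tau(N))\subset\phi(M)$, and it is $R$-linear. Compatibility follows by choosing representatives. For $m'\in M'$ and $Q\in D_S$,
\[
\bar\tau'(\bar m'\cdot Q) = \overline{\tau'(m'\cdot Q)} = \overline{m'\cdot(\rho\circ Q|_R)} = \bar m'\cdot(\rho\circ Q|_R).
\]
The projection $N'\onto\coker\phi$ is a morphism of differential direct summands. It is $D_S$-linear, it maps $M'$ onto $\coker\phi_M$ $D_R$-linearly, and its squares commute by the definition of $\bar\tau'$.

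The only nontrivial point is the injectivity of $M'/\phi(M)\to N'/\phi(N)$. It rests on $\tau'|_{M'} = \mathrm{id}$, which is implicit in calling $\tau'$ a splitting of $M'\subset N'$; the same property for $\tau$ is what makes $\tau|_{\ker\phi}$ a splitting onto $\ker\phi_M$. If the definition in use does not explicitly require $\tau|_M = \mathrm{id}$, one recovers it from the compatibility condition applied to $Q = 1$:
\[
\tau(v) = \tau(v\cdot 1) = v\cdot(\rho\circ 1|_R) = v\cdot\mathrm{id}_R = v \quad\text{for all } v\in M.
\]
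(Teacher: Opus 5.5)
Your proposal is correct and is the same direct diagram chase as \cite[Lemma 3.7]{AHN17}, to which the paper defers. You restrict $\tau$ to $\ker\phi$, use $\tau'|_{M'}=\mathrm{id}$ together with $\tau'\circ\phi=\phi|_M\circ\tau$ to get $M'\cap\phi(N)=\phi(M)$, and descend $\tau'$ to the cokernels. Your remark that $\tau|_M=\mathrm{id}$ follows from the compatibility condition with $Q=1$ (because $\rho|_R=\mathrm{id}_R$) is a useful detail, since the paper's definition does not state it explicitly.
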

\proof The proof is identical to that of \cite[Lemma 3.7]{AHN17}, so we omit it. \qed

By Lemma \ref{Lemma 3.7}, the category of differential direct summands compatible with $\rho$ is abelian. In the proof of the following lemma, given a chain complex 
$$
\cdots  \to (M^{i-1}\subset N^{i-1},\tau^{i-1}) \to (M^i\subset N^i,\tau^i) \to (M^{i+1}\subset N^{i+1},\tau^{i+1}) \to \cdots  
$$
of differential direct summands compatible with $\rho$, we suppress the differential splittings $\tau^i$ and simply write $M^{\bullet} \subset N^{\bullet}$.

\begin{lemma}\label{local coh direct summand}
Given a direct summand $R \subset S$ of noetherian $\bbc$-algebras with splitting $\rho$ and an ideal $I\subset R$, if $(M \subset N,\tau)$ is a differential direct summand compatible with $\rho$, then $(H^i_I(M) \subset H^i_{IS}(N), \tau^i)$ is a differential direct summand compatible with $\rho$ for all $i$, where $\tau^i:=H^i_I(\tau):H^i_{IS}(N)=H^i_{I}(N) \to H^i_I(M)$.
\end{lemma}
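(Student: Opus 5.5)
Following the proof of \cite[Proposition 3.8]{AHN17}, the plan is to compute local cohomology with a Čech complex and then use that the category of differential direct summands compatible with $\rho$ is abelian (Lemma \ref{Lemma 3.7}).

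Since $R$ is noetherian, choose generators $f_1,\dots,f_m$ of $I$. Then $H^i_I(M)$ is the $i$-th cohomology of the Čech complex
$$
C^\bullet(M): 0\to M \to \bigoplus_{j} M_{f_j} \to \bigoplus_{j<k} M_{f_jf_k}\to \cdots \to M_{f_1\cdots f_m}\to 0.
$$
The elements $f_1,\dots,f_m$ also generate $IS$, so $H^i_{IS}(N)=H^i_I(N)$ is the cohomology of the analogous complex $C^\bullet(N)$ built from the localizations $N_{f_J}$. Here $f_J=\prod_{j\in J}f_j$, and $f_J$ lies in $R$.

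First, Lemma \ref{Proposition 3.6} applies to each $f_J\in R$ and shows that $(M_{f_J}\subset N_{f_J},\tau_{f_J})$ is a differential direct summand compatible with $\rho$. A finite direct sum of differential direct summands compatible with $\rho$ is again one, with the direct-sum splitting. So each term $C^p(M)\subset C^p(N)$ is a differential direct summand.

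Next, the Čech differentials must be morphisms of differential direct summands. Each component of a differential is, up to sign, a localization map $N_{f_J}\to N_{f_Jf_k}$, and $N_{f_J f_k}=(N_{f_J})_{f_k}$. Lemma \ref{Proposition 3.6}, applied to the summand $M_{f_J}\subset N_{f_J}$ and the element $f_k$, shows that this map is a morphism. Concretely, it is $D_S$-linear, it restricts to the $D_R$-linear map $M_{f_J}\to M_{f_Jf_k}$, and it commutes with $\tau_{f_J}$ and $\tau_{f_Jf_k}$; the last point holds because these splittings are the localizations of $\tau$. Signed sums of morphisms are again morphisms. Hence $C^\bullet(M)\subset C^\bullet(N)$ is a complex in the category of differential direct summands compatible with $\rho$.

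Finally, cohomology is a kernel followed by a cokernel, so two applications of Lemma \ref{Lemma 3.7} make $H^i(C^\bullet(M))\subset H^i(C^\bullet(N))$ a differential direct summand. Its splitting is the map induced by the termwise splittings, which is exactly $H^i_I(\tau)$. It remains to see that this is an inclusion, that is, that $H^i_I(M)\to H^i_I(N)$ is injective. This holds because $\tau$ is an $R$-linear retraction of $M\subset N$: applying the functor $H^i_I$ gives $H^i_I(\tau)\circ H^i_I(\mathrm{incl})=\mathrm{id}$. The main point to check carefully is that the $D_R$- and $D_S$-module structures on the Čech cohomology coincide with the intrinsic ones on local cohomology. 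They do, since both come from the same localized modules and are independent of the chosen generators.
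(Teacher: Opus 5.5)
Your proposal is correct and follows the paper's proof. Both compute $H^i_I(M)$ and $H^i_{IS}(N)$ via \v{C}ech complexes on generators of $I$, use Lemma \ref{Proposition 3.6} to see that $\check{\mathrm{C}}^{\bullet}(M)\subset\check{\mathrm{C}}^{\bullet}(N)$ is a complex of differential direct summands compatible with $\rho$, and conclude with Lemma \ref{Lemma 3.7}. Your extra checks (finite direct sums, the differentials as iterated localizations, injectivity via the retraction $\tau$) just make explicit what the paper leaves implicit, and the injectivity already follows from the cokernel part of Lemma \ref{Lemma 3.7}.
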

\proof We adapt the relevant part of the proof of \cite[Theorem 3.8]{AHN17}, namely the argument for showing that $H^i_I(R)$ is a differential direct summand of $H^i_{IS}(S)$, to the setting of right $D$-modules. Choose generators $I=(f_1,\dots,f_r)$, and consider the \v{C}ech complexes $\check{\mathrm{C}}^{\bullet}(f_1,\dots,f_r;M)$ and $\check{\mathrm{C}}^{\bullet}(f_1,\dots,f_r;N)$ whose $i$th cohomology modules are given by $H^i_I(M)$ and $H^i_{IS}(N)$, respectively. By Lemma \ref{Proposition 3.6}, $\check{\mathrm{C}}^{\bullet}(f_1,\dots,f_r;M) \subset \check{\mathrm{C}}^{\bullet}(f_1,\dots,f_r;N)$ is a chain complex of differential direct summands compatible with $\rho$. The result then follows from Lemma \ref{Lemma 3.7}. \qed

\begin{remark}\label{polynomial extension}
Fix an integer $r\geq 1$. Given a direct summand $R \subset S$ of $\bbc$-algebras with splitting $\rho$, let $\rho'$ denote the map $\rho \otimes 1 : S \otimes_{\bbc} \bbc[t_1,\dots,t_r] \to R \otimes_{\bbc} \bbc[t_1,\dots,t_r]$. Then $R[t_1,\dots,t_r] \subset S[t_1,\dots,t_r]$ is a direct summand of $\bbc$-algebras with splitting $\rho'$. It is easy to see that for $Q = \sum_{\alpha,\beta\in \bbz_{\geq 0}^r} Q_{\alpha,\beta} t^{\alpha}\partial_t^{\beta} \in D_{S[t_1,\dots,t_r]}$ with $Q_{\alpha,\beta}\in D_S$ (recall \eqref{local product}), we have
\begin{equation}\label{restriction}
\rho' \circ Q|_{R[t_1,\dots,t_r]} =  \sum_{\alpha,\beta\in \bbz_{\geq 0}^r} (\rho \circ Q_{\alpha,\beta}|_R) t^{\alpha}\partial_t^{\beta} \in D_{R[t_1,\dots,t_r]}.
\end{equation}
Thus, given a differential direct summand $(M\subset N,\tau)$ compatible with $\rho$, if we denote by $\tau'$ the map $\tau \otimes 1 : N \otimes_{\bbc} \bbc[t_1,\dots,t_r] \to M \otimes_{\bbc} \bbc[t_1,\dots,t_r]$, then $(M[t_1,\dots,t_r] \subset N[t_1,\dots,t_r],\tau')$ is a differential direct summand compatible with $\rho'$.
\end{remark}

\begin{lemma}\label{direct summand b function}
Fix an integer $r\geq 1$ and denote the standard coordinates on $\bba^r$ by $t_1,\dots,t_r$. Given a direct summand $R \subset S$ of finite type $\bbc$-algebras with splitting $\rho$, let $\rho'$ denote the map $\rho \otimes 1 : S \otimes_{\bbc}\bbc[t_1,\dots,t_r] \to R \otimes_{\bbc}\bbc[t_1,\dots,t_r]$ and let $\ms$ and $\ns$ be quasicoherent right $\ds_{\Spec R} \otimes_{\bbc} D_{\bba^r}$- and right $\ds_{\Spec S} \otimes_{\bbc} D_{\bba^r}$-modules, respectively, such that $\Gamma(\Spec R,\ms)$ is an $R[t_1,\dots,t_r]$-submodule of $\Gamma(\Spec S,\ns)$. Suppose there exists an $R[t_1,\dots,t_r]$-module map $\tau:\Gamma(\Spec S,\ns)\to \Gamma(\Spec R,\ms)$ such that $(\Gamma(\Spec R,\ms) \subset \Gamma(\Spec S,\ns),\tau)$ is a differential direct summand compatible with $\rho'$. Then for every $v\in \Gamma(\Spec R,\ms)$, if we let $v^*$ denote the image of $v$ in $\Gamma(\Spec S,\ns)$, we have $b_v(s) \ | \ b_{v^*}(s)$.
\end{lemma}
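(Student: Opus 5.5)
Write $M:=\Gamma(\Spec R,\ms)$, $N:=\Gamma(\Spec S,\ns)$, and set $R':=R[t_1,\dots,t_r]$, $S':=S[t_1,\dots,t_r]$. The plan is to push a $b$-function relation for $v^*$ in $N$ down to $M$ using the differential splitting $\tau$.

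\textbf{Step 1: global reformulation.} Both $\Spec R$ and $\Spec S$ are affine and the modules are quasicoherent, so Remark \ref{b function affine local} applies. Hence $b_{v^*}(s)$ generates the ideal of $b(s)$ with
$$
v^*\cdot b(s)\in v^*\cdot (D_S\otimes_{\bbc}V^1D_{\bba^r}),
$$
and $b_v(s)$ is described in the same way using $D_R$.

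\textbf{Step 2: apply $\tau$.} Write $b=b_{v^*}$, so that
$$
v^*\cdot b(s)=\sum_k v^*\cdot Q_k,\qquad Q_k\in D_S\otimes V^1D_{\bba^r}.
$$
By \eqref{local product} each $Q_k$ is an element of $D_{S'}$, and so is $b(s)$, since it is a polynomial in $s=-\sum\partial_{t_i}t_i$. Apply $\tau$ to both sides. The differential direct summand property gives
$$
\tau(v^*\cdot Q)=v\cdot(\rho'\circ Q|_{R'}).
$$

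\textbf{Step 3: compute the restrictions with \eqref{restriction}.} An element $Q=\sum Q_{\alpha,\beta}t^\alpha\partial_t^\beta$ with $Q_{\alpha,\beta}\in D_S$ and $|\alpha|-|\beta|\ge 1$ maps to
$$
\sum(\rho\circ Q_{\alpha,\beta}|_R)\,t^\alpha\partial_t^\beta\in D_R\otimes V^1D_{\bba^r}.
$$
For $b(s)$, all coefficients $Q_{\alpha,\beta}$ are scalars $c\cdot 1\in D_S$. Since $\rho$ is $R$-linear and splits the inclusion, $\rho\circ c|_R=c$, so $\rho'\circ b(s)|_{R'}=b(s)$. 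This yields
$$
v\cdot b(s)\in v\cdot(D_R\otimes V^1D_{\bba^r}),
$$
and Remark \ref{b function affine local} then gives $b_v\mid b_{v^*}$. If $b_{v^*}=0$, the divisibility is trivial.

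\textbf{Main point to check.} The substantive step is the identity $\rho'\circ(PQ)|_{R'}$ for products: we need it only for the single operator $Q_k$ and for $b(s)$ separately, because $\tau$ is applied to each term $v^*\cdot Q_k$ individually. Linearity of the map $Q\mapsto\rho'\circ Q|_{R'}$ is clear. I also need that $v^*\cdot b(s)$ as computed in $N$ corresponds to $v\cdot b(s)$ in $M$, which is exactly the differential splitting applied to $b(s)$. So the only care needed is writing $b(s)$ in the normal form $\sum c\,t^\alpha\partial_t^\beta$ before applying \eqref{restriction}.
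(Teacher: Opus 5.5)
Your proposal is correct and is essentially the paper's own proof. The paper chooses $Q\in D_S\otimes_{\bbc}V^1D_{\bba^r}$ with $v^*\cdot b_{v^*}(s)=v^*\cdot Q$ and applies $\tau$, using \eqref{restriction} to get $\rho'\circ b_{v^*}(s)|_{R[t_1,\dots,t_r]}=b_{v^*}(s)$ and $\rho'\circ Q|_{R[t_1,\dots,t_r]}\in D_R\otimes_{\bbc}V^1D_{\bba^r}$. The only cosmetic difference is your finite sum $\sum_k Q_k$: it is unnecessary, since $D_S\otimes_{\bbc}V^1D_{\bba^r}$ is closed under addition, so a single $Q$ suffices, and as you note, no multiplicativity of $Q\mapsto\rho'\circ Q|_{R'}$ is needed.
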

\proof The proof follows the same idea as in \cite[Theorem 3.14]{AHN17} (see also \cite[Theorems 3.24 and 3.27]{AHJNTW22}); we include the details for completeness. By \eqref{b function affine local}, there exists $Q \in D_S \otimes_{\bbc} V^1D_{\bba^r}$ such that $v^*\cdot b_{v^*}(s) = v^*\cdot Q$. By \eqref{restriction}, we have $\rho' \circ b_{v^*}(s)|_{R[t_1,\dots,t_r]} = b_{v^*}(s)$ and $\rho \circ Q|_{R[t_1,\dots,t_r]} \in D_R \otimes_{\bbc} V^1D_{\bba^r}$, hence
$$
v \cdot b_{v^*}(s) = \tau(v^*\cdot b_{v^*}(s)) = \tau(v^*\cdot Q) = v \cdot (\rho \circ Q|_{R[t_1,\dots,t_r]}) \in v\cdot (D_R \otimes_{\bbc} V^1D_{\bba^r}).
$$
Thus $b_v(s) \ | \ b_{v^*}(s)$ by \eqref{b function affine local}. \qed

\section{The right $\ds$-module $B_{\bff}^{\omega}$}\label{bfomega}

In this section we introduce the right $\ds$-module analogue $B_{\bff}^{\omega}$ of $B_{\bff}$ on normal varieties.

Let $X$ be a normal variety. Given regular functions $f_1,\dots,f_r\in \os_X(X)$, we denote by $\bff$ the $r$-tuple $(f_1,\dots,f_r)$ and put $\Gamma_{\bff} = i_{\bff}(X) \subset X \times \bba^r$, where $i_{\bff}\colon  X \to X \times \bba^r, x \mapsto (x,\bff(x))$ is the graph embedding. We consider the local cohomology sheaf $\hs^r_{\Gamma_{\bff}}( \omega_{X\times \bba^r})$, which has a natural right $\ds_{X \times \bba^r}$-module structure (see, for example, \cite[Examples 2.1(iv)]{Lyu93} or \cite[page 109]{KS94}). We denote the standard coordinates on $\bba^r$ by $t_1,\dots,t_r$ and the projection $X\times \bba^r \to X$ by $p_X$, and put $\rs_X = p_{X,*}\ds_{X\times \bba^r}$. We define the right $\rs_X$-module $B^{\omega}_{\bff}$ as
$$
B^{\omega}_{\bff} :=
p_{X,*}(\hs^r_{\Gamma_{\bff}}( \omega_{X\times \bba^r})).
$$

\begin{lemma}\label{bfomega action}
Given a normal variety $X$ and regular functions $f_1,\dots,f_r\in \os_X(X)$, if we denote the $r$-tuple $(f_1,\dots,f_r)$ by $\bff$, then we have a natural isomorphism of $\os_X$-modules
$$
B_{\bff}^{\omega} \simeq \bigoplus_{\beta\in \bbz_{\geq 0}^r} \omega_X\partial_t^{\beta} \delta_{\bff}^{\omega}= \omega_X\otimes_{\bbc} \bbc[\partial_{t_1},\dots,\partial_{t_r}] \delta_{\bff}^{\omega},
$$
where the actions of $\os_X$ and $\partial_{t_i}$ are the obvious ones, while the actions of $\theta \in \sDer_{\bbc}(\os_X)$ and $t_i$ are given by 
$$
v\partial_t^{\beta}\delta^{\omega}_{\bff} \cdot \theta =  \theta(v)\partial_t^{\beta}\delta^{\omega}_{\bff} + \sum_{i=1}^r \theta(f_i) v \partial_t^{\beta+e_i}\delta^{\omega}_{\bff}
\ \ \
\text{and}
\ \ \
v\partial_t^{\beta}\delta^{\omega}_{\bff}  \cdot t_i = f_i v \partial_t^{\beta}\delta^{\omega}_{\bff} + \beta_i v \partial_t^{\beta-e_i}\delta^{\omega}_{\bff},
$$
where $e_1,\dots,e_r$ denotes the standard basis of $\bbz^r$. In particular, $B_{\bff}^{\omega}$ is $\os_X$-torsion-free.
\end{lemma}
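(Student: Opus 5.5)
The plan is to reduce to the case of a product with a smooth base and then extend over the singular locus, which has codimension $\geq 2$. First, change coordinates on $X\times\bba^r$ by the automorphism $\phi\colon (x,t)\mapsto (x,t-\bff(x))$. It carries $\Gamma_{\bff}$ onto $X\times 0$ and is compatible with $p_X$. Since $\phi$ is an isomorphism, it identifies $\hs^r_{\Gamma_{\bff}}(\omega_{X\times\bba^r})$ with $\phi^{-1}$ of $\hs^r_{X\times 0}(\omega_{X\times \bba^r})$. Here the right $\ds$-module structure is the transported one, and $\omega$ is transported by pulling back top forms, which is possible because $\phi$ has unit Jacobian.

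Next I compute $\hs^r_{X\times 0}(\omega_{X\times\bba^r})$ with the \v{C}ech complex in $t_1,\dots,t_r$. On an affine open $W\subset X$ we have $\Gamma(W\times\bba^r,\omega_{X\times\bba^r})=\Gamma(W,\omega_X)\otimes_\bbc \bbc[t]\,dt$. For $X$ normal this holds because $\omega_{X\times\bba^r}=j_*\omega$ on the smooth locus $X_{sm}\times\bba^r$, whose complement has codimension $\geq 2$. Local cohomology commutes with the flat base $\bbc[t]$-structure, so $H^r_{(t)}$ equals $\Gamma(W,\omega_X)\otimes_\bbc \bbc[t,t^{-1}]/\sum_i \bbc[t,t_i^{-1}]$. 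This has $\bbc$-basis $t^{-\beta-\mathbf 1}$, and we set $v\,\partial_t^\beta\delta^\omega := \beta!\,(-1)^{|\beta|}\, v\,t^{-\beta-\mathbf1}dt$ up to a harmless sign convention. Pulling back along $\phi$ replaces $t$ by $t-\bff$, and we define $\delta^\omega_{\bff}$ as the class of $dx\wedge dt/\prod(t_i-f_i)$, more intrinsically $v\otimes d(t-f)/\prod_i(t_i-f_i)$. This gives the claimed decomposition $\bigoplus_\beta \omega_X\partial_t^\beta\delta^\omega_{\bff}$ as $\os_X$-modules. It is natural and glues because no coordinates on $X$ were used. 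Torsion-freeness follows because $\omega_X=j_*\omega_{X_{sm}}$ is torsion-free on an integral (normal) variety, so a direct sum of copies is too.

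For the action formulas, the $\os_X$ and $\partial_{t_i}$ actions are immediate. For $t_i$, compute $(t_i-f_i)\cdot (t_i-f_i)^{-\beta_i-1}=(t_i-f_i)^{-\beta_i}$ and translate through the normalization $\beta!(-1)^{|\beta|}$. This gives $v\partial_t^\beta\delta\cdot t_i = f_i v\partial^\beta_t\delta+\beta_i v\partial_t^{\beta-e_i}\delta$; the sign is positive for right modules, since $[\partial_{t_i},t_i]=1$ enters with the opposite sign from the left case \eqref{bf action}. For $\theta$, it suffices to check on $X_{sm}$, because both sides are sections of $j_*$ of sheaves on $X_{sm}$ and the action is defined via \eqref{pull push normal isom} and restriction. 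On $X_{sm}$, work locally with coordinates and $\theta=\partial_{x_j}$. Use the right action on top forms, namely $g\,dx\wedge dt\cdot\partial=-\partial(g)\,dx\wedge dt$ as in \eqref{canonical dmod}. Then differentiate $g/\prod(t_i-f_i)^{\beta_i+1}$ in $x_j$, producing the term $\partial_{x_j}(f_i)$ times a raised exponent, which is $\partial_t^{\beta+e_i}$ after normalization. Combining this with the right action of $\partial_{x_j}$ on $\omega_X$, written $\theta(v)$, yields the stated formula.

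The main obstacle is the bookkeeping that makes this precise over the singular variety. One issue is checking that the right $\ds_{X\times\bba^r}$-structure on local cohomology of $\omega$ (via \cite{Lyu93}) is compatible with restriction to the smooth locus. The other is checking that $\hs^r_{\Gamma_{\bff}}$ commutes with $j_*$ there. I would handle both by using the \v{C}ech description with the global equations $t_i-f_i$, which makes everything a localization of $j_*$-sheaves. Localization commutes with $j_*$ for quasicoherent sheaves since $j$ is quasi-compact, so the formulas verified on $X_{sm}$ extend.
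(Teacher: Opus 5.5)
\textbf{The gap is in the signs of the action formulas.} Your route is essentially the paper's:
\begin{itemize}
\item realize $\hs^r_{\Gamma_{\bff}}(\omega_{X\times\bba^r})$ as the top \v{C}ech cohomology for the equations $t_i-f_i$;
\item read off the $\os_X$-basis $(\bft-\bff)^{-\beta-e}dt$, where $e=(1,\dots,1)$;
\item normalize, and compute the actions.
\end{itemize}
The change of coordinates and the passage to $X_{sm}$ are not needed, since the paper computes directly in $\omega_{X\times\bba^r}[1/(\bft-\bff)^e]$ using Remark~\ref{right action formula}. Your decomposition and torsion-freeness argument are fine. The sign bookkeeping, however, does not work as you claim, and the sign is not ``harmless''.

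With your normalization $v\partial_t^{\beta}\delta^{\omega}_{\bff}=(-1)^{|\beta|}\beta!\,v(\bft-\bff)^{-\beta-e}dt$, one gets $v\partial_t^{\beta}\delta^{\omega}_{\bff}\cdot\partial_{t_i}=-v\partial_t^{\beta+e_i}\delta^{\omega}_{\bff}$. The $t_i$-action then contains the term $-\beta_i v\partial_t^{\beta-e_i}\delta^{\omega}_{\bff}$. Both contradict what you assert. The obvious $\partial_{t_i}$-action forces the normalization $\beta!$ up to an overall constant, as in the paper. With that normalization your $t_i$-formula comes out right. But $\theta\bigl((\bft-\bff)^{-\beta-e}\bigr)=\sum_i(\beta_i+1)\theta(f_i)(\bft-\bff)^{-\beta-e-e_i}$ enters with the minus sign of Remark~\ref{right action formula}, so the $\theta$-computation yields
\[
v\partial_t^{\beta}\delta^{\omega}_{\bff}\cdot\theta=\theta(v)\partial_t^{\beta}\delta^{\omega}_{\bff}-\sum_{i=1}^r\theta(f_i)\,v\,\partial_t^{\beta+e_i}\delta^{\omega}_{\bff},\qquad \theta(v):=v\cdot\theta .
\]

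\textbf{The stated ``$+$'' cannot be obtained from any normalization.} For $X=\bba^1$ and $f=x$, the vector field $\partial_x+\partial_t$ kills $t-x$, so $dx\,\delta^{\omega}_f\cdot\partial_x=-dx\,\partial_t\delta^{\omega}_f$. Likewise, the stated $\theta$- and $t_i$-formulas together violate $[\theta,t_i]=0$.

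The statement itself carries the same slip, as does the second display in the paper's proof, which should read $-\sum_i(\beta_i+1)v\theta(f_i)(\bft-\bff)^{-\beta-e_i-e}dt$. The slip is harmless downstream: Lemma~\ref{left right switch} actually needs the minus sign for the map $\phi$ there to intertwine the $\partial_{x_j}$-actions.

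Your coordinate change gives the quickest proof of the corrected formula. It transports $\theta$ to $\theta-\sum_i\theta(f_i)\partial_{t_i}$ and $t_i$ to $t_i+f_i$ on the model $\bff=0$, where all the actions are evident.

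\textbf{Two smaller points.}
\begin{itemize}
\item In your \v{C}ech quotient, the $i$-th summand is the localization at $\prod_{j\neq i}t_j$, not ``$\bbc[t,t_i^{-1}]$''.
\item Identities may be checked on $X_{sm}$ because $B^{\omega}_{\bff}\simeq\bigoplus_{\beta}\omega_X$ and $\omega_X=j_*j^*\omega_X$. It is not enough that localization commutes with $j_*$, since $\hs^r$ is a cokernel and $j_*$ is not right exact.
\end{itemize}
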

\proof Let $e=(1,\dots,1)\in \bbz^r$, $dt=dt_1\wedge \cdots \wedge dt_r \in \omega_{\bba^r}$, and $(\bft - \bff)^{\alpha} = \prod_{i=1}^r (t_i-f_i)^{\alpha_i}$ for $\alpha \in \bbz^r$. Since the graph $\Gamma_{\bff} \subset X\times \bba^r$ is defined by $t_1-f_1,\dots,t_r-f_r$, we have
$$
\hs^r_{\Gamma_{\bff}}( \omega_{X\times \bba^r})=\omega_{X\times\bba^r}[1/(\bft-\bff)^e]/\sum_{i=1}^r \omega_{X\times \bba^r}[1/(\bft-\bff)^{e-e_i}].
$$
Since $p_{X,*}\omega_{X\times \bba^r}= \omega_X[t_1,\dots,t_r]dt$, we have the $\os_X$-module decomposition
$$
p_{X,*}(\omega_{X\times \bba^r}[1/(\bft-\bff)^e]) = \bigoplus_{\alpha\in \bbz^r} \omega_X (\bft-\bff)^{\alpha} dt,
$$
which induces the following isomorphism of $\os_X$-modules:
$$
B^{\omega}_{\bff} \simeq \bigoplus_{\beta \in \bbz_{\geq 0}^r} \omega_X \frac{1}{(\bft - \bff)^{\beta+e}} dt.
$$ 
Given $v\in \omega_X$, $\beta\in \bbz^r_{\geq 0}$, and $\theta\in \sDer_{\bbc}(\os_X)$, observe that the following three equations hold in $p_{X,*}(\omega_{X\times \bba^r}[1/(\bft-\bff)^e])$:
$$
\begin{aligned}
v\frac{1}{(\bft-\bff)^{e}}dt \cdot \partial_t^{\beta} &=  \beta! v  \frac{1}{(\bft - \bff)^{\beta+e}}dt \\
v\frac{1}{(\bft-\bff)^{\beta+e}}dt \cdot \theta &= \theta(v) \frac{1}{(\bft-\bff)^{\beta+e}}dt  + \sum_{i=1}^r v\theta(f_i) \beta_i \frac{1}{(\bft-\bff)^{\beta+e_i+e}}dt \\
v\frac{1}{(\bft-\bff)^{\beta+e}}dt \cdot t_i  &=  v  \frac{1}{(\bft - \bff)^{\beta-e_i+e}}dt+f_i v \frac{1}{(\bft - \bff)^{\beta+e}}dt.
\end{aligned}
$$
Indeed, the first, second, and third equations follow from (\ref{canonical dmod}), Remark \ref{right action formula}, and the fact that $t_i=(t_i-f_i)+f_i$, respectively. Therefore, if we write $v \partial_t^{\beta} \delta_{\bff}^{\omega} := \beta! v\frac{1}{(\bft-\bff)^{\beta+e}}dt  \in  B^{\omega}_{\bff}$ for $\beta\in \bbz_{\geq 0}^r$ and $v\in \omega_X$, the result follows. \qed

\begin{definition}\label{bfomega pullback}
Let $\pi: Y \to X$ be a morphism between normal varieties of the same pure dimension such that $\pi^{-1}(X_{sing})$ has codimension $\geq 2$, let $f_1,\dots,f_r\in \os_X(X)$ be regular functions, and let $\bff$ and $\pi^*\bff$ denote the $r$-tuples $(f_1,\dots,f_r)$ and $(\pi^*f_1,\dots,\pi^*f_r)$, respectively. Given a section $v=\sum_{\beta} v_{\beta} \partial_t^{\beta} \delta_{\bff}^{\omega}\in \Gamma(X,B_{\bff}^{\omega})$ with $v_{\beta}\in \Gamma(X,\omega_X)$, we define
$$
\pi^*v := \sum_{\beta} \pi^*(v_{\beta}) \partial_t^{\beta} \delta_{\pi^*\bff}^{\omega} \in \Gamma(Y,B^{\omega}_{\pi^*\bff}),
$$
where $\pi^*(v_{\beta})\in \Gamma(Y,\omega_Y)$ is given by the map $\pi^*$ defined in \eqref{pullback forms}.
\end{definition}


\begin{lemma}\label{left right switch}
Let $X$ be a smooth variety with global coordinates $x_1,\dots,x_n\in \os_X(X)$, let $f_1,\dots,f_r\in \os_X(X)$ be regular functions, and let $\bff$ denote the $r$-tuple $(f_1,\dots,f_r)$. Given a section $u=\sum_{\beta}u_{\beta} \partial_t^{\beta}\delta_{\bff}\in \Gamma(X, B_{\bff})$ with $u_{\beta}\in \os_X(X)$, if we put
$$
v=\sum_{\beta} (-1)^{|\beta|}(u_{\beta} dx_1\wedge \cdots \wedge dx_n) \partial_t^{\beta} \delta^{\omega}_{\bff} \in \Gamma(X, B^{\omega}_{\bff}),
$$
then we have $b_u(s) = b_v(-s-r)$. 
\end{lemma}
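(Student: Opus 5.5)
The plan is to use the standard side-changing transposition on $\rs_X=\ds_X\otimes_{\bbc}D_{\bba^r}$ and to show that the correspondence $u\mapsto v$ intertwines the left action of an operator with the right action of its transpose. Since $X$ has global coordinates, $\omega_X=\os_X\,dx$ with $dx=dx_1\wedge\cdots\wedge dx_n$. Let $P\mapsto P^{t}$ be the anti-involution of $\rs_X$ determined by
\[
g^t=g\ (g\in\os_X),\quad \partial_{x_i}^t=-\partial_{x_i},\quad t_i^t=t_i,\quad \partial_{t_i}^t=-\partial_{t_i}.
\]
It is well defined on the Weyl-type relations, because $[\partial_{x_i},g]=\partial_{x_i}(g)$ and $[\partial_{t_i},t_i]=1$ are compatible with reversing products and changing these signs. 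Define the $\os_X$-linear bijection $\Phi\colon B_{\bff}\to B^{\omega}_{\bff}$ by
\[
\Phi(g\,\partial_t^{\beta}\delta_{\bff})=(-1)^{|\beta|}(g\,dx)\,\partial_t^{\beta}\delta^{\omega}_{\bff}.
\]
Then $v=\Phi(u)$, and the key claim is
\[
\Phi(P\cdot w)=\Phi(w)\cdot P^t\qquad\text{for all } P\in\rs_X,\ w\in B_{\bff}.
\]

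It suffices to check the claim on the generators $g$, $\partial_{x_i}$, $t_i$, $\partial_{t_i}$, using the explicit actions \eqref{bf action} and Lemma \ref{bfomega action}.

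For $\partial_{t_i}$: $\Phi(\partial_{t_i}w)$ shifts $\beta$ by $e_i$ and so acquires an extra factor $-1$, which matches right multiplication by $-\partial_{t_i}$.

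For $t_i$: the term $f_i g\,\partial_t^{\beta}\delta_{\bff}$ matches directly. The term $-\beta_i g\,\partial_t^{\beta-e_i}\delta_{\bff}$ maps to $-\beta_i(-1)^{|\beta|-1}(\cdots)$, while the right action produces $+\beta_i(-1)^{|\beta|}(\cdots)$, and these agree.

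For $\partial_{x_i}$: first, $\Phi(\partial_{x_i}(g)\ldots)$ must equal $-(g\,dx)\cdot\partial_{x_i}$ in its first term. This holds because, by \eqref{canonical dmod}, $\theta(v)$ for $\theta=\partial_{x_i}$ acting on $v=g\,dx$ in Lemma \ref{bfomega action} is $-\partial_{x_i}(g)\,dx$. The sign convention here needs care: one must confirm that ``$\theta(v)$'' in that lemma means the right action $v\cdot\theta$ on $\omega_X$. Second, the term $-\partial_{x_i}(f_j)\,g\,\partial_t^{\beta+e_j}$ maps to $-\partial_{x_i}(f_j)(-1)^{|\beta|+1}(\cdots)$, whereas $-(\ldots)\cdot\partial_{x_i}$ contributes $-(-1)^{|\beta|}\partial_{x_i}(f_j)(\cdots)$, and these agree.

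Next compute $s^t$. We have
\[
s^t=-\sum_i t_i^t\partial_{t_i}^t=\sum_i t_i\partial_{t_i}=\sum_i(\partial_{t_i}t_i-1)=-s-r.
\]
Hence $b(s)^t=b(-s-r)$. Since $t^\alpha\partial_t^\beta$ transposes to $\pm\partial_t^\beta t^\alpha$, Lemma \ref{explicit V filtration} gives $(V^1\rs_X)^t=V^1\rs_X$.

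Finally, $b(s)\cdot u=Q\cdot u$ with $Q\in V^1\rs_X$ holds if and only if
\[
v\cdot b(-s-r)=v\cdot Q^t,
\]
because $\Phi$ is bijective. Therefore $b(s)$ lies in the ideal defining $b_u$ exactly when $b(-s-r)$ lies in the ideal defining $b_v$. Both polynomials are monic up to the sign $(-1)^{\deg}$, so $b_u(s)=b_v(-s-r)$, where the right-hand side is understood after normalizing to be monic.

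The main obstacle is the sign bookkeeping in the $\partial_{x_i}$ check, where the conventions of Lemma \ref{bfomega action} and \eqref{canonical dmod} must be matched exactly. The rest is formal.
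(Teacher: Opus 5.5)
Your route is the same as the paper's. The paper uses the same anti-involution (called $\FT$ there), notes $\FT(s)=-s-r$ and $\FT(V^k\rs_X)=V^k\rs_X^{op}$, and asserts that your $\Phi$ is $\rs_X$-linear once $B^{\omega}_{\bff}$ is made a left module through $\FT$.

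However, the step you yourself flagged is wrong as written. In the second half of your $\partial_{x_i}$ check, $-\partial_{x_i}(f_j)(-1)^{|\beta|+1}=(-1)^{|\beta|}\partial_{x_i}(f_j)$, while $-(-1)^{|\beta|}\partial_{x_i}(f_j)$ is its negative. So the two terms do not agree.

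The cause is the sign of the $\theta(f_i)$-term in the displayed formula of Lemma~\ref{bfomega action}, which you took at face value. Recompute it with Remark~\ref{right action formula}. For $h=(\bft-\bff)^{-\beta-e}$ one has $(v\,h\,dt)\cdot\theta=(v\cdot\theta)\,h\,dt-v\,\theta(h)\,dt$. Since $\theta(t_i-f_i)=-\theta(f_i)$, we get $\theta(h)=\sum_i(\beta_i+1)\theta(f_i)(\bft-\bff)^{-\beta-e-e_i}$. Multiplying by $\beta!$ gives
\[
v\,\partial_t^{\beta}\delta^{\omega}_{\bff}\cdot\theta=(v\cdot\theta)\,\partial_t^{\beta}\delta^{\omega}_{\bff}-\sum_{i=1}^r\theta(f_i)\,v\,\partial_t^{\beta+e_i}\delta^{\omega}_{\bff}.
\]
The plus sign cannot be right. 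With it, for $w=v\,\delta^{\omega}_{\bff}$ one gets $w\cdot\theta\cdot t_i-w\cdot t_i\cdot\theta=2\theta(f_i)w$, contradicting $[\theta,t_i]=0$ in $\rs_X$.

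With the minus sign, the right-hand contribution becomes $+(-1)^{|\beta|}\partial_{x_i}(f_j)(\cdots)$, which matches. The rest of your argument then goes through unchanged: the $t_i$ and $\partial_{t_i}$ checks, $s^t=-s-r$, $(V^1\rs_X)^t=V^1\rs_X$, and the comparison of ideals up to monic normalization.

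The paper's one-line claim that $\Phi$ is an isomorphism, justified by citing Lemma~\ref{bfomega action}, tacitly needs this same sign correction.
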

\proof Consider the $\bbc$-algebra involution $\FT: \rs_X\xrightarrow{\sim} \rs^{op}_X$ given by $\FT(\partial_{x_i}) = -\partial_{x_i}$, $\FT(\partial_{t_i}) = -\partial_{t_i}$, and $\FT(g)=g$ for $g\in \os_X$. Observe that
$$
\FT(s) = \sum_{i=1}^r t_i\partial_{t_i} = \sum_{i=1}^r (\partial_{t_i} t_i -1 ) = -s-r.
$$
If we view $B_{\bff}^{\omega}$ as a left $\rs_X$-module via restriction of scalars along $\FT$, then by \eqref{bf action} and Lemma \ref{bfomega action}, the following map is an isomorphism of left $\rs_X$-modules:
$$
\phi: B_{\bff} \xrightarrow{\sim} B^{\omega}_{\bff}, \sum_{\beta}g_{\beta} \partial_t^{\beta}\delta_{\bff} \mapsto \sum_{\beta} (-1)^{|\beta|}(g_{\beta}dx_1\wedge \cdots \wedge dx_n) \partial_t^{\beta}\delta^{\omega}_{\bff}.
$$
Since $\FT(V^k\rs_X) = V^k\rs_X^{op}$ by Lemma \ref{explicit V filtration}, $\phi$ induces the isomorphism
$$
V^0\rs_X \cdot u/ V^1\rs_X \cdot u \xrightarrow{\sim} v\cdot V^0\rs_X/ v\cdot V^1\rs_X.
$$
Thus $b_u(s) = \FT(b_v(s)) = b_v(\FT(s)) = b_v(-s-r)$. \qed

\section{Finite group quotients and direct summands of canonical modules}\label{direct summands of canonical modules}

Throughout this section, we fix a smooth affine variety $X=\Spec S$ of pure dimension, together with a finite group $G$ acting on $X$ from the right (equivalently, on $S$ from the left). Let $R=S^G$ denote the ring of invariants and set $Y=\Spec R$. We denote the morphism induced by the inclusion $R \subset S$ by $\pi: X \to Y$. By \cite[Corollary 6.4.6 and Remark 6.4.7]{BH93}, $Y$ is a Cohen-Macaulay variety. It is well-known that $\pi$ is finite and surjective and that $Y$ is normal.

Note that $R \subset S$ is a direct summand of $\bbc$-algebras with splitting given by 
$$
\rho=\frac{1}{|G|}\sum_{g\in G} g: S \to R.
$$
Consider the left $G$-action on $\Gamma(X,\omega_X)$ given by pulling back differential forms and the map
$$
\tau=\frac{1}{|G|}\sum_{g\in G} g: \Gamma(X,\omega_X) \to \Gamma(X,\omega_X)^G.
$$
By \cite[Theorem 2.7]{Pes84}, the map $\pi^*: \Gamma(Y,\omega_Y) \to \Gamma(X,\omega_X)$ defined in \eqref{pullback forms} is injective and maps onto $\Gamma(X,\omega_X)^G$, so that we have the $R$-module isomorphism
\begin{equation}\label{image onto invariants}
\pi^*: \Gamma(Y,\omega_Y) \xrightarrow{\sim} \Gamma(X,\omega_X)^G.
\end{equation}
Since $\Gamma(Y,\omega_Y)$ is a right $D_R$-module, \eqref{image onto invariants} induces a right $D_R$-module structure on $\Gamma(X,\omega_X)^G$.

The following theorem is the key technical result of this section.

\begin{theorem}\label{canonical direct summand}
The pair $(\Gamma(X,\omega_X)^G \subset \Gamma(X,\omega_X), \tau)$ is a differential direct summand compatible with $\rho$, where the right $D_R$-module structure on $\Gamma(X,\omega_X)^G$ is given by (\ref{image onto invariants}). 
\end{theorem}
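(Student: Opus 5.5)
The plan is to prove the identity $\tau(v\cdot Q)=v\cdot(\rho\circ Q|_R)$, for $v\in\Gamma(X,\omega_X)^G$ and $Q\in D_S$, in three steps. First reduce to a dense open set over which $\pi$ is étale. There, lift operators uniquely from $R$ to $S$ and use $G$-equivariance. Since $\tau$ is clearly $R$-linear, only this identity needs proof.

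\emph{Step 1: reduction to the étale locus.} Let $B\subset X$ be the closed set of points with nontrivial stabilizer. Over $Y\setminus\pi(B)$ the morphism $\pi$ is a Galois étale cover and $Y$ is smooth there. Choose a nonzero $f\in R$ with $D(f)\subset Y\setminus \pi(B)$; this is possible because $G$ acts faithfully, or else we replace $G$ by its image in $\mathrm{Aut}(X)$. Since $X$ is smooth, $\Gamma(X,\omega_X)$ is torsion-free, so the map $\Gamma(X,\omega_X)\to\Gamma(X,\omega_X)_f$ is injective. It therefore suffices to check the identity after inverting $f$. For this we need:
\begin{itemize}
\item the right $D_S$-action and the transported right $D_R$-action both commute with localization, in the sense used in Lemma \ref{Proposition 3.6};
\item $\tau$ and $\rho$ localize to the corresponding averaging maps $\tau_f$ and $\rho_f$;
\item $(\rho\circ Q|_R)$ localizes to $\rho_f\circ Q_f|_{R_f}$.
\end{itemize}
For the $D_R$-structure on $\Gamma(Y,\omega_Y)$, the compatibility follows from its definition through the smooth locus and \eqref{pull push normal isom}. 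The other compatibilities are formal. From now on we write $R,S$ for $R_f,S_f$, so that $R\subset S$ is étale and $Y$ is smooth.

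\emph{Step 2: structure in the étale case.} Because $R\to S$ is étale, every $P\in D_R$ has a unique extension $\widetilde P\in D_S$, and $D_S=S\otimes_R D_R$. Concretely, every $Q\in D_S$ can be written as $Q=\sum_i s_i\widetilde P_i$ with $s_i\in S$ and $P_i\in D_R$. By uniqueness of the lift, each $\widetilde P$ is $G$-invariant, meaning $g\circ\widetilde P\circ g^{-1}=\widetilde P$.

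We also need that the étale pullback $\pi^*\colon\Gamma(Y,\omega_Y)\to\Gamma(X,\omega_X)$ is compatible with the actions, i.e.\ $\pi^*(w\cdot P)=\pi^*(w)\cdot\widetilde P$. This is the standard fact that $\omega_X=\pi^*\omega_Y$ as right $\ds$-modules for étale $\pi$. In local coordinates pulled back from $Y$, it follows from \eqref{canonical dmod}, since the coordinate derivations $\partial_{y_i}$ lift to $\partial_{y_i}$.

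Finally, $G$ acts on $\Gamma(X,\omega_X)$ compatibly with the action of $D_S$: for $g\in G$,
\[
g(v\cdot Q)=g(v)\cdot (gQg^{-1}).
\]
This holds by naturality of the right $\ds$-module structure on $\omega$ under automorphisms.

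\emph{Step 3: the computation.} By additivity it suffices to treat $Q=s\widetilde P$ with $s\in S$ and $P\in D_R$. For $r\in R$ we have $Q|_R(r)=sP(r)$, so
\[
\rho\circ Q|_R=\rho(s)P.
\]
Now write $v=\pi^*w$, which is $G$-invariant. Then $v\cdot(s\widetilde P)=(sv)\cdot\widetilde P$, and using the three facts from Step 2 we get
\[
\tau\bigl((sv)\cdot\widetilde P\bigr)=\tfrac{1}{|G|}\sum_{g}(g(s)v)\cdot\widetilde P=(\rho(s)v)\cdot\widetilde P=\pi^*\bigl((\rho(s)w)\cdot P\bigr),
\]
and the last expression is exactly $v\cdot(\rho(s)P)$ under the identification \eqref{image onto invariants}.

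The main obstacle is Step 1. We must carefully verify that the $D_R$-structure on $\Gamma(X,\omega_X)^G$ transported through $\pi^*$ agrees, after inverting $f$, with the structure coming from étale pullback. We must also check that $\rho\circ Q|_R$ localizes correctly. We would handle both using \eqref{pull push normal isom}, the injectivity of $\pi^*$ from \cite{Pes84}, and torsion-freeness of all the modules involved.
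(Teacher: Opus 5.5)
Your argument is correct in substance, but it is organized differently from the paper's.

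\textbf{How the paper argues.} The paper never localizes the identity itself. It averages $Q$ to the $G$-invariant operator $P=\frac{1}{|G|}\sum_{g\in G}g\cdot Q\in D_S^G$. A one-line global computation then gives $\rho\circ Q|_R=\res(P)$. The proof concludes by combining Lemma \ref{G equivariance} with Lemma \ref{first and second action}, which says that $\pi^*\colon\omega_Y\to(\pi_*\omega_X)^G$ is linear over $(\pi_*\ds_X)^G$, acting on $\omega_Y$ through $\res$. The passage to the \'{e}tale locus happens only inside the proof of that lemma, at the sheaf level. It uses that $(\pi_*\omega_X)^G\simeq\omega_Y$ injects into $j_*j^*(\pi_*\omega_X)^G$.

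\textbf{How your argument differs.} You localize the whole identity at an invariant $f$. You then use $D_S\simeq S\otimes_R D_R$ (Lemma \ref{etale unique extension}) to write $Q=\sum_i s_i\widetilde P_i$ with $G$-invariant lifts, which gives the explicit formula $\rho\circ Q|_R=\sum_i\rho(s_i)P_i$. This is the paper's averaging made concrete: averaging $s\widetilde P$ gives $\rho(s)\widetilde P$, and $\res(\rho(s)\widetilde P)=\rho(s)P$. However, the decomposition is only available on the \'{e}tale locus. That is why you need the extra bookkeeping that $\tau$, the transported $D_R$-action, and $\rho\circ Q|_R$ all commute with localization. Those checks do go through; for instance, $(\rho\circ Q|_R)_f=\rho_f\circ Q_f|_{R_f}$ because both are differential operators on $R_f$ that agree on $R$. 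The paper's route avoids this bookkeeping and isolates a reusable statement about invariant operators. Yours makes the differential splitting completely explicit.

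\textbf{One justification in Step 1 needs fixing.} The set $B$ of points with nontrivial stabilizer can be all of $X$, even for a faithful action, when $X$ is disconnected. This happens precisely in the situation of Lemma \ref{quot sing etale cover}, where $U=\coprod_i U_i$, $G=\prod_i G_i$, and $G_j$ fixes $U_i$ pointwise for $j\neq i$. In that case $Y\setminus\pi(B)$ is empty and replacing $G$ by its image does not help. Instead, use the open set $W\subset Y$ over which $\pi$ is \'{e}tale, which is dense by generic \'{e}taleness (as in the paper). Take $f\in R$ vanishing on $Y\setminus W$ but on no irreducible component of $Y$. Then $f$ is a nonzerodivisor on $S$, so $\Gamma(X,\omega_X)\to\Gamma(X,\omega_X)_f$ is injective. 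Merely requiring $f\neq 0$ would not suffice.
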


We note that the left $D$-module version of theorem \ref{canonical direct summand}, which says that $(R\subset S,\rho)$ is a differential direct summand compatible with $\rho$, is immediate from the definition.

Before proving the theorem, we record its main consequence for $b$-functions.

\begin{corollary}\label{finite group b function}
Let $f_1,\dots,f_r\in \os_Y(Y)$ be regular functions and let $\bff$ denote the $r$-tuple $(f_1,\dots,f_r)$. Given a section $v \in \Gamma(Y,B_{\bff}^{\omega})$, we have $b_v(s) \ | \ b_{\pi^*v}(s)$, where $\pi^*v\in \Gamma(X, B_{\pi^*\bff}^{\omega})$ is defined as in Definition \ref{bfomega pullback}.
\end{corollary}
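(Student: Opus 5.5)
The plan is to deduce the corollary from Lemma \ref{direct summand b function}, applied to the direct summand $R\subset S$ with splitting $\rho$, with $\ms=B^{\omega}_{\bff}$ on $Y$ and $\ns=B^{\omega}_{\pi^*\bff}$ on $X$. The work is to exhibit $\Gamma(Y,B^{\omega}_{\bff})$ as an $R[t_1,\dots,t_r]$-submodule of $\Gamma(X,B^{\omega}_{\pi^*\bff})$, via $v\mapsto \pi^*v$, and to produce a differential splitting compatible with $\rho'=\rho\otimes 1$.

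First, realize both modules as local cohomology of canonical modules. On the product we have $\Gamma(Y\times\bba^r,\omega_{Y\times\bba^r})=\Gamma(Y,\omega_Y)[t_1,\dots,t_r]\,dt$, and likewise on $X$. The group $G$ acts on $X\times\bba^r$ trivially on the $\bba^r$ factor, with quotient $Y\times\bba^r$ and invariant ring $R[t_1,\dots,t_r]$.

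Apply Theorem \ref{canonical direct summand} to $X\times\bba^r$ with the $G$-action. Alternatively, apply it to $X$ and tensor with $\bbc[t]$ using Remark \ref{polynomial extension}; I expect the identifications of $D$-module structures to be straightforward either way. This gives that $\Gamma(X,\omega_X)^G[t]\,dt\subset \Gamma(X,\omega_X)[t]\,dt$ is a differential direct summand compatible with $\rho'$, with splitting $\tau'=\tau\otimes1$. Here the module structure on the invariants is transported from $\Gamma(Y\times\bba^r,\omega_{Y\times\bba^r})$ by $(\pi\times 1)^*$.

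Next take the ideal $I=(t_1-f_1,\dots,t_r-f_r)\subset R[t]$. Its extension to $S[t]$ is $(t_i-\pi^*f_i)$, which defines the graph $\Gamma_{\pi^*\bff}$. Lemma \ref{local coh direct summand} then gives that $H^r_I$ of the invariant module is a differential direct summand of $H^r_{IS[t]}(\Gamma(X\times\bba^r,\omega))$, compatible with $\rho'$, with splitting $H^r_I(\tau')$. Since $X$ and $Y$ are affine, the latter is $\Gamma(X,B^{\omega}_{\pi^*\bff})$. The former is isomorphic, through $(\pi\times1)^*$ and its compatibility with the \v{C}ech description, to $\Gamma(Y,B^{\omega}_{\bff})$.

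We then check that under this identification the inclusion sends $v=\sum v_\beta\partial_t^\beta\delta^\omega_{\bff}$ to $\pi^*v$. Using the explicit basis from the proof of Lemma \ref{bfomega action}, $v_\beta\partial_t^\beta\delta^\omega = \beta!\,v_\beta(\bft-\bff)^{-\beta-e}dt$. Pulling back by $\pi\times 1$ multiplies forms by nothing extra in the $t$ direction and replaces $f_i$ by $\pi^*f_i$, so the two descriptions match. The right module structures also agree, because the $D_{R[t]}$-structure on the invariant local cohomology is the one induced from $\omega_{Y\times\bba^r}$. Lemma \ref{direct summand b function} then yields $b_v(s)\mid b_{\pi^*v}(s)$.

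The main obstacle is the bookkeeping in the step identifying the $D_{R[t]}$-module $H^r_I(\Gamma(X\times\bba^r,\omega)^G)$ with $\Gamma(Y,B^{\omega}_{\bff})$ as right modules. One must confirm that the local cohomology right $\ds$-structure on $\hs^r_{\Gamma_{\bff}}(\omega_{Y\times\bba^r})$, defined on the singular space $Y$, agrees with the one obtained by applying the \v{C}ech and localization construction of Lemma \ref{Proposition 3.6} to the transported structure. The first thing to try is to verify this on the smooth locus, where everything is explicit by the formulas of Lemma \ref{bfomega action}. One then extends using torsion-freeness (Lemma \ref{bfomega action}) and the isomorphism \eqref{pull push normal isom}.
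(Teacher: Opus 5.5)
Your proposal is correct and follows essentially the same route as the paper: Theorem \ref{canonical direct summand} together with Remark \ref{polynomial extension} makes $\Gamma(X,\omega_X)^G[t_1,\dots,t_r]\subset\Gamma(X,\omega_X)[t_1,\dots,t_r]$ a differential direct summand compatible with $\rho'$. The paper then applies Lemma \ref{local coh direct summand} with $I=(t_1-f_1,\dots,t_r-f_r)$, identifies the resulting local cohomology modules with $\Gamma(Y,B^{\omega}_{\bff})$ and $\Gamma(X,B^{\omega}_{\pi^*\bff})$, and concludes by Lemma \ref{direct summand b function}. The paper treats as immediate the compatibility of the right module structures under these identifications, and the fact that the inclusion is $v\mapsto\pi^*v$. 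The bookkeeping you flag is therefore extra care rather than a missing step.
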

\proof Let $M = \Gamma(X,\omega_X)^G$ and $N=\Gamma(X,\omega_X)$, and let $\rho'$ and $\tau'$ denote the induced maps $\rho \otimes 1: S[t_1,\dots,t_r] \to R[t_1,\dots,t_r]$ and $\tau\otimes 1: N[t_1,\dots,t_r] \to M[t_1,\dots,t_r]$, respectively. By Theorem \ref{canonical direct summand} and Remark \ref{polynomial extension}, $(M[t_1,\dots,t_r] \subset N[t_1,\dots,t_r], \tau')$ is a differential direct summand compatible with $\rho'$. By Lemma \ref{local coh direct summand}, if we denote by $I\subset R[t_1,\dots,t_r]$ the ideal generated by $t_1-f_1,\dots,t_r-f_r$, then
$$
(H^r_I(M[t_1,\dots,t_r]) \subset H^r_{IS[t_1,\dots,t_r]}(N[t_1,\dots,t_r]), H^r_I(\tau'))
$$
is a differential direct summand compatible with $\rho'$. Since 
$$
\Gamma(Y,B_{\bff}^{\omega})= H^r_I(\Gamma(Y,\omega_Y)[t_1,\dots,t_r]) \simeq H^r_I(M[t_1,\dots,t_r]) 
$$
and $\Gamma(X,B_{\pi^*\bff}^{\omega})=H^r_{IS[t_1,\dots,t_r]}(N[t_1,\dots,t_r])$, the result follows from Lemma \ref{direct summand b function}. \qed

The remainder of this section is devoted to the proof of Theorem \ref{canonical direct summand}. We begin with some discussion and establish two auxiliary lemmas that will be used in the proof.

For each $g\in G$, we denote the automorphism of $X$ induced by the action of $g$ by
$$
r_g: X \to X, x \mapsto x \cdot g.
$$
Let $U \subset X$ be an open subset. For a regular function $f\in \os_X(U)$, let 
\begin{equation}\label{function action}
gf := r_g^*f \in \os_X(r_g^{-1}(U))
\end{equation}
denote the regular function obtained by pulling $f$ back along $r_g: r_g^{-1}(U) \to U$. For a differential form $v\in \Gamma(U,\omega_X)$, let 
\begin{equation}\label{form action}
g \cdot v :=r_g^*v \in \Gamma(r_g^{-1}(U), \omega_X)
\end{equation}
denote the differential form obtained by pulling $v$ back along $r_g: r_g^{-1}(U) \to U$. Finally, for a differential operator $P\in \Gamma(U,\ds_X)$, let 
\begin{equation}\label{operator action}
g \cdot P \in \Gamma(r_g^{-1}(U), \ds_X)
\end{equation}
denote the differential operator given by $(g \cdot P)(f) = gP(g^{-1}f)$ for every $f\in \os_X(V)$ and open subset $V \subset r_g^{-1}(U)$. 

Observe that $G$ acts on $\pi_*\os_X$, $\pi_*\omega_X$, and $\pi_*\ds_X$ on the left via \eqref{function action}, \eqref{form action}, and \eqref{operator action}, respectively. Note that $(\pi_*\os_X)^G=\os_Y$ and the inclusion $\pi_*\os_X \into \pi_*\ds_X$ is $G$-equivariant. Also, $G$ acts on $\pi_*\ds_X$ by $\bbc$-algebra automorphisms.

\begin{remark}\label{global coordinate action}
Let $U \subset X$ be an open subset with global coordinates $x_1,\dots,x_n \in \os_X(U)$, and fix an element $g\in G$. Note that $gx_1,\dots,gx_n \in \os_X(r_g^{-1}(U))$ are global coordinates on $r_g^{-1}(U)$. By the chain rule, it is easy to see that $g \cdot \partial_{x_i} = \partial_{gx_i}$. Thus $g \cdot \partial_x^{\alpha} = \partial_{gx}^{\alpha}$ for $\alpha \in \bbz_{\geq 0}^n$. 
\end{remark}

We omit the proof of the following easy lemma.

\begin{lemma}\label{G equivariance}
Given $g\in G$, $v \in \pi_*\omega_X$, and $P \in \pi_*\ds_X$, we have $g \cdot (v \cdot P) = (g \cdot v) \cdot (g \cdot P)$.
\end{lemma}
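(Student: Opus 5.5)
Since $\pi_*\omega_X$ is a right $\pi_*\ds_X$-module, both sides of the identity $g\cdot(v\cdot P) = (g\cdot v)\cdot(g\cdot P)$ are $\bbc$-bilinear in $(v,P)$. The identity is also local on $X$: $r_g$ identifies $r_g^{-1}(U)$ with $U$, and each action in \eqref{form action} and \eqref{operator action} is defined open by open. So I will fix an open subset $U\subset X$ with global coordinates $x_1,\dots,x_n$ and prove the identity for $v\in\Gamma(U,\omega_X)$ and $P\in\Gamma(U,\ds_X)$, as an equality on $r_g^{-1}(U)$. Every $P$ is an $\os_X(U)$-linear combination of the $\partial_x^{\alpha}$, and we have $g\cdot(hQ)=(gh)(g\cdot Q)$ and $g\cdot(QQ')=(g\cdot Q)(g\cdot Q')$ because $G$ acts by algebra automorphisms compatibly with the inclusion of functions. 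Combined with associativity of the right action, this reduces the claim to two generating cases: $P=h\in\os_X(U)$, and $P=\partial_{x_i}$.

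The case $P=h$ is the identity $r_g^*(hv)=(r_g^*h)(r_g^*v)$, which is immediate because pullback of forms is multiplicative over functions. For $P=\partial_{x_i}$, write $v=f\,dx_1\wedge\cdots\wedge dx_n$. Formula \eqref{canonical dmod} gives $v\cdot\partial_{x_i}=-\partial_{x_i}(f)\,dx_1\wedge\cdots\wedge dx_n$. Pulling back yields $g\cdot(v\cdot\partial_{x_i}) = -(g\,\partial_{x_i}(f))\,d(gx_1)\wedge\cdots\wedge d(gx_n)$. On the other side, $g\cdot v=(gf)\,d(gx_1)\wedge\cdots\wedge d(gx_n)$, and Remark \ref{global coordinate action} gives $g\cdot\partial_{x_i}=\partial_{gx_i}$. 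Since $gx_1,\dots,gx_n$ are global coordinates on $r_g^{-1}(U)$, formula \eqref{canonical dmod} applied in these coordinates gives $(g\cdot v)\cdot\partial_{gx_i}=-\partial_{gx_i}(gf)\,d(gx_1)\wedge\cdots\wedge d(gx_n)$. It then remains to check that $\partial_{gx_i}(gf)=g\,\partial_{x_i}(f)$. This is exactly the definition \eqref{operator action} of $g\cdot\partial_{x_i}$ applied to $gf$, namely $(g\cdot\partial_{x_i})(gf) = g\,\partial_{x_i}(g^{-1}gf)$, together with the same remark.

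I do not expect a real obstacle. The one point needing care is the conventions for left versus right actions. I must check that $g\cdot(-)$ on operators is multiplicative rather than anti-multiplicative; this holds since $(g\cdot P)(g\cdot Q)(f)=gP(g^{-1}gQ(g^{-1}f))=g(PQ)(g^{-1}f)$. Given that, the inductive step $g\cdot(v\cdot QQ')=(g\cdot(v\cdot Q))\cdot(g\cdot Q') = ((g\cdot v)\cdot(g\cdot Q))\cdot(g\cdot Q')$ goes through, and the reduction to generators is valid.
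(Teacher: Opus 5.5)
The paper omits the proof of this lemma as easy, and your argument is a correct and complete way to supply it. You localize to a coordinate chart and check the generating cases $P=h$ and $P=\partial_{x_i}$ via \eqref{canonical dmod}, applied in the coordinates $gx_1,\dots,gx_n$, together with Remark~\ref{global coordinate action}. You then pass to products using associativity of the right action and the multiplicativity $(g\cdot P)(g\cdot Q)=g\cdot(PQ)$, which you verified correctly and which was the one convention point that needed checking.
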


We define a $\bbc$-algebra morphism
\begin{equation}\label{res}
\res: (\pi_*\ds_X)^G \to \ds_Y
\end{equation}
as follows. Let $U \subset Y$ be an open subset and let $\pi_U:=\pi|_{\pi^{-1}(U)}: \pi^{-1}(U) \to U$. Given $P\in \Gamma(U,(\pi_*\ds_X)^G)$, we may view $P$ as a $\bbc$-linear endomorphism of $\os_X$. Then $\pi_{U,*}P$ is a $G$-invariant endomorphism of $\pi_{U,*}\os_{\pi^{-1}(U)} = (\pi_*\os_X)|_U$, hence it restricts to a $\bbc$-linear endomorphism of $((\pi_*\os_X)|_U)^G=\os_U$. We define $\res(P)\in \Gamma(U,\sEnd_{\bbc}(\os_Y))$ to be this restriction. Since $P$ is a differential operator, it easily follows that $\res(P)\in \Gamma(U,\ds_Y)$, completing the definition of \eqref{res}. 

We view the canonical sheaf $\omega_Y$ as a right $(\pi_*\ds_X)^G$-module via restriction of scalars along $\res: (\pi_*\ds_X)^G \to \ds_Y$. Note that the right $\pi_*\ds_X$-module structure on $\pi_*\omega_X$ induces a right $(\pi_*\ds_X)^G$-module structure on $(\pi_*\omega_X)^G$ by Lemma \ref{G equivariance}.

By \cite[Theorem 2.7]{Pes84}, the natural morphism $\pi^*: \omega_Y \to \pi_*\omega_X$ is injective and maps onto $(\pi_*\omega_X)^G$, so that we have the $\os_Y$-module isomorphism
\begin{equation}\label{peskin}
\pi^*: \omega_Y \xrightarrow{\sim} (\pi_*\omega_X)^G.
\end{equation}

\begin{lemma}\label{first and second action}
The map $\pi^*: \omega_Y \to (\pi_*\omega_X)^G$ is $(\pi_*\ds_X)^G$-linear, that is, for every $v \in \omega_Y$ and $Q \in (\pi_*\ds_X)^G$, we have $\pi^*(v \cdot \res(Q)) = (\pi^*v) \cdot Q$.
\end{lemma}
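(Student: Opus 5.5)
The plan is to reduce to the smooth locus and then check the identity locally in coordinates. Both sides of $\pi^*(v\cdot \res(Q)) = (\pi^*v)\cdot Q$ are sections of $(\pi_*\omega_X)^G \subset \pi_*\omega_X$. The sheaf $\pi_*\omega_X$ is torsion-free (indeed reflexive), so two sections agree as soon as they agree on a dense open subset. I will use the open set $Y^\circ \subset Y$ over which $\pi$ is étale. Since $G$ is finite acting on a smooth variety, the locus of points with nontrivial stabilizer is a proper closed subset. Its image is closed in $Y$, and its complement $Y^\circ$ is smooth, dense, and has $\pi^{-1}(Y^\circ)\to Y^\circ$ étale (a $G$-torsor). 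Both the right $\ds_Y$-action on $\omega_Y$ and the pullback \eqref{pullback forms} are compatible with restriction to opens. Moreover, $\res$ commutes with restriction, because it is defined sheaf-theoretically. So it suffices to prove the identity over $Y^\circ$.

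Over $Y^\circ$ I work étale-locally. Take a point $x\in\pi^{-1}(Y^\circ)$ and local coordinates $y_1,\dots,y_n$ on a small open $W\subset Y^\circ$ around $\pi(x)$. Since $\pi$ is étale, $x_i:=\pi^*y_i$ are global coordinates on a neighborhood $W'$ of $x$. Shrinking, I may assume $W'\to W$ is an isomorphism, using étaleness and the fact that the $G$-translates of $W'$ are disjoint. By the $\os$-bilinearity and the Leibniz-type structure of right actions, it suffices to check the identity for generators of $(\pi_*\ds_X)^G$ restricted to $W$. On $\pi^{-1}(W)=\bigsqcup_g W'g$, a $G$-invariant operator $Q$ is determined by its restriction $Q|_{W'}=\sum_\alpha q_\alpha\partial_x^\alpha$, with $Q|_{W'g}=g^{-1}\cdot Q|_{W'}$ (via Remark \ref{global coordinate action}). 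Under the isomorphism $W'\cong W$, the operator $\res(Q)$ is then just $\sum_\alpha \bar q_\alpha \partial_y^\alpha$, where $\bar q_\alpha$ corresponds to $q_\alpha$. This holds because an invariant function on $\pi^{-1}(W)$ is determined by its restriction to $W'$.

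Next I verify the identity on $W'$. There $\pi^*$ sends $h\,dy_1\wedge\cdots\wedge dy_n$ to $\pi^*h\,dx_1\wedge\cdots\wedge dx_n$, and by \eqref{canonical dmod} both right actions are computed by the same formula in the matched coordinates. So the two sides agree on $W'$. The left-hand side is $G$-invariant since it lies in the image of \eqref{peskin}. The right-hand side is $G$-invariant by Lemma \ref{G equivariance}, because $\pi^*v$ and $Q$ are invariant. Hence they agree on every translate $W'g$, and therefore on $\pi^{-1}(W)$.

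The main obstacle is making the density reduction rigorous. For this I will cite that $\pi_*\omega_X$ is torsion-free, being the pushforward of a line bundle under a finite dominant map of integral schemes (componentwise). I also need that the right $\ds_Y$-module structure on $\omega_Y$ (from \eqref{pull push normal isom}) restricts to the usual one on $Y^\circ\subset Y_{sm}$, which holds by construction. A secondary point is the careful identification of $\res(Q)$ with the coordinate expression. This follows since functions on $W$ correspond bijectively, via $\pi^*$, to $G$-invariant functions on $\pi^{-1}(W)$, and these in turn correspond to arbitrary functions on $W'$.
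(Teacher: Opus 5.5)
Your overall route is the paper's: use torsion-freeness of $(\pi_*\omega_X)^G\simeq\omega_Y$ to pass to a dense open over which $\pi$ is étale, then compare both sides in coordinates via \eqref{canonical dmod}. However, the local step fails as written. You shrink so that $W'\to W$ is an isomorphism and $\pi^{-1}(W)$ is the disjoint union of the $G$-translates of $W'$. In the Zariski topology this is false, since an étale map, even a $G$-torsor, is not Zariski-locally trivial. For example, take $G=\{\pm1\}$ acting on $\bbc^*$ with quotient map $z\mapsto z^2$: every nonempty open $W'\subset\bbc^*$ meets $-W'$, so $W'\to\pi(W')$ is never injective. Your description of $\res(Q)$ through $W'\cong W$ therefore has no footing. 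A genuine étale base change on $Y$ would instead require proving that $\res$ and the right $\ds_Y$-action on $\omega_Y$ are compatible with it, which you do not do.

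No trivialization is needed. Since $\pi$ is étale over $W$, the functions $x_i=\pi^*y_i$ are global coordinates on all of $\pi^{-1}(W)$. They are $G$-invariant, so $g\cdot\partial_{x_i}=\partial_{x_i}$ by Remark \ref{global coordinate action}. Hence over $W$ one has $(\pi_*\ds_X)^G=\bigoplus_\alpha(\pi_*\os_X)^G\partial_x^\alpha=\bigoplus_\alpha\os_Y\partial_x^\alpha$, and the chain rule gives $\res(\partial_x^\alpha)=\partial_y^\alpha$. For $v=f\,dy_1\wedge\cdots\wedge dy_n$, both sides then equal $(-1)^{|\alpha|}\partial_x^\alpha(\pi^*f)\,dx_1\wedge\cdots\wedge dx_n$. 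This is exactly the paper's computation, and it makes your translate-by-translate invariance argument unnecessary.

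Your choice of $Y^\circ$ is also problematic. The complement of the image of the points with nontrivial stabilizer is dense only if every $g\neq 1$ acts nontrivially on each connected component of $X$. That is not assumed, and it fails in the situation the paper actually needs. In Lemma \ref{quot sing etale cover}, $G=\prod_i G_i$ acts on $U=\coprod_i U_i$, so every point of $U_1$ is fixed by $\prod_{i\neq 1}G_i$, and your $Y^\circ$ may be empty. Replace it by the open set of points of $Y$ over which $\pi$ is étale, which is dense by generic smoothness in characteristic zero; this is what the paper does.
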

\noindent{\it Proof}. We need to show that the following diagram commutes:
\begin{equation}\label{commutative diagram}
\begin{tikzcd}
\omega_Y \otimes_{\os_Y} (\pi_*\ds_X)^G \arrow[r]\arrow[d,"\pi^* \otimes 1"]& \omega_Y \arrow[d,"\pi^*"] \\
(\pi_*\omega_X)^G \otimes_{\os_Y} (\pi_*\ds_X)^G \arrow[r] & (\pi_*\omega_X)^G.
\end{tikzcd}
\end{equation}
By generic smoothness, there exists a smooth dense open subset $U$ of $Y$ such that the morphism $\pi|_{\pi^{-1}(U)}\colon \pi^{-1}(U) \to U$ is \'{e}tale. Let $j: U \to Y$ denote the inclusion. Since $(\pi_*\omega_X)^G \simeq \omega_Y$ by \eqref{peskin}, the natural morphism $(\pi_*\omega_X)^G \to j_*j^*(\pi_*\omega_X)^G$ is injective, hence it suffices to show that the diagram commutes after restricting to $U$. Thus we may assume that $Y$ is smooth and $\pi: X \to Y$ is \'{e}tale. Since we can check commutativity of the diagram locally on $Y$, we may further assume that $Y$ has global coordinates $y_1,\dots,y_n \in \os_Y(Y)$.

If we put $x_i=\pi^*y_i$, then $x_1,\dots,x_n \in \os_X(X)$ are global coordinates on $X$ since $\pi$ is \'{e}tale. Note that $gx_i=x_i$ for each $i$, so by Remark \ref{global coordinate action}, we have $g \cdot \partial_{x_i} = \partial_{x_i}$, hence $\partial_{x_i}\in (\pi_*\ds_X)^G$. It is then easy to see that
$$
(\pi_*\ds_X)^G = \bigoplus_{\alpha \in \bbz_{\geq 0}^n} (\pi_*\os_X)^G\partial_x^{\alpha} = \bigoplus_{\alpha \in \bbz_{\geq 0}^n} \os_Y \partial_x^{\alpha}.
$$
By the chain rule, we have $\res(\partial_{x_i}) = \partial_{y_i}$, so that $\res(\partial_x^{\alpha}) = \partial_y^{\alpha}$ for $\alpha \in \bbz_{\geq 0}^n$. To show that \eqref{commutative diagram} commutes, it suffices to show that 
$$
\pi^* (f dy_1 \wedge \cdots \wedge dy_n \cdot \res(\partial_x^{\alpha})) 
=
\pi^*(fdy_1 \wedge \cdots \wedge dy_n) \cdot \partial_x^{\alpha}
$$
for $f\in \os_Y$ and $\alpha \in \bbz_{\geq 0}^n$. This follows since both sides of the equation are equal to $(-1)^{|\alpha|} \partial^{\alpha}_x(\pi^*(f))dx_1\wedge \cdots \wedge dx_n$ by \eqref{canonical dmod}. \qed

\noindent{\it Proof of Theorem \ref{canonical direct summand}}. Let $v\in \Gamma(Y,\omega_Y)$ and $Q \in D_S$. Since the right $D_R$-module structure on $\Gamma(X,\omega_X)^G$ is given by (\ref{image onto invariants}), it suffices to show that
$$
\tau(\pi^*v \cdot Q) = \pi^*(v \cdot (\rho \circ Q|_R)).
$$
Consider the following $G$-invariant differential operator of $S$:
$$
P = \frac{1}{|G|}\sum_{g\in G} g \cdot Q \in D^G_S = \Gamma(Y, (\pi_*\ds_X)^G).
$$
For $f\in R$, observe that 
$$
(\rho \circ Q|_R)(f) = \frac{1}{|G|}\sum_{g\in G} g (Q(f)) = \frac{1}{|G|}\sum_{g\in G} g Q(g^{-1}f) = \res(P)(f),
$$
where the second equality uses the fact that $f\in S^G$. Thus $\rho \circ Q|_R=\res(P)$, so by Lemma \ref{first and second action}, we have
$$
\pi^*(v \cdot (\rho \circ Q|_R)) = \pi^*(v \cdot\res(P)) = (\pi^*v) \cdot P= \frac{1}{|G|}\sum_{g\in G} \pi^*v \cdot (g \cdot Q).
$$
On the other hand, by Lemma \ref{G equivariance}, we have
$$
\tau(\pi^*v \cdot Q) = \frac{1}{|G|}\sum_{g\in G} g \cdot (\pi^*v \cdot Q)
= \frac{1}{|G|}\sum_{g\in G} (g \cdot \pi^*v) \cdot (g \cdot Q)
= \frac{1}{|G|}\sum_{g\in G} \pi^*v \cdot (g \cdot Q),
$$
where the third equality uses the fact that $\pi^*v \in \Gamma(X,\omega_X)^G$. Thus we are done. \qed

\section{\'{E}tale pullback}\label{etale pullback}

The goal of this section is to show that $b$-functions of sections of $B_{\bff}^{\omega}$ are invariant under surjective \'{e}tale pullback.

\begin{theorem}\label{etale pull b}
Let $\pi: X \to Y$ be a surjective \'{e}tale morphism of varieties of the same pure dimension such that $Y$ is normal, let $f_1,\dots,f_r\in \os_Y(Y)$ be regular functions, and let $\bff$ denote the $r$-tuple $(f_1,\dots,f_r)$. Given a section $v\in \Gamma(Y, B_{\bff}^{\omega})$, we have $b_v(s) = b_{\pi^*v}(s)$, where $\pi^*v\in \Gamma(X, B_{\pi^*\bff}^{\omega})$ is defined as in Definition \ref{bfomega pullback}.
\end{theorem}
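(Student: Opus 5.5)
Since $\pi$ is étale and $Y$ is normal, $X$ is normal with $X_{sing}=\pi^{-1}(Y_{sing})$, so $\pi^{-1}(Y_{sing})$ has codimension $\geq 2$ and $\pi^*v$ is defined. The plan is to identify $\pi^*B^{\omega}_{\bff}$ with $B^{\omega}_{\pi^*\bff}$ compatibly with the $V$-filtration, and then to use flatness for one divisibility and faithful flatness (surjectivity) for the other.

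\textbf{Step 1: the étale base change.} For $\pi$ étale, differential operators are compatible with base change: the natural map $\os_X\otimes_{\pi^{-1}\os_Y}\pi^{-1}\ds_Y\to\ds_X$ is an isomorphism, both for Grothendieck's operators on singular varieties and on $X\times\bba^r\to Y\times\bba^r$. Hence $\os_X\otimes_{\pi^{-1}\os_Y}\pi^{-1}\rs_Y\simeq\rs_X$, compatibly with the filtrations $V^k$ by Lemma \ref{explicit V filtration}. Similarly $\omega_X\simeq\pi^*\omega_Y$, since this holds on smooth loci, both sheaves are reflexive, and the complement has codimension $\geq 2$. Using Lemma \ref{bfomega action}, the pullback of Definition \ref{bfomega pullback} induces an isomorphism
$$
\pi^{-1}B^{\omega}_{\bff}\otimes_{\pi^{-1}\os_Y}\os_X\simeq B^{\omega}_{\pi^*\bff}.
$$
It remains to check that this isomorphism intertwines the right actions of $\pi^{-1}\rs_Y$. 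The $t_i$ and $\partial_{t_i}$ actions are visibly compatible. For derivations, the formula in Lemma \ref{bfomega action} involves only $\theta(v)$ and $\theta(f_i)$, and $\pi^*$ commutes with the action of lifted vector fields on forms; both sides are torsion-free, so this can be checked on the smooth étale locus, in local coordinates as in the proof of Lemma \ref{first and second action}.

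\textbf{Step 2: reduce to quasicoherent statements.} By Remarks \ref{b function local} and \ref{b function annihilator}, $b_v$ is the monic generator of $\Ann_{\bbc[s]}\qs_v$, where $\qs_v = v\cdot V^0\rs_Y/v\cdot V^1\rs_Y$. These are quasicoherent $\os_Y$-modules, being images of $V^k\rs_Y$, which is quasicoherent by Lemma \ref{qcoh V filtration}. Using the isomorphism of Step 1 and the flatness of $\os_X$ over $\pi^{-1}\os_Y$, I will show
$$
\pi^*(v\cdot V^k\rs_Y)\simeq(\pi^*v)\cdot V^k\rs_X
$$
as subsheaves of $B^{\omega}_{\pi^*\bff}$. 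The left side lands in the right side because $\rs_X=\os_X\cdot\pi^{-1}\rs_Y$. For the reverse inclusion I must commute $\os_X$ past $V^k\rs_X$: since $V^k\rs_X=V^k\rs_Y\otimes\os_X$ on either side, any element $(\pi^*v)\cdot(gP)$ can be rewritten as a finite sum of terms $(\pi^*v)\cdot P' g'$ with $P'\in V^k\rs_Y$ and $g'\in\os_X$. Consequently $\pi^*\qs_v\simeq\qs_{\pi^*v}$, with the $s$-actions matching because $s$ comes from $D_{\bba^r}$.

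\textbf{Step 3: conclude.} Flatness gives $\Ann(\qs_v)\subset\Ann(\pi^*\qs_v)$, so $b_{\pi^*v}\mid b_v$. Conversely, $\pi$ is flat and surjective, hence faithfully flat, so $\pi^*$ is faithful on quasicoherent sheaves. If $b_{\pi^*v}(s)$ kills $\pi^*\qs_v=\pi^*(\qs_v)$, then the image of $b_{\pi^*v}(s)\cdot\qs_v$ vanishes after pullback, so $b_{\pi^*v}(s)\cdot\qs_v=0$ and $b_v\mid b_{\pi^*v}$. Hence $b_v=b_{\pi^*v}$.

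\textbf{Main obstacle.} I expect the hardest step to be Step 1: establishing the étale base change for $\ds$ on the singular normal variety $Y$ together with the compatibility of the right module structures. The approach I would try first is to use \eqref{pull push normal isom} to reduce to the smooth loci, where étale base change for differential operators and the explicit formula of Lemma \ref{bfomega action} are standard.
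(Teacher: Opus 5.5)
Your proposal is correct and follows essentially the same route as the paper: identify $\pi^*B^{\omega}_{\bff}\simeq B^{\omega}_{\pi^*\bff}$ via $\pi^*\omega_Y\simeq\omega_X$, use the right-module base change $\pi^{-1}(V^k\rs_Y)\otimes_{\pi^{-1}\os_Y}\os_X\simeq V^k\rs_X$ together with flatness to get $\pi^*(v\cdot V^k\rs_Y)=\pi^*v\cdot V^k\rs_X$, and conclude by comparing the minimal polynomials of $s$ using faithful flatness. The step you flag as the main obstacle is exactly the paper's Lemma \ref{right pull diff}, which is proved as you suggest: reduce to the smooth case via \eqref{pull push normal isom} and flat base change, then use that $\ds$ is free as a right $\os$-module on the monomials in the coordinate derivations.
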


The proof of Theorem \ref{etale pull b} will be given at the end of this section. We begin with some discussion and two auxiliary lemmas that will be used in the proof.

Given a map of $\bbc$-algebras $\phi: R \to S$ and a differential operator $P\in D_R$, we say that a differential operator $Q\in D_S$ extends $P$ if $\phi(P(x)) = \widetilde{P}(\phi(x))$ for every $x\in R$.

\begin{lemma}[{\cite[Theorems 2.2.5 and 2.2.10]{Mas91}}]\label{etale unique extension}
Let $\phi: R \to S$ be an \'{e}tale map of finitely generated $\bbc$-algebras. Given a differential operator $P\in D_R$, there exists a unique differential operator $\widetilde{P}\in D_S$ that extends $P$. Moreover, the map $D_R \to D_S$ that sends $P\in D_R$ to its unique extension is a ring map such that the diagram
\[
\begin{tikzcd}
R \arrow[r]\arrow[d] & S \arrow[d] \\
D_R \arrow[r] & D_S.
\end{tikzcd}
\]
commutes and the induced left $S$-module map $S \otimes_R D_R \to D_S$ is an isomorphism.
\end{lemma}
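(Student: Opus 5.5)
The approach is to reduce to the case where $S$ is a localization of a polynomial ring, using the structure of étale maps, and then pass to completions; compatibility with products and composition will be handled at the end.

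\emph{Existence.} The key observation is that the module of principal parts behaves well under étale base change. Write $P^m_{R}=(R\otimes_{\bbc}R)/I_R^{m+1}$, where $I_R$ is the kernel of multiplication. A differential operator of order $\le m$ on $R$ is the same thing as an $R$-linear map $P^m_R\to R$ (with $R$ acting on the right factor), so $F_mD_R=\operatorname{Hom}_R(P^m_R,R)$.

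Because $R\to S$ is étale, $\Omega_{S/R}=0$ and the diagonal of $\Spec S\to\Spec R$ is an open immersion. From this I would show that the natural map
$$
S\otimes_R P^m_R\to P^m_S
$$
is an isomorphism. The idea is that $P^m_S$ is the $m$-th infinitesimal neighborhood of the diagonal in $\Spec S\times\Spec S$. The map $\Spec S\times_{\bbc}\Spec S\to \Spec S\times_{\bbc}\Spec R$ is étale, so the diagonal section identifies infinitesimal neighborhoods.

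Given $P\in F_mD_R$, viewed as $\bar P:P^m_R\to R$, the extension is then
$$
\widetilde P:P^m_S\simeq S\otimes_RP^m_R\xrightarrow{1\otimes\bar P}S\otimes_R R=S.
$$
One checks that $\widetilde P$ extends $P$. The same identification yields
$$
F_mD_S=\operatorname{Hom}_S(S\otimes_R P^m_R,S)=\operatorname{Hom}_R(P^m_R,S)=S\otimes_R F_mD_R.
$$
The last equality uses that $P^m_R$ is finitely presented and $S$ is flat over $R$. Taking the union over $m$ gives $S\otimes_R D_R\simeq D_S$.

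\emph{Uniqueness.} Suppose $Q\in F_mD_S$ satisfies $Q(\phi(x))=0$ for all $x\in R$. I would show $Q=0$ by induction on $m$. The commutator $[Q,\phi(x)]$ also kills $\phi(R)$, so by induction $[Q,\phi(x)]=0$ for all $x\in R$, and hence $Q$ is $R$-linear. The map $Q:S\to S$ then corresponds to an $S$-linear map $P^m_{S/R}\to S$. Since the diagonal of $S$ over $R$ is an open immersion, $P^m_{S/R}=S$. Thus $Q$ is multiplication by $Q(1)$, and $Q(1)=Q(\phi(1))=0$.

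\emph{Ring structure and the diagram.} Once uniqueness holds, the remaining properties are formal:
\begin{itemize}
\item $\widetilde{P_1P_2}=\widetilde P_1\widetilde P_2$, since both sides extend $P_1P_2$;
\item for $r\in R$, multiplication by $\phi(r)$ extends multiplication by $r$, so the diagram commutes;
\item the map is additive by the same argument.
\end{itemize}

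\emph{Main obstacle.} The step I expect to be hardest is the identification $S\otimes_RP^m_R\simeq P^m_S$, which is the core of the extension. I would attempt it either via formal smoothness plus unramifiedness (lifting $S\to P^m_S$ through nilpotent thickenings uniquely), or via the local standard-étale presentation $S=(R[y]/(g))_h$ with $g'$ invertible.
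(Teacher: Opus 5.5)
Your argument is correct, but it is not the paper's route: the paper gives no proof of this lemma and simply imports it from Masson [Mas91, Theorems 2.2.5 and 2.2.10]. You instead give the self-contained EGA~IV-style argument: principal parts, then flat base change of $\operatorname{Hom}$ out of the finitely presented module $P^m_R$, then uniqueness and formal consequences.

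The key identification $S\otimes_R P^m_R\xrightarrow{\sim}P^m_S$ goes through by either route you name.
\begin{itemize}
\item \emph{Via formal \'etaleness.} Take the unique $R$-algebra lift $S\to S\otimes_R P^m_R$ of the identity through the thickening $S\otimes_R P^m_R\to S$. Its kernel has vanishing $(m+1)$st power, so the induced map $S\otimes_{\bbc}S\to S\otimes_R P^m_R$ kills $I_S^{m+1}$ and inverts the natural map.
\item \emph{Via geometry.} You also need two facts. First, the graph of $\Spec S\to\Spec R$ has $m$-th thickening $\Spec(S\otimes_R P^m_R)$, since it is a base change of the diagonal of $\Spec R$. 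Second, its preimage in $\Spec S\times\Spec S$ is $\Spec(S\otimes_R S)$, in which the diagonal is open and closed.
\end{itemize}
In both cases the tensor product must use the same factor of $P^m_R$ for which $\bar P$ is linear.

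Once this is in place, your inductive uniqueness argument is fine but unnecessary. $S\otimes_R P^m_R$ is generated as a left $S$-module by the elements $1\otimes\overline{1\otimes x}$ with $x\in R$. So an operator of order $\le m$ on $S$ is already determined by its values on $\phi(R)$. Your opening sentence about localizations of polynomial rings and completions plays no role in what you actually do and can be dropped.

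Comparing the two: your route gives the order-by-order isomorphism $S\otimes_R F_mD_R\cong F_mD_S$, and hence, by flatness, $S\otimes_R\gr^F_{\bullet}D_R\cong\gr^F_{\bullet}D_S$. This filtered version is not part of the lemma as stated. It is, however, what is implicitly used when the paper treats finite type of the relative spectrum of $\gr^F_{\bullet}\ds_Y$ as an \'etale-local property in the proof of Lemma~\ref{quot sing finite type}. The citation buys brevity, and avoids setting up principal parts.
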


Given an \'{e}tale morphism $\pi: X \to Y$ of varieties, we define a $\bbc$-algebra map $\ds_Y \to \pi_*\ds_X$ as follows. Over an open subset $W \subset Y$, we send $P\in \Gamma(W,\ds_Y)$ to the unique element $\widetilde{P}\in \Gamma(\pi^{-1}(W),\ds_X)$ such that $\widetilde{P}|_U$ extends $P|_V$ for every affine open $U\subset \pi^{-1}(W)$ that maps into another affine open $V \subset W$. The existence and uniqueness of $\widetilde{P}$ follow from Lemma \ref{etale unique extension}. By the same lemma, if we consider the corresponding $\bbc$-algebra map $\pi^{-1}\ds_Y\to \ds_X$, then the diagram
\[
\begin{tikzcd}
\pi^{-1}\os_Y \arrow[r]\arrow[d] & \os_X \arrow[d] \\
\pi^{-1}\ds_Y \arrow[r] & \ds_X.
\end{tikzcd}
\]
commutes and induces the left $\os_X$-module isomorphism $\os_X \otimes_{\pi^{-1}\os_Y} \pi^{-1}\ds_Y \xrightarrow{\sim} \ds_X$.

\begin{lemma}\label{right pull diff}
Given an \'{e}tale morphism $\pi: X \to Y$ of varieties such that $Y$ is normal, the induced morphism $\pi^{-1}\ds_Y \otimes_{\pi^{-1}\os_Y} \os_X \to \ds_X$ of right $\os_X$-modules is an isomorphism.
\end{lemma}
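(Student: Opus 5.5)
The statement is local on $X$, so we may assume $X=\Spec S$ and $Y=\Spec R$ are affine with $\phi\colon R\to S$ étale. By Lemma \ref{etale unique extension}, $D_S\simeq S\otimes_R D_R$ as left $S$-modules. The plan is to prove the right-module analogue, namely that the multiplication map
$$
\mu\colon D_R\otimes_R S\to D_S,\qquad P\otimes a\mapsto \widetilde{P}\,a,
$$
is bijective. The argument uses the order filtrations and the normality of $R$ to reduce to the smooth locus.

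First, I would compare $\mu$ with the left version on the smooth locus. Let $Y_{sm}\subset Y$ be the smooth locus. Since $\pi$ is étale, $\pi^{-1}(Y_{sm})=X_{sm}$. On $Y_{sm}$ we may work locally with global coordinates $y_1,\dots,y_n$, whose pullbacks are global coordinates on $X$. There $\ds_Y=\bigoplus \os_Y\partial_y^\alpha=\bigoplus \partial_y^\alpha\os_Y$, and $\widetilde{\partial_y^\alpha}=\partial_x^\alpha$. Hence both $\pi^{-1}\ds_Y\otimes\os_X$ and $\ds_X$ are free right $\os_X$-modules on the $\partial^\alpha$, and $\mu$ is an isomorphism on $X_{sm}$.

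Second, I would extend across the singular locus using normality. I would work filtration-by-filtration, with $F_p$ of each side. Both $F_pD_R\otimes_R S$ and $F_pD_S$ are finitely generated $S$-modules. Moreover, $F_pD_S\simeq S\otimes_R F_pD_R$ as left modules, so $F_pD_S$ is reflexive-like. Indeed, \eqref{pull push normal isom} gives $\ds_X=j_*\ds_{X_{sm}}$, and similarly $\ds_Y=j_*\ds_{Y_{sm}}$, filtration-wise. For the target, $\ds_X=j_*\ds_{X_{sm}}$ holds by \eqref{pull push normal isom}, since $X$ is normal, being étale over normal $Y$. For the source, étale base change commutes with $j_*$ by flat base change: $\pi^{-1}(j_*\ds_{Y_{sm}})\otimes\os_X\simeq j_*(\pi^{-1}\ds_{Y_{sm}}\otimes\os_{X_{sm}})$. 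I would apply this to $\pi_*$ of the right-module tensor product, which equals $\ds_Y\otimes_{\os_Y}\pi_*\os_X$ using the right $\os_Y$-structure. The quasicoherence from Remark \ref{qcoh diff} lets us view this as a flat base change of the right $\os_Y$-module $\ds_Y$.

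Then $\mu$ is a map between two sheaves that both equal $j_*$ of their restrictions to the smooth locus, and it is an isomorphism there. Hence $\mu$ is an isomorphism.

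The main obstacle is the second step. The subtlety is that $\ds_Y$ is a right $\os_Y$-module that is quasicoherent, and the right tensor product must be rewritten as a flat pullback of that quasicoherent sheaf along the étale map $\pi$. This requires $F_p\ds_Y$ to be quasicoherent for the right structure, as in Remark \ref{qcoh diff}, together with $j_*$ commuting with flat base change for quasicoherent sheaves. Once this is set up, the rest is a formal gluing.
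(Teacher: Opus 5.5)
Your proposal is correct and follows essentially the paper's proof: both use normality ($\ds_X=j'_*\ds_{X_{sm}}$ and $\ds_Y=j_*\ds_{Y_{sm}}$) together with flat base change of the right-quasicoherent sheaf $\ds_{Y_{sm}}$ along $\pi$ to reduce to $Y$ smooth with global coordinates, where both sides are free right modules on the $\partial^{\alpha}$ and the map sends $\partial_y^{\alpha}\mapsto\partial_x^{\alpha}$. The filtration-by-filtration and ``reflexive-like'' remarks in your second step are unnecessary, since the $j_*$ comparison for the full sheaves with their right $\os$-structure already suffices.
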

\proof Let $j: Y_{sm} \to Y$ and $j': X_{sm} \to X$ denote the inclusions, and let $\pi'$ denote the morphism $\pi|_{X_{sm}}: X_{sm} \to Y_{sm}$. Since $\pi$ is \'{e}tale, $X$ is normal and $\pi^{-1}(Y_{sm})=X_{sm}$, so by \eqref{pull push normal isom} we have $\ds_X = j'_*\ds_{X_{sm}}$ and
$$
\pi^{-1}\ds_Y \otimes_{\pi^{-1}\os_Y} \os_X \xrightarrow{\sim}
\pi^{-1}(j_*\ds_{Y_{sm}})\otimes_{\pi^{-1}\os_Y} \os_X \xrightarrow{\sim}
j'_*(\pi'^{-1}\ds_{Y_{sm}} \otimes_{\pi'^{-1}\os_{Y_{sm}}} \os_{X_{sm}} ),
$$
where the second isomorphism follows from flat base change.
Thus, we reduce to the case when $Y$ is smooth. Since the problem is local on $X$ and on $Y$, we may further reduce to the case when $X$ and $Y$ are affine and $Y$ has global coordinates $y_1,\dots,y_n\in \os_Y(Y)$. Let $X=\Spec S$ and $Y=\Spec R$, and consider the $\bbc$-algebra map $D_R \to D_S$ defined in Lemma \ref{etale unique extension}. It suffices to show that the induced right $S$-module map $D_R \otimes_R S \to D_S$ is an isomorphism. If we put $x_i=\pi^*y_i$ for each $i$, then $x_1,\dots,x_n\in \os_X(X)$ are global coordinates on $X$ since $\pi$ is \'{e}tale, and by the chain rule, $\partial_{x_i}\in D_S$ extends $\partial_{y_i}\in D_R$. Thus the map $D_R \to D_S$ sends $\partial_y^{\alpha}$ to $\partial_x^{\alpha}$ for $\alpha \in \bbz_{\geq 0}^n$. Since $D_R$ is a free right $R$-module with basis $\{\partial_y^{\alpha}\ | \ \alpha \in \bbz_{\geq 0}^n\}$ and $D_S$ is a free right $S$-module with basis $\{\partial_x^{\alpha}\ | \ \alpha \in \bbz_{\geq 0}^n\}$ (see, for example, \cite[Lemma 1.2.7]{HTT08}), the map $D_R \otimes_R S \to D_S$ is an isomorphism, as desired. \qed

\noindent{\it Proof of Theorem \ref{etale pull b}}. By flat base change, we have $\pi^*\omega_Y \simeq \omega_X$, hence by Lemma \ref{bfomega action}, we have $\pi^*B_{\bff}^{\omega} \simeq B_{\pi^*\bff}^{\omega}$. By Lemma \ref{right pull diff}, we have
$$
\pi^{-1}(V^k\rs_Y) \otimes_{\pi^{-1}\os_Y} \os_X =  (\pi^{-1}\ds_Y \otimes_{\pi^{-1}\os_Y} \os_X) \otimes_{\bbc} V^kD_{\bba^r} \simeq \ds_X \otimes_{\bbc}V^kD_{\bba^r} = V^k\rs_X
$$
for $k\in \bbz$. Since $\pi$ is flat, we then obtain $\pi^*(v\cdot V^k\rs_Y) = \pi^*v \cdot V^k\rs_X \subset B_{\pi^*\bff}^{\omega}$ and
$$
\pi^*(v\cdot V^0\rs_Y/v\cdot V^1\rs_Y) = \pi^*v \cdot V^0\rs_X/\pi^*v\cdot V^1\rs_X.
$$
If we denote the actions of $s$ on $v\cdot V^0\rs_Y/v\cdot V^1\rs_Y$ and $\pi^*(v\cdot V^0\rs_Y/v\cdot V^1\rs_Y)$ by $s_Y$ and $s_X$, respectively, then $s_Y$ is $\os_Y$-linear and $\pi^*s_Y=s_X$. By Remark \ref{b function annihilator}, $b_v(s)$ and $b_{\pi^*v}(s)$ are the minimal polynomials of $s_Y$ and $s_X$, respectively. The result then follows since $\pi$ is faithfully flat. \qed

\section{Coherent modules and good filtrations}\label{coherent modules and good filtrations}

Throughout this section, we fix an integer $r \geq 1$ and a variety $X$ such that the morphism $\rSpec_X(\gr^F_{\bullet}\ds_X) \to X$ is of finite type. We denote the standard coordinates on $\bba^r$ by $t_1,\dots,t_r$ and put $\rs_X = \ds_X \otimes_{\bbc} D_{\bba^r}$. Recall the $V$-filtrations on $\ds_{X\times \bba^r}$ and $\rs_X$ from Section \ref{the v-filtration}. 

We say that a graded $R^+_V\rs_X$-module is coherent if it is coherent as an $R^+_V\rs_X$-module. Given a graded $R^+_V\rs_X$-module $\ns=\bigoplus_{k\in \bbz}\ns_k T^k$, we say that an $\os_X$-submodule $\gs \subset \ns$ is an $\os_X$-coherent graded subsheaf if $\gs$ is $\os_X$-coherent and $\gs = \bigoplus_{k\in \bbz} \gs_kT^k$.

Consider the following characterization of coherent modules over certain sheaves of rings.

\begin{lemma}\label{coherent char}
Let $\as$ be a quasicoherent sheaf of rings on a scheme $Z$ such that $\Gamma(U,\as)$ is a right noetherian ring for every affine open $U \subset Z$. Then a right $\as$-module $\ns$ is coherent if and only if it is quasicoherent and $\Gamma(U,\ns)$ is a finitely generated $\Gamma(U,\as)$-module for every affine open $U \subset Z$. The corresponding statement for left modules also holds, with the obvious modifications.
\end{lemma}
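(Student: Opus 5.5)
The plan is to reduce everything to a single affine open $U=\Spec A_0\subset Z$ and then use the usual correspondence between quasicoherent modules and modules over global sections, with $A_0$-localization in place of commutative localization. Put $A=\Gamma(U,\as)$ and let $f\in A_0$. The first step is to show that $\Gamma(U_f,\as)$ is simultaneously $A\otimes_{A_0}(A_0)_f$ and $(A_0)_f\otimes_{A_0}A$. This holds because $\as$ is quasicoherent both as a left and as a right $\os_Z$-module. Consequently $\Gamma(U_f,\as)$ is the (two-sided Ore) localization of $A$ at the multiplicative set $\{f^k\}$, and it is flat over $A$ on both sides.

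Similarly, for a quasicoherent right $\as$-module $\ns$ with $M=\Gamma(U,\ns)$, we get
$$\Gamma(U_f,\ns)=M\otimes_{A_0}(A_0)_f = M\otimes_A\Gamma(U_f,\as).$$
Thus $\ns|_U$ is determined by the right $A$-module $M$, and $M\mapsto \ns$ is exact. I expect this identification of sections over basic opens to be the main technical point. Everything else is formal once we know that localization on basic opens is a flat base change of the ring $A$ that agrees on the left and on the right.

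\emph{Forward direction.} Suppose $\ns$ is coherent. Locally it is the cokernel of a map $\as^m\to\as^n$, and hence it is quasicoherent, since cokernels of quasicoherent modules over a quasicoherent sheaf of rings are quasicoherent. Now fix an affine $U$. Cover $U$ by finitely many basic opens $U_{f_i}$ on each of which $\ns$ is generated by finitely many sections. Using quasicoherence, multiply these generators by suitable powers of $f_i$ so that they become restrictions of elements of $M=\Gamma(U,\ns)$, and let $M'\subset M$ be the right $A$-submodule generated by all of them. The inclusion $M'\to M$ becomes surjective after applying $-\otimes_A\Gamma(U_{f_i},\as)$ for every $i$. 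By exactness of localization, the quotient $M/M'$ vanishes on each $U_{f_i}$, so $M'=M$ and $M$ is finitely generated.

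\emph{Converse.} Suppose $\ns$ is quasicoherent with all $\Gamma(U,\ns)$ finitely generated. Then $\ns$ is of finite type: a surjection $A^n\to M$ induces $\as|_U^n\to\ns|_U$, which is surjective on every basic open by the identification above. For the kernel condition, take an open $V$ and a map $\as|_V^m\to\ns|_V$; since the condition is local, we may shrink $V$ to a basic affine open. Such a map is determined by the images of the standard basis vectors, so it comes from a right $\Gamma(V,\as)$-linear map $\Gamma(V,\as)^m\to\Gamma(V,\ns)$. Its kernel $K$ is a submodule of a finitely generated module over the right noetherian ring $\Gamma(V,\as)$, so $K$ is finitely generated. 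By flatness of $\Gamma(V,\as)\to\Gamma(V_f,\as)$, the kernel sheaf is the quasicoherent sheaf associated to $K$, hence of finite type. This proves that $\ns$ is coherent.

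The left-module statement follows by the identical argument, using the left-module description of $\Gamma(U_f,\as)$ and left noetherianity instead.
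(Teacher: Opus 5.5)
Your proposal is correct and follows essentially the route the paper intends: the paper omits the proof and defers to \cite[Proposition 1.4.9]{HTT08}. That argument likewise identifies sections over basic opens with flat, two-sided localizations $M\otimes_A\Gamma(U_f,\as)=M\otimes_{A_0}(A_0)_f$, gets finite generation of $\Gamma(U,\ns)$ from local finite presentation, and gets the kernel condition from right noetherianity of $\Gamma(V,\as)$. The one step you leave implicit, that sheaf-theoretic generation of $\ns|_{U_{f_i}}$ gives generation of $\Gamma(U_{f_i},\ns)$, follows from the same quasicoherence/localization identification, so there is no gap.
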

\proof The proof is identical to that of \cite[Proposition 1.4.9]{HTT08}, so we omit it. \qed 

The next lemma shows that the quasicoherent sheaves of rings $V^0\ds_{X\times \bba^r}$, $V^0\rs_X$, and $R^+_V\rs_X$ satisfy the hypothesis of Lemma \ref{coherent char}. Recall that a noetherian ring is, by definition, both left noetherian and right noetherian.

\begin{lemma}\label{finite type assumption}
The rings $\Gamma(U', V^0\ds_{X\times \bba^r})$, $\Gamma(U, V^0\rs_X)$, and $\Gamma(U, R^+_V\rs_X)$ are noetherian for affine open subsets $U' \subset X \times \bba^r$ and $U\subset X$.
\end{lemma}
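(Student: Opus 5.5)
The plan is to reduce all three rings to the single ring $\Gamma(U,\ds_X)$ and its associated graded, and to use the standing hypothesis that $\rSpec_X(\gr^F_{\bullet}\ds_X)\to X$ is of finite type. For an affine open $U=\Spec A\subset X$, that hypothesis says $\gr^F_\bullet D_A=\Gamma(U,\gr^F_\bullet\ds_X)$ is a finitely generated commutative $\bbc$-algebra, hence noetherian. Commutativity holds because $[F_p,F_q]\subset F_{p+q-1}$ for Grothendieck differential operators.

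Next we form the filtration on $\Gamma(U,R^+_V\rs_X)=D_A\otimes_{\bbc} R^+_VD_{\bba^r}$. We filter it by the total order filtration: order $\leq p$ in $D_A$ plus total degree in the $\partial_{t_i}$ and $t_i$. Concretely, we give $t_i$ and $\partial_{t_i}$ weight $1$, so the associated graded of $D_{\bba^r}$ is the polynomial ring $\bbc[t,\tau]$. This filtration restricts to the graded pieces $V^k$, which are spanned by monomials, so the associated graded is
$$
\gr^F_\bullet D_A\otimes_{\bbc} \bigoplus_{k\geq0}\Big(\bigoplus_{|\alpha|-|\beta|\geq k}\bbc\, t^\alpha\tau^\beta\Big)T^k.
$$
The second factor is the commutative Rees algebra of the monomial filtration on $\bbc[t,\tau]$. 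It is the semigroup ring of the finitely generated (Gordan) saturated monoid $\{(\alpha,\beta,k): k\geq 0, |\alpha|-|\beta|\geq k\}$, hence a finitely generated $\bbc$-algebra. Its explicit generators are $t_i$, $\partial_{t_i}$, $t_i\partial_{t_j}$ in degree $0$, and $t_iT$ in degree $1$.

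The associated graded is then a tensor product of two finitely generated commutative $\bbc$-algebras, so it is noetherian. The filtration is exhaustive, increasing, and starts at $F_{-1}=0$. The standard lemma then applies: a ring with such a filtration whose associated graded is left and right noetherian is itself left and right noetherian. So $\Gamma(U,R^+_V\rs_X)$ is noetherian.

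The same argument in degree $0$ alone gives the result for $\Gamma(U,V^0\rs_X)$. Alternatively, $V^0\rs_X$ is the degree-$0$ part of a graded noetherian ring, and that part is noetherian.

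For $U'\subset X\times\bba^r$ affine, we argue as follows. By Lemma \ref{qcoh V filtration}, $V^0\ds_{X\times\bba^r}$ is quasicoherent, and noetherianity can be checked on a finite affine cover by standard patching. So we may take $U'$ to be a basic open $D(h)$ inside $U\times\bba^r$. Then $\Gamma(U',V^0\ds)$ is the (two-sided Ore) localization of $\Gamma(U,V^0\rs_X)\otimes$ at powers of $h$. This uses that $\ds$ is a differential-operator ring, so localization at functions is flat and Ore. Localization preserves noetherianity. Alternatively, we run the same associated-graded argument with $\gr^F$ of $D_{A[t]_h}$, which is again of finite type over $\gr D_A$ tensored with $\bbc[t]_h[\tau]$.

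The main obstacle is making sure the associated graded of $R^+_VD_{\bba^r}$, with respect to a filtration compatible with $F$ on $D_A$, is genuinely commutative and finitely generated. This is why we use total degree in $t$ and $\partial_t$ rather than the $\partial_t$-order alone. A second obstacle is the Ore or localization compatibility needed for the $U'$ case on a possibly singular $X$. There we plan to exploit that $\ds_X$ is quasicoherent and that $\gr^F$ commutes with localization.
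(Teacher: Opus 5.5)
Your argument is correct and has the same skeleton as the paper's proof. You filter compatibly with the order filtration on $\ds_X$, and identify the associated graded as $\Gamma(U,\gr^F_{\bullet}\ds_X)$ tensored over $\bbc$ with the monoid algebra of $\{(\alpha,\beta,k) : |\alpha|-|\beta|\geq k\geq 0\}$, which is finitely generated by Gordan's lemma. You then apply the filtered-ring noetherianity criterion and treat $V^0\rs_X$ as the degree-zero part of $R^+_V\rs_X$.

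Where you differ is the filtration on the $D_{\bba^r}$ factor, and this changes how you handle $U'$.

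\begin{itemize}
\item The paper uses the plain order filtration, in which $t_i$ sits in degree $0$. Then $\gr^F_{\bullet}V^0\ds_{X\times\bba^r}=\gr^F_{\bullet}\ds_X\boxtimes\gr^F_{\bullet}V^0\ds_{\bba^r}$ is a quasicoherent commutative $\os_{X\times\bba^r}$-algebra of finite type. So every affine open $U'\subset X\times\bba^r$ is handled directly, with no localization.
\item Your total-degree filtration puts $t_i$ in positive degree, so it does not survive inverting a function $h$ that involves the $t_i$. That is why you need the extra Ore-localization-plus-gluing step.
\end{itemize}

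That step does work. The operator $\operatorname{ad}(h)$ is locally nilpotent on $V^0\ds_{X\times\bba^r}$, so $\{h^n\}$ is a two-sided Ore set. By quasicoherence, the sections over $D(h)$ are exactly this Ore localization of $\Gamma(U,V^0\rs_X)$. Noetherianity then descends along a finite cover of $U'$ by opens that are basic both in $U'$ and in some $U\times\bba^r$.

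However, this extra work comes from a misdiagnosis. The $\partial_t$-order filtration already has commutative associated graded $\bbc[t_1,\dots,t_r,y_1,\dots,y_r]$. Moreover, $\gr^F_{\bullet}R^+_VD_{\bba^r}$ is the very same Gordan monoid algebra. So nothing forces total degree, and the paper's choice is strictly more economical. Your suggested alternative for $U'$ ($\gr^F$ of $D_{A[t]_h}$, ``$\bbc[t]_h[\tau]$'') is, once corrected, exactly the paper's argument. The corrections are that $h$ lies in $A[t_1,\dots,t_r]$, not in $\bbc[t]$, and that one must use the order filtration.

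Two small slips:
\begin{itemize}
\item $\partial_{t_i}\notin V^0D_{\bba^r}$ (it lies in $V^{-1}$), so it is not one of your generators. The correct generators are $t_i$ and $t_i\partial_{t_j}$ in degree $0$, and $t_iT$ in degree $1$. This is harmless, since you only use Gordan's lemma.
\item There is a stray $\otimes$ after $\Gamma(U,V^0\rs_X)$.
\end{itemize}
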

\proof We begin with the following claim. Consider the filtrations on $V^kD_{\bba^r}$ and $R^+_VD_{\bba^r}$ given by $F_pV^kD_{\bba^r} = F_pD_{\bba^r} \cap V^kD_{\bba^r}$ and $F_pR^+_VD_{\bba^r} = \bigoplus_{k \geq 0} F_p V^kD_{\bba^r}T^k$ for $p\in \bbz$.

\noindent{\bf Claim}.
The $\bbc$-algebras $\gr_{\bullet}^F V^0D_{\bba^r}$ and $\gr_{\bullet}^F R^+_VD_{\bba^r}$ are of finite type.

\noindent{\it Proof of the claim}. Since $\gr_{\bullet}^F V^0D_{\bba^r}$ is a quotient of $\gr_{\bullet}^F R^+_VD_{\bba^r}$, it suffices to show that $\gr_{\bullet}^F R^+_VD_{\bba^r}$ is of finite type. Let $y_i \in \gr^F_1D_{\bba^r}$ denote the image of $\partial_{t_i}$, so that $\gr^F_{\bullet}D_{\bba^r} = \bbc[t_1,\dots,t_r, y_1,\dots,y_r]$. Then we have
$$
\gr_{\bullet}^F R^+_VD_{\bba^r} = \bbc[t^{\alpha}y^{\beta} T^k \ | \  |\alpha|-|\beta|\geq k] \subset ( \gr^F_{\bullet}D_{\bba^r})[T],
$$
which is a $\bbc$-algebra of finite type by Gordan's lemma (see \cite[Proposition 1.2.17]{CLS11}). \qed

Next, we consider the filtration on $V^0\ds_{X\times \bba^r} = \ds_X \boxtimes V^0\ds_{\bba^r}$ given by 
$$
F_kV^0\ds_{X\times \bba^r} = \sum_{p+q=k}F_p\ds_X \boxtimes F_qV^0\ds_{\bba^r},
$$
where $F_qV^0\ds_{\bba^r} = F_q\ds_{\bba^r} \cap V^0\ds_{\bba^r}$. Since $\gr^F_{\bullet}V^0\ds_{X \times \bba^r} = \gr^F_{\bullet}\ds_X \boxtimes \gr^F_{\bullet}V^0\ds_{\bba^r}$, we have
$$
\rSpec_{X\times \bba^r} (\gr^F_{\bullet}V^0\ds_{X \times \bba^r}) 
= \rSpec_X(\gr^F_{\bullet}\ds_X) \times \rSpec_{\bba^r}(\gr^F_{\bullet}V^0\ds_{\bba^r}).
$$
Thus the morphism $\rSpec_{X\times \bba^r} (\gr^F_{\bullet}V^0\ds_{X \times \bba^r}) \to X \times \bba^r$ is of finite type by our assumption on $X$ and the claim. If we put $F_k\Gamma(W, V^0\ds_{X \times \bba^r}) = \Gamma(W, F_kV^0\ds_{X \times \bba^r})$ for an affine open $W \subset X \times \bba^r$, then the $\bbc$-algebra $\gr^F_{\bullet}\Gamma(W, V^0\ds_{X \times \bba^r}) = \Gamma(W, \gr^F_{\bullet}V^0\ds_{X \times \bba^r})$ is of finite type, hence $\Gamma(W, V^0\ds_{X \times \bba^r})$ is noetherian by \cite[Proposition D.1.4]{HTT08}. 

Now consider the filtration on $R^+_V\rs_X$ given by
$$
F_kR^+_V\rs_X = \sum_{p+q=k} F_p\ds_X \otimes_{\bbc} F_qR^+_VD_{\bba^r}.
$$
Note that $\gr_F^{\bullet}(R^+_V\rs_X) = \gr^F_{\bullet}\ds_X \otimes_{\bbc} \gr^F_{\bullet} R^+_VD_{\bba^r}$. Thus, given an affine open $U \subset X$, if we put $F_k\Gamma(U, R^+_V\rs_X)=\Gamma(U,F_kR^+_V\rs_X )$, then the $\bbc$-algebra
$$
\gr^F_{\bullet}\Gamma(U, R^+_V\rs_X) = \Gamma(U, \gr^F_{\bullet}R^+_V\rs_X)  = \Gamma(U,  \gr^F_{\bullet}\ds_X) \otimes_{\bbc} \gr^F_{\bullet} R^+_VD_{\bba^r}
$$
is of finite type by our assumption on $X$ and the claim, hence $\Gamma(U, R^+_V\rs_X)$ is noetherian by \cite[Proposition D.1.4]{HTT08}. Finally, $\Gamma(U, V^0\rs_X)$ is noetherian since it is a quotient of $\Gamma(U, R^+_V\rs_X)$. \qed

Therefore, Lemma \ref{coherent char} provides a characterization of coherent $V^0\ds_{X\times \bba^r}$-, $V^0\rs_X$-, and $R^+_V\ds_{X\times \bba^r}$-modules.

\begin{lemma}\label{surjection}
Given a coherent right $V^0\ds_{X\times \bba^r}$-module $\ms$, there exists an $\os_{X \times \bba^r}$-coherent subsheaf $\fs \subset \ms$ that generates $\ms$ as a $V^0\ds_{X\times \bba^r}$-module.
\end{lemma}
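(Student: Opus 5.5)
The plan is the standard argument for coherent $\ds$-modules (compare \cite[Proposition 1.4.9]{HTT08}), adapted to $V^0\ds_{X\times \bba^r}$. The two ingredients are quasicompactness of $X\times\bba^r$ and the fact that a quasicoherent $\os$-module is the union of its coherent subsheaves.

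First, cover $X\times \bba^r$ by finitely many affine opens $W_1,\dots,W_m$. This is possible since $X$ is of finite type over $\bbc$, so $X\times\bba^r$ is quasicompact. By Lemma \ref{finite type assumption} and Lemma \ref{coherent char}, $\ms$ is quasicoherent over $V^0\ds_{X\times\bba^r}$, hence quasicoherent as an $\os_{X\times\bba^r}$-module. Moreover, each $\Gamma(W_j,\ms)$ is a finitely generated right $\Gamma(W_j,V^0\ds_{X\times \bba^r})$-module. Choose finitely many generators $m_{j,1},\dots,m_{j,n_j}\in\Gamma(W_j,\ms)$.

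Next, for each $j$, let $\fs_j\subset \ms|_{W_j}$ be the $\os_{W_j}$-submodule generated by the $m_{j,l}$. This is a coherent subsheaf of $\ms|_{W_j}$. By the standard extension result for coherent subsheaves of quasicoherent sheaves on noetherian schemes (EGA I, 9.4.7, or Hartshorne Exercise II.5.15), there is a coherent $\os_{X\times\bba^r}$-submodule $\fs'_j\subset\ms$ with $\fs'_j|_{W_j}\supset\fs_j$. Set $\fs=\sum_j \fs'_j$. It is a finite sum of coherent subsheaves of the quasicoherent sheaf $\ms$, hence coherent.

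It remains to check generation. Since $\ms$ and $\fs\cdot V^0\ds_{X\times\bba^r}$ are both quasicoherent, this is local. Note that $\fs\cdot V^0\ds_{X\times \bba^r}$ is the image of $\fs\otimes_{\os}V^0\ds_{X\times \bba^r}\to\ms$, and it is quasicoherent because $V^0\ds_{X\times\bba^r}$ is a quasicoherent bimodule by Lemma \ref{qcoh V filtration}. On each affine $W_j$, the sections of $\fs\cdot V^0\ds_{X\times\bba^r}$ contain every $m_{j,l}$. They therefore contain $\sum_l m_{j,l}\Gamma(W_j,V^0\ds_{X\times\bba^r})=\Gamma(W_j,\ms)$, which gives equality on $W_j$. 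Hence $\fs\cdot V^0\ds_{X\times\bba^r}=\ms$.

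The only step needing care is the extension of $\fs_j$ from $W_j$ to a coherent subsheaf of all of $\ms$. My first attempt would be to cite the EGA result directly. If one prefers a hands-on argument, write $\ms|$ as the union of its coherent subsheaves: this is possible because $\ms$ is quasicoherent on a noetherian scheme. Then use the fact that $\fs_j$ is finitely generated on the quasicompact affine $W_j$, so it lies in one such coherent subsheaf after restriction.
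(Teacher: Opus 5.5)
Your proof is correct and follows essentially the same route as the paper. The paper simply cites \cite[Corollary 1.4.17]{HTT08}, whose argument is the standard one you give: quasicoherence of $\ms$ (via Lemmas \ref{finite type assumption} and \ref{coherent char}), finite generation on a finite affine cover, and extending or exhausting by coherent $\os$-subsheaves. The paper itself uses the same extension step, \cite[Lemma 28.23.2]{Stacks}, in the proof of Lemma \ref{graded surjection}.
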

\proof The argument is identical to that of \cite[Corollary 1.4.17]{HTT08}, so we omit it.\qed


\begin{corollary}\label{torsion}
Given a coherent right $V^0\ds_{X\times \bba^r}$-module $\ms$ supported on the closed subset $X\times 0 \subset X\times \bba^r$, there exists an integer $N \geq 0$ such that $\ms \cdot V^N\ds_{X\times \bba^r}=0$.
\end{corollary}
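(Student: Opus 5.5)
The plan is to combine Lemma \ref{surjection}, which gives an $\os_{X\times\bba^r}$-coherent generating subsheaf, with the standard fact that a coherent $\os$-module supported on $V(\is)$ is killed by a power of $\is$. The remaining step is then to push this power of $\is$ past $V^0\ds_{X\times\bba^r}$ using Lemma \ref{generation from zero}.

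First, Lemma \ref{surjection} gives an $\os_{X\times\bba^r}$-coherent subsheaf $\fs\subset\ms$ with $\ms = \fs\cdot V^0\ds_{X\times\bba^r}$. Here $\fs$ is a right $\os$-submodule and $\os\subset V^0\ds$. Since $\ms$ is supported on $X\times 0 = V(\is)$, so is $\fs$. Because $\fs$ is coherent, every local section is annihilated by a power of $\is$. We would like a single uniform exponent $N$ with $\fs\cdot\is^N=0$. This is immediate on an affine open: $\Gamma(U,\fs)$ is finitely generated, each generator is killed by some power of $\is$, and we take the maximum of these exponents. To get a global bound we need $X$ to be quasi-compact. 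This holds because $X$ is a variety, hence of finite type over $\bbc$. So we cover $X\times\bba^r$ by finitely many affines of the form $U\times\bba^r$ and take the maximum $N$ over them. One way to phrase the local step is via the annihilator ideal $\Ann(\fs)$, which is coherent and satisfies $\sqrt{\Ann(\fs)}\supset\is$ on the support; since $\is$ is finitely generated, some power $\is^N$ lies in $\Ann(\fs)$ locally.

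Next, we use Lemma \ref{generation from zero}. It is stated as $V^N\ds = \is^N\cdot V^0\ds$, with $\is^N$ multiplying on the left. Then
$$\ms\cdot V^N\ds_{X\times\bba^r} = \fs\cdot V^0\ds_{X\times\bba^r}\cdot \is^N\cdot V^0\ds_{X\times\bba^r}.$$
The obstacle is that $\is^N$ sits to the right of $V^0\ds$ rather than directly to the right of $\fs$. The fix is to show that $V^0\ds\cdot\is^N\subset \is^N\cdot V^0\ds$. This holds because $V^0\ds\cdot\is^N\subset V^N\ds$ (elements of $V^0\ds$ preserve every power $\is^j$, and so do elements of $\is^N$ composed on the right with the shift), and $V^N\ds=\is^N V^0\ds$ by Lemma \ref{generation from zero}. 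Alternatively, one can apply the right-handed form $V^N\rs_X=\sum_{|\alpha|=N}V^0\rs_X\, t^\alpha$, which follows from the $\partial_t^\beta t^\alpha$ presentation in Lemma \ref{explicit V filtration}, and then pass to $\ds_{X\times\bba^r}$ by quasicoherence as in the proof of Lemma \ref{generation from zero}.

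With either version, $V^N\ds = V^0\ds\cdot\is^N\cdot V^0\ds$ and $V^0\ds\,\is^N V^0\ds = \is^N V^0\ds$. Hence
$$\ms\cdot V^N\ds = \fs\cdot\is^N\cdot V^0\ds = 0.$$

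The main care point is this left/right bookkeeping, namely the inclusion $V^0\ds\cdot\is^N\subset\is^N\cdot V^0\ds$, together with the finiteness needed for a uniform $N$. Both are routine given the results above.
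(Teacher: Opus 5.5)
Your proof is correct and follows the paper's argument. You take the coherent generating subsheaf $\fs$ from Lemma \ref{surjection}, kill it by $\is^N$, and use Lemma \ref{generation from zero} to reach $\fs\cdot\is^N\cdot V^0\ds_{X\times\bba^r}=0$.

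The left/right commutation step you flag is not needed. Since $V^0\ds_{X\times\bba^r}\cdot V^N\ds_{X\times\bba^r}\subset V^N\ds_{X\times\bba^r}$, one has directly $\ms\cdot V^N\ds_{X\times\bba^r}=\fs\cdot V^N\ds_{X\times\bba^r}=\fs\cdot\is^N\cdot V^0\ds_{X\times\bba^r}$, which is exactly what the paper writes. Your quasi-compactness argument for a uniform $N$ fills in a step the paper leaves implicit.
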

\proof Let $\is \subset \os_{X\times \bba^r}$ be the coherent sheaf of ideals generated by $t_1,\dots,t_r$. By Lemma \ref{surjection}, there exists a $\os_{X\times \bba^r}$-coherent subsheaf $\fs \subset \ms$ such that $\ms=\fs \cdot V^0\ds_{X\times \bba^r}$. Since $\fs$ is supported on $X\times 0$, there exists $N\geq 0$ such that $\fs \cdot \is^N = 0$. By Lemma \ref{generation from zero}, we then have
$$
\ms \cdot V^N\ds_{X\times \bba^r} = \fs \cdot V^N\ds_{X\times \bba^r}  = \fs \cdot \is^N \cdot V^0\ds_{X\times \bba^r} = 0. \qed
$$

\begin{lemma}\label{coherent pieces}
If $\ns=\bigoplus_{k\in \bbz} \ns_k T^k$ is a coherent graded right $R^+_V\rs_X$-module, then $\ns_k$ is a coherent right $V^0\rs_X$-module for all $k\in \bbz$.
\end{lemma}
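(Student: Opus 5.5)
By Lemma \ref{coherent char} together with Lemma \ref{finite type assumption}, it suffices to show that each $\ns_k$ is quasicoherent over $\os_X$ and that $\Gamma(U,\ns_k)$ is a finitely generated right $\Gamma(U,V^0\rs_X)$-module for every affine open $U\subset X$. Quasicoherence is immediate. Since $\ns$ is $R^+_V\rs_X$-quasicoherent and the grading is compatible with the $\os_X$-action (which sits in degree $0$), each graded piece $\ns_k$ is an $\os_X$-direct summand of $\ns$. It is therefore quasicoherent, for instance by Lemma \ref{qcoh direct summands}. It is also a right $V^0\rs_X$-module, because $V^0\rs_X = (R^+_V\rs_X)_0$ preserves degrees.

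For finite generation, fix an affine open $U$ and set $A=\Gamma(U,R^+_V\rs_X)$ and $N=\Gamma(U,\ns)=\bigoplus_k N_kT^k$. The module $N$ is finitely generated over $A$, and by decomposing each generator into its homogeneous components we may take homogeneous generators $n_1T^{d_1},\dots,n_mT^{d_m}$. Since $A$ is concentrated in nonnegative degrees, we get
$$
N_k \;=\; \sum_{j\,:\,d_j\le k} n_j\cdot \Gamma(U,V^{k-d_j}\rs_X).
$$
Now set $e=k-d_j\ge 0$. By \eqref{push generation from zero} we have $V^{e}\rs_X=\sum_{|\alpha|=e}t^{\alpha}V^0\rs_X$, and this equality also holds on global sections over $U$ because everything is a direct sum description from Lemma \ref{explicit V filtration}. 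It follows that
$$
N_k=\sum_{j\,:\,d_j\le k}\ \sum_{|\alpha|=k-d_j} (n_jt^{\alpha})\cdot \Gamma(U,V^0\rs_X),
$$
where $n_jt^{\alpha}$ denotes the degree-$k$ element $n_jT^{d_j}\cdot t^{\alpha}T^{k-d_j}$. This is a finite generating set over $\Gamma(U,V^0\rs_X)$. If $k<\min_j d_j$, then $N_k=0$.

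The only point requiring care is the generation step. We must check that $t^{\alpha}$ with $|\alpha|=e$ lies in $V^e\rs_X$ and that the degree-$e$ component $V^e\rs_X T^e$ of $A$ is generated on the left by these elements over $V^0\rs_X$, with the elements placed on the left as the right-module action requires. The latter is exactly the first equality in \eqref{push generation from zero}, so no real obstacle is expected.
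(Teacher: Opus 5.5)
Your proposal is correct and follows the paper's proof essentially step for step. Quasicoherence of $\ns_k$ comes from Lemma \ref{qcoh direct summands}. Then, on an affine open $U$, a finite set of homogeneous generators of $\Gamma(U,\ns)$ together with the identity $V^{e}\rs_X=\sum_{|\alpha|=e}t^{\alpha}V^0\rs_X$ from \eqref{push generation from zero} yields the finite generating set $\{n_j t^{\alpha}\}$ of $\Gamma(U,\ns_k)$ over $\Gamma(U,V^0\rs_X)$.
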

\proof By Remark \ref{qcoh direct summands}, $\ns_k$ is quasicoherent for each $k$. Thus by Lemma \ref{coherent char}, it suffices to show that if $U \subset X$ is an affine open, then $\Gamma(U,\ns_k)$ is a finitely generated $\Gamma(U,V^0\rs_X)$-module for each $k$. By Lemma \ref{coherent char}, we can choose finitely many homogeneous $\Gamma(U,R^+_V\rs_X)$-module generators $v_1T^{k_1},\dots,v_mT^{k_m}$ of $\Gamma(U,\ns)=\bigoplus_{k\in \bbz} \Gamma(U,\ns_k) T^k$. Observe that
$$
\Gamma(U,\ns_k) = \sum_{k_i\leq k} v_i \cdot \Gamma(U,V^{k-k_i}\rs_X)
=
\sum_{k_i\leq k}\sum_{\substack{\alpha\in \bbz^r_{\geq 0}, \\ |\alpha|=k-k_i}} (v_i \cdot t^{\alpha} ) \cdot \Gamma(U,V^0\rs_X),
$$
where the first sum runs over $i\in \{1,\dots,m\}$ such that $k_i\leq k$, and the second equality follows from \eqref{push generation from zero}. Thus $\Gamma(U,\ns_k)$ is finitely generated as a $\Gamma(U,V^0\rs_X)$-module, as desired. \qed

\begin{lemma}\label{graded surjection}
Given a coherent graded right $R^+_V\rs_X$-module $\ns=\bigoplus_{k\in \bbz}\ns_kT^k$, there exists an $\os_X$-coherent graded subsheaf $\gs \subset \ns$ that generates $\ns$ as an $R^+_V\rs_X$-module.
\end{lemma}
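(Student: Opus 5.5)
The argument is the graded analogue of Lemma \ref{surjection}, i.e.\ of \cite[Corollary 1.4.17]{HTT08}: first find generators over affine opens, then glue them using quasicoherence. Coherence of $\ns$ over $R^+_V\rs_X$ is characterized by Lemma \ref{coherent char}, which applies because of Lemma \ref{finite type assumption}.

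First, the local construction. Choose a finite affine open cover $X=\bigcup_{j} U_j$; since $X$ is of finite type it is quasicompact. By Lemma \ref{coherent char}, each $\Gamma(U_j,\ns)$ is a finitely generated $\Gamma(U_j,R^+_V\rs_X)$-module. Decomposing generators into homogeneous components, we may take them homogeneous: $w_{j,1}T^{k_{j,1}},\dots,w_{j,m_j}T^{k_{j,m_j}}$ with $w_{j,i}\in\Gamma(U_j,\ns_{k_{j,i}})$. Here we use that $\ns$ is graded and $R^+_V\rs_X$ acts compatibly, so the homogeneous components of a generating set still generate.

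Second, extension to all of $X$. Each $\ns_k$ is a quasicoherent $\os_X$-module by Remark \ref{qcoh direct summands}. Hence, as in the proof of \cite[Corollary 1.4.17]{HTT08}, the $\os_{U_j}$-coherent subsheaf of $\ns_{k}|_{U_j}$ generated by the $w_{j,i}$ with $k_{j,i}=k$ extends to an $\os_X$-coherent subsheaf $\gs_{j,k}\subset\ns_k$; this is the standard extension lemma for coherent subsheaves of a quasicoherent sheaf on a noetherian scheme. Only finitely many pairs $(j,k)$ occur. Set
$$\gs_k=\sum_{j}\gs_{j,k}\subset\ns_k, \qquad \gs=\bigoplus_k\gs_kT^k.$$
This is a finite sum of coherent sheaves, so $\gs$ is an $\os_X$-coherent graded subsheaf.

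Third, generation. The image of $\gs\otimes R^+_V\rs_X\to\ns$ is a subsheaf. On each $U_j$ it contains all $w_{j,i}T^{k_{j,i}}$, hence its sections over $U_j$ equal $\Gamma(U_j,\ns)$. Since both sheaves are quasicoherent and agree on an affine cover, they are equal, so $\gs$ generates $\ns$ as an $R^+_V\rs_X$-module.

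The only nonformal step is extending coherent subsheaves from $U_j$ to $X$ while preserving the grading. I would handle it by working separately in each degree $\ns_k$, which is quasicoherent, so that the usual extension lemma applies directly.
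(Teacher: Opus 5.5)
Your proof is correct and follows the paper's argument essentially step for step. You take a finite affine cover and choose homogeneous generators of each $\Gamma(U_j,R^+_V\rs_X)$-module $\Gamma(U_j,\ns)$ using Lemma \ref{coherent char}; you then extend, degree by degree, the locally generated coherent subsheaves to $\os_X$-coherent subsheaves of the quasicoherent $\ns_k$ via the Stacks Project extension lemma, and sum over the cover. Your write-up even states correctly that $\gs$ is the sum of the extended sheaves, whereas the paper's text writes $\sum_i\gs_i$ where it means $\sum_i\widetilde{\gs}_i$.
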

\proof Let $X = \bigcup_i U_i$ be a finite affine open cover. For each $i$, choose a finite set $\{v_{ij}T^{k_{ij}}\}_j$ of homogeneous $\Gamma(U_i,R^+_V\rs_X)$-module generators of $\Gamma(U_i,\ns)$, and let $\gs_i = \bigoplus_k \gs_{ik} T^k \subset \ns|_{U_i}$ be the $\os_{U_i}$-coherent graded subsheaf generated by $\{v_{ij}T^{k_{ij}}\}_j$. Since $\ns_k$ is quasicoherent by Remark \ref{qcoh direct summands}, we can extend $\gs_{ik}$ to an $\os_X$-coherent subsheaf $\widetilde{\gs}_{ik}$ of $\ns_k$ by \cite[Lemma 28.23.2]{Stacks}. If we put $\widetilde{\gs}_i = \bigoplus_k \widetilde{\gs}_{ik}T^k  \subset \ns$, then $\gs= \sum_i \gs_i \subset \ns$ is an $\os_X$-coherent graded subsheaf such that $\ns=\gs \cdot R^+_V\rs_X$. \qed

\begin{definition}
Let $\ns$ be a right $V^0\rs_X$-module. A good filtration $U^{\bullet}\ns=(U^k\ns)_k$ of $\ns$ is a $\bbz_{\geq 0}$-indexed decreasing filtration of $\ns$ by $V^0\rs_X$-submodules $U^k\ns \subset \ns$ such that the following properties hold:
\begin{enumerate}
\item[i)] $U^0\ns = \ns$.
\item[ii)] $U^i\ns \cdot V^j\rs_X \subset U^{i+j}\ns$ for $i,j\geq 0$.
\item[iii)] $\bigoplus_{k\geq 0} U^k\ns T^k$ is a coherent $R^+_V\rs_X$-module.
\end{enumerate}
\end{definition}

\begin{remark}\label{good coherence}
If $U^{\bullet}\ns$ is a good filtration of a right $V^0\rs_X$-module $\ns$, then by Lemma \ref{coherent pieces}, $U^k\ns$ is a coherent $V^0\rs_X$-module for all $k\geq 0$. In particular, $\ns=U^0\ns$ is a coherent $V^0\rs_X$-module.
\end{remark}

The proof of the following lemma is standard, so we omit it.

\begin{lemma}\label{artin rees}
Given a good filtration $U^{\bullet}\ns$ of a coherent right $V^0\rs_X$-module $\ns$, there exists a constant $c \in \bbz_{\geq 0}$ such that $U^{k + c}\ns \subset \ns \cdot V^{k}\rs_X$ for every $k\geq 0$.
\end{lemma}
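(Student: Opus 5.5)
The plan is the standard Artin--Rees argument for good filtrations, with noetherianity of the Rees ring as the key input. Set
$$
\ns' := \bigoplus_{k\geq 0} (\ns\cdot V^k\rs_X) T^k \subset \ns[T].
$$
This is the $R^+_V\rs_X$-submodule generated by $\ns T^0$. I would compare $\ns'$ with $\ms:=\bigoplus_{k\geq 0}U^k\ns\,T^k$. Since $U^0\ns=\ns$ and $U^{\bullet}\ns$ is stable under $V^{\bullet}\rs_X$ by property ii), we get $\ns\cdot V^k\rs_X\subset U^k\ns$. Hence $\ns'\subset\ms$ is a graded $R^+_V\rs_X$-submodule. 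The claim is the reverse inclusion up to a shift, so I will show that $\ms$ is generated in bounded degrees.

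First I work affine-locally. Fix a finite affine open cover $X=\bigcup_i U_i$; it suffices to find a constant $c_i$ for each $U_i$ and take $c=\max_i c_i$. This reduction works because the sheaves $U^{k+c}\ns$ and $\ns\cdot V^k\rs_X$ are quasicoherent (by Lemma \ref{coherent pieces}, Remark \ref{good coherence} and Lemma \ref{coherent char}), so the inclusion can be checked on sections over the affine pieces. On $U_i$, Lemma \ref{coherent char} applies because $\Gamma(U_i,R^+_V\rs_X)$ is noetherian by Lemma \ref{finite type assumption}. So $\Gamma(U_i,\ms)$ is finitely generated, and I choose finitely many homogeneous generators $w_jT^{k_j}$ with $w_j\in\Gamma(U_i,U^{k_j}\ns)$. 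Let $c_i=\max_j k_j$.

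Next, for $k\geq 0$, every element of $\Gamma(U_i,U^{k+c_i}\ns)$ has the form $\sum_j w_j P_j$ with $P_j\in\Gamma(U_i,V^{k+c_i-k_j}\rs_X)$. Since $k+c_i-k_j\geq k$, each $P_j$ lies in $V^k\rs_X$, while $w_j\in\ns$. Hence $w_jP_j\in \Gamma(U_i,\ns)\cdot\Gamma(U_i,V^k\rs_X)$, which is contained in $\Gamma(U_i,\ns\cdot V^k\rs_X)$. This gives $U^{k+c_i}\ns\subset \ns\cdot V^k\rs_X$ over $U_i$.

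I expect the only delicate point to be identifying sections of the image sheaf $\ns\cdot V^k\rs_X$ with the image on sections over affines. Here I would use the quasicoherence from Lemma \ref{qcoh V filtration}, which makes global sections over an affine exact on quasicoherent objects. Strictly, though, only the inclusion of section-level products into sections of the sheaf is needed, and that direction is automatic. The gluing step is equally routine, since an inclusion of subsheaves can be checked on an open cover.
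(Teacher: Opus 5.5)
Your proof is correct and is the standard argument, which the paper omits. Finitely many homogeneous generators of $\Gamma(U_i,\bigoplus_k U^k\ns\, T^k)$ over the noetherian ring $\Gamma(U_i,R^+_V\rs_X)$ put every section of $U^{k+c_i}\ns$ over $U_i$ into $\Gamma(U_i,\ns)\cdot\Gamma(U_i,V^k\rs_X)$; this is the same degree bookkeeping the paper uses in the proof of Lemma~\ref{coherent pieces}. Passing from sections to sheaves via quasicoherence of $U^{k+c}\ns$, and taking $c=\max_i c_i$ (which implicitly uses that the filtration is decreasing), is fine.
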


\section{Transfer bimodules and direct image}\label{transfer bimodule and direct image}

Throughout this section, we fix an integer $r\geq 1$ and a morphism $\pi: Y \to X$ of varieties such that $X=\Spec A$ is a smooth affine variety. We set $\rs_Y = \ds_Y \otimes_{\bbc} D_{\bba^r}$ and $\rs_X = \ds_X \otimes_{\bbc} D_{\bba^r}$. Recall the $V$-filtrations on $\ds_{Y\times \bba^r}$, $\ds_{X\times \bba^r}$, $\rs_Y$, and $\rs_X$ from Section \ref{the v-filtration}.

As in the case when $Y$ is smooth (see \cite[Definition 1.3.1]{HTT08}), we define the transfer bimodule of $\pi$ as
$$
\ds_{Y \to X} := \pi^*\ds_X=\os_Y \otimes_{\pi^{-1}\os_X} \pi^{-1}\ds_X.
$$
We recall that $\ds_{Y \to X}$ has a natural $(\ds_Y,\pi^{-1}\ds_X)$-bimodule structure (see also \cite[Theorem/Definition 3.2]{Jia23}). The right $\pi^{-1}\ds_X$-action is simply given by multiplication on the right tensor factor. The left $\ds_Y$-action is given as follows. Given an affine open $U=\Spec B \subset Y$, we have 
\begin{equation}\label{SV isom}
\Gamma(U, \ds_{Y\to X})=B\otimes_A D_A = D_{\bbc}(A,B)
\end{equation}
by \cite[Proposition 2.2.1]{SV97}, where $D_{\bbc}(A,B)$ is the module of $\bbc$-linear differential operators from $A$ to $B$. The module $D_{\bbc}(A,B)$ has a natural $(D_B,D_A)$-bimodule structure, which we transfer to $\Gamma(U, \ds_{Y\to X})$ via \eqref{SV isom}. Note that the left $D_B$-action on $\Gamma(U, \ds_{Y\to X})$ extends the usual left $B$-action, and the right $D_A$-action on $\Gamma(U, \ds_{Y\to X})$ is the one induced by the right $\pi^{-1}\ds_X$-action on $\ds_{Y\to X}$. We define the left $\ds_Y$-module structure on $\ds_{Y\to X}$ as the one obtained by gluing the $D_U$-module structure on $\Gamma(U, \ds_{Y\to X})$ over all affine opens $U \subset Y$. By Lemma \ref{transfer bimodule tool}, $\ds_{Y \to X}$ is a $(\ds_Y,\pi^{-1}\ds_X)$-bimodule.

Next, we consider transfer bimodules adapted for the $V$-filtration. It is immediate that
$$
\pi^*V^0\rs_X = \ds_{Y\to X} \otimes_{\bbc} V^0D_{\bba^r}
$$
is a $(V^0\rs_Y, \pi^{-1}V^0\rs_X)$-bimodule, and that
$$
\pi^*R^+_V\rs_X= \ds_{Y\to X}  \otimes_{\bbc} R^+_VD_{\bba^r}
$$
is an $(R^+_V\rs_Y, \pi^{-1}R^+_V\rs_X)$-bimodule. Since $V^0\rs_Y$ and $R^+_V\rs_Y$ are free as right $\ds_Y$-modules, we have
\begin{equation}\label{vr}
\pi^*V^0\rs_X= V^0\rs_Y \otimes_{\ds_Y}^L \ds_{Y\to X} 
\ \ \ \ 
\text{and}
\ \ \ \ 
\pi^*R^+_V\rs_X  = R^+_V\rs_Y \otimes_{\ds_Y}^L \ds_{Y\to X}.
\end{equation}
Thus we obtain
\begin{equation}\label{rees vr}
\pi^*R^+_V\rs_X = R^+_V\rs_Y \otimes^L_{V^0\rs_Y} \pi^*V^0\rs_X.
\end{equation}

\begin{definition}
Given a right $\ds_Y$-module $\ns$, we put
$$
\widetilde{\pi}_+(\ns) := \hs^0 R\pi_*(\ns \otimes^L_{\ds_Y} \ds_{Y \to X}),
$$
which is a right $\ds_X$-module.
\end{definition}

If $\ns$ is a right $V^0\rs_Y$-module, then $\widetilde{\pi}_+(\ns)$ is a right $V^0\rs_X$-module by \eqref{vr} because
\begin{equation}\label{pi tilde}
\widetilde{\pi}_+(\ns) = \hs^0 R\pi_*(\ns \otimes^L_{V^0\rs_Y} \pi^*V^0\rs_X).
\end{equation}

If $\ns= \bigoplus_{k\in \bbz} \ns_k T^k$ is a graded right $R^+_V\rs_Y$-module, then $\widetilde{\pi}_+(\ns)$ is a graded right $R^+_V\rs_X$-module. To see this, note that by Lemma \ref{derived direct image and direct sum}, we have
$$
\widetilde{\pi}_+(\ns) = \bigoplus_{k\in \bbz} \widetilde{\pi}_+(\ns_k)T^k,
$$
which defines the grading, and by \eqref{rees vr}, we have 
$$
\widetilde{\pi}_+(\ns) = \hs^0 R\pi_*(\ns \otimes^L_{R^+_V\rs_Y} \pi^*R^+_V\rs_X),
$$
which defines the $R^+_V\rs_X$-module structure. It is easy to see that the grading is compatible with the $R^+_V\rs_X$-module structure, so that $\widetilde{\pi}_+(\ns)$ is indeed a graded $R^+_V\rs_X$-module.

Now consider the morphism $\pi\times 1:Y \times \bba^r \to X \times \bba^r$. We claim that
$$
(\pi \times 1)^*V^0\ds_{X\times \bba^r} = \os_{Y \times \bba^r} \otimes_{(\pi\times 1)^{-1}\os_{X \times \bba^r}} (\pi\times 1)^{-1}V^0\ds_{X\times \bba^r}
$$
is a $(V^0\ds_{Y \times \bba^r}, (\pi\times 1)^{-1}V^0\ds_{X\times \bba^r})$-bimodule. The right $(\pi\times 1)^{-1}V^0\ds_{X\times \bba^r}$-module structure is simply given by multiplication on the right tensor factor. The left $V^0\ds_{Y \times \bba^r}$-module structure is given by the induced action of $V^0\ds_{Y \times \bba^r}=\ds_Y \boxtimes V^0\ds_{\bba^r}$ on $(\pi \times 1)^*V^0\ds_{X\times \bba^r}=\ds_{Y\to X}  \boxtimes V^0\ds_{\bba^r}$. To see why these two actions form a bimodule structure, by Lemma \ref{transfer bimodule tool}, it suffices to show that $\Gamma(U\times \bba^r, (\pi \times 1)^*V^0\ds_{X\times \bba^r})$ is a $(D_U\otimes_{\bbc}V^0D_{\bba^r}, D_X \otimes_{\bbc} V^0D_{\bba^r})$-bimodule for every affine open $U \subset Y$, which follows since
$$
\Gamma(U\times \bba^r, (\pi \times 1)^*V^0\ds_{X\times \bba^r}) = \Gamma(U,\ds_{Y \to X}) \otimes_{\bbc} V^0D_{\bba^r}.
$$

\begin{definition}
Given a right $V^0\ds_{Y \times \bba^r}$-module $\ns$, we define
$$
\widetilde{(\pi\times 1)}_{V,+}\ns := \hs^{0}R(\pi\times 1)_* (\ns \otimes_{V^0\ds_{Y\times \bba^r}}^L (\pi\times 1)^*V^0\ds_{X\times \bba^r} ),
$$
which is a right $V^0\ds_{X \times \bba^r}$-module.
\end{definition}

\begin{lemma}\label{box pull}
We have $L\pi^*V^0\rs_X= \pi^*V^0\rs_X$, $L\pi^*R^+_V\rs_X= \pi^*R^+_V\rs_X$, and 
$$
L(\pi \times 1)^*V^0\ds_{X \times \bba^r}= (\pi \times 1)^*V^0\ds_{X \times \bba^r}.
$$
\end{lemma}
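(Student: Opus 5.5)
The plan is to reduce all three claims to the vanishing of higher Tor sheaves of a flat module. Recall $L\pi^*\ms = \os_Y\otimes^L_{\pi^{-1}\os_X}\pi^{-1}\ms$ for a left $\os_X$-module $\ms$. So it suffices to show that $V^0\rs_X$, $R^+_V\rs_X$ and $V^0\ds_{X\times\bba^r}$ are flat as left $\os_X$-modules, respectively left $\os_{X\times\bba^r}$-modules. By quasicoherence (Lemma \ref{qcoh V filtration}) we may work with global sections on affines. Since $X=\Spec A$ is affine, it is enough to prove flatness of $\Gamma(X,V^0\rs_X)$ and $\Gamma(X,R^+_V\rs_X)$ over $A$, and of $\Gamma(X\times\bba^r,V^0\ds_{X\times\bba^r})$ over $A[t_1,\dots,t_r]$.

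\emph{First two claims.} After Lemma \ref{explicit V filtration} we recorded $V^0\rs_X=\ds_X\otimes_{\bbc}V^0D_{\bba^r}$ and $R^+_V\rs_X=\ds_X\otimes_{\bbc}R^+_VD_{\bba^r}$, with $\os_X$ acting on the left through $\ds_X$. As a left $\os_X$-module, $\ds_X$ is locally free, since locally $\ds_X=\bigoplus_\alpha\os_X\partial_x^\alpha$. Tensoring over $\bbc$ with the $\bbc$-vector spaces $V^0D_{\bba^r}$ and $R^+_VD_{\bba^r}$ gives direct sums of copies of $\ds_X$, hence flat left $\os_X$-modules. Therefore $\mathcal{T}or_i^{\pi^{-1}\os_X}(\os_Y,\pi^{-1}(-))$ vanishes for $i>0$, which yields $L\pi^*V^0\rs_X=\pi^*V^0\rs_X$ and $L\pi^*R^+_V\rs_X=\pi^*R^+_V\rs_X$.

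\emph{Third claim.} Here flatness over $A[t]$ is needed, and the main point is to exhibit $\Gamma(X\times\bba^r,V^0\ds_{X\times\bba^r})$ as a direct sum of flat $A[t]$-modules. Lemma \ref{explicit V filtration} gives
$$V^0\rs_X=\bigoplus_{\beta}\ \ds_X\otimes\Big(\bigoplus_{|\alpha|\geq|\beta|}\bbc\, t^\alpha\Big)\partial_t^\beta .$$
Writing $\ds_X=\bigoplus_\gamma\os_X\partial_x^\gamma$ (legitimate because $X$ is affine and smooth; if $X$ lacks global coordinates, localize first, since flatness is local), the left $A[t]$-module structure turns each piece $\bigoplus_{|\alpha|\geq|\beta|}A\,t^\alpha\,\partial_x^\gamma\partial_t^\beta$ into a copy of the ideal $(t_1,\dots,t_r)^{|\beta|}\subset A[t]$. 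Left multiplication by elements of $A[t]$ does not alter $\partial_x^\gamma\partial_t^\beta$, because $A[t]$ commutes past $\partial_x$ only up to terms of lower order in $\partial_x$ with unchanged $\beta$; one may therefore filter by $|\gamma|$ if necessary.

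The step I expect to be the main obstacle is flatness of the powers $(t)^k\subset A[t]$ over $A[t]$. These ideals are \emph{not} flat for $r\geq2$, since they are not projective, so this route alone fails. The better strategy is to avoid flatness over $\os_{X\times\bba^r}$ and use only that $\pi\times1$ is the base change of $\pi$. One has
$$\os_{Y\times\bba^r}\otimes^L_{(\pi\times1)^{-1}\os_{X\times\bba^r}}(\pi\times1)^{-1}\ms=\os_Y\otimes^L_{\pi^{-1}\os_X}\pi^{-1}p_{X,*}\ms,$$
computed on $Y\times\bba^r$ (affine over $Y$). Indeed $A[t]$ is flat over $A$, so $B[t]\otimes^L_{A[t]}M=B\otimes^L_A M$ for an $A[t]$-module $M$. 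Applying this with $M=\Gamma(X\times\bba^r,V^0\ds_{X\times\bba^r})=\Gamma(X,V^0\rs_X)$, which is $A$-flat by the first part, gives the vanishing of higher Tor, and the third equality follows.
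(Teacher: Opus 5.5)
Your proof is correct. For the first two equalities it is the paper's argument: $V^0\rs_X=\ds_X\otimes_{\bbc}V^0D_{\bba^r}$ and $R^+_V\rs_X=\ds_X\otimes_{\bbc}R^+_VD_{\bba^r}$ are direct sums of copies of the locally free left $\os_X$-module $\ds_X$, hence flat.

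For the third equality your route differs from the paper's.

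\emph{The paper's route.} It writes $V^0\ds_{X\times\bba^r}=\ds_X\boxtimes V^0\ds_{\bba^r}$. It then uses compatibility of derived pullback with external products, $L(\pi\times 1)^*(\ds_X\boxtimes V^0\ds_{\bba^r})=L\pi^*\ds_X\boxtimes V^0\ds_{\bba^r}$, and concludes with $L\pi^*\ds_X=\pi^*\ds_X$.

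\emph{Your route.} You bypass the box-product decomposition. You use the flat base change identity $\mathrm{Tor}_i^{A[t]}(B[t],M)=\mathrm{Tor}_i^A(B,M)$, valid because $A\to A[t]$ is flat. You then need only that $M=\Gamma(X,V^0\rs_X)$ is $A$-flat, which you have from the first part.

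\emph{Comparison.} Your argument applies to any quasicoherent $\os_{X\times\bba^r}$-module whose pushforward to $X$ is $\os_X$-flat. It also makes explicit the Tor computation that justifies the paper's box-product identity. The paper's version is shorter.

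Your observation that $V^0\ds_{X\times\bba^r}$ is not flat over $\os_{X\times\bba^r}$ for $r\geq 2$ is correct; the obstruction is the ideals $(t_1,\dots,t_r)^k$. This is why both arguments must go through $\os_X$-flatness. In a final write-up you should drop the abandoned first attempt.

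One cosmetic point concerns your displayed sheaf identity. It compares a complex on $Y\times\bba^r$ with one on $Y$, so $p_{Y,*}$ should be applied to the left side; this is harmless since $p_Y$ is affine. Alternatively, state it only at the level of modules over affine opens $\Spec B\subset Y$, which is what you actually use.
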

\proof Since $\ds_X$ is free as a left $\os_X$-module, we have $L\pi^*\ds_X = \pi^*\ds_X$. It is then immediate that $L\pi^*V^0\rs_X= \pi^*V^0\rs_X$ and $L\pi^*R^+_V\rs_X= \pi^*R^+_V\rs_X$. For the final isomorphism, observe that
$$
L(\pi \times 1)^*V^0\ds_{X \times \bba^r}=L\pi^*\ds_X \boxtimes V^0\ds_{\bba^r} = \pi^*\ds_X \boxtimes V^0\ds_{\bba^r} = (\pi \times 1)^*V^0\ds_{X \times \bba^r}. \qed
$$

\begin{lemma}\label{push example}
We have $\widetilde{\pi}_+(V^0\rs_Y) = \pi_*\os_Y \otimes_{\os_X} V^0\rs_X$.
\end{lemma}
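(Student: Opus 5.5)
We compute $\widetilde{\pi}_+(V^0\rs_Y) = \hs^0R\pi_*(V^0\rs_Y \otimes^L_{V^0\rs_Y} \pi^*V^0\rs_X)$ directly from \eqref{pi tilde}. Since $V^0\rs_Y$ is free (in particular flat) over itself, the derived tensor product collapses:
$$
V^0\rs_Y \otimes^L_{V^0\rs_Y} \pi^*V^0\rs_X = \pi^*V^0\rs_X = \os_Y \otimes_{\pi^{-1}\os_X} \pi^{-1}V^0\rs_X.
$$
It therefore remains to identify $\hs^0R\pi_*(\os_Y\otimes_{\pi^{-1}\os_X}\pi^{-1}V^0\rs_X) = \pi_*(\os_Y\otimes_{\pi^{-1}\os_X}\pi^{-1}V^0\rs_X)$ with $\pi_*\os_Y\otimes_{\os_X} V^0\rs_X$, compatibly with the right $V^0\rs_X$-actions.

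For this we use the projection formula. Since $X=\Spec A$ has global coordinates locally, Lemma \ref{explicit V filtration} shows that $V^0\rs_X = \bigoplus_{|\alpha|\geq|\beta|}\ds_X t^\alpha\partial_t^\beta$ is a free left $\os_X$-module. Indeed, $\ds_X$ is locally free as a left $\os_X$-module, and $X$ is affine, so $\Gamma(X,V^0\rs_X)$ is a projective $A$-module, i.e.\ a direct summand of a free module $A^{(J)}$. Also, $\pi_*$ commutes with arbitrary direct sums of quasicoherent sheaves on the noetherian scheme $Y$; this is the content of Lemma \ref{derived direct image and direct sum}. Writing $\pi^*V^0\rs_X$ locally as a direct sum of copies of $\os_Y$ indexed by an $A$-basis (or as a summand of such), we get
$$
\pi_*(\os_Y\otimes_{\pi^{-1}\os_X}\pi^{-1}V^0\rs_X)\simeq \pi_*\os_Y\otimes_{\os_X}V^0\rs_X,
$$
with the natural map $\pi_*\os_Y\otimes_{\os_X}V^0\rs_X\to\pi_*\pi^*V^0\rs_X$ an isomorphism. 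We first check that this map is an isomorphism for free modules, and then pass to direct summands. The same reasoning shows that the higher direct images agree with $R^i\pi_*\os_Y\otimes V^0\rs_X$; this is not needed for $\hs^0$.

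Finally, we check that the right $V^0\rs_X$-module structure on $\widetilde{\pi}_+(V^0\rs_Y)$ corresponds to right multiplication on the factor $V^0\rs_X$. This is immediate, because in $\pi^*V^0\rs_X$ the right $\pi^{-1}V^0\rs_X$-action is multiplication on the right tensor factor, and the projection-formula map is right $V^0\rs_X$-linear by naturality.

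The only point requiring care is the projection formula for the infinite-rank locally free module $V^0\rs_X$. This is exactly where we need $\pi_*$ to commute with direct sums, and where we need the projection-formula map to respect the noncommutative right action. If freeness over $\os_X$ is awkward globally, we first reduce to an affine open cover of $X$ on which $\ds_X$ is free with basis $\partial_x^\alpha$; both sides are sheaves, so this reduction suffices.
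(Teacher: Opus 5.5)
Your proof is correct and follows essentially the paper's route: you collapse the derived tensor product to $\pi^*V^0\rs_X$, apply the projection formula, and use that $V^0\rs_X$ is (locally) free as a left $\os_X$-module. The only difference is the order of steps. The paper cites the derived projection formula $R\pi_*L\pi^*V^0\rs_X\simeq R\pi_*\os_Y\otimes^L_{\os_X}V^0\rs_X$ (with Lemma \ref{box pull} giving $L\pi^*=\pi^*$) and then uses flatness to pass to $\hs^0$. You instead take $\hs^0$ first and verify the underived projection formula by hand, reducing to free modules and using that $\pi_*$ commutes with direct sums.
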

\proof Observe that
$$
\widetilde{\pi}_+(V^0\rs_Y) = \hs^0(R\pi_*\pi^*V^0\rs_X) =  \hs^0(R\pi_*L\pi^*V^0\rs_X) 
$$
$$
= \hs^0(R\pi_*\os_Y \otimes^L_{\os_X} V^0\rs_X)
= \hs^0(R\pi_*\os_Y ) \otimes_{\os_X} V^0\rs_X =\pi_*\os_Y \otimes_{\os_X} V^0\rs_X
$$
where the second, third, and fourth isomorphisms follow from Lemma \ref{box pull}, \cite[Lemma 36.22.1]{Stacks}, and the fact that $V^0\rs_X$ is free as a left $\os_X$-module, respectively. \qed

\section{Projective direct image}\label{projective direct image}

Throughout this section, we fix an integer $r\geq 1$ and a projective morphism $\pi: Y \to X$ such that $X$ is a smooth affine variety and the morphism $\rSpec_Y (\gr_{\bullet}^F\ds_Y) \to Y$ is of finite type. We denote the projections $Y \times \bba^r \to Y$ and $X \times \bba^r \to X$ by $p_Y$ and $p_X$, respectively.

\begin{lemma}\label{projective coherence}
Let $\ms$ be a coherent right $V^0\ds_{Y\times \bba^r}$-module. Then
$$
\hs^i R(\pi \times 1)_*(\ms \otimes^L_{V^0\ds_{Y\times \bba^r}} (\pi\times 1)^*V^0\ds_{X\times \bba^r})
$$
is a coherent right $V^0\ds_{X\times \bba^r}$-module for all $i\in \bbz$. In particular, $\widetilde{(\pi \times 1)}_{V,+}\ms$ is a coherent right $V^0\ds_{X\times \bba^r}$-module.
\end{lemma}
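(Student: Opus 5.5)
The plan is the standard "reduce to induced modules, then use projective pushforward of coherent sheaves" argument, as in the proof that projective direct images preserve coherence of $\ds$-modules (cf. \cite[Theorem 2.5.1]{HTT08}), adapted to $V^0\ds$.

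\textbf{Step 1: the induced case.} First treat induced modules $\ms=\fs\otimes_{\os_{Y\times\bba^r}}V^0\ds_{Y\times\bba^r}$ with $\fs$ a coherent $\os_{Y\times\bba^r}$-module. Since $V^0\ds_{Y\times\bba^r}$ is locally free as a left $\os$-module (Lemma \ref{explicit V filtration}), we have
$$
\ms\otimes^L_{V^0\ds_{Y\times\bba^r}}(\pi\times1)^*V^0\ds_{X\times\bba^r}=\fs\otimes_{\os_{Y\times\bba^r}}(\pi\times1)^*V^0\ds_{X\times\bba^r}=\fs\otimes_{(\pi\times1)^{-1}\os}(\pi\times1)^{-1}V^0\ds_{X\times\bba^r}.
$$
The projection formula (the argument of Lemma \ref{push example}, using that $V^0\ds_{X\times\bba^r}$ is flat over $\os_{X\times\bba^r}$) then gives
$$
\hs^i R(\pi\times1)_*(\cdots)=R^i(\pi\times1)_*\fs\otimes_{\os_{X\times\bba^r}}V^0\ds_{X\times\bba^r}.
$$
This module is coherent over $V^0\ds_{X\times\bba^r}$, because $\pi\times1$ is projective and so $R^i(\pi\times1)_*\fs$ is $\os$-coherent. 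The same computation shows the cohomology vanishes outside a bounded range of degrees, $0\le i\le d$, independently of $\fs$.

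\textbf{Step 2: resolutions.} For general coherent $\ms$, Lemma \ref{surjection} gives a surjection $\fs\otimes V^0\ds_{Y\times\bba^r}\onto\ms$. The kernel is again coherent, since $\Gamma(U,V^0\ds)$ is noetherian (Lemma \ref{finite type assumption}, using the finite type hypothesis on $\rSpec_Y(\gr^F\ds_Y)$) and we can invoke Lemma \ref{coherent char}. Iterating this yields a resolution by induced modules. The functor $\Phi(\ms)=R(\pi\times1)_*(\ms\otimes^L(\pi\times1)^*V^0\ds_{X\times\bba^r})$ is triangulated, and the cohomologies of $\Phi$ of the induced terms are coherent and lie in a bounded range of degrees.

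We then use descending induction on $i$, as in HTT. For $i$ large, $\hs^i\Phi(\ms)=0$ for every coherent $\ms$: this holds because $\ms\otimes^L(\pi\times1)^*V^0\ds$ has bounded Tor-amplitude, which should follow from a finite flat resolution of $(\pi\times1)^*V^0\ds$ over $V^0\ds_{Y\times\bba^r}$, or alternatively from the fact that $R(\pi\times1)_*$ has bounded cohomological dimension. Now suppose $\hs^{j}\Phi$ preserves coherence for all coherent modules and all $j>i$. Take $0\to\ks\to\ls\to\ms\to0$ with $\ls$ induced. The long exact sequence
$$
\hs^i\Phi(\ls)\to\hs^i\Phi(\ms)\to\hs^{i+1}\Phi(\ks)\to\hs^{i+1}\Phi(\ls)
$$
exhibits $\hs^i\Phi(\ms)$ as an extension of a submodule of the coherent $\hs^{i+1}\Phi(\ks)$ by a quotient of the coherent $\hs^i\Phi(\ls)$. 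Since $V^0\ds_{X\times\bba^r}$ has noetherian sections on affines, kernels, cokernels and extensions of coherent modules are coherent, so $\hs^i\Phi(\ms)$ is coherent.

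\textbf{Main obstacle.} The key difficulty is the boundedness needed to start the descending induction, i.e. a uniform upper bound on the degrees in which $\hs^i\Phi(\ms)$ can be nonzero for all coherent $\ms$. Locally on $Y$, $\pi$ factors through $Y\hookrightarrow\mathbb P^m\times X$, and $\ds_{Y\to X}$ is given by $\os_Y\otimes_{\os_X} D_X$. So $(\pi\times1)^*V^0\ds_{X\times\bba^r}$ is a base change of $\os_Y\otimes_{\os_X}V^0\ds_{X\times\bba^r}$, and the Tor terms should be controlled via a Koszul/Spencer-type resolution relative to the smooth $X$. I would first try to avoid the Tor issue entirely. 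The idea is to replace $\ms$ by a left-bounded resolution, compute $\Phi$ via Čech complexes on a finite affine cover of $Y$ (of length at most $\#$cover), and run the induction on the truncated resolution. Since only finitely many degrees of $R(\pi\times1)_*$ appear, the statement reduces to the induced case.
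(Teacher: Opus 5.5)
Your overall strategy (induced modules, the projection formula plus coherence of higher direct images, and an induction driven by the finite cohomological dimension of $R(\pi\times1)_*$) matches the paper's. However, Step~1 has a genuine gap, located exactly where the singularities of $Y$ matter.

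\textbf{The gap in Step~1.} You claim that $V^0\ds_{Y\times\bba^r}$ is locally free as a left $\os_{Y\times\bba^r}$-module. This does not follow from Lemma~\ref{explicit V filtration}, which only gives $V^k\rs_Y=\ds_Y\otimes_{\bbc}V^kD_{\bba^r}$. It is also false here: for $Y$ with quotient singularities, $\ds_Y$ is in general not even flat over $\os_Y$. For example, for $Y=\bba^2/\{\pm1\}$ we have $D_Y=D_{\bba^2}^{\{\pm1\}}$, and the $\os_Y$-direct summand formed by the odd polynomials times $\partial_x$ is a non-free rank one module. (Independently of $Y$, $V^0D_{\bba^r}\simeq\bigoplus_{\beta}(t_1,\dots,t_r)^{|\beta|}\partial_t^{\beta}$ is not flat over $\bbc[t_1,\dots,t_r]$ once $r\geq 2$, so your tensor products over $\os_{X\times\bba^r}$ must be derived; that part is harmless for coherence.)

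Consequently, for the merely coherent $\fs$ supplied by Lemma~\ref{surjection}, the object $(\fs\otimes_{\os}V^0\ds_{Y\times\bba^r})\otimes^L_{V^0\ds_{Y\times\bba^r}}(\pi\times1)^*V^0\ds_{X\times\bba^r}$ differs from $\fs\otimes^L_{\os}(\pi\times1)^*V^0\ds_{X\times\bba^r}$. The difference consists of contributions of $\mathcal{T}or^{\os}_j(\fs,V^0\ds_{Y\times\bba^r})\otimes^L_{V^0\ds_{Y\times\bba^r}}(\pi\times1)^*V^0\ds_{X\times\bba^r}$ for $j\geq 1$. Controlling these requires knowing $\Phi$ on other coherent modules, so the base case of your induction is circular for the induced modules your Step~2 actually uses.

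\textbf{The missing idea.} This is where projectivity of $\pi$ and affineness of $X$ are used; properness alone only gives coherence of $R^i(\pi\times1)_*\fs$. Every coherent $\os_{Y\times\bba^r}$-module is a quotient of some $(\ls^{\otimes -N})^{\oplus M}$ with $\ls$ relatively ample. So you can choose the induced module in each presentation to be $\cs=\es\otimes_{\os}V^0\ds_{Y\times\bba^r}$ with $\es$ locally free. Then:
\begin{itemize}
\item $\cs$ is locally a finite free $V^0\ds_{Y\times\bba^r}$-module;
\item $\cs\otimes^L(\pi\times1)^*V^0\ds_{X\times\bba^r}=\es\otimes_{\os}(\pi\times1)^*V^0\ds_{X\times\bba^r}$;
\item Lemma~\ref{box pull} and the projection formula give $\Phi(\cs)=R(\pi\times1)_*\es\otimes^L_{\os_{X\times\bba^r}}V^0\ds_{X\times\bba^r}$, which has coherent cohomology.
\end{itemize}
This is the point of Remark~\ref{proper smooth}: the paper drops local freeness of the $\es^i$ only for smooth $Y$, where $\ds_Y$ is itself locally free over $\os_Y$.

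\textbf{The induction after the repair.} With this repair your one-step descending induction works, but your ``main obstacle'' is misdiagnosed. The complex $\ms\otimes^L(\pi\times1)^*V^0\ds_{X\times\bba^r}$ lies in $D^{\leq 0}$, and $R(\pi\times1)_*$ has cohomological dimension at most $d=\dim Y\times\bba^r$. Hence $\hs^i\Phi(\ms)=0$ for $i>d$ and every $\ms$ automatically. Tor-amplitude would only bound the cohomology from below, so no Spencer, Koszul or \v{C}ech argument is needed.

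What your version does need is quasicoherence. In the sequence $\hs^i\Phi(\ks)\to\hs^i\Phi(\cs)\to\hs^i\Phi(\ms)\to\hs^{i+1}\Phi(\ks)$ you do not yet know that $\hs^i\Phi(\ks)$ is coherent. To conclude that $\hs^i\Phi(\cs)$ modulo the image of $\hs^i\Phi(\ks)$ is coherent, you must separately know that $\hs^i\Phi(\ks)$ is quasicoherent. Its image is then a quasicoherent submodule of a coherent module over a sheaf of rings with noetherian sections on affines, hence coherent.

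The paper sidesteps this as follows. It resolves $\ms$ completely by locally free induced modules, replaces the tensored complex by its stupid truncation $\sigma_{\geq i-d-1}$ via Lemma~\ref{bounded cohomological dimension}, and inducts on the length of the truncation. In each long exact sequence, both flanking terms are then already known to be coherent.
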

\proof Let $d=\dim Y \times \bba^r$. We follow the approach in \cite[Theorem 3.4.1]{Sab11}. Since $\pi \times 1$ is projective and $X \times \bba^r$ is affine, for every $\os_{Y \times \bba^r}$-coherent sheaf $\fs$, there exists an $\os_{Y \times \bba^r}$-finite locally free sheaf $\es$ that surjects onto $\gs$ (namely $\es=(\ls^{\otimes -N})^M$ for some $\pi$-ample line bundle $\ls$ and $N,M\geq 0$). So by Lemma \ref{surjection}, there exists an exact sequence
$$
\cdots \to\cs^{-2} \to \cs^{-1} \to \cs^0 \to \ms\to 0
$$
of right $V^0\ds_{Y \times \bba^r}$-modules such that $\cs^i = \es^i \otimes_{\os_{Y \times \bba^r}} V^0\ds_{Y \times \bba^r}$ for some $\os_{Y\times \bba^r}$-finite locally free sheaf $\es^i$ for each $i\leq 0$. We denote this resolution of $\ms$ by $\cs^{\bullet}$. We also consider the chain complex
$$
\bs := \cs^{\bullet} \otimes_{V^0\ds_{Y \times \bba^r}} (\pi \times 1)^*V^0\ds_{X \times \bba^r}.
$$

Since $\es^i$ is $\os_{Y \times \bba^r}$-flat, we have
$$
\cs^i \otimes^L_{V^0\ds_{Y\times \bba^r}} (\pi\times 1)^*V^0\ds_{X\times \bba^r}=\es^{i} \otimes_{\os_{Y \times \bba^r}} (\pi\times 1)^*V^0\ds_{X\times \bba^r},
$$
hence $\cs^i$ is $(-)\otimes^L_{V^0\ds_{Y\times \bba^r}} (\pi\times 1)^*V^0\ds_{X\times \bba^r}$-acyclic and $\bs = \ms \otimes^L_{V^0\ds_{Y\times \bba^r}} (\pi\times 1)^*V^0\ds_{X\times \bba^r}$. By Remark \ref{bounded cohomological dimension}, we then have
$$
\hs^i R(\pi \times 1)_*(\ms \otimes^L_{V^0\ds_{Y\times \bba^r}} (\pi\times 1)^*V^0\ds_{X\times \bba^r}) = \hs^i R(\pi \times 1)_*(\sigma_{\geq i-d-1}(\bs^{\bullet})).
$$
Thus it suffices to show that $R(\pi \times 1)_*(\sigma_{\geq i}(\bs^{\bullet}))$ has $V^0\ds_{X \times \bba^r}$-coherent cohomology sheaves for all $i\leq 0$. We proceed by descending induction on $i$. Before carrying out the inductive argument, we first show that $R(\pi \times 1)_*(\bs^i)$ has $V^0\ds_{X \times \bba^r}$-coherent cohomology sheaves for all $i\leq 0$. Note that $\bs^i = \es^i \otimes^L_{\os_{Y \times \bba^r}} L(\pi\times 1)^*V^0\ds_{X\times \bba^r}$ by Lemma \ref{box pull}. So we have
$$
R(\pi \times 1)_*( \bs^i ) 
= R(\pi \times 1)_*(\es^i ) \otimes_{\os_{X \times \bba^r}}^L V^0\ds_{X \times \bba^r}
$$
by \cite[Lemma 36.22.1]{Stacks}. Since $\pi\times 1$ is proper, $R(\pi \times 1)_*(\es^i )$ has $\os_{X \times \bba^r}$-coherent cohomology sheaves, hence $R(\pi \times 1)_*( \bs^i )$ has $V^0\ds_{X \times \bba^r}$-coherent cohomology sheaves.

We now return to the inductive argument. The base case follows since $R(\pi \times 1)_*(\sigma_{\geq 0}(\bs^{\bullet}))=R(\pi \times 1)_*( \bs^0)$. So let $i<0$ and assume that $R(\pi \times 1)_*(\sigma_{\geq i+1}(\bs^{\bullet}))$ has $V^0\ds_{X \times \bba^r}$-coherent cohomology sheaves. Applying $R(\pi \times 1)_*$ to the short exact sequence
$$
0 \to \sigma_{\geq i+1}(\bs^{\bullet}) \to \sigma_{\geq i}(\bs^{\bullet}) \to \bs^i[-i] \to 0
$$
yields the distinguished triangle
$$
R(\pi \times 1)_*(\sigma_{\geq i+1}(\bs^{\bullet})) \to R(\pi \times 1)_*(\sigma_{\geq i}(\bs^{\bullet}) ) \to R(\pi \times 1)_*(\bs^i)[-i].
$$
Since $R(\pi \times 1)_*(\sigma_{\geq i+1}(\bs^{\bullet}))$ and $R(\pi \times 1)_*(\bs^i)$ have $V^0\ds_{X \times \bba^r}$-coherent cohomology sheaves, by taking the long exact sequence in cohomology, we see that $R(\pi \times 1)_*(\sigma_{\geq i}(\bs^{\bullet}) )$ also has $V^0\ds_{X \times \bba^r}$-coherent cohomology sheaves, as desired. \qed

\begin{lemma}\label{push relation}
Given a coherent right $V^0\ds_{Y\times\bba^{r}}$-module $\ms$, we have the following natural isomorphism of right $V^0\rs_X$-modules:
$$
p_{X,*} (\widetilde{(\pi \times 1)}_{V,+}\ms ) \simeq \widetilde{\pi}(p_{Y,*}\ms).
$$
\end{lemma}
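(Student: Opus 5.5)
The natural approach is to compare both sides through the affine projections $p_Y$ and $p_X$. These are affine morphisms, so $p_{Y,*}$ and $p_{X,*}$ are exact on quasicoherent sheaves and commute with $R\pi_*$. Concretely, $p_X\circ(\pi\times 1)=\pi\circ p_Y$ gives
$$
p_{X,*}R(\pi\times 1)_* = R(p_X\circ(\pi\times 1))_* = R\pi_* \circ Rp_{Y,*} = R\pi_*\circ p_{Y,*}
$$
on complexes with quasicoherent cohomology. Applying $\hs^0$, the claim therefore reduces to a natural isomorphism of complexes of right $\pi^{-1}V^0\rs_X$-modules
$$
p_{Y,*}\bigl(\ms\otimes^L_{V^0\ds_{Y\times\bba^r}}(\pi\times 1)^*V^0\ds_{X\times\bba^r}\bigr)\simeq p_{Y,*}\ms\otimes^L_{V^0\rs_Y}\pi^*V^0\rs_X .
$$
By \eqref{pi tilde}, the right-hand side, after $R\pi_*$ and $\hs^0$, is exactly $\widetilde{\pi}_+(p_{Y,*}\ms)$.

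To prove the reduced isomorphism I will use the resolution from the proof of Lemma~\ref{projective coherence}. Since $\ms$ is coherent, Lemma~\ref{surjection} together with $\pi$-ampleness gives a resolution $\cs^\bullet\to\ms$ with $\cs^i=\es^i\otimes_{\os_{Y\times\bba^r}}V^0\ds_{Y\times\bba^r}$ and each $\es^i$ locally free of finite rank. Applying $p_{Y,*}$ keeps this a resolution, by exactness on quasicoherent sheaves; here Lemma~\ref{qcoh V filtration} ensures the terms are quasicoherent. Moreover, by Lemma~\ref{explicit V filtration},
$$
p_{Y,*}\cs^i = p_{Y,*}\es^i\otimes_{\os_Y[t]}V^0\rs_Y.
$$
The module $p_{Y,*}\es^i$ is flat over $\os_Y[t]$, and $V^0\rs_Y$ is flat over $\os_Y[t]=p_{Y,*}\os_{Y\times\bba^r}$ as a left module, since $V^0D_{\bba^r}$ is free over $\bbc[t]$. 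Hence $p_{Y,*}\cs^i$ is acyclic for $-\otimes_{V^0\rs_Y}\pi^*V^0\rs_X$.

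The comparison is then done termwise. On the left, each term is $p_{Y,*}(\es^i\otimes_{\os_{Y\times\bba^r}}(\pi\times1)^*V^0\ds_{X\times\bba^r})$. On the right, each term is $p_{Y,*}\es^i\otimes_{\os_Y[t]}(\ds_{Y\to X}\otimes_{\bbc}V^0D_{\bba^r})$. These agree because $p_{Y,*}(\pi\times 1)^*V^0\ds_{X\times\bba^r}=\ds_{Y\to X}\otimes_\bbc V^0D_{\bba^r}=\pi^*V^0\rs_X$, which follows from the boxtimes description preceding Lemma~\ref{box pull}, and because $p_{Y,*}$ commutes with tensor products against sheaves pulled back along the affine map.

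The main obstacle is checking that these termwise identifications are compatible with the differentials and with the bimodule structures. The differentials of $\cs^\bullet$ are right $V^0\ds_{Y\times\bba^r}$-linear, and the left $V^0\ds_{Y\times\bba^r}$-action on $(\pi\times1)^*V^0\ds_{X\times\bba^r}$ must match the left $V^0\rs_Y$-action on $\pi^*V^0\rs_X$ under $p_{Y,*}$. I will verify this on affine opens $U\times\bba^r$ with $U\subset Y$ affine, where both sides become $\Gamma(U,\ds_{Y\to X})\otimes_\bbc V^0D_{\bba^r}$ with the same $(D_U\otimes V^0D_{\bba^r},D_X\otimes V^0D_{\bba^r})$-bimodule structure, using \eqref{SV isom}. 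Finally, naturality in $\ms$ follows by lifting morphisms to the resolutions, which are unique up to homotopy, so the resulting isomorphism is independent of the choice of resolution.
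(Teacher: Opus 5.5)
Your proposal follows the paper's proof essentially step for step. It uses the same resolution $\cs^\bullet\to\ms$ with terms $\es^i\otimes_{\os_{Y\times\bba^r}}V^0\ds_{Y\times\bba^r}$. It makes the same two identifications, showing that $p_{Y,*}\cs^\bullet\otimes_{V^0\rs_Y}\pi^*V^0\rs_X$ computes both $p_{Y,*}\ms\otimes^L_{V^0\rs_Y}\pi^*V^0\rs_X$ and $Rp_{Y,*}\bigl(\ms\otimes^L_{V^0\ds_{Y\times\bba^r}}(\pi\times 1)^*V^0\ds_{X\times\bba^r}\bigr)$. It ends with the same passage through $p_X\circ(\pi\times 1)=\pi\circ p_Y$, using quasicoherence of the cohomology sheaves.

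\textbf{The naturality step.} This is the one step whose justification fails as written. The $\cs^i$ are only locally projective. They are not projective objects in the category of sheaves of right $V^0\ds_{Y\times\bba^r}$-modules on the non-affine scheme $Y\times\bba^r$. So a morphism $\ms\to\ms'$ need not lift to a chain map between pre-chosen resolutions. Already in degree $0$, lifting the $\os$-linear map $\es^0\to\ms'$ through $\cs'^0\to\ms'$ is obstructed by a class in
\[
\mathrm{Ext}^1_{\os}(\es^0,\ker(\cs'^0\to\ms'))=H^1\bigl(Y\times\bba^r,(\es^0)^\vee\otimes\ker(\cs'^0\to\ms')\bigr),
\]
and nothing forces this class to vanish. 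Nor is there a ``unique up to homotopy'' statement. This matters because the lemma asserts a \emph{natural} isomorphism. Naturality is exactly what the main proof uses to identify the cokernel of $\widetilde{\pi}_+(\phi)$ with $p_{X,*}$ of the cokernel of $\widetilde{(\pi\times 1)}_{V,+}(\phi')$. The paper repairs this by representing the morphism in the derived category as a roof $\cs^\bullet\leftarrow\ks^\bullet\to\cs'^\bullet$. Here $\ks^\bullet$ is bounded above with terms $\vs^i\otimes V^0\ds_{Y\times\bba^r}$, each $\vs^i$ finite locally free, and the left leg is a quasi-isomorphism. One then checks compatibility of the comparison maps along each leg. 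Alternatively, you could first write down a canonical comparison morphism and use resolutions only to prove it is an isomorphism.

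\textbf{A smaller point.} When $r\ge 2$, $V^0D_{\bba^r}$ is not free, or even flat, over $\bbc[t_1,\dots,t_r]$ as a left module. As such it decomposes as $\bigoplus_\beta (t_1,\dots,t_r)^{|\beta|}\partial_t^\beta$, and $(t_1,\dots,t_r)^k$ is not flat for $k\ge 1$. You do not need this claim. Flatness of $p_{Y,*}\es^i$ over $p_{Y,*}\os_{Y\times\bba^r}$ already makes $p_{Y,*}\es^i\otimes_{p_{Y,*}\os_{Y\times\bba^r}}V^0\rs_Y$ a flat right $V^0\rs_Y$-module. That is all the acyclicity step requires.
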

\proof We retain the notation from the proof of Lemma \ref{projective coherence}. In particular, $\cs^{\bullet} \to \ms$ denotes the resolution constructed there. We set $\qs = (\pi\times 1)^*V^0\ds_{X\times \bba^r}$, and note that
\begin{equation}\label{push transfer}
p_{Y,*}\qs=p_{Y,*}(\pi^*\ds_X \boxtimes V^0\ds_{\bba^r}) = \pi^*\ds_X \otimes_{\bbc} V^0D_{\bba^r}= \pi^*V^0\rs_X.
\end{equation}

We first claim that
\begin{equation}\label{first identity}
p_{Y,*}\ms \otimes^L_{V^0\rs_Y} p_{Y,*}\qs = p_{Y,*}\cs^{\bullet}\otimes_{V^0\rs_Y} p_{Y,*}\qs.
\end{equation}
To see this, observe that $p_{Y,*}\cs^{\bullet}\to p_{Y,*}\ms \to 0$ is exact since $\cs^{\bullet} \to \ms \to 0$ is an exact sequence of quasicoherent sheaves and $p_Y$ is an affine morphism. Also, note that
$$
p_{Y,*}\cs^i = p_{Y,*}(\es^i \otimes_{\os_{Y\times \bba^r}} V^0\ds_{Y \times \bba^r} )
= p_{Y,*}\es^i \otimes_{p_{Y,*}\os_{Y\times \bba^r}} V^0\rs_Y.
$$
Because $p_Y$ is an affine morphism and $\es_i$ is a flat quasicoherent $\os_{Y\times \bba^r}$-module, $p_{Y,*}\es^i$ is a flat $p_{Y,*}\os_{Y\times \bba^r}$-module, which implies that $p_{Y,*}\cs^i$ is $(-)\otimes_{V^0\rs_Y}p_{Y,*}(\qs)$-acyclic. Thus the claim follows.

We next claim that
\begin{equation}\label{second identity}
Rp_{Y,*}(\ms \otimes^L_{V^0\ds_{Y\times \bba^r}} \qs )
= p_{Y,*}\cs^{\bullet}\otimes_{V^0\rs_Y} p_{Y,*}\qs.
\end{equation}
To see this, recall from the proof of Lemma \ref{projective coherence} that $\ms \otimes^L_{V^0\ds_{Y\times \bba^r}} \qs  = \cs^{\bullet}\otimes_{V^0\ds_{Y\times \bba^r}} \qs$. Since $\qs$ is quasicoherent, $\cs^i\otimes_{V^0\ds_{Y\times \bba^r}} \qs  = \es^i \otimes_{\os_{Y\times \bba^r}} \qs $ is also quasicoherent. In particular, $\cs^i\otimes_{V^0\ds_{Y\times \bba^r}} \qs$ is $p_{Y,*}$-acyclic, hence $Rp_{Y,*}(\ms \otimes^L_{V^0\ds_{Y\times \bba^r}} \qs )
=p_{Y,*}(\cs^{\bullet}\otimes_{V^0\ds_{Y\times \bba^r}} \qs )$.
Because $p_{Y,*}\cs^i= p_{Y,*}\es^i \otimes_{p_{Y,*}\os_{Y\times \bba^r}} V^0\rs_Y$, we have
$$
p_{Y,*}(\cs^i\otimes_{V^0\ds_{Y\times \bba^r}} \qs ) = p_{Y,*}\es^i \otimes_{p_{Y,*}\os_{Y\times \bba^r}} p_{Y,*}\qs 
=p_{Y,*}\cs^i \otimes_{V^0\rs_Y} p_{Y,*}\qs,
$$
which implies the claim.

We now define the following isomorphism of right $\pi^{-1}\rs_{X}$-modules obtained by composing the isomorphisms in \eqref{first identity} and \eqref{second identity}:
$$
\phi_{\ms}=\phi_{\ms}^{\cs^{\bullet}}:p_{Y,*}\ms \otimes^L_{V^0\rs_Y} p_{Y,*}\qs \xrightarrow{\sim} Rp_{Y,*}(\ms \otimes^L_{V^0\ds_{Y\times \bba^r}} \qs ).
$$ 
We claim that $\phi_{\ms}$ is indeed independent of the choice of $\cs^{\bullet}$ and functorial in $\ms$. To see this, it suffices to show that if $\psi:\ms\to\ms'$ is a morphism of coherent right $V^{0}\ds_{Y\times\bba^{r}}$-modules and $\cs'^{\bullet}$ is a resolution of $\ms'$ as constructed in the proof of Lemma \ref{projective coherence}, then we have $\phi_{\ms'}^{\cs'^{\bullet}}\circ (p_{Y,*}(\psi)\otimes  1)=Rp_{Y,*}(\psi\otimes 1)\circ\phi_{\ms}^{\cs^{\bullet}}$. This is straightforward once we represent the morphism $\cs^{\bullet} \to \cs'^{\bullet}$ in $D(V^{0}\ds_{Y\times\bba^{r}}^{op})$ corresponding to $\psi$ as a roof $\cs^{\bullet} \from \ks^{\bullet} \to \cs'^{\bullet}$, where $\ks^{\bullet}$ is a bounded above complex of $V^{0}\ds_{Y\times\bba^{r}}$-modules such that $\ks^{i}=\vs^{i}\otimes_{\os_{Y\times\bba^{r}}}V^{0}\ds_{Y\times\bba^{r}}$ for some $\os_{Y\times\bba^{r}}$-finite locally free sheaf $\vs^{i}$ for each $i$, and $\ks^{\bullet} \to \cs'^{\bullet}$ is a quasi-isomorphism (such a roof exists by \cite[Lemma 13.15.4]{Stacks}).

Since $R(\pi \times 1)_*(\ms \otimes^L_{V^0\ds_{Y\times \bba^r}} (\pi\times 1)^*V^0\ds_{X\times \bba^r})$ has quasicoherent cohomology sheaves by Lemma \ref{projective coherence} and $p_X$ is an affine morphism, we have
$$
p_{X,*} (\widetilde{(\pi \times 1)}_{V,+}\ms )
= \hs^0 Rp_{X,*}R(\pi \times 1)_*(\ms \otimes^L_{V^0\ds_{Y\times \bba^r}} \qs)
=\hs^0 R\pi_*Rp_{Y,*}(\ms \otimes^L_{V^0\ds_{Y\times \bba^r}} \qs).
$$
Applying $\hs^0 R\pi_*$ to $\phi_{\ms}$ then yields the desired isomorphism
$$
p_{X,*} (\widetilde{(\pi \times 1)}_{V,+}\ms ) \simeq \hs^0 R\pi_*(p_{Y,*}\ms \otimes^L_{V^0\rs_{Y}} p_{Y,*}\qs)
=\widetilde{\pi}(p_{Y,*}\ms),
$$
where the last isomorphism follows from \eqref{pi tilde}. \qed

\begin{lemma}\label{projective graded coherence}
If $\ns$ is a coherent graded right $R^+_V\rs_Y$-module, then $\widetilde{\pi}_+(\ns)$ is a coherent graded right $R^+_V\rs_X$-module.
\end{lemma}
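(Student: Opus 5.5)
The plan is to run the argument of Lemma \ref{projective coherence} again, now in the graded setting: replace $V^0\ds_{Y\times \bba^r}$ by $R^+_V\rs_Y$ and $(\pi\times 1)^*V^0\ds_{X\times\bba^r}$ by $\pi^*R^+_V\rs_X$, and keep track of the grading throughout. Since $\rSpec_Y(\gr^F_\bullet\ds_Y)\to Y$ is of finite type, Lemma \ref{finite type assumption} applies on $Y$. Since $X$ is smooth, $\gr^F_\bullet\ds_X$ is a polynomial algebra over $\os_X$, so the same lemma applies on $X$. Hence $R^+_V\rs_Y$ and $R^+_V\rs_X$ are noetherian on affine opens, and Lemma \ref{coherent char} describes their coherent modules. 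In particular, coherent graded $R^+_V\rs_X$-modules form an abelian category closed under extensions, which is what the long exact sequence argument at the end needs.

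\emph{Step 1: graded locally free resolution.} By Lemma \ref{graded surjection}, there is an $\os_Y$-coherent graded subsheaf $\gs=\bigoplus_k \gs_kT^k$ generating $\ns$. Because $\gs$ is $\os_Y$-coherent and $Y$ is quasicompact, only finitely many $\gs_k$ are nonzero. Because $\pi$ is projective and $X$ is affine, each $\gs_k$ is a quotient of a finite locally free sheaf $\es_k=(\ls^{\otimes -N})^{M}$, where $\ls$ is $\pi$-ample. This gives a graded surjection
$$
\cs^0:=\bigoplus_k \es_kT^k\otimes_{\os_Y} R^+_V\rs_Y \onto \ns,
$$
where the summand for $k$ is shifted so that it sits in degree $k$. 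The kernel is again a coherent graded $R^+_V\rs_Y$-module, by noetherianity and Lemma \ref{coherent char}. Iterating produces a graded resolution $\cs^\bullet\to\ns$ with each $\cs^i=\es^i\otimes_{\os_Y}R^+_V\rs_Y$, where each $\es^i$ is a finite direct sum of grading-shifted finite locally free sheaves.

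\emph{Step 2: the individual terms.} Set $\bs^\bullet=\cs^\bullet\otimes_{R^+_V\rs_Y}\pi^*R^+_V\rs_X$. Each $\cs^i$ is acyclic for $(-)\otimes^L_{R^+_V\rs_Y}\pi^*R^+_V\rs_X$, and $\bs^i=\es^i\otimes_{\os_Y}\pi^*R^+_V\rs_X$. By Lemma \ref{box pull} we have $\pi^*R^+_V\rs_X=L\pi^*R^+_V\rs_X$, so the projection formula \cite[Lemma 36.22.1]{Stacks} gives
$$
R\pi_*\bs^i=R\pi_*\es^i\otimes^L_{\os_X}R^+_V\rs_X .
$$
Moreover $R^+_V\rs_X$ is free as a left $\os_X$-module, so this derived tensor is an ordinary tensor on each cohomology sheaf. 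Properness of $\pi$ makes the cohomology sheaves of $R\pi_*\es^i$ $\os_X$-coherent. Therefore the cohomology sheaves of $R\pi_*\bs^i$ are coherent graded $R^+_V\rs_X$-modules, with the grading inherited from the shifts in $\es^i$.

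\emph{Step 3: truncation and induction.} Let $d$ bound the cohomological dimension of $R\pi_*$ on quasicoherent sheaves (Remark \ref{bounded cohomological dimension}). All the terms $\bs^i$ are quasicoherent, so $\hs^0R\pi_*(\bs^\bullet)=\hs^0R\pi_*(\sigma_{\geq -d-1}\bs^\bullet)$. The triangles
$$
R\pi_*\sigma_{\geq i+1}\bs^\bullet\to R\pi_*\sigma_{\geq i}\bs^\bullet\to R\pi_*\bs^i[-i]
$$
then give, by descending induction on $i$ and the long exact sequence, that every $R\pi_*\sigma_{\geq i}\bs^\bullet$ has coherent graded cohomology. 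All maps are graded, so the grading is preserved at each stage. Taking $i=-d-1$ shows that $\widetilde{\pi}_+(\ns)$ is a coherent graded $R^+_V\rs_X$-module. Finally, one checks that this grading agrees with the decomposition $\widetilde{\pi}_+(\ns)=\bigoplus_k\widetilde{\pi}_+(\ns_k)T^k$ from Section \ref{transfer bimodule and direct image}; this holds because the resolution is graded and $R\pi_*$ commutes with the direct sum (Lemma \ref{derived direct image and direct sum}).

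The main obstacle is Step 1. Lemma \ref{graded surjection} only provides a coherent generating subsheaf, and producing finite locally free graded covers requires the observation that only finitely many degrees occur. One must also make sure that the kernel at each stage is again a coherent graded module, so that the resolution can be continued indefinitely. The rest is a bookkeeping copy of Lemma \ref{projective coherence}.
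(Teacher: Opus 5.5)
Your proposal is correct and is essentially the paper's proof: the paper also just reruns Lemma \ref{projective coherence} with $R^+_V\rs_Y$, $R^+_V\rs_X$, graded $\os_Y$-finite locally free sheaves, and Lemma \ref{graded surjection} in place of their ungraded counterparts, exactly as you do. One point of phrasing: the truncation step should invoke Lemma \ref{bounded cohomological dimension} as stated, i.e.\ vanishing for arbitrary abelian sheaves with $d=\dim Y$, rather than a cohomological-dimension bound for quasicoherent sheaves, because the differentials of $\bs^\bullet$ need not be $\os_Y$-linear and so the image sheaves in that lemma's proof are merely abelian sheaves.
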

\proof The argument is identical to that of Lemma \ref{projective coherence} after replacing $Y\times \bba^r$, $X\times \bba^r$, $\pi\times 1$, $\os_{Y\times \bba^r}$, $\os_{X\times \bba^r}$, $V^0\ds_{Y \times \bba^r}$, $V^0\ds_{X \times \bba^r}$, $\os_{Y \times\bba^{r}}$-coherent, $\os_{Y \times\bba^{r}}$-finite locally free, and Lemma \ref{surjection} with $Y$, $X$, $\pi$, $\os_Y$, $\os_X$, $R^+_V\rs_Y$, $R^+_V\rs_X$, graded $\os_{Y}$-coherent, graded $\os_{Y}$-finite locally free, and Lemma \ref{graded surjection}, respectively. We thus omit the proof. \qed

\begin{corollary}\label{projective good}
Let $U^{\bullet}\ns$ be a good filtration of a right $V^0\rs_Y$-module $\ns$. If we put 
$$
\im(\widetilde{\pi}_+(U^k\ns)) := \im(\widetilde{\pi}_+(U^k\ns) \to  \widetilde{\pi}_+(\ns) ),
$$
then $(\im(\widetilde{\pi}_+(U^k\ns)) )_{k\geq 0}$ is a good filtration of $\widetilde{\pi}_+(\ns)$.
\end{corollary}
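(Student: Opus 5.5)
The plan is to verify the three defining properties of a good filtration for $\mathcal{U}^k := \im(\widetilde{\pi}_+(U^k\ns))$, with the Rees module $\bigoplus_{k\geq 0} U^k\ns\, T^k$ doing most of the work. Write $\ns^+ := \bigoplus_{k\geq 0} U^k\ns T^k$. This is a coherent graded right $R^+_V\rs_Y$-module by condition iii). By Lemma \ref{projective graded coherence}, $\widetilde{\pi}_+(\ns^+) = \bigoplus_{k\geq 0}\widetilde{\pi}_+(U^k\ns)T^k$ is then a coherent graded right $R^+_V\rs_X$-module. Here the decomposition into graded pieces comes from the compatibility of $\widetilde{\pi}_+$ with direct sums recalled in Section \ref{transfer bimodule and direct image}.

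The first step is property i). Since $U^0\ns=\ns$, the map $\widetilde{\pi}_+(U^0\ns)\to\widetilde{\pi}_+(\ns)$ is the identity, so $\mathcal{U}^0=\widetilde{\pi}_+(\ns)$. Each $\mathcal{U}^k$ is a $V^0\rs_X$-submodule because the maps $\widetilde{\pi}_+(U^k\ns)\to\widetilde{\pi}_+(\ns)$ are $V^0\rs_X$-linear by functoriality. The filtration is decreasing since $U^{k+1}\ns\subset U^k\ns$ and the map for $U^{k+1}\ns$ factors through the one for $U^k\ns$.

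The second step is property ii). The $R^+_V\rs_X$-module structure on $\widetilde{\pi}_+(\ns^+)$ gives, for each $j\geq 0$, maps $\widetilde{\pi}_+(U^i\ns)\otimes V^j\rs_X\to\widetilde{\pi}_+(U^{i+j}\ns)$. These are induced by the $R^+_V\rs_Y$-structure on $\ns^+$ through the bimodule $\pi^*R^+_V\rs_X$ and \eqref{rees vr}. Consider the morphism of graded modules $\widetilde{\pi}_+(\ns^+)\to\widetilde{\pi}_+(\ns[T])=\bigoplus_k\widetilde{\pi}_+(\ns)T^k$, induced by the inclusion $\ns^+\subset\ns\otimes_{\bbc}\bbc[T]$. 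Here $\ns[T]$ is regarded as a graded $R^+_V\rs_Y$-module via $R^+_V\rs_Y\subset\rs_Y[T]$; it is not coherent, but $\widetilde{\pi}_+$ is still defined on it. This morphism is $R^+_V\rs_X$-linear, and its image is exactly $\bigoplus_k\mathcal{U}^kT^k$. Therefore $\mathcal{U}^i\cdot V^j\rs_X\subset\mathcal{U}^{i+j}$.

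The last step is property iii). The module $\bigoplus_k\mathcal{U}^kT^k$ is the image of the coherent module $\widetilde{\pi}_+(\ns^+)$ inside the quasicoherent module $\widetilde{\pi}_+(\ns)[T]$. Over an affine open it is therefore a quotient of a finitely generated module over the noetherian ring $\Gamma(U,R^+_V\rs_X)$ of Lemma \ref{finite type assumption}, hence finitely generated. It is also quasicoherent, being the image of a morphism of quasicoherent sheaves. By Lemma \ref{coherent char} it is coherent.

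I expect the main obstacle to be the second step. One has to check carefully that the grading and module structure on $\widetilde{\pi}_+(\ns^+)$ are compatible with the maps into $\widetilde{\pi}_+(\ns)$. In particular, the map $\widetilde{\pi}_+(\ns^+)\to\widetilde{\pi}_+(\ns)[T]$ must be $R^+_V\rs_X$-linear, which amounts to naturality of $\hs^0R\pi_*(-\otimes^L\pi^*R^+_V\rs_X)$ compared with $-\otimes^L_{V^0\rs_Y}\pi^*V^0\rs_X$ via \eqref{rees vr}. The first thing I would try is to reduce this to the degreewise statement for the multiplication maps $U^i\ns\otimes V^j\rs_Y\to U^{i+j}\ns\subset\ns$, and then apply functoriality of $\widetilde{\pi}_+$.
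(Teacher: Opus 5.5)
Your proposal is correct and is essentially the paper's proof. The paper likewise applies $\widetilde{\pi}_+$ to the inclusion $\bigoplus_{k\geq 0} U^k\ns T^k\subset\bigoplus_{k\geq 0}\ns T^k$ of graded $R^+_V\rs_Y$-modules and invokes Lemma \ref{projective graded coherence}, so that the image $\bigoplus_{k\geq 0}\im(\widetilde{\pi}_+(U^k\ns))T^k$ is a coherent graded $R^+_V\rs_X$-submodule, which gives (ii) and (iii) at once. The one step you assert without justification, quasicoherence of $\widetilde{\pi}_+(\ns)[T]$ (needed so that the image is quasicoherent), the paper gets from Lemma \ref{coherent pieces}: $\widetilde{\pi}_+(\ns)=\widetilde{\pi}_+(U^0\ns)$ is a coherent $V^0\rs_X$-module.
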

\proof Consider the inclusion $\bigoplus_{k\geq 0} U^k\ns T^k \subset \bigoplus_{k\geq 0} \ns T^k$ of graded right $R^+_V\rs_Y$-modules. Applying $\widetilde{\pi}_+$ yields the following morphism of graded $R^+_V\rs_X$-modules:
$$
\phi: \bigoplus_{k\geq 0} \widetilde{\pi}_+(U^k\ns) T^k \to \bigoplus_{k\geq 0} \widetilde{\pi}_+(\ns) T^k.
$$
By Lemma \ref{projective graded coherence}, $\bigoplus_{k\geq 0} \widetilde{\pi}_+(U^k\ns) T^k$ is a coherent $R^+_V\rs_X$-module. Note that $\bigoplus_{k\geq 0} \widetilde{\pi}_+(\ns) T^k$ is quasicoherent since $\widetilde{\pi}_+(\ns)=\widetilde{\pi}_+(U^0\ns)$ is a coherent $V^0\rs_X$-module by Lemma \ref{coherent pieces}. Thus $\im \phi=\bigoplus_{k\geq 0} \im(\widetilde{\pi}_+(U^k\ns)) T^k$ is a coherent graded right $R^+_V\rs_X$-module by Remark \ref{coherent char}, as desired. \qed

\begin{remark}\label{proper smooth}
All the results in this section continue to hold under the weaker assumption that $\pi$ is proper, provided that $Y$ is smooth. Indeed, under these modified assumptions, while the $\os_{Y\times \bba^r}$-coherent sheaves $\es^i$ in the proofs of Lemmas \ref{projective coherence} and \ref{push relation} can no longer be assumed to be locally free, the proofs still go through because $\ds_{Y \times \bba^r}$ and $\ds_Y$ are now locally free as left $\os_Y$-modules.
\end{remark}

\section{Proof of the main theorem}\label{proof of the main theorem}

We begin this section by proving Theorem \ref{main result}. We then deduce Theorem \ref{main result for hypersurfaces} as a consequence.

\noindent{\it Proof of Theorem \ref{main result}}. Let $p_Y$ and $p_X$ denote the projections $Y \times \bba^r \to Y$ and $X\times \bba^r \to X$, respectively, and put $\rs_Y=p_{Y,*}\ds_{Y\times \bba^r}$ and $\rs_X=p_{X,*}\ds_{X\times \bba^r}$. Also, let $p$ and $q$ denote the morphisms $U/G \onto Y$ and $U\to U/G$, respectively. Recall that $Z\subset X$ is the closed subscheme defined by $f_1,\dots,f_r$. Since $Y$ is normal, we can consider the right $\rs_Y$-module $B_{\pi^*\bff}^{\omega}$. By Lemma \ref{quot sing finite type}, we can use the results in Sections \ref{coherent modules and good filtrations} and \ref{projective direct image}.

Recall that $u= \sum_{\beta} u_{\beta}\partial_t^{\beta}\delta_{\bff} \in \Gamma(X,B_{\bff})$. We consider the section
$$
v := \sum_{\beta} (-1)^{|\beta|}(u_{\beta}dx_1\wedge \cdots \wedge dx_n) \partial_t^{\beta}\delta^{\omega}_{\bff} \in \Gamma(X,B_{\bff}^{\omega})
$$
and set $v^* := \pi^*v \in \Gamma(Y,B_{\pi^*\bff}^{\omega})$ (recall Definition \ref{bfomega pullback}). Note that $b_u(s)=b_v(-s-r)$ by Lemma \ref{left right switch}. Since
$$
((\pi \circ p \circ q)^*v )|_{U_i} = \sum_{\beta} (-1)^{|\beta|}(\pi^*(u_{\beta}) g_idy_1\wedge \cdots \wedge dy_n) \partial_t^{\beta}\delta^{\omega}_{\pi_i^*\bff} \in \Gamma(U_i,B_{\pi_i^*\bff}^{\omega}),
$$
we also have $b_{\pi_i^*u}(s) = b_{((\pi \circ p \circ q)^*v )|_{U_i}}(-s-r)$ by Lemma \ref{left right switch}. It then follows from Remark \ref{b function local} that $b(s)=b_{(\pi \circ p \circ q)^*v}(-s-r)$. Thus, the theorem statement is equivalent to showing that
$$
b_v(s) \ | \ b_{(\pi \circ p \circ q)^*v}(s)b_{(\pi \circ p \circ q)^*v}(s-1) \cdots b_{(\pi \circ p \circ q)^*v}(s-N)
$$
for some $N \geq 0$. By Corollary \ref{finite group b function} and Theorem \ref{etale pull b}, we have 
$$
b_{v^*}(s) = b_{p^*v^*}(s) \ | \ b_{q^*(p^*v^*)}(s),
$$
and by \eqref{composition}, we have $q^*(p^*v^*) = (\pi \circ p \circ q)^*v$, hence $b_{v^*}(s) \ | \ b_{(\pi \circ p \circ q)^*v}(s)$. It therefore suffices to show that
$$
b_v(s) \ | \ b_{v^*}(s)b_{v^*}(s-1) \cdots b_{v^*}(s-N)
$$
for some $N \geq 0$, which the rest of this proof is devoted to showing.

We consider the coherent right $V^0\rs_Y$-module $\ns := v^* \cdot V^0\rs_Y \subset B_{\pi^*\bff}^{\omega}$ and the morphism $\phi: V^0\rs_Y \onto \ns,P \mapsto v^{*}\cdot P$. Note that $\widetilde{\pi}_+(V^0\rs_Y)= V^0\rs_X$ by Lemma \ref{push example} and Zariski's Main Theorem. We denote by 
$$
\widetilde{v}:=\widetilde{\pi}_+(\phi)(1)  \in \Gamma(X,\widetilde{\pi}_+(\ns) )
$$
the image of $1\in V^0\rs_X$ under the morphism $\widetilde{\pi}_+(\phi):V^0\rs_X=\widetilde{\pi}_+(V^0\rs_Y) \to \widetilde{\pi}_+(\ns)$.

We claim that there exists some $M \geq 0$ such that
\begin{equation}\label{squish}
\widetilde{\pi}_+(\ns) \cdot V^M\rs_X \subset \widetilde{v} \cdot V^0\rs_X.
\end{equation}
To see this, let $v^{**} \in \Gamma(X\times \bba^r,\hs^r_{\Gamma_{\pi^*\bff}}(\omega_{Y\times \bba^r}))$ denote the section corresponding to $v^*$ via
$$
\Gamma(X\times \bba^r,\hs^r_{\Gamma_{\pi^*\bff}}(\omega_{Y\times \bba^r})) = \Gamma(X, B_{\pi^*\bff}^{\omega}).
$$
We also consider the surjection $\phi': V^0\ds_{Y\times \bba^r} \onto v^{**}\cdot V^0\ds_{Y\times \bba^r},P \mapsto v^{**}\cdot P$ and set
$$
\qs = \coker (\widetilde{(\pi\times 1)}_{V,+}(\phi'):  \widetilde{(\pi\times 1)}_{V,+}(V^0\ds_{Y\times \bba^r})
\to
\widetilde{(\pi\times 1)}_{V,+}(v^{**}\cdot V^0\ds_{Y\times \bba^r})).
$$
Note that $\widetilde{(\pi\times 1)}_{V,+}(v^{**} \cdot V^0\ds_{Y \times \bba^r})$ is supported on $\Gamma_{\bff}\subset X \times \bba^r$ since $v^{**} \cdot V^0\ds_{Y \times \bba^r}$ is supported on $\Gamma_{\pi^*\bff}=(\pi\times 1)^{-1}(\Gamma_{\bff}) \subset Y\times \bba^r$. Since $\pi\times 1$ is an isomorphism over $(X \setminus Z)\times \bba^r$, it follows that $\widetilde{(\pi\times 1)}_{V,+}(\phi')|_{(X \setminus Z) \times \bba^r}$ is surjective, hence $\qs$ is supported on $Z\times \bba^r \subset X\times \bba^r$. Thus $\qs$ is supported on $\Gamma_{\bff} \cap (Z\times \bba^r) \subset X\times 0$. By Lemmas \ref{projective coherence} and \ref{torsion}, there exists some $M \geq 0$ such that $\qs \cdot V^M\ds_{X\times \bba^r} = 0$, which implies that $p_{X,*}(\qs) \cdot V^M\rs_X =0$. The claim then follows since $p_{X,*}(\qs) = \coker (\widetilde{\pi}_+(\phi))= \widetilde{\pi}_+(\ns)/ \widetilde{v}\cdot V^0\rs_X$ by Lemma \ref{push relation}.

Consider the good filtration $(v^* \cdot V^k\rs_Y)_{k\geq 0}$ of $\ns$. By Lemma \ref{projective good}, if we put
$$
\im(\widetilde{\pi}_+(v^* \cdot V^k\rs_Y)) := \im( \widetilde{\pi}_+(v^*  \cdot V^k\rs_Y) \to \widetilde{\pi}_+(\ns)  ),
$$
then $(\im(\widetilde{\pi}_+(\ns \cdot V^k\rs_Y)))_{k\geq 0}$ is a good filtration of $\widetilde{\pi}_+(\ns)$. By Lemma \ref{artin rees}, there exists some $N \geq 0$ such that 
\begin{equation}\label{artin rees consequence}
\im(\widetilde{\pi}_+(v^* \cdot V^{N+1}\rs_Y)) \subset \widetilde{\pi}_+(\ns) \cdot V^{M+1}\rs_X.
\end{equation}
Now set
$$
a(s):=b_{v^*}(s)b_{v^*}(s-1)\cdots b_{v^*}(s-N).
$$
By Lemma \ref{right moving}, the morphism $a(s): \ns \to \ns, w \mapsto w \cdot a(s)$, which is $\ds_Y$-linear, factors through $v^*\cdot V^{N+1}\rs_Y \subset \ns$. Thus we can consider the following commutative diagram of right $\ds_Y$-modules:
\[
\begin{tikzcd}
\ns \arrow[r] \arrow[rr, bend right,"a(s)"] & v^*\cdot V^{N+1}\rs_Y \arrow[r, hook] & \ns.
\end{tikzcd}
\]
Applying $\widetilde{\pi}_+$ yields the commutative diagram
\[
\begin{tikzcd}
\widetilde{\pi}_+(\ns) \arrow[r] \arrow[rr, bend right,"a(s)"] & \widetilde{\pi}_+(v^*\cdot V^{N+1}\rs_Y) \arrow[r] & \widetilde{\pi}_+(\ns),
\end{tikzcd}
\]
whose commutativity implies that $\widetilde{\pi}_+(\ns) \cdot a(s) \subset \im(\widetilde{\pi}_+(v^*\cdot V^{N+1}\rs_Y))$. Thus we have
$$
\widetilde{\pi}_+(\ns) \cdot a(s) 
 \subset \im(\widetilde{\pi}_+(v^* \cdot V^{N+1}\rs_Y)) 
\subset \widetilde{\pi}_+(\ns) \cdot V^{M+1}\rs_X
\subset \widetilde{v} \cdot V^1\rs_X,
$$
where the second and third inclusions follow from \eqref{artin rees consequence} and \eqref{squish}, respectively. Since $\widetilde{v}$ is a section of $\widetilde{\pi}_+(\ns)$, we then obtain
\begin{equation}\label{pushed b functional equation}
\widetilde{v}  \cdot a(s) \in\widetilde{v} \cdot V^1\rs_X.
\end{equation}

We next show that there is a morphism of right $V^0\rs_X$-modules $\psi: \widetilde{v} \cdot V^0\rs_X  \to v \cdot V^0\rs_X$ such that $\psi(\widetilde{v})=v$. Let $j: X \setminus Z  \to X$ denote the inclusion. Note that the natural morphism $v\cdot V^0\rs_X \to j_*j^*(v\cdot V^0\rs_X)$ is injective since $v\cdot V^0\rs_X \subset B_{\bff}^{\omega}$ and $B_{\bff}^{\omega}$ is $\os_X$-torsion-free by Lemma \ref{bfomega action}. Thus it suffices to show that there is a morphism of right $V^0\rs_X$-modules $\widetilde{v} \cdot V^0\rs_X \to j_*j^*(v\cdot V^0\rs_X)$ that maps $\widetilde{v}$ to $v|_{X \setminus Z}$. Since $\pi$ is an isomorphism over $X \setminus Z$, it is straightfoward to see that $j^*(\widetilde{v} \cdot V^0\rs_X)=j^*(v\cdot V^0\rs_X)$ and $\widetilde{v}|_{X \setminus Z} = v|_{X \setminus Z}$. The desired morphism is then the composition $\widetilde{v} \cdot V^0\rs_X \to j_*j^*(\widetilde{v} \cdot V^0\rs_X) =  j_*j^*(v\cdot V^0\rs_X)$. 

Finally, since $\widetilde{v}\cdot a(s) = \widetilde{v} \cdot P$ for some $P\in V^1\rs_X$ by \eqref{pushed b functional equation}, it follows that 
$$
v\cdot a(s) = \psi(\widetilde{v}\cdot a(s) ) = \psi(\widetilde{v} \cdot P)= v \cdot P
$$
by $V^0\rs_X$-linearity of $\psi$. Hence $b_v(s) \ | \ a(s)$, as desired. \qed

\begin{remark}
By Remark \ref{proper smooth}, Theorem \ref{main result} continues to hold under the weaker assumption that $\pi$ is proper, provided that $Y$ is smooth.
\end{remark}

Before we prove Theorem \ref{main result for hypersurfaces}, we begin with the following remark.

\begin{remark}\label{the v filtration on bf}
Let $f\in \os_X(X)$ be a regular function on a smooth variety $X$. The $V$-filtration $(V^{\lambda}B_f)_{\lambda}$ on $B_f$, originally constructed by Malgrange in \cite{Mal83} as a filtration indexed by integers, is a certain decreasing filtration indexed by rational numbers that is uniquely characterized by a few properties (for details, we refer to \cite{Kas83}, \cite[Section 3.1]{Sai88}, and \cite[Section 1]{BMS06}). Recall from Remark \ref{rational roots} that for every $u\in \Gamma(X,B_f)$, the $b$-function $b_u(s)$ is a nonzero polynomial such that all its roots are rational. The $V$-filtration has the following description in terms of $b$-functions, due to Sabbah \cite{Sab87}:
$$
\Gamma(X,V^{\lambda}B_f) = \{u\in \Gamma(X,B_f) \ | \ \text{all roots of $b_u(s)$ are $\leq -\lambda$}\}.
$$
\end{remark}

\noindent{\it Proof of Theorem \ref{main result for hypersurfaces}}. With the notation introduced before Theorem \ref{main result}, note that 
$$
\pi_i^*(\partial_t^m\delta_f) = v y_{1}^{k_{i1}}\cdots y_{n}^{k_in}\partial_t^m\delta_{u y_{1}^{a_{i1}} \cdots y_{n}^{a_{in}}}
$$
for every $m\geq 0$ and $i\in I$. The first part of the theorem concerning the form of the roots of $b_f(s)$ follows immediately from Theorem \ref{main result} since $b_{\pi^*_i(\delta_f)}(s)$ divides $\prod_{j=1}^n \prod_{\ell=1}^{a_{ij}} (s+\frac{k_{ij}+\ell}{a_{ij}})$ by \cite[Lemmas 2.6 and 2.7]{DM22}. 

For the final statement describing a lower bound for the minimal exponent when $a_{i1}\in \{0,1\}$ and $k_{i1}=0$ for all $i\in I$, let us write
$$
\min_{i\in I,j\geq 2,a_{ij}\neq 0}\frac{k_{ij}+1}{a_{ij}}= q+\gamma
$$
for $q\in \bbz_{\geq 0}$ and $\gamma\in (0,1]$. By a result of Saito (see \cite[(1.3.8)]{Sai17}), $\widetilde{\alpha}(f)\geq q+\gamma$ if and only if $\partial_t^q\delta_f \in \Gamma(X, V^{\gamma}B_f)$. By Remark \ref{the v filtration on bf}, this is equivalent to the fact that all roots of $b_{\partial^q_t\delta_f}(s)$ are $\leq -\gamma$. By Theorem \ref{main result}, we have
$$
\max \{\lambda\in \bbq \ | \ b_{\partial^q_t\delta_f}(\lambda) = 0 \} \leq \max \{\lambda\in \bbq \ | \ b_{\pi_i^*(\partial_t^q\delta_f)}(\lambda) = 0 \text{ for some $i\in I$} \}.
$$
By \cite[Lemmas 2.6 and 2.7]{DM22}, $b_{\pi_i^*(\partial_t^q\delta_f)}(s)$ divides $(s+1)\prod_{j=2}^n \prod_{\ell=1}^{a_{ij}} (s-q+\frac{k_{ij}+\ell}{a_{ij}})$, hence
$$
\max \{\lambda\in \bbq \ | \ b_{\partial^q_t\delta_f}(\lambda) = 0 \}  \leq \max\{-1, \max_{i\in I,j\geq 2,a_{ij}\neq 0}\{q - \frac{k_{ij}+1}{a_{ij}}\}\}
=\max\{-1,-\gamma\}=-\gamma,
$$
as desired. \qed

\section{Estimates of zero loci of Bernstein-Sato ideals}\label{Estimates of zero loci of Bernstein-Sato ideals}

The main goal of this section is to prove Theorem \ref{main result for bernstein-sato ideals}. The proof closely follows that of Theorem \ref{main result for hypersurfaces}, so we begin by establishing the analogous definitions and results.

\subsection{The $W$-filtration}\label{}

Throughout this section, we fix a variety $X$ and an integer $r\geq 1$. In this section, we define and prove some basic properties of the $W$-filtration on $\ds_{X\times \bba^r}$. 

We denote the standard coordinates on $\bba^r$ by $t_1,\dots,t_r$ and the projection $X \times \bba^r \to X$ by $p_X$, and put
$\rs_X := p_{X,*}\ds_{X\times \bba^r} = \ds_X\otimes_{\bbc} D_{\bba^r}$
For each $i\in \{1,\dots,r\}$, we denote by $\is_{i} \subset \os_{X\times \bba^r}$ the coherent sheaf of ideals generated by $t_i$ and set
$$
V_{i}^k\ds_{X\times \bba^r} := \{P \in \ds_{X\times \bba^r} \ | \ P ( \is_{i}^j) \subset \is_{i}^{j+k} \text{ for all $j\in \bbz$}\},
$$
where $\is^j_{i}=\os_{X\times \bba^r}$ for $j\leq 0$. 
The $W$-filtration on $\ds_{X\times \bba^r}$ is the decreasing filtration $(W^k\ds_{X\times \bba^r})_{k\in \bbz}$ given by
$$
W^k\ds_{X\times \bba^r} := \bigcap_{i=1}^{r}V^{k}_i\ds_{X\times \bba^r}.
$$
This filtration also appears in \cite[Definition 2.1.2]{Wu17}, where it is defined differently. We then consider the $W$-filtration $(W^k\rs_X)_{k\in \bbz}$ on $\rs_X$ given by
$$
W^k\rs_X :=p_{X,*} W^k\ds_{X\times \bba^r},
$$
as well as the corresponding $\bbz_{\geq 0}$-graded Rees algebra
$$
R^+_W\rs_X := \bigoplus_{k\geq 0} W^k\rs_X T^k \subset \rs_X[T].
$$
By Lemma \ref{qcoh V filtration}, $W^k\ds_{X \times \bba^r}$ is a quasicoherent $\os_{X \times \bba^r}$-bimodule for each $k$, hence $W^0\ds_{X\times \bba^r}$, $W^0\rs_X$, and $R^+_W\rs_X$ are quasicoherent sheaves of rings.

\begin{lemma}\label{BS explicit V filtration}
For $k\in \bbz$, we have
$$
W^k\rs_X =\bigoplus_{\alpha_{i}-\beta_{i}\geq k} \ds_X t^{\alpha}\partial_t^{\beta}=\bigoplus_{\alpha_{i}-\beta_{i}\geq k} \ds_X \partial_t^{\beta} t^{\alpha},
$$
where both direct sums run over all $\alpha,\beta\in \bbz^r_{\geq 0}$ such that $\alpha_{i}-\beta_{i}\geq k$ for each $i\in \{1,\dots,r\}$.
\end{lemma}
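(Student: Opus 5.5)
The plan is to reduce to the affine case and to single coordinates, then intersect, following the pattern of Lemma \ref{explicit V filtration} (whose proof is omitted as well known). Both sides are quasicoherent subsheaves of $\rs_X = \ds_X\otimes_{\bbc} D_{\bba^r} = \bigoplus_{\alpha,\beta}\ds_X t^{\alpha}\partial_t^{\beta}$: the left by Lemma \ref{qcoh V filtration}, the right because it is a direct sum of the corresponding summands. So it suffices to compare global sections over an affine open $U=\Spec R\subset X$. Over such $U$ we have $\Gamma(U\times\bba^r,\ds)=D_R\otimes_{\bbc} D_{\bba^r}$ by \eqref{local product}. The equality of the two right-hand sides is routine: commuting $t_i^{\alpha_i}$ past $\partial_{t_i}^{\beta_i}$ produces terms $t_i^{\alpha_i-m}\partial_{t_i}^{\beta_i-m}$, which have the same difference $\alpha_i-\beta_i$, and the coordinates for distinct indices commute.

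The main step is to show that, for a fixed $i$, an operator $P=\sum_{\alpha,\beta}P_{\alpha\beta}t^{\alpha}\partial_t^{\beta}$ with $P_{\alpha\beta}\in D_R$ lies in $V_i^k$ if and only if every nonzero $P_{\alpha\beta}$ satisfies $\alpha_i-\beta_i\ge k$. For the ``if'' direction, the operator $t^{\alpha}\partial_t^{\beta}$ maps $t_i^j\cdot(\text{anything})$ into $t_i^{j-\beta_i+\alpha_i}(\ldots)$ when $j\ge\beta_i$. When $j<\beta_i$ it maps into $t_i^{\alpha_i}(\ldots)$, and $\alpha_i\ge j+k$ there, since $\alpha_i\geq\beta_i+k>j+k-1$. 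The factor $D_R$ and the other coordinates preserve the ideal $(t_i^j)$, because they commute with $t_i$.

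For the ``only if'' direction, I would use the grading by the Euler operator $\theta_i=t_i\partial_{t_i}$. Each $t^{\alpha}\partial_t^{\beta}$ is homogeneous of $t_i$-degree $\alpha_i-\beta_i$, and $V_i^k$ is stable under the operation $P\mapsto[\theta_i,P]$, since $\theta_i\in V_i^0$ and $V_i^0$-products preserve the filtration. Decomposing $P$ into $\theta_i$-eigencomponents via a Vandermonde argument (only finitely many degrees occur) then shows that each homogeneous component $P_d$ of degree $d$ lies in $V_i^k$. Next, write $P_d=\sum_{\beta_i}Q_{\beta_i}t_i^{d+\beta_i}\partial_{t_i}^{\beta_i}$, where the $Q$'s do not involve $t_i$ or $\partial_{t_i}$. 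Suppose $d<k$, and let $b$ be the minimal $\beta_i$ with $Q_b\neq0$, where $\beta_i\ge\max(0,-d)$. Apply $P_d$ to $g\,t_i^{b}$ with $g\in R[t_{j\ne i}]$ chosen so that $Q_b(g)\ne0$. All terms with larger $\beta_i$ kill $t_i^b$, so the result is a nonzero multiple of $Q_b(g)\,t_i^{d+b}$. This lies in $\is_i^{d+b}$ but not in $\is_i^{b+k}$, which contradicts $P_d\in V_i^k$. Hence every term occurring in $P$ satisfies $\alpha_i-\beta_i\ge k$.

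Intersecting over $i=1,\dots,r$ then gives the description of $W^k\rs_X$ as the span of the terms with $\alpha_i-\beta_i\ge k$ for every $i$, since the characterization is coefficientwise in the fixed basis $\{t^\alpha\partial_t^\beta\}$. The step I expect to require the most care is the ``only if'' direction, in particular justifying the homogeneous decomposition through stability under $[\theta_i,-]$ and exhibiting a test function on which $Q_b$ acts nontrivially. The latter uses that a nonzero differential operator in $D_R\otimes D_{\bba^{r-1}}$ acts nontrivially on $R[t_{j\neq i}]$.
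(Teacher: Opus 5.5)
Your proof is correct and is essentially the paper's argument. The paper's one-line proof amounts to applying Lemma \ref{explicit V filtration} to the single coordinate $t_i$, with $X\times\bba^{r-1}$ as the base, to describe each $V_i^k\rs_X$ coefficientwise in the basis $t^{\alpha}\partial_t^{\beta}$, and then intersecting over $i$. The only difference is that you reprove this single-coordinate case directly rather than citing the lemma; your eigendecomposition under $[t_i\partial_{t_i},-]$ and the test element $g\,t_i^{b}$ do the job, since $b+k>b+d\geq 0$.
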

\proof This follows immediately from Lemma \ref{explicit V filtration}.\qed

By Lemma \ref{BS explicit V filtration}, we have
\begin{equation}\label{BS push generation from zero}
W^0\rs_X=\ds_{X}\langle s_{1},\dots,s_{r},t_{1},\dots,t_{r}\rangle
\ \ \ \ 
\text{and}
\ \ \ \ 
W^k\rs_X = (t_1\cdots t_r)^k W^0\rs_X
\end{equation}
for $k\geq 0$. The following lemma is an immediate consequence of quasicoherence and \eqref{BS push generation from zero}.

\begin{lemma}\label{BS generation from zero}
We have $W^k\ds_{X\times \bba^r} = (t_1\cdots t_r)^k\cdot W^0\ds_{X\times \bba^r}$ for $k\geq 0$.
\end{lemma}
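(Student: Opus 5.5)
The plan mirrors the proof of Lemma \ref{generation from zero}: first establish the identity after pushing forward along $p_X$, then use quasicoherence to lift it back to $X\times\bba^r$.

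First, observe that both sides are left $\os_{X\times \bba^r}$-submodules of $\ds_{X\times\bba^r}$.
\begin{itemize}
\item[i)] $W^k\ds_{X\times \bba^r}$ is quasicoherent, being an intersection of the quasicoherent bimodules $V^k_i\ds_{X\times\bba^r}$ (Lemma \ref{qcoh V filtration} applied to each single coordinate $t_i$).
\item[ii)] $(t_1\cdots t_r)^k\cdot W^0\ds_{X\times\bba^r}$ is the image of the left $\os$-linear map $W^0\ds_{X\times\bba^r}\to\ds_{X\times\bba^r}$ given by left multiplication by $(t_1\cdots t_r)^k$. It is therefore also quasicoherent as a left $\os_{X\times\bba^r}$-module.
\end{itemize}

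Since $p_X$ is an affine morphism, $p_{X,*}$ is faithful and exact on quasicoherent sheaves. It is therefore enough to check the equality
\[
p_{X,*}W^k\ds_{X\times\bba^r}=p_{X,*}\bigl((t_1\cdots t_r)^k W^0\ds_{X\times\bba^r}\bigr)
\]
inside $p_{X,*}\ds_{X\times\bba^r}=\rs_X$.

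The left-hand side is $W^k\rs_X$ by definition. For the right-hand side, exactness of $p_{X,*}$ lets us commute it with taking the image of left multiplication by the global section $(t_1\cdots t_r)^k$. This gives $(t_1\cdots t_r)^k\,p_{X,*}W^0\ds_{X\times\bba^r}=(t_1\cdots t_r)^kW^0\rs_X$. The equality then reduces to \eqref{BS push generation from zero}.

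For completeness I would also verify \eqref{BS push generation from zero} from Lemma \ref{BS explicit V filtration}. The inclusion $\supseteq$ is clear, since multiplying by $t^{(k,\dots,k)}$ raises every $\alpha_i-\beta_i$ by $k$. For $\subseteq$, take a basis element $\ds_X t^\alpha\partial_t^\beta$ with $\alpha_i-\beta_i\ge k\ge0$ for all $i$. Then $\alpha_i\ge k$, so $t^\alpha\partial_t^\beta=(t_1\cdots t_r)^k\,t^{\alpha-(k,\dots,k)}\partial_t^\beta$. The second factor lies in $W^0\rs_X$ because $\alpha_i-k-\beta_i\ge0$. Moreover, $\ds_X$ commutes with the $t_i$, so the $\ds_X$ coefficients can be moved past $(t_1\cdots t_r)^k$.

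The only mild obstacle is the quasicoherence of $(t_1\cdots t_r)^k\cdot W^0\ds_{X\times\bba^r}$ and the compatibility of $p_{X,*}$ with this image; both follow from the observation in ii) together with exactness of $p_{X,*}$.
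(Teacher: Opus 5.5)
Your proposal is correct and follows the paper's route: the paper states that the lemma is an immediate consequence of the quasicoherence of both sides (via Lemma \ref{qcoh V filtration}) together with \eqref{BS push generation from zero}, mirroring the proof of Lemma \ref{generation from zero}. Your additional details fill in what the paper leaves implicit: the exactness and faithfulness of $p_{X,*}$ on quasicoherent sheaves, and the explicit check of \eqref{BS push generation from zero} from Lemma \ref{BS explicit V filtration}.
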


We put
$$
W^kD_{\bba^r} := \bigoplus_{|\alpha|-|\beta|\geq k} \bbc t^{\alpha} \partial_t^{\beta} \subset D_{\bba^r}
\ \ \ \ 
\text{and}
\ \ \ \ 
R^+_WD_{\bba^r}:=\bigoplus_{k\geq 0}W^kD_{\bba^r}T^k \subset D_{\bba^r}[T].
$$
By Lemma \ref{BS explicit V filtration}, we have $W^k\rs_X = \ds_X\otimes_{\bbc} W^kD_{\bba^r}$ and $R^+_W\rs_X= \ds_X \otimes_{\bbc} R^+_WD_{\bba^r}$.

\subsection{Bernstein-Sato ideals}\label{}

Throughout this section, we fix a variety $X$ and an integer $r\geq 1$. In this section we reformulate the definition of the relative Bernstein-Sato ideal given in the Introduction in terms of the $W$-filtration and prove some of its basic properties. We denote the standard coordinates on $\bba^r$ by $t_1,\dots,t_r$ and consider the operators $s_{i} = - \partial_{t_i} t_i \in W^0D_{\bba^r}$ for $1\leq i\leq r$. We put $\rs_X = \ds_X\otimes_{\bbc} D_{\bba^r}$.

Given a left $\rs_X$-module $\ms$ and a section $u\in \Gamma(X,\ms)$, the Bernstein-Sato ideal $\BS_{u}$ of $u$ is defined as
$$
\BS_{u}=\{ b\in \bbc[s_{1},\dots,s_{r}]\ |\ b\cdot u\in W^{0}\rs_{X} \cdot u\}.
$$
Similarly, given a right $\rs_X$-module $\ns$ and a section $v\in \Gamma(X,\ns)$, the Bernstein-Sato ideal $\BS_{v}$ of $v$ is defined as 
$$
\BS_{v}=\{ b\in \bbc[s_{1},\dots,s_{r}]\ |\ v\cdot b\in v\cdot W^{0}\rs_{X} \}.
$$

\begin{remark}\label{relative coincidence}
Suppose that $X$ is smooth. Given regular functions $g,f_{1},\dots,f_{r}\in \os_{X}(X)$, let $\bff$ denote the $r$-tuple $(f_{1},\dots,f_{r})$. The Bernstein-Sato ideal $\BS_{\bff,g}$ of $\bff$ relative to $g$, which was defined in the Introduction, admits the following equivalent definition by \eqref{specialization}:
$$
\BS_{\bff,g}:=\{ b\in \bbc[s_{1},\dots,s_{r}]\ |\ b\cdot g\delta_{\bff}\in \ds_{X}[s_{1},\dots,s_{r}]  \cdot g\prod_{i=1}^{r}f_{i}\delta_{\bff}\}.
$$
Since $W^{0}\rs_{X}\cdot g\prod_{i=1}^{r}f_{i}\delta_{\bff}=\ds_{X}[s_{1},\dots,s_{r}]  \cdot g\prod_{i=1}^{r}f_{i}\delta_{\bff}$ by \eqref{BS push generation from zero}, we have $\BS_{\bff,g}=\BS_{g\delta_{\bff}}$.
\end{remark}

\begin{remark}\label{BS b function local}
The Bernstein-Sato ideal is local in the following sense. Given an open cover $X=\bigcup_i U_i$, a right $\rs_X$-module $\ns$, and a section $v\in \Gamma(X,\ns)$, we have $\BS_{v} = \bigcap_i \BS_{v|_{U_i}}$. 
\end{remark}

\begin{lemma}\label{BS left moving}
Suppose that $X$ is smooth. Given regular functions $g,f_{1},\dots,f_{r}\in \os_{X}(X)$, if we denote the $r$-tuple $(f_{1},\dots,f_{r})$ by $\bff$, then for every element $b(s_{1},\dots,s_{r})\in \BS_{\bff,g}$, we have
$$
b(s_{1}+k,\dots,s_{r}+k)(W^k\rs_X\cdot g\delta_{\bff})  \subset W^{k+1}\rs_X\cdot g\delta_{\bff} 
$$
for every $k\geq 0$.
\end{lemma}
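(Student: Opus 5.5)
The plan is a direct computation in $\rs_X$: I would describe $W^k\rs_X\cdot g\delta_{\bff}$ explicitly and then move $b$ past the powers of $t_1\cdots t_r$ using the commutation rules between the $t_i$ and the $s_j$.

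\emph{Commutation rules.} From $s_i=-\partial_{t_i}t_i$ one computes $s_it_i=-(t_i\partial_{t_i}+1)t_i=t_i(s_i-1)$. For $j\neq i$, $s_j$ commutes with $t_i$. The $s_j$ commute with $\ds_X$ and with each other. Hence, for every $c\in\bbc[s_1,\dots,s_r]$ and $\alpha\in\bbz^r_{\geq 0}$,
$$
c(s)\,t^{\alpha}=t^{\alpha}\,c(s-\alpha).
$$
Writing $e=(1,\dots,1)$, this gives $b(s+k)\,(t_1\cdots t_r)^k=(t_1\cdots t_r)^k\,b(s)$. Applying the same rule to $P(s)\in\ds_X[s]$ gives $P(s)\,t^{e}=t^{e}P(s-e)$.

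\emph{Step 1: $W^0\rs_X\cdot g\delta_{\bff}=\ds_X[s_1,\dots,s_r]\cdot g\delta_{\bff}$.} By \eqref{BS push generation from zero}, $W^0\rs_X$ is generated by $\ds_X$, the $s_i$ and the $t_i$, so it suffices to show that the right-hand side is stable under each $t_i$.
\begin{itemize}
\item The $t_i$ commute with $\ds_X$, and $t_iP(s)=P(s+e_i)t_i$.
\item By \eqref{bf action}, $t_i\cdot g\delta_{\bff}=f_ig\,\delta_{\bff}$, which lies in $\ds_X\cdot g\delta_{\bff}$.
\end{itemize}
Together these give $t_iP(s)\cdot g\delta_{\bff}=P(s+e_i)f_i\cdot g\delta_{\bff}$, which lies in $\ds_X[s]\cdot g\delta_{\bff}$. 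Combining Step 1 with \eqref{BS push generation from zero} yields
$$
W^k\rs_X\cdot g\delta_{\bff}=(t_1\cdots t_r)^k\,\ds_X[s]\cdot g\delta_{\bff}.
$$

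\emph{Step 2: the Bernstein-Sato relation.} By Remark \ref{relative coincidence}, since $b\in\BS_{\bff,g}$ we can write $b(s)\cdot g\delta_{\bff}=Q(s)\cdot g\prod_i f_i\,\delta_{\bff}$ for some $Q(s)\in\ds_X[s]$. Because $t^{e}\cdot g\delta_{\bff}=g\prod_i f_i\,\delta_{\bff}$, this reads $b(s)\cdot g\delta_{\bff}=Q(s)t^{e}\cdot g\delta_{\bff}$.

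\emph{Step 3: conclusion.} For $P(s)\in\ds_X[s]$ I compute
$$
b(s+k)(t_1\cdots t_r)^kP(s)\cdot g\delta_{\bff}=(t_1\cdots t_r)^kP(s)b(s)\cdot g\delta_{\bff}=(t_1\cdots t_r)^kP(s)Q(s)t^{e}\cdot g\delta_{\bff}.
$$
Moving $t^{e}$ to the left turns this into $(t_1\cdots t_r)^{k+1}(PQ)(s-e)\cdot g\delta_{\bff}$. Since $(PQ)(s-e)\in\ds_X[s]\subset W^0\rs_X$, this element lies in $(t_1\cdots t_r)^{k+1}W^0\rs_X\cdot g\delta_{\bff}=W^{k+1}\rs_X\cdot g\delta_{\bff}$, using \eqref{BS push generation from zero} again.

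The only delicate point is Step 1, i.e. checking that the $t_i$-action preserves $\ds_X[s]\cdot g\delta_{\bff}$. An alternative is to work inside the localization \eqref{specialization}, where $t_i$ acts by multiplication by $f_i$ together with the shift $s_i\mapsto s_i+1$.
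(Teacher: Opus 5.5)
Your proof is correct and takes the same route as the paper. The paper's proof is only the identity $W^k\rs_X\cdot g\delta_{\bff}=(t_1\cdots t_r)^k\ds_X[s_1,\dots,s_r]\cdot g\delta_{\bff}$, obtained from \eqref{bf action} and \eqref{BS push generation from zero}; the commutation $b(s+k)(t_1\cdots t_r)^k=(t_1\cdots t_r)^k b(s)$ and the use of the functional equation $b(s)\cdot g\delta_{\bff}=Q(s)t_1\cdots t_r\cdot g\delta_{\bff}$, which you write out in Steps 2 and 3, are left implicit there.
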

\proof This follows from \eqref{bf action} and \eqref{BS push generation from zero}, which imply that
$$
W^k\rs_X\cdot g\delta_{\bff}=(t_1\cdots t_r)^k\ds_{X}[s_{1},\dots,s_{r}] \cdot g\delta_{\bff}.\qed
$$

\begin{lemma}\label{BS right moving}
Suppose that $X$ is normal. Given regular functions $f_{1},\dots,f_{r}\in \os_{X}(X)$ and a section $v\in \Gamma(X,\omega_{X})$, if we denote the $r$-tuple $(f_{1},\dots,f_{r})$ by $\bff$, then for every element $b(s_{1},\dots,s_{r})\in \BS_{v\delta_{\bff}^{\omega}}$, we have
$$
(v\delta^{\omega}_{\bff}\cdot W^k\rs_X) b(s_{1}-k,\dots,s_{r}-k) \subset v\delta^{\omega}_{\bff}\cdot W^{k+1}\rs_X
$$
for every $k\geq 0$. 
\end{lemma}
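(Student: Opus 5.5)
The plan is to copy the proof of Lemma \ref{BS left moving} in the right-module setting. Everything reduces to two facts: an explicit description of $v\delta^{\omega}_{\bff}\cdot W^k\rs_X$, and a commutation rule between $b(s_1,\dots,s_r)$ and powers of $t_1\cdots t_r$. Put $e=(1,\dots,1)$, so $t^{ke}=(t_1\cdots t_r)^k$. I read the defining condition for $\BS_v$ as $v\cdot b\in v\cdot W^1\rs_X$. This matches Remark \ref{relative coincidence}, where $g\prod_i f_i\delta_{\bff}=t^{e}\cdot g\delta_{\bff}$, and the $W^0$ in the displayed definition looks like a typo, since with $W^0$ the condition holds trivially.

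\textbf{Step 1 (commutation).} From $s_i=-\partial_{t_i}t_i$ and $[\partial_{t_i},t_i]=1$ we get $t_i s_i=(s_i+1)t_i$. Also $t_i$ commutes with $s_j$ for $j\neq i$ and with $\ds_X$. Iterating gives $t_i^k(s_i-k)=s_it_i^k$, hence
$$
t^{ke}\, b(s_1-k,\dots,s_r-k)=b(s_1,\dots,s_r)\,t^{ke}.
$$

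\textbf{Step 2 (description of the orbit).} By Lemma \ref{BS explicit V filtration}, $W^k\rs_X=\bigoplus \ds_X\partial_t^{\beta}t^{\alpha}$ with $\alpha_i-\beta_i\geq k$ for all $i$. Since $\partial_{t_i}^{\beta_i}t_i^{\beta_i}$ is a polynomial in $s_i$, each summand can be rewritten as $\ds_X[s_1,\dots,s_r]\,t^{\gamma}$ with $\gamma_i\geq k$. Hence $W^k\rs_X=\ds_X[s_1,\dots,s_r]\,t^{\gamma}$-combinations, and in particular $W^k\rs_X=W^0\rs_X\cdot t^{ke}$ as well as $W^0\rs_X=\ds_X[s]\langle t\rangle$.

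Now act on $v\delta^{\omega}_{\bff}$. By Lemma \ref{bfomega action} with $\beta=0$, we have $v\delta^{\omega}_{\bff}\cdot t_i=f_iv\delta^{\omega}_{\bff}$. Whenever a $t_i$ sits at the far left, we use Step 1 to move it past the polynomial in $s$, at the cost of shifting $s$; it commutes with $\ds_X$. The resulting $f_i\in\os_X$ is then absorbed into $\ds_X$. This yields
$$
v\delta^{\omega}_{\bff}\cdot W^0\rs_X=v\delta^{\omega}_{\bff}\cdot\ds_X[s_1,\dots,s_r]
$$
and therefore
$$
v\delta^{\omega}_{\bff}\cdot W^k\rs_X=v\delta^{\omega}_{\bff}\cdot\ds_X[s_1,\dots,s_r]\,t^{ke}.
$$
This is the right-module analogue of the identity used in the proof of Lemma \ref{BS left moving}.

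\textbf{Step 3 (conclusion).} Take a general element $v\delta^{\omega}_{\bff}\cdot Q\,t^{ke}$ with $Q\in\ds_X[s]$. By Step 1,
$$
v\delta^{\omega}_{\bff}\cdot Q\,t^{ke}\,b(s-k)=v\delta^{\omega}_{\bff}\cdot Q\,b(s)\,t^{ke}=(v\delta^{\omega}_{\bff}\cdot b(s))\cdot Q\,t^{ke},
$$
where the last equality uses that $s_j$ commutes with $\ds_X$ and with the other $s_i$. Since $b\in\BS_{v\delta^{\omega}_{\bff}}$, the factor $v\delta^{\omega}_{\bff}\cdot b(s)$ lies in $v\delta^{\omega}_{\bff}\cdot W^1\rs_X$. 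Because $W^1\rs_X\cdot Q\,t^{ke}\subset W^1\rs_X\cdot W^k\rs_X\subset W^{k+1}\rs_X$, the element lies in $v\delta^{\omega}_{\bff}\cdot W^{k+1}\rs_X$, as required.

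I expect Step 2 to be the main obstacle: the bookkeeping that reorders $t$'s, $\partial_t$'s and $\ds_X$ so that every $t_i$ either reaches $v\delta^{\omega}_{\bff}$, where it becomes $f_i$, or sits at the far right as a factor of $t^{ke}$. I would first do the case $r=1$ and then note that different indices $i$ commute, which gives the general case.
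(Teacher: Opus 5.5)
Your proposal is correct and follows essentially the same route as the paper. The paper likewise derives $v\delta^{\omega}_{\bff}\cdot W^k\rs_X=v\delta^{\omega}_{\bff}\cdot (t_1\cdots t_r)^k\ds_X[s_1,\dots,s_r]$ from Lemma~\ref{bfomega action} and \eqref{BS push generation from zero}, and then moves $b$ past $(t_1\cdots t_r)^k$ with the shift rule. Putting $(t_1\cdots t_r)^k$ on the right instead, as you do, is equivalent because it normalizes $W^0\rs_X$, and your reading of the defining condition of $\BS_v$ with $W^1$ in place of $W^0$ is the intended one.
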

\proof This follows from Lemma \ref{bfomega action} and \eqref{BS push generation from zero}, which imply that
$$
v\delta^{\omega}_{\bff}\cdot W^k\rs_X=v\delta^{\omega}_{\bff}\cdot (t_1\cdots t_r)^k\ds_{X}[s_{1},\dots,s_{r}].\qed
$$

\subsection{Properties of Bernstein-Sato ideals}\label{}

The proofs of the following three results are essentially the same as those of Lemma \ref{direct summand b function}, Lemma \ref{left right switch}, and Theorem \ref{etale pull b}, respectively, so we omit them.

\begin{lemma}\label{BS direct summand b function}
Fix an integer $r\geq 1$ and denote the standard coordinates on $\bba^r$ by $t_1,\dots,t_r$. Given a direct summand $R \subset S$ of finite type $\bbc$-algebras with splitting $\rho$, let $\rho'$ denote the map $\rho \otimes 1 : S \otimes_{\bbc}\bbc[t_1,\dots,t_r] \to R \otimes_{\bbc}\bbc[t_1,\dots,t_r]$ and let $\ms$ and $\ns$ be quasicoherent right $\ds_{\Spec R} \otimes_{\bbc} D_{\bba^r}$- and right $\ds_{\Spec S} \otimes_{\bbc} D_{\bba^r}$-modules, respectively, such that $\Gamma(\Spec R,\ms)$ is an $R[t_1,\dots,t_r]$-submodule of $\Gamma(\Spec S,\ns)$. Suppose there exists an $R[t_1,\dots,t_r]$-module map $\tau:\Gamma(\Spec S,\ns)\to \Gamma(\Spec R,\ms)$ such that $(\Gamma(\Spec R,\ms) \subset \Gamma(\Spec S,\ns),\tau)$ is a differential direct summand compatible with $\rho'$. Then for every $v\in \Gamma(\Spec R,\ms)$, if we let $v^*$ denote the image of $v$ in $\Gamma(\Spec S,\ns)$, we have $\BS_v\supset \BS_{v^*}$.
\end{lemma}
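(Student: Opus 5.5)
The plan is to transcribe the proof of Lemma \ref{direct summand b function}, replacing the single polynomial $b(s)$ by an arbitrary element of $\BS_{v^*}$ and the ring $V^1D_{\bba^r}$ by $W^0D_{\bba^r}$. The first step is an affine-local description of Bernstein-Sato ideals parallel to Remark \ref{b function affine local}. Since $\ns$ is quasicoherent over the quasicoherent sheaf of rings $\ds_{\Spec S}\otimes_{\bbc}D_{\bba^r}$ and $\Spec S$ is affine, we have $\Gamma(\Spec S, v^*\cdot W^0\rs)=v^*\cdot (D_S\otimes_{\bbc}W^0D_{\bba^r})$. Hence
$$
\BS_{v^*}=\{b\in\bbc[s_1,\dots,s_r] \ | \ v^*\cdot b\in v^*\cdot (D_S\otimes_{\bbc} W^0D_{\bba^r})\},
$$
and the analogous description holds for $\BS_v$ with $D_R$ in place of $D_S$.

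Now fix $b\in\BS_{v^*}$ and choose $Q\in D_S\otimes_{\bbc}W^0D_{\bba^r}$ with $v^*\cdot b=v^*\cdot Q$. Write $Q=\sum_{\alpha,\beta}Q_{\alpha,\beta}t^\alpha\partial_t^\beta$ with $Q_{\alpha,\beta}\in D_S$. By Lemma \ref{BS explicit V filtration} and its $D_{\bba^r}$ version, only pairs with $\alpha_i-\beta_i\geq 0$ for all $i$ occur. By \eqref{restriction} we then get
$$
\rho'\circ Q|_{R[t_1,\dots,t_r]}=\sum_{\alpha,\beta}(\rho\circ Q_{\alpha,\beta}|_R)\,t^\alpha\partial_t^\beta,
$$
and the right-hand side lies in $D_R\otimes_{\bbc}W^0D_{\bba^r}$ because the index set is unchanged. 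Likewise $b$ lies in $\bbc\otimes D_{\bba^r}$, so its coefficients are scalars fixed by $\rho$, and \eqref{restriction} gives $\rho'\circ b|_{R[t_1,\dots,t_r]}=b$.

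Applying the differential splitting $\tau$ and using $\tau(v^*\cdot P)=v\cdot(\rho'\circ P|_{R[t]})$ for $P\in D_{S[t]}$, we get
$$
v\cdot b=\tau(v^*\cdot b)=\tau(v^*\cdot Q)=v\cdot(\rho'\circ Q|_{R[t_1,\dots,t_r]})\in v\cdot(D_R\otimes_{\bbc}W^0D_{\bba^r}).
$$
Hence $b\in\BS_v$ by the affine-local description above. Here $D_{S[t]}$ is identified with $D_S\otimes D_{\bba^r}$ via \eqref{local product}, and $D_{S[t]}$ acts on $\Gamma(\Spec S,\ns)$ through this identification.

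The only point needing care is the affine-local description in the first step, i.e.\ that the sheaf-level condition $v^*\cdot b\in v^*\cdot W^0\rs$ agrees with the global-section condition. This follows from the quasicoherence of $W^0\rs$ (Lemma \ref{qcoh V filtration} applied to each $V_i^k$) together with the exactness of global sections on affine schemes, exactly as for $V^1$. Everything else is formal.
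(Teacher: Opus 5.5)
Your proof is correct and follows the paper's route: the paper omits this proof as a transcription of the proof of Lemma~\ref{direct summand b function}, and you carry out exactly that transcription (the affine-local description of $\BS$, the formula \eqref{restriction} preserving the set of monomials $t^\alpha\partial_t^\beta$, and then applying $\tau$). One remark: read literally with $W^0$, as in the paper's definition, the statement is vacuous, because $s_i=-\partial_{t_i}t_i\in W^0D_{\bba^r}$ forces $\BS_v=\bbc[s_1,\dots,s_r]$; the intended condition is $v\cdot b\in v\cdot W^1$ (this is how it is used in Remark~\ref{relative coincidence} and at the end of the proof of Theorem~\ref{main result for bernstein-sato ideals}), and your argument goes through verbatim with $W^1$ in place of $W^0$.
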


\begin{lemma}\label{BS left right switch}
Let $X$ be a smooth variety with global coordinates $x_1,\dots,x_n\in \os_X(X)$, let $g,f_1,\dots,f_r\in \os_X(X)$ be regular functions, let $\bff$ denote the $r$-tuple $(f_1,\dots,f_r)$, and consider the section $v=(g dx_1\wedge \cdots \wedge dx_n) \delta^{\omega}_{\bff}\in \Gamma(X, B^{\omega}_{\bff})$. Then for $b(s_{1},\dots,s_{r})\in  \bbc[s_{1},\dots,s_{r}]$, we have $b(s_{1},\dots,s_{r})\in \BS_{v}$ if and only if $b(-s_{1}-1,\dots,-s_{r}-1)\in \BS_{g\delta_{\bff}}$. 
\end{lemma}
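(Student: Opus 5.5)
We follow the proof of Lemma \ref{left right switch}, with the single operator $s$ replaced by the $r$ operators $s_1,\dots,s_r$ and the $V$-filtration replaced by the $W$-filtration. Let $\FT\colon \rs_X\xrightarrow{\sim}\rs_X^{op}$ be the $\bbc$-algebra anti-involution determined by $\FT(\partial_{x_i})=-\partial_{x_i}$, $\FT(\partial_{t_i})=-\partial_{t_i}$, and $\FT(h)=h$ for $h\in\os_X$; it fixes each $t_i$.

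The first step is to compute $\FT(s_i)$. Since $s_i=-\partial_{t_i}t_i$ and $\FT$ reverses products, we get $\FT(s_i)=-\FT(t_i)\FT(\partial_{t_i})=t_i\partial_{t_i}=\partial_{t_i}t_i-1=-s_i-1$. The $s_i$ commute with one another, so $\FT$ restricts to a $\bbc$-algebra automorphism of $\bbc[s_1,\dots,s_r]$ given by $b(s_1,\dots,s_r)\mapsto b(-s_1-1,\dots,-s_r-1)$. Next, Lemma \ref{BS explicit V filtration} describes $W^0\rs_X$ as the span of monomials $\ds_X t^\alpha\partial_t^\beta$ with $\alpha_i-\beta_i\geq 0$ for every $i$, and $\FT$ maps such a monomial to $\pm\partial_t^\beta t^\alpha\,\FT(\ds_X)$. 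Using the second description in Lemma \ref{BS explicit V filtration} and the fact that $\FT(\ds_X)=\ds_X$, it follows that $\FT(W^0\rs_X)=W^0\rs_X$, regarded as a subset of $\rs_X^{op}$.

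Next, as in the proof of Lemma \ref{left right switch}, we regard $B_{\bff}^{\omega}$ as a left $\rs_X$-module via restriction of scalars along $\FT$. Then the map
$$
\phi\colon\sum_\beta g_\beta\partial_t^\beta\delta_{\bff}\mapsto\sum_\beta(-1)^{|\beta|}(g_\beta\,dx_1\wedge\cdots\wedge dx_n)\partial_t^\beta\delta^\omega_{\bff}
$$
is an isomorphism of left $\rs_X$-modules, and we take this from that proof. It satisfies $\phi(g\delta_{\bff})=v$. Consequently, for $b\in\bbc[s_1,\dots,s_r]$, the condition $b\cdot g\delta_{\bff}\in W^0\rs_X\cdot g\delta_{\bff}$ holds if and only if $v\cdot\FT(b)\in v\cdot \FT(W^0\rs_X)=v\cdot W^0\rs_X$.

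It remains to match this with the definitions of $\BS_{g\delta_{\bff}}$ and $\BS_v$. There is a mismatch here: as literally written, the membership condition for $\BS_u$ involves $W^0\rs_X\cdot u$, whereas Remark \ref{relative coincidence} uses the generator $g\prod_i f_i\delta_{\bff}$. The intended definitions are the analogues of those for $b$-functions, namely $b\cdot u\in W^1\rs_X\cdot u$ and $v\cdot b\in v\cdot W^1\rs_X$. Indeed, by \eqref{BS push generation from zero} we have $W^1\rs_X\cdot g\delta_{\bff}=t_1\cdots t_r\,\ds_X[s]\cdot g\delta_{\bff}=\ds_X[s]\cdot g\prod f_i\delta_{\bff}$, which is exactly the condition in Remark \ref{relative coincidence}. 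So we work with $W^1$, and this requires $\FT(W^1\rs_X)=W^1\rs_X$. That identity holds by the same monomial argument, since the conditions $\alpha_i-\beta_i\geq1$ are preserved.

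With this in place we obtain that $b\in\BS_{g\delta_{\bff}}$ if and only if $b(-s_1-1,\dots,-s_r-1)\in\BS_v$. Because the substitution $s_i\mapsto -s_i-1$ is an involution, this is equivalent to the statement of the lemma. The only real obstacle is confirming that $\FT$ preserves each $W^k\rs_X$ and sends each $s_i$ to $-s_i-1$ individually, and both are routine checks on monomials.
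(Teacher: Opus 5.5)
Your proof is correct and is the argument the paper intends; the paper omits it as essentially identical to the proof of Lemma \ref{left right switch}. In both, the anti-involution $\FT$ sends each $s_i$ to $-s_i-1$, preserves every $W^k\rs_X$, and intertwines $B_{\bff}$ with $B^{\omega}_{\bff}$ via $\phi$, where $\phi(g\delta_{\bff})=v$. You are also right to read the definition of $\BS_u$ with $W^1\rs_X$ in place of the literal $W^0\rs_X$, which would make every such ideal all of $\bbc[s_1,\dots,s_r]$. Your argument works under either reading, since $\FT$ preserves each $W^k\rs_X$.
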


\begin{theorem}\label{BS etale pull b}
Let $\pi: X \to Y$ be a surjective \'{e}tale morphism of varieties of the same pure dimension such that $Y$ is normal, let $f_1,\dots,f_r\in \os_Y(Y)$ be regular functions, and let $\bff$ denote the $r$-tuple $(f_1,\dots,f_r)$. Given a section $v\in \Gamma(Y, B_{\bff}^{\omega})$, we have $\BS_v = \BS_{\pi^*v}$.
\end{theorem}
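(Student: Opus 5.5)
We follow the proof of Theorem \ref{etale pull b} essentially verbatim, with the $V$-filtration replaced by the $W$-filtration and the $b$-function replaced by the Bernstein-Sato ideal.

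First, by flat base change $\pi^*\omega_Y \simeq \omega_X$. Combined with Lemma \ref{bfomega action}, this gives an isomorphism $\pi^*B_{\bff}^{\omega} \simeq B_{\pi^*\bff}^{\omega}$ of $\os_X$-modules under which $\pi^*v$ corresponds to the section of Definition \ref{bfomega pullback}.

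Next, Lemma \ref{right pull diff} says that $\pi^{-1}\ds_Y \otimes_{\pi^{-1}\os_Y}\os_X \xrightarrow{\sim} \ds_X$ as right $\os_X$-modules. Tensoring with $W^kD_{\bba^r}$ over $\bbc$ and using Lemma \ref{BS explicit V filtration} (so $W^k\rs = \ds\otimes_{\bbc} W^kD_{\bba^r}$), we get
$$
\pi^{-1}(W^k\rs_Y)\otimes_{\pi^{-1}\os_Y}\os_X \simeq W^k\rs_X
$$
for every $k$. Since $\pi$ is flat, this yields $\pi^*(v\cdot W^0\rs_Y) = \pi^*v\cdot W^0\rs_X$ inside $B^{\omega}_{\pi^*\bff}$. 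One must check compatibility of right actions: the pulled-back action of $\ds_Y$ on $\pi^*B^\omega_{\bff}$ agrees with the $\ds_X$-action via the extension map of Lemma \ref{etale unique extension}. As in the earlier proof, this can be checked on the smooth locus in local coordinates, where $\partial_{x_i}$ extends $\partial_{y_i}$.

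Now pass to the quotient $\qs_Y := v\cdot\bbc[s_1,\dots,s_r]W^0\rs_Y / v\cdot W^0\rs_Y$. Since the $s_i$ lie in $W^0\rs_Y$, we simply have $\qs_Y = 0$, so a more careful reformulation is needed. Instead consider the $\os_Y$-module $\ms_Y := (v\cdot \bbc[s]+ v\cdot W^0\rs_Y)/v\cdot W^0\rs_Y$. Here the intended denominator is $v\cdot(t_1\cdots t_r)W^0\rs_Y = v\cdot W^1\rs_Y$, by Lemma \ref{BS right moving} and \eqref{BS push generation from zero}; the statement's "$W^0$" should be read via the relation with $W^1$ as in Remark \ref{relative coincidence}. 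We therefore view $\BS_v$ as the annihilator in $\bbc[s_1,\dots,s_r]$ of the image of $v$ in $v\cdot W^0\rs_Y/v\cdot W^1\rs_Y$. Equivalently, $b\in\BS_v$ iff the $\os_Y$-linear endomorphism $b(s_1,\dots,s_r)$ of the quasicoherent $\os_Y$-module $\ms_Y = v\cdot W^0\rs_Y/v\cdot W^1\rs_Y$ vanishes. Here we use that the $s_i$ commute with $\os_Y$ and that Lemma \ref{BS right moving} makes the annihilator of $v$ equal to that of the whole quotient.

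By flatness, $\pi^*\ms_Y = \ms_X := \pi^*v\cdot W^0\rs_X/\pi^*v\cdot W^1\rs_X$, and $\pi^*$ of the endomorphism $b(s)$ on $\ms_Y$ is $b(s)$ on $\ms_X$. Since $\pi$ is faithfully flat, an $\os_Y$-linear endomorphism vanishes iff its pullback vanishes. Hence $\Ann_{\bbc[s]}\ms_Y = \Ann_{\bbc[s]}\ms_X$, that is, $\BS_v=\BS_{\pi^*v}$.

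The main obstacle is the bookkeeping in the last step: making precise which quotient's annihilator computes $\BS_v$. The analogue of Remark \ref{b function annihilator} relies on Lemma \ref{BS right moving}, which is stated only for sections of the form $w\delta^\omega_{\bff}$. For a general section one would need to check directly that $b\in\BS_v$ iff $b$ kills the cyclic module quotient. Should this fail, the fallback is to compare the ideals directly. Since $v\cdot b\in v\cdot W\rs_Y$ is a statement about membership of a global section in a quasicoherent subsheaf, it descends and ascends along faithfully flat $\pi$ via $\pi^*(v\cdot W\rs_Y)=\pi^*v\cdot W\rs_X$. This gives the equality of ideals without needing the annihilator reformulation.
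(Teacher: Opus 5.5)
Your argument is correct and is essentially the paper's: the proof of Theorem \ref{etale pull b} transported to the $W$-filtration. That means flat base change, Lemma \ref{right pull diff} to get $\pi^*(v\cdot W^1\rs_Y)=\pi^*v\cdot W^1\rs_X$, and faithful flatness of $\pi$, with $\BS_v$ rightly read as $\{b \mid v\cdot b\in v\cdot W^1\rs_Y\}$ instead of the misprinted $W^0$. Make your fallback the actual final step, though: the reformulation via the annihilator of the whole quotient $v\cdot W^0\rs_Y/v\cdot W^1\rs_Y$ is genuinely false for general $v$ when $r\geq 2$ (since $t_j\in W^0\rs_Y\setminus W^1\rs_Y$ and $t_j\,b(s)=b(s+e_j)\,t_j$ shifts only $s_j$; e.g.\ for $f_1=f_2=0$ and $v=w\,\partial_{t_1}\delta^{\omega}_{\bff}$ with $0\neq w\in\Gamma(Y,\omega_Y)$ one has $v\cdot W^1\rs_Y=0$ and $v\cdot(s_1+2)=0$, yet $(v\cdot t_1)\cdot(s_1+2)=w\,\delta^{\omega}_{\bff}\neq 0$), whereas the vanishing of the single class of $v\cdot b$ in the quasicoherent quotient $B^{\omega}_{\bff}/v\cdot W^1\rs_Y$ is exactly what faithful flatness detects.
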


The argument for the following corollary is identical to that of Corollary \ref{finite group b function} after replacing Lemma \ref{direct summand b function} with Lemma \ref{BS direct summand b function}.

\begin{corollary}\label{BS finite group b function}
Let $G$ be a finite group acting on a smooth affine variety $X$ of pure dimension, and consider the morphism $\pi: X \to Y$, where $Y=X/G$. Given regular functions $f_1,\dots,f_r\in \os_Y(Y)$ and a section $v \in \Gamma(Y,B_{\bff}^{\omega})$, where $\bff$ is the $r$-tuple $(f_1,\dots,f_r)$, we have $\BS_v\supset \BS_{\pi^*v}$.
\end{corollary}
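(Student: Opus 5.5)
The argument follows that of Corollary \ref{finite group b function} verbatim, with Lemma \ref{BS direct summand b function} in place of Lemma \ref{direct summand b function}.

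Write $X=\Spec S$, $R=S^G$, $M=\Gamma(X,\omega_X)^G$, $N=\Gamma(X,\omega_X)$. Let $\rho=\frac{1}{|G|}\sum_g g$ and $\tau=\frac{1}{|G|}\sum_g g$ be the averaging maps, and let $\rho'=\rho\otimes 1$ and $\tau'=\tau\otimes 1$ be their extensions to $S[t_1,\dots,t_r]$ and $N[t_1,\dots,t_r]$.

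\begin{enumerate}
\item Theorem \ref{canonical direct summand} shows that $(M\subset N,\tau)$ is a differential direct summand compatible with $\rho$. Remark \ref{polynomial extension} then shows that $(M[t_1,\dots,t_r]\subset N[t_1,\dots,t_r],\tau')$ is a differential direct summand compatible with $\rho'$.

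\item Let $I\subset R[t_1,\dots,t_r]$ be the ideal generated by $t_1-f_1,\dots,t_r-f_r$. Lemma \ref{local coh direct summand} gives that
\[
\bigl(H^r_I(M[t_1,\dots,t_r])\subset H^r_{IS[t_1,\dots,t_r]}(N[t_1,\dots,t_r]),\,H^r_I(\tau')\bigr)
\]
is a differential direct summand compatible with $\rho'$.

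\item By \eqref{image onto invariants}, $\pi^*$ identifies $\Gamma(Y,\omega_Y)$ with $M$ as right $D_R$-modules. This gives
\[
\Gamma(Y,B^\omega_{\bff})=H^r_I(\Gamma(Y,\omega_Y)[t_1,\dots,t_r])\simeq H^r_I(M[t_1,\dots,t_r]).
\]
On the other side, $\Gamma(X,B^\omega_{\pi^*\bff})=H^r_{IS[t]}(N[t])$.

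\item We check that the inclusion of the first module into the second sends $v$ to $\pi^*v$ as in Definition \ref{bfomega pullback}. On the explicit basis $\partial_t^\beta\delta^\omega$ of Lemma \ref{bfomega action}, the element $\partial_t^\beta\delta^\omega$ corresponds to $\beta!\,(\bft-\bff)^{-\beta-e}dt$. The map on local cohomology is therefore just $\pi^*$ applied to the coefficients $v_\beta$, which is the stated compatibility.

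\item With these identifications, Lemma \ref{BS direct summand b function} yields $\BS_v\supset\BS_{\pi^*v}$.
\end{enumerate}

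The only step needing care is the compatibility in step 4: the identifications of $B^\omega$ with local cohomology on $Y$ and on $X$ must be intertwined by $\pi^*$, and $\pi^*$ must respect the right $D$-module structures. The latter is exactly the content of Theorem \ref{canonical direct summand}, so nothing beyond bookkeeping remains.
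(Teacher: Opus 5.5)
Your proposal is correct and is essentially the paper's argument: the paper proves this corollary by repeating the proof of Corollary \ref{finite group b function} (Theorem \ref{canonical direct summand}, Remark \ref{polynomial extension}, Lemma \ref{local coh direct summand}) with Lemma \ref{BS direct summand b function} in place of Lemma \ref{direct summand b function}. Your step 4 uses the explicit basis of Lemma \ref{bfomega action} to check that the inclusion of local cohomology modules sends $v$ to $\pi^*v$; this only makes explicit a compatibility that the paper leaves implicit.
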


\subsection{Coherent modules and good filtrations}\label{BS coherent modules and good filtrations}

Throughout this section, we fix an integer $r \geq 1$ and a variety $X$ such that the morphism $\rSpec_X(\gr^F_{\bullet}\ds_X) \to X$ is of finite type. We denote the standard coordinates on $\bba^r$ by $t_1,\dots,t_r$ and put $\rs_X = \ds_X \otimes_{\bbc} D_{\bba^r}$.

\begin{lemma}\label{BS finite type assumption}
The rings $\Gamma(V, W^0\ds_{X\times \bba^r})$, $\Gamma(U, W^0\rs_X)$, and $\Gamma(U, R^+_W\rs_X)$ are noetherian for affine open subsets $V \subset X \times \bba^r$ and $U\subset X$.
\end{lemma}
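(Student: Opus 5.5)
The plan is to mirror the proof of Lemma \ref{finite type assumption} verbatim, replacing the $V$-filtration by the $W$-filtration throughout. The noetherian property will again come from \cite[Proposition D.1.4]{HTT08}: a filtered ring whose associated graded is a commutative $\bbc$-algebra of finite type is noetherian. So the work is to put compatible order filtrations on $W^0D_{\bba^r}$ and $R^+_WD_{\bba^r}$ and to check that the associated graded algebras are of finite type.

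\textbf{The claim.} Set $F_pW^kD_{\bba^r}=F_pD_{\bba^r}\cap W^kD_{\bba^r}$ and $F_pR^+_WD_{\bba^r}=\bigoplus_{k\ge 0}F_pW^kD_{\bba^r}T^k$. Write $y_i$ for the symbol of $\partial_{t_i}$, so that $\gr^F_\bullet D_{\bba^r}=\bbc[t,y]$.

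By Lemma \ref{BS explicit V filtration}, $W^k$ is spanned by the monomials $t^\alpha\partial_t^\beta$ with $\alpha_i-\beta_i\ge k$ for every $i$. Taking symbols therefore gives
$$\gr^F_\bullet R^+_WD_{\bba^r}=\bbc[t^\alpha y^\beta T^k \mid \alpha_i-\beta_i\ge k \text{ for all } i,\ k\ge 0]\subset \bbc[t,y,T].$$
This is the monomial algebra of the rational polyhedral cone in $\bbz^{2r+1}$ cut out by the inequalities $\alpha_i\ge 0$, $\beta_i\ge 0$, $k\ge 0$ and $\alpha_i-\beta_i-k\ge 0$. It is therefore of finite type by Gordan's lemma \cite[Proposition 1.2.17]{CLS11}.

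The algebra $\gr^F_\bullet W^0D_{\bba^r}$ is a quotient of this one, obtained by setting $T=0$. Alternatively, it is directly the semigroup ring of the cone $\alpha_i\ge \beta_i\ge 0$, which is generated by $t_i$ and $t_iy_i$.

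One point needs care: the symbol computation is legitimate. Because each $W^k$ is spanned by monomials $t^\alpha\partial_t^\beta$, the order filtration restricts to it with monomial bases, so $\gr$ commutes with taking these spans.

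\textbf{Globalizing to $X$.} With the claim in hand, I will carry over the argument of Lemma \ref{finite type assumption} word for word.

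For $W^0\ds_{X\times\bba^r}=\ds_X\boxtimes W^0\ds_{\bba^r}$, use the product filtration. Its associated graded is $\gr^F\ds_X\boxtimes\gr^F W^0\ds_{\bba^r}$, whose relative Spec over $X\times\bba^r$ is the product of $\rSpec_X(\gr^F_\bullet\ds_X)$ with the finite-type affine scheme from the claim. This is of finite type by the standing hypothesis on $X$. Sections over an affine open $V$ then have finite-type associated graded, hence are noetherian.

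For $R^+_W\rs_X=\ds_X\otimes_\bbc R^+_WD_{\bba^r}$, use the filtration $F_k=\sum_{p+q=k}F_p\ds_X\otimes F_qR^+_WD_{\bba^r}$. Over an affine open $U$ its associated graded is $\Gamma(U,\gr^F\ds_X)\otimes_\bbc \gr^F R^+_WD_{\bba^r}$, which is of finite type. Finally, $\Gamma(U,W^0\rs_X)$ is the degree-zero quotient of $\Gamma(U,R^+_W\rs_X)$ (kill $T$), so it is noetherian as well.

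\textbf{Main obstacle.} The only genuinely new input is the Gordan-lemma computation with the componentwise inequalities $\alpha_i-\beta_i\ge k$. Since these are still finitely many linear inequalities defining a rational cone, I expect no difficulty there. The rest is formal and identical to Lemma \ref{finite type assumption}.
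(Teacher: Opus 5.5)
Your proposal is correct and is the argument the paper intends: the paper omits this proof as ``essentially the same as that of Lemma \ref{finite type assumption},'' and the only new input is yours, namely Gordan's lemma for the cone cut out by the componentwise inequalities $\alpha_i-\beta_i\ge k$. Those are the right inequalities from Lemma \ref{BS explicit V filtration}, not the $|\alpha|-|\beta|\geq k$ misprinted in the paper's display defining $W^kD_{\bba^r}$. The remaining steps carry over verbatim: the product filtration, \cite[Proposition D.1.4]{HTT08}, and passing to the degree-zero quotient of the Rees algebra.
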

\proof The argument is essentially the same as that of Lemma \ref{finite type assumption}, so we omit it.\qed

Therefore, Lemma \ref{coherent char} provides a characterization of coherent $V^0\ds_{X\times \bba^r}$-, $V^0\rs_X$-, and $R^+_V\ds_{X\times \bba^r}$-modules. The proofs of the following two results are essentially the same as those of Lemma \ref{surjection} and Corollary \ref{torsion}, respectively, so we omit them.

\begin{lemma}\label{BS surjection}
Given a coherent right $W^0\ds_{X\times \bba^r}$-module $\ms$, there exists an $\os_{X \times \bba^r}$-coherent subsheaf $\fs \subset \ms$ that generates $\ms$ as a $W^0\ds_{X\times \bba^r}$-module.
\end{lemma}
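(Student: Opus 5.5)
The plan is to follow the argument of \cite[Corollary 1.4.17]{HTT08}, adapted from $\ds$ to the quasicoherent sheaf of rings $W^0\ds_{X\times \bba^r}$, exactly as was done for Lemma \ref{surjection}. The two inputs are: $W^0\ds_{X\times \bba^r}$ is quasicoherent (it is the intersection of the quasicoherent bimodules $V^0_i\ds_{X\times\bba^r}$, each quasicoherent by the argument of Lemma \ref{qcoh V filtration} applied to the ideal $\is_i$), and its rings of sections over affine opens are noetherian by Lemma \ref{BS finite type assumption}. By Lemma \ref{coherent char}, $\ms$ is then quasicoherent and $\Gamma(V,\ms)$ is a finitely generated $\Gamma(V,W^0\ds_{X\times\bba^r})$-module for every affine open $V$.

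First I would fix a finite affine open cover $X\times\bba^r=\bigcup_j V_j$; this is possible since $X\times\bba^r$ is of finite type, hence quasicompact. For each $j$, I choose finitely many generators $m_{j1},\dots,m_{jl_j}$ of $\Gamma(V_j,\ms)$ over $\Gamma(V_j,W^0\ds_{X\times\bba^r})$. I then let $\fs_j\subset\ms|_{V_j}$ be the $\os_{V_j}$-submodule they generate. This is coherent, being a quotient of $\os_{V_j}^{l_j}$ inside the quasicoherent $\ms|_{V_j}$ (quasicoherence being via restriction along $\os\to W^0\ds$).

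Next, since $\ms$ is a quasicoherent $\os_{X\times\bba^r}$-module on a noetherian scheme, \cite[Lemma 28.23.2]{Stacks} extends each $\fs_j$ to a coherent subsheaf $\widetilde{\fs}_j\subset\ms$ with $\widetilde{\fs}_j|_{V_j}=\fs_j$. I put $\fs=\sum_j\widetilde{\fs}_j$. This is a finite sum of coherent subsheaves of a quasicoherent sheaf, so it is $\os_{X\times\bba^r}$-coherent.

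Finally, the $W^0\ds_{X\times\bba^r}$-submodule $\fs\cdot W^0\ds_{X\times\bba^r}$ is quasicoherent, being the image of $\fs\otimes_{\os}W^0\ds_{X\times\bba^r}\to\ms$. On each $V_j$ its sections contain the $m_{jk}$, hence contain $\Gamma(V_j,\ms)$. Quasicoherent subsheaves that agree on sections over an affine cover are equal, so $\fs\cdot W^0\ds_{X\times\bba^r}=\ms$.

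The only point needing care is that the module structure is a right one, so we take $\fs\cdot W^0\ds$. The map $\fs\otimes_{\os}W^0\ds\to\ms$ must be formed using the left $\os$-structure of $W^0\ds$, and its image must be quasicoherent; this uses that $W^0\ds_{X\times\bba^r}$ is quasicoherent as a bimodule. I expect this to be routine, so there is no real obstacle beyond bookkeeping.
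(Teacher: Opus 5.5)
Your proposal is correct and is essentially the paper's argument: the paper proves this by deferring, via Lemma \ref{surjection}, to the standard argument of Hotta--Takeuchi--Tanisaki, Corollary 1.4.17. Your version (finitely many generators over a finite affine cover, extended to coherent subsheaves of $\ms$ and summed) is exactly the ungraded form of the paper's proof of Lemma \ref{graded surjection}. The quasicoherence bookkeeping you flag for $\fs\cdot W^0\ds_{X\times\bba^r}$ does go through because $W^0\ds_{X\times\bba^r}$ is quasicoherent on both sides. You can also bypass it by checking stalks directly, using that $\ms$ is quasicoherent and that $W^0\ds_{X\times\bba^r}$ contains $\os_{X\times\bba^r}$.
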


\begin{corollary}\label{BS torsion}
Let $H=V(t_{1}\cdots t_{r}) \subset \bba^r$. Given a coherent right $W^0\ds_{X\times \bba^r}$-module $\ms$ supported on the closed subset $X\times H \subset X\times \bba^r$, there exists an integer $N \geq 0$ such that $\ms \cdot W^N\ds_{X\times \bba^r}=0$.
\end{corollary}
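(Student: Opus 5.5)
We follow the proof of Corollary \ref{torsion}, replacing the ideal $\is$ by the principal ideal $\is_H \subset \os_{X\times\bba^r}$ generated by $t_1\cdots t_r$, and Lemma \ref{generation from zero} by Lemma \ref{BS generation from zero}.

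First, by Lemma \ref{BS surjection}, there is an $\os_{X\times\bba^r}$-coherent subsheaf $\fs\subset\ms$ with $\ms=\fs\cdot W^0\ds_{X\times\bba^r}$. Since $\ms$ is supported on $X\times H=V(t_1\cdots t_r)$, so is $\fs$. A coherent sheaf supported on $V(\is_H)$ is annihilated by some power of $\is_H$; this is the Nullstellensatz-type fact that the annihilator's radical contains $t_1\cdots t_r$, applied on a finite affine cover (here $X$ is quasi-compact as a variety). So we get $N\ge 0$ with $\fs\cdot (t_1\cdots t_r)^N=0$.

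For the right $\os$-structure, note that the right $\os_{X\times\bba^r}$-action on $\ms$ is via $\os\subset W^0\ds$. The annihilation statement is meant for this action. Since $\fs$ is coherent for the $\os$-module structure induced from $\ms$, the support argument applies directly.

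Next, by Lemma \ref{BS generation from zero}, $W^N\ds_{X\times\bba^r}=(t_1\cdots t_r)^N\cdot W^0\ds_{X\times\bba^r}$. Hence
$$
\ms\cdot W^N\ds_{X\times\bba^r}=\fs\cdot W^0\ds_{X\times\bba^r}\cdot (t_1\cdots t_r)^N W^0\ds_{X\times\bba^r}.
$$
The point needing care is commuting $(t_1\cdots t_r)^N$ past $W^0$ on the left. I would first use the generation $\ms\cdot W^N\ds = \fs\cdot W^N\ds$, which holds because $W^0\ds\cdot W^N\ds\subset W^N\ds$. That identity gives
$$
\ms\cdot W^N\ds=\fs\cdot W^N\ds = \fs\cdot (t_1\cdots t_r)^N\cdot W^0\ds = 0.
$$

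The only mildly delicate step is the existence of $N$ with $\fs\cdot(t_1\cdots t_r)^N=0$. This is standard for coherent sheaves supported on a closed subscheme, using noetherianity on a finite affine cover and taking the maximum exponent.
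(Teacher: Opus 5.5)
Your proposal is correct and is the argument the paper intends. The paper omits this proof, saying it is the same as that of Corollary \ref{torsion}. That argument is exactly yours: take an $\os_{X\times\bba^r}$-coherent generating subsheaf $\fs\subset\ms$ from Lemma \ref{BS surjection}, use the support hypothesis to get $\fs\cdot(t_1\cdots t_r)^N=0$, and conclude via Lemma \ref{BS generation from zero} that $\ms\cdot W^N\ds_{X\times\bba^r}=\fs\cdot W^N\ds_{X\times\bba^r}=\fs\cdot(t_1\cdots t_r)^N\cdot W^0\ds_{X\times\bba^r}=0$.
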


Next, we consider the notion of good filtrations for $W^{0}\rs_{X}$-modules.

\begin{definition}
Let $\ns$ be a right $W^0\rs_X$-module. A good filtration $U^{\bullet}\ns=(U^k\ns)_k$ of $\ns$ is a $\bbz_{\geq 0}$-indexed decreasing filtration of $\ns$ by $W^0\rs_X$-submodules $U^k\ns \subset \ns$ such that the following properties hold:
\begin{enumerate}
\item[i)] $U^0\ns = \ns$.
\item[ii)] $U^i\ns \cdot W^j\rs_X \subset U^{i+j}\ns$ for $i,j\geq 0$.
\item[iii)] $\bigoplus_{k\geq 0} U^k\ns T^k$ is a coherent $R^+_W\rs_X$-module.
\end{enumerate}
The corresponding definition for left $W^0\rs_X$-modules is obtained in the obvious way.
\end{definition}

The proof of the following lemma is standard.

\begin{lemma}\label{BS artin rees}
Given a good filtration $U^{\bullet}\ns$ of a coherent right $W^0\rs_X$-module $\ns$, there exists a constant $c \in \bbz_{\geq 0}$ such that $U^{k + c}\ns \subset \ns \cdot W^{k}\rs_X$ for every $k\geq 0$. The corresponding statement for left $W^0\rs_X$-modules also holds.
\end{lemma}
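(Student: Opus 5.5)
We prove Lemma \ref{BS artin rees} by the usual Artin--Rees argument for filtrations that are good with respect to a Rees algebra, working locally on affine opens and then gluing. First, the question is local on $X$: if $X=\bigcup_i U_i$ is a finite affine open cover and $c_i$ works on $U_i$, then $c=\max_i c_i$ works on $X$. Indeed, both $U^{k+c}\ns$ and $\ns\cdot W^k\rs_X$ are quasicoherent $\os_X$-submodules of $\ns$, by Lemma \ref{coherent char} and the quasicoherence of $W^k\rs_X$, so inclusion can be checked on each $U_i$. Hence we may assume $X$ is affine and work with global sections. By Lemma \ref{BS finite type assumption}, the rings $A:=\Gamma(X,W^0\rs_X)$ and $\Gamma(X,R^+_W\rs_X)$ are noetherian.

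By property iii) and Lemma \ref{coherent char}, the module $\bigoplus_{k\ge0}U^k N\,T^k$ is finitely generated over $\Gamma(X,R^+_W\rs_X)$, where $N:=\Gamma(X,\ns)$. Choose finitely many homogeneous generators $w_jT^{k_j}$ and set $c=\max_j k_j$. For $k\ge0$ and any element of degree $k+c$, write it as $\sum_j (w_jT^{k_j})\cdot P_j T^{k+c-k_j}$ with $P_j\in W^{k+c-k_j}\rs_X$. Since $k+c-k_j\ge k$ and the $W$-filtration is decreasing, we have $P_j\in W^k\rs_X$. Moreover $w_j\in U^{k_j}N\subset U^0N=N$, so the element lies in $N\cdot W^k\rs_X$. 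This gives $U^{k+c}\ns\subset\ns\cdot W^k\rs_X$ on global sections over the affine $X$, and by quasicoherence as sheaves.

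In fact the argument does not use coherence of $\ns$ beyond what iii) already gives, nor noetherianity in an essential way, except to translate coherence into finite generation on affines via Lemma \ref{coherent char}. That translation is the one point requiring care, and it is supplied by Lemma \ref{BS finite type assumption}. The left-module statement follows by the identical argument with the multiplication placed on the other side, or alternatively by applying the anti-involution $\FT$ from the proof of Lemma \ref{left right switch}, which preserves each $W^k\rs_X$ by Lemma \ref{BS explicit V filtration}. No step looks like a genuine obstacle; the only subtle point is the reduction to the affine case via quasicoherence of the submodules involved.
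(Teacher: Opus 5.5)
Your argument is correct, and it is exactly the standard Rees-module argument the paper has in mind; the paper omits the proof as standard. On each affine $U_i$ of a finite cover you take $c$ to be the largest degree among finitely many homogeneous generators of $\bigoplus_{k\geq 0}U^k\ns T^k$, so every element of degree $k+c$ lies in $\Gamma(U_i,\ns)\cdot\Gamma(U_i,W^k\rs_X)$, and you glue using quasicoherence of $U^{k+c}\ns$.

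One correction: drop your alternative route to the left-module case via $\FT$. In this section $X$ is an arbitrary variety, not a smooth one with global coordinates, so no such anti-involution is available. The mirrored argument you give first works verbatim.
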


\subsection{Transfer bimodule and direct image}\label{}

Throughout this section, we fix an integer $r\geq 1$ and a morphism $\pi: Y \to X$ of varieties such that $X=\Spec A$ is a smooth affine variety. We set $\rs_Y = \ds_Y \otimes_{\bbc} D_{\bba^r}$ and $\rs_X = \ds_X \otimes_{\bbc} D_{\bba^r}$. Recall from Section \ref{transfer bimodule and direct image} that $\ds_{Y\to X}=\pi^{*}\ds_{X}$ has a natural $(\ds_Y, \pi^{-1}\ds_X)$-bimodule structure. Also, recall that $\widetilde{\pi}_+(\ns) = \hs^0 R\pi_*(\ns \otimes^L_{\ds_Y} \ds_{Y \to X})$ for a right $\ds_Y$-module $\ns$.

It is immediate that
$$
\pi^*W^0\rs_X = \ds_{Y\to X} \otimes_{\bbc} W^0D_{\bba^r}
$$
is a $(W^0\rs_Y, \pi^{-1}W^0\rs_X)$-bimodule, and that
$$
\pi^*R^+_W\rs_X= \ds_{Y\to X}  \otimes_{\bbc} R^+_WD_{\bba^r}
$$
is an $(R^+_W\rs_Y, \pi^{-1}R^+_W\rs_X)$-bimodule. If $\ns$ is a right $W^0\rs_Y$-module, then $\widetilde{\pi}_+(\ns)$ is a right $W^0\rs_X$-module since
$$
\widetilde{\pi}_+(\ns) = \hs^0 R\pi_*(\ns \otimes^L_{W^0\rs_Y} \pi^*W^0\rs_X).
$$
Similarly, if $\ns= \bigoplus_{k\in \bbz} \ns_k T^k$ is a graded right $R^+_W\rs_Y$-module, then $\widetilde{\pi}_+(\ns)$ is a graded right $R^+_W\rs_X$-module since 
$$
\widetilde{\pi}_+(\ns) = \hs^0 R\pi_*(\ns \otimes^L_{R^+_W\rs_Y} \pi^*R^+_W\rs_X).
$$

Consider the morphism $\pi\times 1:Y \times \bba^r \to X \times \bba^r$. Observe that
$$
(\pi \times 1)^*W^0\ds_{X\times \bba^r} = \os_{Y \times \bba^r} \otimes_{(\pi\times 1)^{-1}\os_{X \times \bba^r}} (\pi\times 1)^{-1}W^0\ds_{X\times \bba^r}
$$
is a $(W^0\ds_{Y \times \bba^r}, (\pi\times 1)^{-1}W^0\ds_{X\times \bba^r})$-bimodule.

\begin{definition}
Given a right $W^0\ds_{Y \times \bba^r}$-module $\ns$, we define
$$
\widetilde{(\pi\times 1)}_{W,+}\ns := \hs^{0}R(\pi\times 1)_* (\ns \otimes_{W^0\ds_{Y\times \bba^r}}^L (\pi\times 1)^*W^0\ds_{X\times \bba^r} ),
$$
which is a right $W^0\ds_{X \times \bba^r}$-module.
\end{definition}

The proofs of the following two results are essentially the same as those of Lemmas \ref{box pull} and \ref{push example}, respectively, so we omit them.

\begin{lemma}\label{BS box pull}
We have $L\pi^*W^0\rs_X= \pi^*W^0\rs_X$, $L\pi^*R^+_W\rs_X= \pi^*R^+_W\rs_X$, and 
$$
L(\pi \times 1)^*W^0\ds_{X \times \bba^r}= (\pi \times 1)^*W^0\ds_{X \times \bba^r}.
$$
\end{lemma}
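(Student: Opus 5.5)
The plan is to mirror the proof of Lemma \ref{box pull}, replacing $V^0$ by $W^0$ throughout. The key input is that $\ds_X$ is free as a left $\os_X$-module, since $X$ is smooth and affine; in fact it is free with basis $\partial_x^{\alpha}$ after shrinking to coordinate charts, and in general it is locally free. Flatness is all that is needed, since $L\pi^*\fs=\pi^*\fs$ for any flat quasicoherent $\os_X$-module $\fs$. Hence $L\pi^*\ds_X=\pi^*\ds_X$.

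For the first two identities, I use Lemma \ref{BS explicit V filtration}, which gives
$$
W^0\rs_X=\ds_X\otimes_{\bbc}W^0D_{\bba^r}
\ \ \ \
\text{and}
\ \ \ \
R^+_W\rs_X=\ds_X\otimes_{\bbc}R^+_WD_{\bba^r}.
$$
As left $\os_X$-modules these are direct sums of copies of $\ds_X$, indexed by a $\bbc$-basis of $W^0D_{\bba^r}$ (respectively of $R^+_WD_{\bba^r}$). Such direct sums are flat, and $L\pi^*$ commutes with direct sums. Therefore
$$
L\pi^*W^0\rs_X=\pi^*\ds_X\otimes_{\bbc}W^0D_{\bba^r}=\pi^*W^0\rs_X,
$$
and the same argument gives the statement for $R^+_W\rs_X$.

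For the last identity, I need the external product decomposition $W^0\ds_{X\times\bba^r}=\ds_X\boxtimes W^0\ds_{\bba^r}$ as left $\os_{X\times\bba^r}$-modules. Here $W^0\ds_{\bba^r}$ denotes the sheaf on $\bba^r$ associated with $W^0D_{\bba^r}$. This decomposition follows from Lemma \ref{BS explicit V filtration} together with the quasicoherence of $W^0\ds_{X\times\bba^r}$, which holds because both sides are quasicoherent and have the same pushforward along the affine morphism $p_X$. Alternatively, one can check it directly from the definition of $V_i^k$ in local coordinates, exactly as for $V^0$.

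Since $\ds_X$ is $\os_X$-flat and $W^0\ds_{\bba^r}$ is $\os_{\bba^r}$-flat (it is free on the monomials $\partial_t^{\beta}$ times those $t^{\alpha}$ allowed by the constraints), the box product is flat over $\os_{X\times\bba^r}$. Consequently
$$
L(\pi\times1)^*W^0\ds_{X\times\bba^r}=L\pi^*\ds_X\boxtimes W^0\ds_{\bba^r}=\pi^*\ds_X\boxtimes W^0\ds_{\bba^r}=(\pi\times1)^*W^0\ds_{X\times\bba^r}.
$$

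The only point needing care is this identification of $W^0\ds_{X\times\bba^r}$ as an external product, and in particular its flatness as a left $\os_{X\times\bba^r}$-module. I would settle it by noting that $W^0D_{\bba^r}$ has the explicit $\bbc[t]$-basis $\{\partial_t^{\beta}t^{\beta}\}_{\beta}$, with extra powers of $t$ absorbed into the coefficients. This shows it is free as a left $\bbc[t_1,\dots,t_r]$-module, which gives the required flatness.
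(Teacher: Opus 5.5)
Your proposal is correct and follows the paper's route: the paper omits this proof as the same as that of Lemma \ref{box pull} with $V$ replaced by $W$, where flatness of $\ds_X$ gives the first two identities and the decomposition $W^0\ds_{X\times\bba^r}=\ds_X\boxtimes W^0\ds_{\bba^r}$ gives the third. Your extra check that $W^0\ds_{\bba^r}$ is $\os_{\bba^r}$-free is harmless but not needed, since over $\bbc$ one has $L(\pi\times 1)^*(\fs\boxtimes\gs)=L\pi^*\fs\boxtimes\gs$ for any quasicoherent $\gs$ on $\bba^r$.
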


\begin{lemma}\label{BS push example}
We have $\widetilde{\pi}_+(W^0\rs_Y) = \pi_*\os_Y \otimes_{\os_X} W^0\rs_X$.
\end{lemma}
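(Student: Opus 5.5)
The plan is to mirror the proof of Lemma \ref{push example} step by step, replacing $V^0$ by $W^0$ throughout. We have
$$
\widetilde{\pi}_+(W^0\rs_Y) = \hs^0 R\pi_*(W^0\rs_Y \otimes^L_{W^0\rs_Y} \pi^*W^0\rs_X) = \hs^0 R\pi_*(\pi^*W^0\rs_X),
$$
since tensoring the free module $W^0\rs_Y$ over itself is trivially acyclic. The goal is then to compute $R\pi_*$ of the pullback $\pi^*W^0\rs_X = \ds_{Y\to X}\otimes_{\bbc} W^0D_{\bba^r}$.

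First, use Lemma \ref{BS box pull} to replace $\pi^*W^0\rs_X$ by $L\pi^*W^0\rs_X$. This works because $W^0\rs_X = \ds_X\otimes_{\bbc} W^0D_{\bba^r}$ is free as a left $\os_X$-module, since $\ds_X$ is free over $\os_X$ with basis $\partial_x^\alpha$. Next, apply the projection formula \cite[Lemma 36.22.1]{Stacks} to obtain
$$
R\pi_*L\pi^*W^0\rs_X = R\pi_*\os_Y\otimes^L_{\os_X} W^0\rs_X,
$$
viewing $W^0\rs_X$ as a quasicoherent left $\os_X$-module. Because $W^0\rs_X$ is flat (indeed free) over $\os_X$, the derived tensor product commutes with taking $\hs^0$. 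This yields $\hs^0(R\pi_*\os_Y)\otimes_{\os_X}W^0\rs_X = \pi_*\os_Y\otimes_{\os_X}W^0\rs_X$.

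The only point needing care is checking that these identifications respect the right $W^0\rs_X$-module structure, not merely the left $\os_X$-structure. The projection formula isomorphism is natural in the second factor, and the right action on $\pi^*W^0\rs_X$ is by right multiplication on the $\pi^{-1}W^0\rs_X$ factor. Together these make the isomorphism compatible with the right action, exactly as in the $V$-filtration case. I do not expect a genuine obstacle here, since the argument is word-for-word that of Lemma \ref{push example}.
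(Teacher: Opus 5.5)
Your proposal is correct and is essentially the paper's own proof. The paper omits the argument and defers to Lemma \ref{push example}, which uses exactly your chain $\hs^0 R\pi_*\pi^*W^0\rs_X = \hs^0 R\pi_* L\pi^*W^0\rs_X = \hs^0(R\pi_*\os_Y\otimes^L_{\os_X}W^0\rs_X) = \pi_*\os_Y\otimes_{\os_X}W^0\rs_X$, justified by Lemma \ref{BS box pull}, the projection formula, and flatness of $W^0\rs_X$ over $\os_X$. One small correction: in this section $X$ is not assumed to have global coordinates, so $\ds_X$ need not be free with basis $\partial_x^{\alpha}$; it is only locally free over $\os_X$, and that is all both of your steps actually use.
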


\subsection{Projective direct image}\label{BS projective direct image}

Throughout this section, we fix an integer $r\geq 1$ and a projective morphism $\pi: Y \to X$ such that $X$ is a smooth affine variety and the morphism $\rSpec_Y (\gr_{\bullet}^F\ds_Y) \to Y$ is of finite type. We denote the projections $Y \times \bba^r \to Y$ and $X \times \bba^r \to X$ by $p_Y$ and $p_X$, respectively.

\begin{lemma}\label{BS projective coherence}
Let $\ms$ be a coherent right $W^0\ds_{Y\times \bba^r}$-module. Then
$$
\hs^i R(\pi \times 1)_*(\ms \otimes^L_{W^0\ds_{Y\times \bba^r}} (\pi\times 1)^*W^0\ds_{X\times \bba^r})
$$
is a coherent right $W^0\ds_{X\times \bba^r}$-module for all $i\in \bbz$. In particular, $\widetilde{(\pi \times 1)}_{W,+}\ms$ is a coherent right $W^0\ds_{X\times \bba^r}$-module.
\end{lemma}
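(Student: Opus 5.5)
The plan is to mirror the proof of Lemma \ref{projective coherence}, with $V^0$ replaced by $W^0$ throughout. Put $d=\dim Y\times\bba^r$ and $\qs=(\pi\times 1)^*W^0\ds_{X\times\bba^r}$.

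\textbf{Step 1: a locally free resolution.} By Lemma \ref{BS finite type assumption}, the sheaf $W^0\ds_{Y\times\bba^r}$ has noetherian sections over affine opens. Lemma \ref{coherent char} therefore applies, and Lemma \ref{BS surjection} gives an $\os_{Y\times\bba^r}$-coherent subsheaf $\fs\subset\ms$ generating $\ms$. Since $\pi\times 1$ is projective and $X\times\bba^r$ is affine, $\fs$ is a quotient of some $\es=(\ls^{\otimes -N})^{M}$, where $\ls$ is $\pi$-ample. We get a surjection $\es\otimes_{\os}W^0\ds_{Y\times\bba^r}\onto\ms$. Its kernel is again coherent, because the rings are noetherian on affines. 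Iterating produces a resolution $\cs^\bullet\to\ms$ with $\cs^i=\es^i\otimes_{\os_{Y\times\bba^r}}W^0\ds_{Y\times\bba^r}$ and each $\es^i$ finite locally free.

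\textbf{Step 2: computing the derived tensor product.} Each $\cs^i$ is acyclic for $(-)\otimes^L_{W^0\ds_{Y\times\bba^r}}\qs$, since $\cs^i\otimes\qs=\es^i\otimes_{\os}\qs$ with $\es^i$ flat. Hence $\ms\otimes^L\qs$ is computed by the complex $\bs^\bullet=\cs^\bullet\otimes\qs$. Its terms are quasicoherent, and $R(\pi\times1)_*$ has cohomological dimension bounded in terms of $d$ (Remark \ref{bounded cohomological dimension}). So $\hs^iR(\pi\times1)_*(\bs^\bullet)=\hs^iR(\pi\times1)_*(\sigma_{\geq i-d-1}\bs^\bullet)$, and it suffices to prove that each brutal truncation $\sigma_{\geq i}\bs^\bullet$ has coherent pushforward cohomology.

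\textbf{Step 3: the individual terms.} By Lemma \ref{BS box pull}, $\bs^i=\es^i\otimes^L_{\os}L(\pi\times1)^*W^0\ds_{X\times\bba^r}$. The projection formula \cite[Lemma 36.22.1]{Stacks} then gives
$$R(\pi\times1)_*\bs^i=R(\pi\times1)_*\es^i\otimes^L_{\os_{X\times\bba^r}}W^0\ds_{X\times\bba^r}.$$
Because $\pi\times 1$ is proper, $R(\pi\times1)_*\es^i$ has $\os$-coherent cohomology. Since $W^0\ds_{X\times\bba^r}$ is free as a left $\os_{X\times\bba^r}$-module (by Lemma \ref{BS explicit V filtration}), the cohomology sheaves are $\hs^j(R(\pi\times1)_*\es^i)\otimes_\os W^0\ds_{X\times\bba^r}$. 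These are finitely generated over $W^0\ds_{X\times\bba^r}$ and quasicoherent, hence coherent by Lemma \ref{coherent char}.

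\textbf{Step 4: induction.} Run descending induction on $i$ using the distinguished triangles built from $0\to\sigma_{\geq i+1}\bs^\bullet\to\sigma_{\geq i}\bs^\bullet\to\bs^i[-i]\to0$. Coherent $W^0\ds_{X\times\bba^r}$-modules form an abelian category closed under extensions, since the section rings are noetherian on affines. The long exact cohomology sequence then carries coherence from the two outer terms to the middle one.

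\textbf{Main obstacle.} The delicate point is Step 1: producing the generating coherent subsheaf $\fs$ and the locally free resolution, which hinges on the noetherianity supplied by Lemma \ref{BS finite type assumption}. Once that is available, the remaining steps are formal.
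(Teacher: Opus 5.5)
Your proposal is correct and is essentially the paper's proof. The paper proves this lemma by running the argument of Lemma \ref{projective coherence} verbatim, with Lemmas \ref{BS surjection} and \ref{BS box pull} in place of Lemmas \ref{surjection} and \ref{box pull}, which is exactly what you do. You also make explicit a few points left implicit there, such as flatness of $W^0\ds_{X\times\bba^r}$ over $\os_{X\times\bba^r}$ and closure of coherent modules under kernels and extensions.
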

\proof The argument is essentially the same as that of Lemma \ref{projective coherence} after replacing Lemmas \ref{surjection} and \ref{box pull} with Lemmas \ref{BS surjection} and \ref{BS box pull}, respectively. Thus we omit it.\qed

The proofs of the following three results are essentially the same as those of Lemma \ref{push relation}, Lemma \ref{projective graded coherence}, and Corollary \ref{projective good}, respectively, so we omit them.

\begin{lemma}\label{BS push relation}
Given a coherent right $W^0\ds_{Y\times\bba^{r}}$-module $\ms$, we have the following natural isomorphism of right $W^0\rs_X$-modules:
$$
p_{X,*} (\widetilde{(\pi \times 1)}_{W,+}\ms ) \simeq \widetilde{\pi}(p_{Y,*}\ms).
$$
\end{lemma}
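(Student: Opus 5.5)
We follow the proof of Lemma \ref{push relation}, replacing $V^0$ by $W^0$ throughout. The plan has three steps: resolve $\ms$ by induced modules, compare the two derived tensor products after pushing forward along $p_Y$, and then apply $\hs^0 R\pi_*$.

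\textbf{Step 1: the resolution.} Let $\qs=(\pi\times 1)^*W^0\ds_{X\times\bba^r}$. Since $\pi\times 1$ is projective over the affine $X\times\bba^r$, every $\os$-coherent sheaf is a quotient of an $\os$-finite locally free sheaf. Combining this with Lemma \ref{BS surjection} gives a resolution
$$
\cs^\bullet\to\ms, \qquad \cs^i=\es^i\otimes_{\os_{Y\times\bba^r}}W^0\ds_{Y\times\bba^r},
$$
with each $\es^i$ finite locally free. This is exactly the resolution used in the proof of Lemma \ref{BS projective coherence}. By flatness of $\es^i$ and Lemma \ref{BS box pull}, we get $\cs^i\otimes^L\qs=\es^i\otimes_{\os}\qs$. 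Hence
$$
\ms\otimes^L_{W^0\ds_{Y\times\bba^r}}\qs=\cs^\bullet\otimes\qs .
$$

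\textbf{Step 2: comparison after $p_{Y,*}$.} First, as in \eqref{push transfer},
$$
p_{Y,*}\qs=\pi^*\ds_X\otimes_\bbc W^0D_{\bba^r}=\pi^*W^0\rs_X .
$$
Next we prove the two identities analogous to \eqref{first identity} and \eqref{second identity}. For the first, note that $p_Y$ is affine and all sheaves involved are quasicoherent, so $p_{Y,*}\cs^\bullet\to p_{Y,*}\ms$ is still a resolution. Its terms are
$$
p_{Y,*}\cs^i=p_{Y,*}\es^i\otimes_{p_{Y,*}\os}W^0\rs_Y,
$$
with $p_{Y,*}\es^i$ flat over $p_{Y,*}\os$, so these terms are acyclic for $-\otimes_{W^0\rs_Y}\pi^*W^0\rs_X$. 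This gives
$$
p_{Y,*}\ms\otimes^L_{W^0\rs_Y}\pi^*W^0\rs_X=p_{Y,*}\cs^\bullet\otimes_{W^0\rs_Y}\pi^*W^0\rs_X .
$$
For the second, each $\cs^i\otimes\qs=\es^i\otimes\qs$ is quasicoherent, hence $p_{Y,*}$-acyclic. Therefore $Rp_{Y,*}(\ms\otimes^L\qs)=p_{Y,*}(\cs^\bullet\otimes\qs)$, and termwise
$$
p_{Y,*}(\es^i\otimes\qs)=p_{Y,*}\es^i\otimes p_{Y,*}\qs=p_{Y,*}\cs^i\otimes_{W^0\rs_Y}p_{Y,*}\qs .
$$
Combining the two identities yields an isomorphism
$$
\phi_\ms\colon p_{Y,*}\ms\otimes^L_{W^0\rs_Y}\pi^*W^0\rs_X\xrightarrow{\sim} Rp_{Y,*}(\ms\otimes^L\qs).
$$

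\textbf{Step 3: applying $\hs^0 R\pi_*$.} By Lemma \ref{BS projective coherence}, the complex $R(\pi\times1)_*(\ms\otimes^L\qs)$ has quasicoherent cohomology, and $p_X$ is affine. Hence
$$
p_{X,*}\widetilde{(\pi\times1)}_{W,+}\ms=\hs^0R\pi_*Rp_{Y,*}(\ms\otimes^L\qs).
$$
Applying $\hs^0R\pi_*$ to $\phi_\ms$ identifies this with $\hs^0R\pi_*(p_{Y,*}\ms\otimes^L_{W^0\rs_Y}\pi^*W^0\rs_X)$. This equals $\widetilde{\pi}_+(p_{Y,*}\ms)$ by the description of $\widetilde{\pi}_+$ on $W^0\rs_Y$-modules given above.

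The main obstacle is naturality: we must show that $\phi_\ms$ is $W^0\rs_X$-linear, independent of the choice of $\cs^\bullet$, and functorial in $\ms$. Linearity holds because every identification in Step 2 is compatible with the right $\pi^{-1}W^0\rs_X$-action on $p_{Y,*}\qs$. For independence and functoriality, I would again represent a morphism $\ms\to\ms'$ in the derived category by a roof $\cs^\bullet\leftarrow\ks^\bullet\to\cs'^\bullet$ whose terms have the same form $\vs^i\otimes W^0\ds_{Y\times\bba^r}$ with $\vs^i$ finite locally free, using \cite[Lemma 13.15.4]{Stacks}. Both constructions are compatible with such roofs, which gives the claim.
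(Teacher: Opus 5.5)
Your proposal is correct and matches the paper's intended argument. The paper omits this proof as essentially the same as that of Lemma \ref{push relation}, and you carry out exactly that proof with $V^0$ replaced by $W^0$: the induced-module resolution, the identities \eqref{first identity} and \eqref{second identity}, the application of $\hs^0 R\pi_*$ via Lemma \ref{BS projective coherence}, and the roof argument for independence of choices and functoriality.
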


\begin{lemma}\label{BS projective graded coherence}
If $\ns$ is a coherent graded right $R^+_W\rs_Y$-module, then $\widetilde{\pi}_+(\ns)$ is a coherent graded right $R^+_W\rs_X$-module.
\end{lemma}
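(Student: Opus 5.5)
The plan is to transcribe the proof of Lemma \ref{projective coherence}, now for the graded Rees algebras, with $\pi$ in place of $\pi\times 1$. Let $\ns=\bigoplus_k \ns_kT^k$ be a coherent graded right $R^+_W\rs_Y$-module. The analogue of Lemma \ref{graded surjection} for $R^+_W\rs_Y$ holds with the same proof. That proof only uses Lemma \ref{coherent char}, noetherianity from Lemma \ref{BS finite type assumption}, and quasicoherence of the graded pieces. It gives an $\os_Y$-coherent graded subsheaf $\gs\subset\ns$ generating $\ns$.

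The first step is to build a free graded resolution. Since $\pi$ is projective and $X$ is affine, each graded piece of $\gs$ (only finitely many are nonzero) is a quotient of a finite locally free sheaf of the form $(\ls^{\otimes -N})^{M}$, with $\ls$ a $\pi$-ample line bundle. This gives a graded surjection $\es^0\otimes_{\os_Y}R^+_W\rs_Y\onto\ns$, where $\es^0=\bigoplus_k\es^0_kT^{k}$ is graded and finite locally free and the grading is shifted appropriately. The kernel is again coherent and graded, since $R^+_W\rs_Y$ has noetherian sections over affines. Iterating gives a resolution $\cs^\bullet\to\ns$ with $\cs^i=\es^i\otimes_{\os_Y}R^+_W\rs_Y$ and every map homogeneous of degree zero.

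Next I identify the derived tensor product and reduce to terms. Since $R^+_W\rs_Y=\ds_Y\otimes_{\bbc}R^+_WD_{\bba^r}$ and $\pi^*R^+_W\rs_X=\ds_{Y\to X}\otimes_{\bbc}R^+_WD_{\bba^r}$, we get
$$\cs^i\otimes_{R^+_W\rs_Y}\pi^*R^+_W\rs_X=\es^i\otimes_{\os_Y}\pi^*R^+_W\rs_X .$$
By Lemma \ref{BS box pull} this equals $\es^i\otimes^L_{\os_Y}L\pi^*R^+_W\rs_X$. So each $\cs^i$ is acyclic for the tensor, and $\bs:=\cs^\bullet\otimes_{R^+_W\rs_Y}\pi^*R^+_W\rs_X$ computes $\ns\otimes^L_{R^+_W\rs_Y}\pi^*R^+_W\rs_X$. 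The projection formula then gives
$$R\pi_*\bs^i=R\pi_*\es^i\otimes^L_{\os_X}R^+_W\rs_X .$$
Here $R\pi_*\es^i$ has $\os_X$-coherent cohomology because $\pi$ is proper. Since $R^+_W\rs_X$ is flat as a left $\os_X$-module, $R\pi_*\bs^i$ has cohomology coherent over $R^+_W\rs_X$, and these cohomology sheaves are graded because everything respects the $T$-grading.

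The last step truncates and inducts. As in Remark \ref{bounded cohomological dimension}, $R\pi_*$ has cohomological dimension bounded by $\dim Y$. Hence $\hs^iR\pi_*(\bs)=\hs^iR\pi_*(\sigma_{\geq i-d-1}\bs)$ with $d=\dim Y$. Descending induction on the truncation index, using the triangles $\sigma_{\geq j+1}\bs\to\sigma_{\geq j}\bs\to\bs^j[-j]$, shows that all cohomology sheaves are coherent. This uses that coherent $R^+_W\rs_X$-modules are closed under kernels, cokernels and extensions, which follows from Lemma \ref{coherent char} and noetherianity. In particular $\widetilde{\pi}_+(\ns)=\hs^0$ is coherent, and it is graded by Lemma \ref{derived direct image and direct sum}.

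I expect the main obstacle to be keeping the grading compatible throughout. The generators must be homogeneous, the locally free covers must be chosen degreewise with the grading shifts built into the free module, and the graded structure of the derived pushforward must match the direct-sum decomposition $\widetilde{\pi}_+(\ns)=\bigoplus_k\widetilde{\pi}_+(\ns_k)T^k$. Finite generation of the graded pieces, via the Gordan-type claim for $R^+_WD_{\bba^r}$, is already supplied by Lemma \ref{BS finite type assumption}.
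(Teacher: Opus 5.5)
Your proposal is correct and takes the same route as the paper. The paper handles this lemma by transcribing the proof of Lemma \ref{projective coherence} (through Lemma \ref{projective graded coherence}): it uses $\pi$ in place of $\pi\times 1$, the Rees algebras in place of $V^0\ds$, graded $\os_Y$-coherent and graded finite locally free sheaves, and the graded surjection lemma, followed by the same acyclicity, projection-formula and truncation-induction steps you describe.
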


\begin{corollary}\label{BS projective good}
Let $U^{\bullet}\ns$ be a good filtration of a right $W^0\rs_Y$-module $\ns$. If we put 
$$
\im(\widetilde{\pi}_+(U^k\ns)) := \im(\widetilde{\pi}_+(U^k\ns) \to  \widetilde{\pi}_+(\ns) ),
$$
then $(\im(\widetilde{\pi}_+(U^k\ns)) )_{k\geq 0}$ is a good filtration of $\widetilde{\pi}_+(\ns)$.
\end{corollary}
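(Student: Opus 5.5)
The plan is to copy the proof of Corollary \ref{projective good} verbatim, replacing the $V$-filtration by the $W$-filtration throughout.

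First consider the inclusion of graded right $R^+_W\rs_Y$-modules
$$
\bigoplus_{k\geq 0} U^k\ns\, T^k \subset \bigoplus_{k\geq 0} \ns\, T^k .
$$
Apply $\widetilde{\pi}_+$ to it. Because $\widetilde{\pi}_+$ commutes with the direct sum (Lemma \ref{derived direct image and direct sum}), the result is a morphism of graded right $R^+_W\rs_X$-modules
$$
\phi\colon \bigoplus_{k\geq 0}\widetilde{\pi}_+(U^k\ns)\,T^k \to \bigoplus_{k\geq 0}\widetilde{\pi}_+(\ns)\,T^k .
$$

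Next, the source of $\phi$ is a coherent $R^+_W\rs_X$-module. This follows from Lemma \ref{BS projective graded coherence}, since $\bigoplus_k U^k\ns\,T^k$ is coherent by condition iii) in the definition of a good filtration.

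The target of $\phi$ is quasicoherent. The $W$-analogue of Lemma \ref{coherent pieces} is proved the same way, using \eqref{BS push generation from zero} in place of \eqref{push generation from zero}. Applied to the coherent source module in degree $0$, it shows that $\widetilde{\pi}_+(U^0\ns)=\widetilde{\pi}_+(\ns)$ is a coherent $W^0\rs_X$-module, and in particular quasicoherent.

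It follows that $\im\phi = \bigoplus_k \im(\widetilde{\pi}_+(U^k\ns))\,T^k$ is quasicoherent. Over each affine open it is a quotient of a finitely generated module over the noetherian ring $\Gamma(U,R^+_W\rs_X)$ (Lemma \ref{BS finite type assumption}), so it is coherent by Lemma \ref{coherent char}. This gives condition iii) for the image filtration.

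It remains to check conditions i) and ii). Condition i) is immediate, because $U^0\ns=\ns$ and so the $k=0$ image is all of $\widetilde{\pi}_+(\ns)$. For condition ii), right multiplication by $T^j W^j\rs_X$ on $\im\phi$, which is an $R^+_W\rs_X$-submodule, sends degree $i$ to degree $i+j$. This gives
$$
\im(\widetilde{\pi}_+(U^i\ns))\cdot W^j\rs_X \subset \im(\widetilde{\pi}_+(U^{i+j}\ns)).
$$
Each $\im(\widetilde{\pi}_+(U^k\ns))$ is a $W^0\rs_X$-submodule of $\widetilde{\pi}_+(\ns)$, by naturality of $\widetilde{\pi}_+$.

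The filtration is decreasing: the inclusion $U^{k+1}\ns\subset U^k\ns$ makes the map $\widetilde{\pi}_+(U^{k+1}\ns)\to\widetilde{\pi}_+(\ns)$ factor through $\widetilde{\pi}_+(U^k\ns)$, so the images are nested.

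The only step needing any care is the coherence of the source, and that is handled entirely by Lemma \ref{BS projective graded coherence}. Everything else is formal.
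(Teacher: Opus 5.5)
Your proposal is correct and follows the paper's route: the paper omits this proof as "essentially the same" as that of Corollary~\ref{projective good}. That proof applies $\widetilde{\pi}_+$ to the graded inclusion, gets coherence of the source from the graded-coherence lemma (here Lemma~\ref{BS projective graded coherence}), gets quasicoherence of the target from coherence of the degree-zero piece, and concludes that the image is coherent via Lemma~\ref{coherent char}. Your explicit checks of conditions i), ii) and of the nesting of the images are formal details the paper leaves implicit.
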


\subsection{Proof of the theorem}\label{}

This section is devoted to the proof of Theorem \ref{main result for bernstein-sato ideals}. We begin with the following two lemmas.

\begin{lemma}\label{BS hypersurface}
Let $X$ be a smooth variety. Given regular functions $g,f_{1},\dots,f_{r}\in \os_{X}(X)$ and invertible functions $u_{1},\dots,u_{r}\in \os_{X}(X)$, if we denote the $r$-tuples $(f_{1},\dots,f_{r})$ and $(u_{1}f_{1},\dots,u_{r}f_{r})$ by $\bff$ and $\bfh$, respectively, then we have $\BS_{g\delta_{\bff}}=\BS_{g\delta_{\bfh}}$.
\end{lemma}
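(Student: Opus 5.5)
Since $\BS_{g\delta_{\bff}}$ and $\BS_{g\delta_{\bfh}}$ are defined symmetrically and $\bff$ is obtained from $\bfh$ by multiplying by the invertible functions $u_i^{-1}$, it suffices to prove one inclusion, say $\BS_{g\delta_{\bff}}\subset\BS_{g\delta_{\bfh}}$. I will work through the description of Remark \ref{relative coincidence} together with the specialization isomorphism \eqref{specialization}: after inverting $f_1\cdots f_r$ (equivalently $h_1\cdots h_r$, since the $u_i$ are units), $B_{\bff}[1/f_1\cdots f_r]\simeq \os_X[1/\prod f_i, s_1,\dots,s_r]f_1^{s_1}\cdots f_r^{s_r}$, and likewise for $\bfh$. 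Hence a polynomial $b$ lies in $\BS_{g\delta_{\bff}}$ if and only if there is $P\in\ds_X[s_1,\dots,s_r]$ with
$$
b(s)\,g f_1^{s_1}\cdots f_r^{s_r} = P\cdot g f_1^{s_1+1}\cdots f_r^{s_r+1}
$$
in the localized module. Here I use that $B_{\bff}$ is $\os_X$-torsion-free (its underlying module is free over $\os_X$), so the map to the localization is injective.

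The key step is an isomorphism of $\ds_X[s_1,\dots,s_r]$-modules
$$
\Phi:\os_X[1/\textstyle\prod f_i, s]\,\bff^{s}\xrightarrow{\sim}\os_X[1/\textstyle\prod f_i, s]\,\bfh^{s},\qquad w\,\bff^{s}\mapsto w\,u_1^{-s_1}\cdots u_r^{-s_r}\,\bfh^{s}.
$$
To make sense of $u^{-s}$, I will not try to define it as an element. Instead I define a twisted $\ds_X[s]$-action directly: a derivation $\theta$ acts on $w\,\bfh^{s}$ by $\theta(w)+w\sum_i s_i\theta(h_i)/h_i$, and I transport the action on the $\bff$ side via $\theta(f_i)/f_i=\theta(h_i)/h_i-\theta(u_i)/u_i$. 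The formal factor $u^{-s}$ then corresponds to conjugating by the automorphism of $\ds_X[s]$ that sends $\theta\mapsto\theta-\sum_i s_i\theta(u_i)/u_i$ and fixes $\os_X[s]$. This automorphism $\sigma$ is well defined because the $\theta(u_i)/u_i$ are regular functions: they come from closed $1$-forms $du_i/u_i$, so the twist respects commutators of vector fields. It preserves $\ds_X[s]$ and commutes with $\bbc[s]$.

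Applying $\Phi$ to the functional equation gives
$$
b(s)\,g u^{-s}\bfh^{s}=\sigma(P)\cdot g\,u^{-s-1}\bfh^{s+1}.
$$
Since $u^{-1}=\prod u_i^{-1}$ is a regular function, I absorb it into the operator: $\sigma(P)\cdot(u^{-1}\,\cdot)$ is again in $\ds_X[s]$. Cancelling the common twist yields $b(s)\,g\bfh^{s}\in\ds_X[s]\cdot g\bfh^{s+1}$, and therefore $b\in\BS_{g\delta_{\bfh}}$.

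The main obstacle is making the twisting rigorous without a literal $u^{s}$. Concretely, I must check that $\sigma$ is a well-defined ring automorphism of $\ds_X[s]$ and that the resulting $\Phi$ intertwines the two module structures. I expect to verify this locally in coordinates, where $\ds_X$ is generated by $\os_X$ and the $\partial_{x_j}$. There the relations to check reduce to $[\partial_{x_j}-\sum_i s_i\,\partial_{x_j}(u_i)/u_i,\ \partial_{x_k}-\sum_i s_i\,\partial_{x_k}(u_i)/u_i]=0$, which holds by the symmetry of second derivatives of $\log u_i$.
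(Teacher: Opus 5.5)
Your argument is essentially the paper's. The paper uses the same twist, with the opposite sign ($\theta\mapsto\theta+\sum_i s_i\theta(u_i)u_i^{-1}$). It extends this to an automorphism of all of $\rs_X$ by $t_i\mapsto u_it_i$ and $\partial_{t_i}\mapsto u_i^{-1}\partial_{t_i}$. It then applies it directly on $B_{\bff}$ via the $\os_X$-linear isomorphism $B_{\bfh}\to B_{\bff}$, $\partial_t^{\beta}\delta_{\bfh}\mapsto \prod_i u_i^{-\beta_i}\partial_t^{\beta}\delta_{\bff}$. After that it only has to note that the automorphism fixes each $s_i$ and preserves $W^1\rs_X$.

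Your detour through $\os_X[1/\prod_i f_i,s]\bff^{s}$ is the one place where your version is weaker. Freeness of $B_{\bff}$ over $\os_X$ gives injectivity of $B_{\bff}\to B_{\bff}[1/f_1\cdots f_r]$ only if $f_1\cdots f_r$ is a nonzerodivisor, and the lemma does not assume this. If some $f_i$ vanishes identically on a component of $X$, the localization is zero there, so your localized criterion for membership in $\BS_{g\delta_{\bff}}$ is satisfied by every $b$. It is therefore no longer equivalent to membership. For example, with $r=1$, $f=0$, $g=1$, one has $\BS_{\delta}=(s)$. The nonzerodivisor hypothesis does hold where the paper applies the lemma (monomials in global coordinates times units). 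The cleaner fix is to run your twist on $B_{\bff}$ itself, as the paper does, which needs no localization.
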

\proof The proof follows the same idea as in \cite[Lemma 2.6]{DM22}. Let $\rs_{X}=\ds_{X}\otimes_{\bbc}D_{\bba^{r}}$. Consider the isomorphism $\phi\colon\rs_{X}\xrightarrow{\sim} \rs_{X}$ of sheaves of $\bbc$-algebras given by $\phi(p)=p$ for $p\in \os_{X}$, $\phi(\theta)=\theta+\sum_{i=1}^{r}s_{i}\theta(u_{i})u^{-1}_{i}$ for $\theta\in \Der_{\bbc}(\os_{X})$, and $\phi(t_{i})=u_{i}t_{i}$ and $\phi(\partial_{t_{i}})=u^{-1}_{i}\partial_{t_{i}}$ for $1\leq i\leq r$. If we view $B_{\bff}$ as a $\rs_{X}$-module via restriction of scalars along $\phi$, then the $\os_{X}$-module isomorphism $\tau\colon  B_{\bfh}\xrightarrow{\sim}B_{\bff}$ given by $\tau(\partial_t^{\beta}\delta_{\bfh})= (\prod_{i=1}^{r}u^{-\beta_{i}}_{i})\partial_t^{\beta}\delta_{\bff}$ is $\rs_{X}$-linear. Note that $\phi(s_{i})=s_{i}$ for each $i$. By \eqref{BS push generation from zero}, it is easy to see that $\phi(W^{1}\rs_{X})=W^{1}\rs_{X}$. Thus for $b\in \bbc[s_{1},\dots,s_{r}]$, we have $b\cdot g\delta_{\bfh}\in W^{1}\rs_{X}\cdot g\delta_{\bfh}$ if and only if $\tau(b\cdot g\delta_{\bfh})\in \tau(W^{1}\rs_{X}\cdot g\delta_{\bfh})$, which is equivalent to $b\cdot g\delta_{\bff}\in W^{1}\rs_{X}\cdot g\delta_{\bff}$, as desired.\qed

\begin{lemma}\label{BS coordinates}
Let $X$ be a smooth variety with global coordinates $x_{1},\dots,x_{n}\in \os_{X}(X)$. Given nonzero regular functions $g,f_{1},\dots,f_{r}\in \os_{X}(X)$, suppose there exist regular functions $h,u_{1},\dots,u_{r}\in \os_{X}(X)$ with $u_{1},\dots,u_{r}$ invertible such that $g=hx_{1}^{b_{1}}\cdots x_{n}^{b_{n}}$ and, for all $i$, $f_{i}=u_{i}x_{1}^{a_{i1}}\cdots x_{n}^{a_{in}}$. If we denote the $r$-tuple $(f_{1},\dots,f_{r})$ by $\bff$, then we have
$$
\prod_{j=1}^{n}\prod_{\ell=0}^{N}\prod_{m=1}^{\sum_{i=1}^{r}a_{ij}}(a_{1j}s_{1}+\cdots +a_{rj}s_{r}+b_{j}+m+\ell)\in \BS_{g\delta_{\bff}}
$$
for some integer $N\geq 0$. Moreover, if $h$ is invertible, then we may take $N=0$. 
\end{lemma}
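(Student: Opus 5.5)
By Lemma \ref{BS hypersurface}, the ideal $\BS_{g\delta_{\bff}}$ does not change if each $f_i$ is replaced by $u_i^{-1}f_i$. So I first reduce to the case $u_1=\cdots=u_r=1$, in which every $f_i=x^{a_i}$ is a monomial. I then handle the monomial case $g=x^b$ (and $h$ invertible) by an explicit operator, and pass to general $h$ using an Artin--Rees argument for the $W$-filtration.

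\emph{Step 1 (explicit operator).} Put $L_j(s)=\sum_{i}a_{ij}s_i$, $A_j=\sum_i a_{ij}$, and
$$b_0(s)=\prod_{j=1}^n\prod_{m=1}^{A_j}(L_j(s)+b_j+m).$$
Since $B_{\bff}$ is free over $\os_X$, it embeds into its localization at $f_1\cdots f_r$. By \eqref{specialization}, it is therefore enough to compute in $\os_X[1/f_1\cdots f_r,s_1,\dots,s_r]f_1^{s_1}\cdots f_r^{s_r}$. There $x^bf_1^{s_1}\cdots f_r^{s_r}=\prod_j x_j^{b_j+L_j(s)}$ and $f_1\cdots f_r=x^{A}$. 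A one-variable computation gives
$$\partial_{x_j}^{A_j}\,x_j^{b_j+A_j+L_j(s)}=\prod_{m=1}^{A_j}(L_j(s)+b_j+m)\,x_j^{b_j+L_j(s)}.$$
The $\partial_{x_j}$ for different $j$ commute, so $\partial_x^{A}\cdot x^b\prod_i f_i\,\delta_{\bff}=b_0(s)\,x^b\delta_{\bff}$. If $h$ is invertible, apply $h\,\partial_x^A\,h^{-1}\in\ds_X$ to $g\prod_if_i\,\delta_{\bff}=h x^{b}x^{A}\delta_{\bff}$. This uses only that $s_i$ commutes with $\os_X$, and it yields $b_0(s)\,g\delta_{\bff}\in\ds_X[s_1,\dots,s_r]\cdot g\prod_i f_i\delta_{\bff}$. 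By Remark \ref{relative coincidence}, this proves the case $N=0$.

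\emph{Step 2 (general $h$).} Step 1 applied to $x^b$ shows $b_0\in\BS_{x^b\delta_{\bff}}$. Lemma \ref{BS left moving} then gives $b_0(s+k)\,(W^k\rs_X\cdot x^b\delta_{\bff})\subset W^{k+1}\rs_X\cdot x^b\delta_{\bff}$, and hence
$$\prod_{\ell=0}^{N}b_0(s+\ell)\cdot x^b\delta_{\bff}\in W^{N+1}\rs_X\cdot x^b\delta_{\bff}.$$
Because the $s_i$ commute with $h\in\os_X$, multiplying by $h$ shows that $\prod_{\ell=0}^N b_0(s+\ell)\cdot g\delta_{\bff}$ lies in
$$W^0\rs_X\cdot g\delta_{\bff}\ \cap\ W^{N+1}\rs_X\cdot x^b\delta_{\bff}.$$
On the module $\ns=W^0\rs_X\cdot g\delta_{\bff}$, consider the filtration $U^k\ns:=\ns\cap W^k\rs_X\cdot x^b\delta_{\bff}$. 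I claim it is good in the left-module sense. The filtration $(W^k\rs_X\cdot x^b\delta_{\bff})_k$ is good on $W^0\rs_X\cdot x^b\delta_{\bff}$, since its Rees module is generated by $x^b\delta_{\bff}$. The Rees module $\bigoplus_k U^k\ns\,T^k$ is a graded submodule of that Rees module, hence coherent, because $R^+_W\rs_X$ has noetherian sections over affines by Lemma \ref{BS finite type assumption}, together with Lemma \ref{coherent char}. Lemma \ref{BS artin rees} then provides $c$ with $U^{c+1}\ns\subset W^1\rs_X\cdot g\delta_{\bff}$. Taking $N=c$ gives $\prod_{\ell=0}^N b_0(s+\ell)\in\BS_{g\delta_{\bff}}$, which is the polynomial in the statement.

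The main obstacle is Step 2: checking that the induced filtration is genuinely good, that is, the coherence of the intersected Rees module. This relies on the noetherianity from Lemma \ref{BS finite type assumption}. Since $X$ is smooth and may be taken affine (the ideal is local by Remark \ref{BS b function local}), the hypothesis on $\gr^F\ds_X$ holds and this should go through. The Step 1 computation is routine once one verifies, via \eqref{bf action}, that $\partial_{x_j}$ acts on $B_{\bff}$ compatibly with the identification \eqref{specialization}.
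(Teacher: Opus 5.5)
Your proposal is correct and follows essentially the same route as the paper. You reduce to $u_i=1$ via Lemma \ref{BS hypersurface}, verify the explicit $\partial_x^{A}$ identity for $x^{b}\delta_{\bff}$, and then pass to $g\delta_{\bff}$ using Lemma \ref{BS left moving} together with Artin--Rees for the filtration $\ns\cap W^k\rs_X\cdot x^b\delta_{\bff}$ on $\ns=W^0\rs_X\cdot g\delta_{\bff}$. The only cosmetic difference is in the case of invertible $h$: you conjugate by $h$, whereas the paper notes that $W^0\rs_X\cdot g\delta_{\bff}=W^0\rs_X\cdot x^b\delta_{\bff}$, so the intersection already lies in $W^1\rs_X\cdot g\delta_{\bff}$.
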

\proof By Lemma \ref{BS hypersurface}, we may assume that $u_{i}=1$ for each $i$. Consider $\ns=W^{0}\rs_{X}\cdot g\delta_{\bff}$ and $U^{k}\ns=\ns\cap (W^{k}\rs_{X}\cdot x_{1}^{b_{1}}\cdots x_{n}^{b_{n}}\delta_{\bff})$ for $k\geq 0$. Since $\bigoplus_{k\geq 0}(W^{k}\rs_{X}\cdot x_{1}^{b_{1}}\cdots x_{n}^{b_{n}}\delta_{\bff})T^k$ is a coherent $R_{W}^{+}\rs_{X}$-module, the inclusion
$$
\bigoplus_{k\geq 0} U^k\ns T^k\subset \bigoplus_{k\geq 0}(W^{k}\rs_{X}\cdot x_{1}^{b_{1}}\cdots x_{n}^{b_{n}}\delta_{\bff})T^k
$$
implies that $\bigoplus_{k\geq 0} U^k\ns T^k$ is a coherent $R_{W}^{+}\rs_{X}$-module, hence $(U^{k}\ns)_{k\geq 0}$ is a good filtration of $\ns$. Thus by Lemma \ref{BS artin rees}, there exists an integer $N\geq 0$ such that
$$
(W^{0}\rs_{X}\cdot g\delta_{\bff})\cap (W^{N+1}\rs_{X}\cdot x_{1}^{b_{1}}\cdots x_{n}^{b_{n}}\delta_{\bff})\subset W^{1}\rs_{X}\cdot g\delta_{\bff}.
$$
Of course, if $h$ is invertible, then we may take $N=0$. By Lemma \ref{BS left moving}, it follows that for every element $b(s_{1},\dots,s_{r})\in \BS_{x_{1}^{b_{1}}\cdots x_{n}^{b_{n}}\delta_{\bff}}$, we have $\prod_{\ell=0}^{N}b(s_{1}+\ell,\dots,s_{r}+\ell)\in \BS_{g\delta_{\bff}}$. Therefore, it suffices to show that
$$
\prod_{j=1}^{n}\prod_{m=1}^{\sum_{i=1}^{r}a_{ij}}(a_{1j}s_{1}+\cdots +a_{rj}s_{r}+b_{j}+m)\in \BS_{x_{1}^{b_{1}} \cdots x_{n}^{b_{n}}\delta_{\bff}},
$$
which follows from
$$
\prod_{j=1}^{n}\partial_{x_{j}}^{\sum_{i=1}^{r}a_{ij}}  \cdot \prod_{j=1}^{n}x_{j}^{b_{j}}\prod_{i=1}f_{i}\delta_{\bff}=\prod_{j=1}^{n}\prod_{m=1}^{\sum_{i=1}^{r}a_{ij}}(a_{1j}s_{1}+\cdots +a_{rj}s_{r}+b_{j}+m)\prod_{j=1}^{n}x_{j}^{b_{j}}\delta_{\bff}.\qed
$$

\noindent{\it Proof of Theorem \ref{main result for bernstein-sato ideals}}. Recall that $p$ and $q$ denote the morphisms $U/G \onto Y$ and $U\to U/G$, respectively, and that $D\subset X$ is the divisor defined by $\prod_{i =1}^{r}f_i$. Let $p_Y$ and $p_X$ denote the projections $Y \times \bba^r \to Y$ and $X\times \bba^r \to X$, respectively, and put $\rs_Y=p_{Y,*}\ds_{Y\times \bba^r}$ and $\rs_X=p_{X,*}\ds_{X\times \bba^r}$. By Lemma \ref{quot sing finite type}, we can use the results in Sections \ref{BS coherent modules and good filtrations} and \ref{BS projective direct image}.

Let $P\in U$ be a point. By the assumptions of the theorem, there exists an open neighborhood $U_{P}$ of $P$ with global coordinates $y_{1},\dots,y_{n}\in \os_{U}(U_{P})$ such that if we denote by $\pi_P$ the morphism $U_P \to X$ and by $g_P=\det ( (\frac{\partial (\pi_P^* x_k)}{\partial y_l})_{kl} )\in \os_U(U_P)$ the Jacobian of $\pi_P$, then $\pi^{*}_Pf_{i}=u_{i}y_{1}^{a_{i1}}\cdots y_{n}^{a_{in}}$ for each $i$ and $g_P=uy_{1}^{b_{1}}\cdots y_{n}^{b_{n}}$ for some invertible functions $u,u_{1},\dots,u_{r}\in \os_{U}(U_{P})$. We denote the $r$-tuple $(\pi^{*}_Pf_{1},\dots,\pi^{*}_Pf_{r})$ by $\pi_P^*\bff$. By Lemma \ref{BS coordinates}, there exists an integer $N_{P}\geq 0$, with $N_{P}= 0$ if $g=1$, such that $\BS_{\pi^*(g) g_P\delta_{\pi^{*}_{P}\bff}}$ contains the polynomial
$$
\prod_{E}\prod_{\ell=0}^{N_{P}}\prod_{m=1}^{\sum_{i=1}^{r}\ord_{E}(f_{i})}(\ord_{E}(f_{1})s_{1}+\cdots + \ord_{E}(f_{r})s_{r}+ \ord_{E}(g)+k_{E}+m+\ell),
$$
where the outermost product runs over the irreducible components $E$ of $\pi_{P}^{*}D$. 

Consider the open cover $U=\bigcup_{P\in U}U_{P}$, and let $P_{1},\dots,P_{l}\in U$ be finitely many points such that $U=\bigcup_{i=1}^{l}U_{P_{i}}$. If we put $N':=\max_{1\leq i\leq l}N_{P_{i}}$ and
$$
b(s_{1},\dots,s_{r}) :=\prod_{E}\prod_{\ell'=0}^{N'}\prod_{m=1}^{\sum_{i=1}^{r}\ord_{E}(f_{i})}(\ord_{E}(f_{1})s_{1}+\cdots + \ord_{E}(f_{r})s_{r}+ \ord_{E}(g)+k_{E}+m+\ell'),
$$
where the outermost product runs over the irreducible components $E$ of $(\pi \circ p \circ q)^*D$, then we have
\begin{equation}\label{BS first containment}
b(s_{1},\dots,s_{r})\in 
\bigcap_{i=1}^{l}\BS_{\pi^*(g) g_P\delta_{\pi^{*}_{P_{i}}\bff}}.
\end{equation}

Now consider the section
$$
v := (gdx_1\wedge \cdots \wedge dx_n) \delta^{\omega}_{\bff} \in \Gamma(X,B_{\bff}^{\omega}),
$$
and set $v^* := \pi^*v \in \Gamma(Y,B_{\pi^*\bff}^{\omega})$ and $b^{*}(s_{1},\dots,s_{r}):=b(-s_{1}-1,\dots,-s_{r}-1)$. By Remark \ref{relative coincidence}, we have $\BS_{\bff,g}=\BS_{g\delta_{\bff}}$. Therefore, by Lemma \ref{BS left right switch}, it suffices to show that
$$
\prod_{\ell=0}^{N}b^{*}(s_{1}-\ell,\dots,s_{r}-\ell)\in \BS_{v}
$$
for some $N \geq 0$, which the rest of this proof is devoted to showing.

We begin by showing that
\begin{equation}\label{BS second containment}
b^{*}(s_{1},\dots,s_{r})\in 
\BS_{v^*}.
\end{equation} 
 Since
$$
((\pi \circ p \circ q)^*v )|_{U_P} = (\pi^*(g) g_Pdy_1\wedge \cdots \wedge dy_n) \delta^{\omega}_{\pi_P^*\bff} \in \Gamma(U_P,B_{\pi_P^*\bff}^{\omega})
$$
for each $P\in U$, by Lemma \ref{BS left right switch} and \eqref{BS first containment}, we have
$$
b^{*}(s_{1},\dots,s_{r})\in 
\bigcap_{i=1}^{l}\BS_{((\pi \circ p \circ q)^*v )|_{U_{P_{i}}}}=\BS_{(\pi \circ p \circ q)^*v},
$$
where the equality follows from Remark \ref{BS b function local}. By Corollary \ref{BS finite group b function} and Theorem \ref{BS etale pull b}, we have $\BS_{q^*(p^*v^*)}\subset \BS_{p^*v^*}=\BS_{v^*}$, and by \eqref{composition}, we have $q^*(p^*v^*) = (\pi \circ p \circ q)^*v$. Thus \eqref{BS second containment} follows since $b^{*}(s_{1},\dots,s_{r})\in\BS_{(\pi \circ p \circ q)^*v}\subset \BS_{v^*}$.

Next, consider the coherent right $W^0\rs_Y$-module $\ns := v^* \cdot W^0\rs_Y \subset B_{\pi^*\bff}^{\omega}$ and the morphism $\phi: W^0\rs_Y \onto \ns,P \mapsto v^{*}\cdot P$. Note that $\widetilde{\pi}_+(W^0\rs_Y)= W^0\rs_X$ by Lemma \ref{BS push example} and Zariski's Main Theorem. We denote by 
$$
\widetilde{v}:=\widetilde{\pi}_+(\phi)(1)  \in \Gamma(X,\widetilde{\pi}_+(\ns) )
$$
the image of $1\in W^0\rs_X$ under the morphism $\widetilde{\pi}_+(\phi)\colon W^0\rs_X=\widetilde{\pi}_+(W^0\rs_Y) \to \widetilde{\pi}_+(\ns)$.

We claim that there exists some $M \geq 0$ such that
\begin{equation}\label{BS squish}
\widetilde{\pi}_+(\ns) \cdot W^M\rs_X \subset \widetilde{v} \cdot W^0\rs_X.
\end{equation}
To see this, let $v^{**} \in \Gamma(X\times \bba^r,\hs^r_{\Gamma_{\pi^*\bff}}(\omega_{Y\times \bba^r}))$ denote the section corresponding to $v^*$ via
$$
\Gamma(X\times \bba^r,\hs^r_{\Gamma_{\pi^*\bff}}(\omega_{Y\times \bba^r})) = \Gamma(X, B_{\pi^*\bff}^{\omega}).
$$
We also consider the surjection $\phi': W^0\ds_{Y\times \bba^r} \onto v^{**}\cdot W^0\ds_{Y\times \bba^r},P \mapsto v^{**}\cdot P$ and set
$$
\qs = \coker (\widetilde{(\pi\times 1)}_{W,+}(\phi'):  \widetilde{(\pi\times 1)}_{W,+}(W^0\ds_{Y\times \bba^r})
\to
\widetilde{(\pi\times 1)}_{W,+}(v^{**}\cdot W^0\ds_{Y\times \bba^r})).
$$
Note that $\widetilde{(\pi\times 1)}_{W,+}(v^{**} \cdot W^0\ds_{Y \times \bba^r})$ is supported on $\Gamma_{\bff}\subset X \times \bba^r$ since $v^{**} \cdot W^0\ds_{Y \times \bba^r}$ is supported on $\Gamma_{\pi^*\bff}=(\pi\times 1)^{-1}(\Gamma_{\bff}) \subset Y\times \bba^r$. Since $\pi\times 1$ is an isomorphism over $(X \setminus D)\times \bba^r$, it follows that $\widetilde{(\pi\times 1)}_{W,+}(\phi')|_{(X \setminus D) \times \bba^r}$ is surjective, hence $\qs$ is supported on $D\times \bba^r \subset X\times \bba^r$. Thus $\qs$ is supported on $\Gamma_{\bff} \cap (D\times \bba^r) \subset X\times H$, where $H=V(t_{1}\cdots t_{r})\subset \bba^{r}$. By Lemmas \ref{BS projective coherence} and \ref{BS torsion}, there exists some $M \geq 0$ such that $\qs \cdot W^M\ds_{X\times \bba^r} = 0$, which implies that $p_{X,*}(\qs) \cdot W^M\rs_X =0$. Since $p_{X,*}(\qs) = \coker (\widetilde{\pi}_+(\phi))= \widetilde{\pi}_+(\ns)/ \widetilde{v}\cdot W^0\rs_X$ by Lemma \ref{BS push relation}, the claim then follows.

Consider the good filtration $(v^* \cdot W^k\rs_Y)_{k\geq 0}$ of $\ns$. By Lemma \ref{BS projective good}, if we put
$$
\im(\widetilde{\pi}_+(v^* \cdot W^k\rs_Y)) := \im( \widetilde{\pi}_+(v^*  \cdot W^k\rs_Y) \to \widetilde{\pi}_+(\ns)  ),
$$
then $(\im(\widetilde{\pi}_+(\ns \cdot W^k\rs_Y)))_{k\geq 0}$ is a good filtration of $\widetilde{\pi}_+(\ns)$. By Lemma \ref{BS artin rees}, there exists some $N \geq 0$ such that 
\begin{equation}\label{BS artin rees consequence}
\im(\widetilde{\pi}_+(v^* \cdot W^{N+1}\rs_Y)) \subset \widetilde{\pi}_+(\ns) \cdot W^{M+1}\rs_X.
\end{equation}
Now set
$$
a(s_{1},\dots,s_{r}):=\prod_{\ell=0}^{N}b^{*}(s_{1}-\ell,\dots,s_{r}-\ell).
$$
By \eqref{BS second containment} and Lemma \ref{BS right moving}, the morphism $a(s_{1},\dots,s_{r})\colon  \ns \to \ns, w \mapsto w \cdot a(s_{1},\dots,s_{r})$, which is $\ds_Y$-linear, factors through $v^*\cdot W^{N+1}\rs_Y \subset \ns$. Thus we can consider the following commutative diagram of right $\ds_Y$-modules:
\[
\begin{tikzcd}
\ns \arrow[r] \arrow[rr, bend right,"{a(s_{1},\dots,s_{r})}"] & v^*\cdot W^{N+1}\rs_Y \arrow[r, hook] & \ns.
\end{tikzcd}
\]
Applying $\widetilde{\pi}_+$ yields the commutative diagram
\[
\begin{tikzcd}
\widetilde{\pi}_+(\ns) \arrow[r] \arrow[rr, bend right,"{a(s_{1},\dots,s_{r})}"] & \widetilde{\pi}_+(v^*\cdot W^{N+1}\rs_Y) \arrow[r] & \widetilde{\pi}_+(\ns),
\end{tikzcd}
\]
whose commutativity implies that $\widetilde{\pi}_+(\ns) \cdot a(s_{1},\dots,s_{r}) \subset \im(\widetilde{\pi}_+(v^*\cdot W^{N+1}\rs_Y))$. Thus we have
$$
\widetilde{\pi}_+(\ns) \cdot a(s_{1},\dots,s_{r})
 \subset \im(\widetilde{\pi}_+(v^* \cdot W^{N+1}\rs_Y)) 
\subset \widetilde{\pi}_+(\ns) \cdot W^{M+1}\rs_X
\subset \widetilde{v} \cdot W^1\rs_X,
$$
where the second and third inclusions follow from \eqref{BS artin rees consequence} and \eqref{BS squish}, respectively. Since $\widetilde{v}$ is a section of $\widetilde{\pi}_+(\ns)$, we then obtain
\begin{equation}\label{BS pushed b functional equation}
\widetilde{v}  \cdot a(s_{1},\dots,s_{r}) \in\widetilde{v} \cdot W^1\rs_X.
\end{equation}

We next show that there is a morphism of right $W^0\rs_X$-modules $\psi: \widetilde{v} \cdot W^0\rs_X  \to v \cdot W^0\rs_X$ such that $\psi(\widetilde{v})=v$. Let $j: X \setminus D  \to X$ denote the inclusion. Note that the natural morphism $v\cdot W^0\rs_X \to j_*j^*(v\cdot W^0\rs_X)$ is injective since $v\cdot W^0\rs_X \subset B_{\bff}^{\omega}$ and $B_{\bff}^{\omega}$ is $\os_X$-torsion-free by Lemma \ref{bfomega action}. Thus it suffices to show that there is a morphism of right $W^0\rs_X$-modules $\widetilde{v} \cdot W^0\rs_X \to j_*j^*(v\cdot W^0\rs_X)$ that maps $\widetilde{v}$ to $v|_{X \setminus D}$. Since $\pi$ is an isomorphism over $X \setminus D$, it is straightfoward to see that $j^*(\widetilde{v} \cdot W^0\rs_X)=j^*(v\cdot W^0\rs_X)$ and $\widetilde{v}|_{X \setminus D} = v|_{X \setminus D}$. The desired morphism is then the composition $\widetilde{v} \cdot W^0\rs_X \to j_*j^*(\widetilde{v} \cdot W^0\rs_X) =  j_*j^*(v\cdot W^0\rs_X)$. 

Finally, since $\widetilde{v}\cdot a(s_{1},\dots,s_{r}) = \widetilde{v} \cdot P$ for some $P\in W^1\rs_X$ by \eqref{BS pushed b functional equation}, it follows that 
$$
v\cdot a(s_{1},\dots,s_{r}) = \psi(\widetilde{v}\cdot a(s_{1},\dots,s_{r}) ) = \psi(\widetilde{v} \cdot P)= v \cdot P
$$
by $W^0\rs_X$-linearity of $\psi$. Hence $a(s_{1},\dots,s_{r})\in \BS_{v}$, as desired. \qed

\appendix

\section{Technical lemmas}\label{technical lemmas}

In this section, we collect several technical lemmas used throughout the paper.

\begin{remark}\label{push bounded coho dim}
Given a morphism $f: X \to Y$ of varieties, note that $R^if_*(\fs)=0$ for all sheaves $\fs$ of abelian groups on $X$ and $i>\dim X$ (see \cite[Theorem III.2.7 and Proposition III.8.1]{Har77}). Thus $Rf_*$ is defined on chain complexes that are not necessarily bounded below by \cite[Lemma 13.32.2]{Stacks}.
\end{remark}

\begin{lemma}\label{bounded cohomological dimension}
Let $f: X \to Y$ be a morphism of varieties, and set $d=\dim X$. Given a chain complex $\cs^{\bullet}$ of sheaves of abelian groups on $X$, we have
$$
\hs^i Rf_*(\cs^{\bullet}) = \hs^iRf_* (\sigma_{\geq i-d-1}\cs^{\bullet})
$$
for all $i\in \bbz$, where $\sigma$ denotes the stupid truncation.
\end{lemma}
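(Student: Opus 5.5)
The plan is to compare the two complexes through a cone and use that $Rf_*$ has finite cohomological dimension. Stupid truncation gives a short exact sequence of complexes of sheaves of abelian groups
$$
0 \to \sigma_{\geq i-d-1}\cs^{\bullet} \to \cs^{\bullet} \to \sigma_{< i-d-1}\cs^{\bullet} \to 0.
$$
Since $Rf_*$ is defined on unbounded complexes (Remark \ref{push bounded coho dim}), this yields a distinguished triangle. Taking the long exact sequence of $\hs^\bullet$, it suffices to show that $\hs^j Rf_*(\sigma_{< i-d-1}\cs^{\bullet})=0$ for $j\in\{i-1,i\}$. Then the map $\hs^iRf_*(\sigma_{\geq i-d-1}\cs^\bullet)\to \hs^iRf_*(\cs^\bullet)$ is an isomorphism.

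The key claim is the following. If $\ks^{\bullet}$ is a complex concentrated in degrees $\leq m$, then $\hs^jRf_*(\ks^{\bullet})=0$ for $j>m+d$. For bounded complexes this is an induction on the number of nonzero terms, using stupid truncations and triangles. The base case is a single sheaf $\fs[-k]$ with $k\leq m$, whose $\hs^j Rf_*$ is $R^{j-k}f_*\fs$; this vanishes for $j-k>d$ by Remark \ref{push bounded coho dim}. Applying the claim with $m=i-d-2$ gives vanishing for $j>i-2$, which covers $j=i-1,i$ as needed.

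The main obstacle is the unbounded-below case, since $\sigma_{<i-d-1}\cs^\bullet$ is typically unbounded below. Here I would use the construction of $Rf_*$ in \cite[Lemma 13.32.2]{Stacks}. Because $f_*$ has finite cohomological dimension $d$, one may compute $Rf_*$ by replacing each term with a $d$-step resolution by $f_*$-acyclic sheaves, for instance by truncating a Godement resolution at stage $d$ and observing that the last term is still $f_*$-acyclic by dimension shifting. Taking the total complex gives a representative of $Rf_*(\ks^\bullet)$ as $f_*$ of a complex concentrated in degrees $\leq m+d$. Its cohomology therefore vanishes above $m+d$, and the claim follows for all complexes.
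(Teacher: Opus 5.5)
Your argument is correct, but it is organized differently from the paper's.

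\textbf{The paper's route.} The paper does not use the stupid-truncation sequence $0\to\sigma_{\geq n}\cs^\bullet\to\cs^\bullet\to\sigma_{<n}\cs^\bullet\to 0$ with $n=i-d-1$. It first replaces $\cs^\bullet$ by the canonical truncation $\tau_{\geq n}\cs^\bullet$, citing \cite[Lemma 13.32.2]{Stacks} for $\hs^iRf_*(\cs^\bullet)=\hs^iRf_*(\tau_{\geq n}\cs^\bullet)$. It then notes that $\sigma_{\geq n}\cs^\bullet\subset\tau_{\geq n}\cs^\bullet$ has quotient the single sheaf $\is=\im(\cs^{n-1}\to\cs^{n})$ placed in degree $n-1$. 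So the two error terms in the long exact sequence are just $R^{d+1}f_*\is$ and $R^{d+2}f_*\is$, and both vanish. All unbounded-below issues are absorbed into that one citation, and what remains concerns a single sheaf.

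\textbf{Your route.} Your cokernel $\sigma_{<n}\cs^\bullet$ is itself unbounded below, so you have to prove the amplitude bound by hand: $\hs^jRf_*\ks^\bullet=0$ for $j>m+d$ when $\ks^\bullet$ lives in degrees $\leq m$. Your truncated Godement construction does this correctly:
\begin{enumerate}
\item[i)] it is functorial and has width $d+1$;
\item[ii)] its rows are exact, so its total complex receives a quasi-isomorphism from $\ks^\bullet$;
\item[iii)] its terms are $f_*$-acyclic, with the last one acyclic by dimension shifting.
\end{enumerate}
You still need the same Stacks lemma to know that $f_*$ of an unbounded complex of $f_*$-acyclics computes $Rf_*$. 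The amplitude bound itself is a standard consequence of finite cohomological dimension and can be cited from the same place. Your separate induction for bounded complexes is unnecessary once the Godement argument is in hand.

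\textbf{Comparison.} Your version gives the more general, reusable statement that $Rf_*$ raises the top degree of a complex by at most $d$. The paper's version is shorter because it never has to control an unbounded-below complex directly.
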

\proof Let $\is=\im(\cs^{i-d-2} \to \cs^{i-d-1})$ and consider the short exact sequence of chain complexes
$$
0 \to \sigma_{\geq i-d-1}\cs^{\bullet} \to \tau_{\geq i-d-1}\cs^{\bullet} \to \is[-i+d+2]  \to 0,
$$
where $\tau$ denotes the canonical truncation. By Remark \ref{push bounded coho dim} and \cite[Lemma 13.32.2]{Stacks} we have $\hs^i Rf_*(\cs^{\bullet})= \hs^i Rf_*(\tau_{\geq i-d-1}\cs^{\bullet})$. Thus, applying $Rf_*$ and taking cohomology yields the exact sequence
$$
R^{d+1}f_*(\is) \to \hs^i Rf_*(\sigma_{\geq i-d-1}\cs^{\bullet}) \to \hs^i Rf_*(\cs^{\bullet}) \to R^{d+2}f_*(\is).
$$
Since the outer terms vanish, the claim follows. \qed

\begin{remark}\label{filtered colimit direct sum}
Given a family $\{\fs_i\}_{i\in I}$ of sheaves of abelian groups on a topological space $X$, we may view the direct sum $\bigoplus_{i\in I}\fs_i$ as a filtered colimit, running over finite subsets $J$ of $I$, of the partial direct sums $\bigoplus_{j\in J}\fs_j$.
\end{remark}

\begin{lemma}\label{qcoh direct summands}
Let $\fs$ be a quasicoherent sheaf on a scheme $X$. If $\{\fs_i\}_{i\in I}$ is a family of $\os_X$-module subsheaves of $\fs$ such that $\fs=\bigoplus_{i\in I} \fs_i$, then $\fs_i$ is quasicoherent for all $i \in I$. 
\end{lemma}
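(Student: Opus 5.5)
The plan is to work affine-locally and reduce to the classical fact that a direct summand of a quasicoherent module over a ring is again given by a module over that ring. Quasicoherence is local, so we may assume $X=\Spec A$ is affine and $\fs=\widetilde{M}$ with $M=\Gamma(X,\fs)$. We fix $i\in I$ and put $\fs'_i=\bigoplus_{j\neq i}\fs_j$, so that $\fs=\fs_i\oplus\fs'_i$ as $\os_X$-modules. The projection $p_i\colon\fs\to\fs$ with image $\fs_i$ and kernel $\fs'_i$ is then an $\os_X$-linear endomorphism of $\fs$.

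The key step is to show that an $\os_X$-linear endomorphism of a quasicoherent sheaf has quasicoherent image. Here we must be careful: $\fs'_i$ is itself a direct sum of possibly non-quasicoherent subsheaves, so we should not rely on any property of it. We only use that $p_i$ is an $\os_X$-module endomorphism of the quasicoherent sheaf $\fs$. Since $\mathrm{Hom}_{\os_X}(\widetilde M,\widetilde M)=\mathrm{Hom}_A(M,M)$, we have $p_i=\widetilde{\varphi}$ for $\varphi=\Gamma(X,p_i)$. The kernel and image of a morphism of quasicoherent sheaves are quasicoherent, namely $\widetilde{\ker\varphi}$ and $\widetilde{\im\varphi}$. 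Hence $\fs_i=\im(p_i)\simeq\widetilde{\im\varphi}$ is quasicoherent; alternatively, $\fs_i=\ker(1-p_i)$.

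It remains to justify that $p_i$ is well defined as an $\os_X$-linear sheaf map. The decomposition $\fs=\bigoplus_j\fs_j$ is an internal direct sum of sheaves, meaning the natural map from the sheaf direct sum is an isomorphism. The projection onto the $i$th summand of a sheaf direct sum is $\os_X$-linear, because on sections the sum sheaf is the sheafification of the presheaf direct sum, and the projection is defined compatibly; Remark \ref{filtered colimit direct sum} can be used to phrase this. Composing this projection with the inclusion $\fs_i\into\fs$ gives $p_i$.

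I expect the only genuine subtlety to be this identification of the internal sum with the sheaf direct sum, so that the projection makes sense. Once that is granted, the rest is formal.
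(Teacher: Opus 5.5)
Your proof is correct, but it takes a different route from the paper's.

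\textbf{The paper's route.} The paper works with sections. Using Remark \ref{filtered colimit direct sum} and the fact that sections over an affine (quasi-compact) open commute with filtered colimits, it obtains $\Gamma(U,\fs)=\bigoplus_i\Gamma(U,\fs_i)$ for every affine open $U=\Spec R$. It then checks the localization criterion for quasicoherence summand by summand: the direct sum of the maps $\fs_i(U)\otimes_R R_f\to\fs_i(\Spec R_f)$ is the isomorphism $\fs(U)\otimes_R R_f\xrightarrow{\sim}\fs(\Spec R_f)$, so each of these maps is an isomorphism.

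\textbf{Your route.} You realize $\fs_i$ as the image of an idempotent $\os_X$-linear endomorphism $p_i$ of $\fs$, equivalently as $\ker(1-p_i)$. You then use that kernels and images of maps between quasicoherent modules are quasicoherent.

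\textbf{Comparison.} Your route is more formal and needs no compactness input. It works verbatim on an arbitrary scheme, without even reducing to the affine case, because quasicoherent modules are closed under kernels on any scheme. The step you flag as the ``genuine subtlety'' is immediate from the universal property of the coproduct. Define $\fs\simeq\bigoplus_j\fs_j\to\fs_i$ to be the identity on $\fs_i$ and zero on $\fs_j$ for $j\neq i$; there is no need to go through sheafification. The paper's argument, in exchange, yields the identification $\Gamma(U,\fs)=\bigoplus_i\Gamma(U,\fs_i)$ on affine opens as a byproduct. That is the form in which such decompositions are handled elsewhere in the paper.
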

\proof Note that $\fs(U)=\bigoplus_{i\in I} \fs_i(U)$ for all affine opens $U \subset X$ by Remark \ref{filtered colimit direct sum} and \cite[Lemma 6.29.1]{Stacks}. Given an affine open $U=\Spec R \subset X$ and $f\in R$, it suffices to show that the natural map $\phi_i: \fs_i(U)\otimes_R R_f \to \fs_i(\Spec R_f)$ is an isomorphism for each $i$. This follows since $\bigoplus_{i\in I}\phi_i$ is equal to the isomorphism $\fs(U)\otimes_R R_f \xrightarrow{\sim} \fs(\Spec R_f)$. \qed

\begin{remark}\label{flabby direct sum}
Let $X$ be a noetherian topological space and let $\{\fs_i\}_{i\in I}$ be an arbitrary family of sheaves of abelian groups on $X$. By Remark \ref{filtered colimit direct sum} and \cite[Lemma 6.29.1]{Stacks}, we have $\Gamma(U, \bigoplus_{i\in I} \fs_i) = \bigoplus_{i\in I} \Gamma(U,\fs_i)$ for every open subset $U \subset X$. We record the following consequences
\begin{enumerate}
\item[i)] If $\fs_i$ is flabby for each $i\in I$, then $\bigoplus_i \fs_i$ is also flabby. 
\item[ii)] If $f: X \to Y$ is any continuous map, then $f_*( \bigoplus_i \fs_i) = \bigoplus_i f_*\fs_i$. 
\end{enumerate}
\end{remark}

\begin{lemma}\label{derived direct image and direct sum}
Let $f: X \to Y$ be a morphism of varieties and let $\rs$ be a sheaf of (possibly noncommutative) rings on $X$. Given a left $\rs$-module $\gs$ and an arbitrary family $\{\fs_i\}_{i\in I}$ of right $\rs$-modules, the natural morphism
$$
\bigoplus_{i\in I}  \hs^j (Rf_*(\fs_i\otimes^L_{\rs} \gs)) \to \hs^j(Rf_*( (\bigoplus_{i\in I} \fs_i ) \otimes^L_{\rs} \gs ) )
$$
is an isomorphism for all $j\in \bbz$.
\end{lemma}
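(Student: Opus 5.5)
The plan is to reduce to a flat resolution of $\gs$ and commute $Rf_*$ with direct sums using flabby resolutions.

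First, replace $\gs$ by a flat resolution: choose a resolution $\ps^{\bullet}\to\gs$ by flat left $\rs$-modules (for instance, sums of extensions by zero $j_!\rs|_V$ over opens $V$). Then $\fs\otimes^L_{\rs}\gs$ is computed by $\fs\otimes_{\rs}\ps^{\bullet}$ for every right $\rs$-module $\fs$. Since tensor products commute with direct sums, we get an isomorphism of complexes
$$
\Big(\bigoplus_i \fs_i\Big)\otimes_{\rs}\ps^{\bullet}\simeq \bigoplus_i (\fs_i\otimes_{\rs}\ps^{\bullet}).
$$
This complex may be unbounded above and below. By Remark \ref{push bounded coho dim} and Lemma \ref{bounded cohomological dimension}, however, $\hs^j Rf_*$ of any of these complexes depends only on the stupid truncation $\sigma_{\geq j-d-1}$, where $d=\dim X$. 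Since $\sigma_{\geq j-d-1}$ commutes with direct sums, we may therefore assume that all complexes are bounded below. Call them $\cs_i^{\bullet}=\sigma_{\geq j-d-1}(\fs_i\otimes_{\rs}\ps^{\bullet})$; then $\bigoplus_i\cs_i^{\bullet}$ is the truncation of the complex for the direct sum. The natural map in the statement is compatible with these identifications.

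Next, compute $Rf_*$ of a bounded below complex. Take the Godement (canonical flabby) resolution $\cs_i^{\bullet}\to \mathcal{G}^{\bullet}(\cs_i^{\bullet})$, that is, the total complex of the Godement double complex. It is functorial and bounded below. By Remark \ref{flabby direct sum}(i), $\bigoplus_i\mathcal{G}^{\bullet}(\cs_i^{\bullet})$ is a bounded below complex of flabby sheaves, since $X$ is noetherian. It is quasi-isomorphic to $\bigoplus_i\cs_i^{\bullet}$ because direct sums of sheaves are exact. Hence it computes $Rf_*(\bigoplus_i\cs_i^{\bullet})$ by applying $f_*$ termwise. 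By Remark \ref{flabby direct sum}(ii), $f_*$ commutes with the direct sum, so
$$
Rf_*\Big(\bigoplus_i\cs_i^{\bullet}\Big)\simeq\bigoplus_i f_*\mathcal{G}^{\bullet}(\cs_i^{\bullet})=\bigoplus_i Rf_*(\cs_i^{\bullet}).
$$
Taking $\hs^j$, which commutes with direct sums of complexes of sheaves, gives the claim. The final check is that the isomorphism so obtained is the natural morphism, which follows from functoriality of the Godement construction in each summand.

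The main obstacle is the unboundedness. The complex $\fs\otimes^L\gs$ need not be bounded, so we need the truncation argument together with the bounded cohomological dimension of $f_*$ to know that $Rf_*$ is computed correctly. We also need care that $\sigma_{\geq j-d-1}$ of a flat-resolution tensor complex represents the right object. I would handle this by working throughout with the explicit complex $\fs\otimes_{\rs}\ps^{\bullet}$ rather than abstract derived objects.
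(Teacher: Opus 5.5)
Your proposal is correct and follows essentially the same argument as the paper: a flat resolution of $\gs$ to reduce to complexes of abelian sheaves, flabby resolutions, and Remark \ref{flabby direct sum} both to keep the direct sum flabby and to commute $f_*$ with direct sums. The only difference is how unboundedness is handled. The paper directly takes flabby resolutions of the possibly unbounded complexes via \cite[Lemmas 13.32.1 and 13.32.2]{Stacks}, whereas you first apply Lemma \ref{bounded cohomological dimension} to pass to stupid truncations and then use bounded-below Godement resolutions; both rely on the same finite cohomological dimension of $f_*$.
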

\proof By taking an $\rs$-flat resolution of $\gs$, we see that it suffices to show that if $\{\fs^{\bullet}_i\}_{i\in I}$ is an arbitrary family of chain complexes of sheaves of abelian groups on $X$, then the natural morphism
$$
\bigoplus_{i\in I}  \hs^j (Rf_*(\fs_i^{\bullet})) \to \hs^j (Rf_*( \bigoplus_{i\in I} \fs_i^{\bullet} ) )
$$
is an isomorphism for all $j\in \bbz$. By \cite[Lemma 13.32.1]{Stacks}, there exists a quasi-isomorphism $\fs_i^{\bullet} \to \gs_i^{\bullet}$ such that $\gs_i^{\bullet}$ is a chain complex of flabby sheaves. Consider the quasi-isomorphism $\bigoplus_{i\in I} \fs_i^{\bullet} \to \bigoplus_{i\in I} \gs_i^{\bullet}$. By Remark \ref{flabby direct sum}, $\bigoplus_{i\in I} \gs_i^{\bullet}$ is a chain complex of flabby sheaves. We are done since
$$
\bigoplus_i \hs^j (Rf_*(\fs_i^{\bullet})) = \bigoplus_i \hs^j (f_*\gs_i^{\bullet}) =  \hs^j(\bigoplus_i f_*\gs_i^{\bullet}) =  \hs^jf_*(\bigoplus_i \gs_i^{\bullet}) =\hs^j (Rf_*( \bigoplus_i \fs_i^{\bullet} ) )
$$
where the first and last isomorphism follow from \cite[Lemma 13.32.2]{Stacks} and the third isomorphism follows from Remark \ref{flabby direct sum}. \qed

We leave the proof of the following easy lemma as an exercise to the reader.

\begin{lemma}\label{transfer bimodule tool}
Given a morphism of affine schemes $\pi: Y \to X$ and quasicoherent sheaves of rings $\as$ and $\bs$ on $Y$ and $X$, respectively, assume that there exists a left $\as$-module structure on $\pi^*\bs = \os_Y \otimes_{\pi^{-1}\os_X}\pi^{-1}\bs$ extending its left $\os_Y$-module structure such that $\Gamma(Y,\pi^*\bs)$, equipped with the induced actions of $\as(Y)$ and $\bs(X)$, is an $(\as(Y),\bs(X))$-bimodule. Then $\pi^*\bs$ is an $(\as, \pi^{-1}\bs)$-bimodule.
\end{lemma}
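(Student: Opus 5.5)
The statement is Lemma \ref{transfer bimodule tool}, and the plan is to reduce the sheaf-level bimodule compatibility to the global-section statement we are given, using quasicoherence on the affine scheme $Y$. Write $Y=\Spec B$ and $X=\Spec A$. The first point is that $\pi^*\bs = \os_Y\otimes_{\pi^{-1}\os_X}\pi^{-1}\bs$ is quasicoherent as a left $\os_Y$-module, so it equals the sheaf $\widetilde{M}$ with $M=\Gamma(Y,\pi^*\bs)=B\otimes_A \bs(X)$. The second point is that the right $\pi^{-1}\bs$-action is given on sections by multiplication on the right tensor factor.

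We must check that for every open $W\subset Y$, every $a\in \as(W)$, every $m\in \pi^*\bs(W)$, and every local section $b$ of $\pi^{-1}\bs$ over $W$, we have $a(mb)=(am)b$. This identity is local, so it suffices to check it on a basis of opens and stalkwise. Since $b$ comes locally from some $b'\in\bs(V)$ with $\pi(W)\subset V$, we may shrink and take $W=D(h)\subset Y$ and $V=D(g)\subset X$ basic opens with $\pi(W)\subset V$. Then $\pi^*(g)$ is invertible on $W$, so $W=D(h\pi^*(g))$, and we may assume $h$ is a multiple of $\pi^*g$.

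The key step is to identify $\Gamma(W,\pi^*\bs)$ with the localization $M_h$, and to express both actions on it through the global actions. Quasicoherence of $\as$ as an $\os_Y$-bimodule gives $\as(W)=\as(Y)_h$, with left and right localizations agreeing. Quasicoherence of $\bs$ gives $\bs(V)=\bs(X)_g$. Moreover, the $\as$-action on $\pi^*\bs$ is compatible with restriction, so the action of $\as(W)$ on $M_h$ is the unique extension of the $\as(Y)$-action on $M$. Concretely, for $a\in\as(Y)$ we have
$$a\cdot (m/h^k)=\sum \dots,$$
computed through commutators $[a,h]$ as in the proof of Lemma \ref{Proposition 3.6}. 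Here we would need $\as$ to satisfy an Ore-type condition; this holds because $\as$ is $\os_Y$-quasicoherent on both sides, so $h^{-1}a$ can be rewritten as $a'h^{-N}$.

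With this in hand, every element of $\as(W)$ has the form $a h^{-k}$ and every element of $\bs(V)$ has the form $g^{-l}b$, with $a\in\as(Y)$ and $b\in\bs(X)$. Left multiplication by $h^{-k}$ is $\os_Y$-linear and therefore commutes with the right $\bs$-action, since $\os_Y$ acts through the left tensor factor. Likewise, right multiplication by $\pi^*(g)^{-l}$ is the same as left multiplication by an element of $\os_Y(W)$. The identity $a(mb)=(am)b$ then reduces to the global identity on $M$, which holds by hypothesis, together with the fact that the left $\as$-action extends the $\os_Y$-action.

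The main obstacle I expect is justifying that the left $\as(W)$-action on $\Gamma(W,\pi^*\bs)$ is determined by the global $\as(Y)$-action, i.e., the Ore/localization compatibility for a noncommutative quasicoherent ring acting on a quasicoherent module. The first thing I would try is to show that the action map $\as\otimes_{\os_Y}\pi^*\bs\to\pi^*\bs$ is a morphism of quasicoherent $\os_Y$-modules, with the tensor taken using the right $\os_Y$-structure on $\as$. Both sides are then determined by global sections on the affine $Y$, and the bimodule identity can be read off as an equality of two morphisms of quasicoherent sheaves
$$\as\otimes_{\os_Y}\pi^*\bs\otimes_{A}\bs(X)\to \pi^*\bs,$$
which agree on global sections and hence agree everywhere.
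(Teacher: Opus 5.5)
Your overall strategy of reducing the local identity $a(mb)=(am)b$ to the global hypothesis via quasicoherence on the affine $Y$ is sound. The idea in your last paragraph, comparing two $\os_Y$-linear maps out of a quasicoherent sheaf, is the efficient way to handle the $\as$-side. The gap is in passing from global $b\in\bs(X)$ to arbitrary local sections of $\pi^{-1}\bs$.

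You assert that right multiplication by $\pi^*(g)^{-l}$ on $\Gamma(W,\pi^*\bs)$ agrees with left multiplication by an element of $\os_Y(W)$. This is false whenever $\bs$ is noncommutative: on $y\otimes b'$ the two operations give $y\otimes b'g^{-l}$ and $y\otimes g^{-l}b'$. For instance, take $\bs=\ds_X$, $X=Y=\bba^1$, $\pi=\mathrm{id}$. Then on $D(x)$ one has $\partial_x\cdot x^{-1}=x^{-1}\partial_x-x^{-2}\neq x^{-1}\partial_x$. Moreover, $\bs=\ds_X$ is precisely one of the cases where the paper uses the lemma.

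Your fallback with $\as\otimes_{\os_Y}\pi^*\bs\otimes_A\bs(X)$ does not repair this. For $a\otimes m\otimes b\mapsto(am)b$ to be well defined on a tensor product over $A$, you already need $a(mc)=(am)c$ for local $a,m$ and $c\in A$, which is a special case of the claim. It also still does not address local sections of $\pi^{-1}\bs$ directly.

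The repair is short, in two steps.

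First, fix a single $b\in\bs(X)$ and consider
\[
\Phi_b\colon\as\otimes_{\os_Y}\pi^*\bs\to\pi^*\bs,\qquad a\otimes m\mapsto a(mb)-(am)b.
\]
It is well defined and left $\os_Y$-linear, because the left $\os_Y$-action on $\pi^*\bs$ commutes with right multiplication by $\pi^{-1}\bs$ and the $\as$-action extends the $\os_Y$-action. Its source is quasicoherent because $\as$ is a quasicoherent $\os_Y$-bimodule. It vanishes on global sections by hypothesis, so $\Phi_b=0$. This also makes your Ore-type localization of the $\as(W)$-action unnecessary.

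Second, for an open $W\subset Y$, consider the set of $c\in\pi^{-1}\bs(W)$ such that $a(mc)=(am)c$ for all sections $a$ of $\as$ and $m$ of $\pi^*\bs$ over open subsets of $W$. This set is a subring. It contains $c^{-1}$ whenever it contains a unit $c$, because
\[
a(mc^{-1})=\bigl((a(mc^{-1}))c\bigr)c^{-1}=\bigl(a(mc^{-1}c)\bigr)c^{-1}=(am)c^{-1}.
\]
If $\pi(W)\subset D(g)$, then $g$ is a unit in $\pi^{-1}\bs(W)$. By left quasicoherence of $\bs$, every element of $\bs(D(g))$ has the form $g^{-l}b_0$ with $b_0\in\bs(X)$. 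Every local section of $\pi^{-1}\bs$ locally comes from such an element, so the identity holds everywhere by locality.
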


\section{Quotient singularities}\label{quotient singularities}

As in \cite[Definition 4.27]{Yas04}, we use the following definition of quotient singularities.

\begin{definition}\label{quot sing definition}
We say that a variety $Y$ has quotient singularities if there exists an \'etale surjection $\coprod_{i\in I}U_{i}/G_{i} \onto Y$, where $I$ is a finite set and $G_{i}$ is a finite group acting on a smooth affine variety $U_{i}$ for each $i\in I$.
\end{definition}

\begin{lemma}\label{quot sing etale cover}
A variety $Y$ has quotient singularities if and only if there exists an \'{e}tale surjection $U/G \onto Y$, where $G$ is a finite group acting on a smooth affine variety $U$.
\end{lemma}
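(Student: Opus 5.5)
The direction ``$\Leftarrow$'' is immediate: a single étale surjection $U/G \onto Y$ is the special case of Definition \ref{quot sing definition} in which $I$ has one element. So the work is in ``$\Rightarrow$''. Starting from an étale surjection $\coprod_{i\in I}U_i/G_i \onto Y$, I will build a single smooth affine variety $U$ with an action of a single finite group $G$ such that $U/G \simeq \coprod_{i\in I} U_i/G_i$. Composing with the given map then yields the required étale surjection $U/G \onto Y$.

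For the construction I take $G=\prod_{i\in I}G_i$ and, for each $i$, the induced variety
$$
U'_i := U_i \times^{G_i} G = (U_i\times G)/G_i,
$$
where $G_i$ acts diagonally: on $U_i$ by the given action and on $G$ through the inclusion $G_i \subset G$ as a factor, by left multiplication. The group $G$ acts on $U'_i$ by right multiplication on the second factor. Since $G_i$ acts freely on $G$, it acts freely on $U_i\times G$, so $U'_i$ is simply $|G/G_i|$ disjoint copies of $U_i$. Hence $U'_i$ is smooth and affine, and one checks that $U'_i/G \simeq U_i/G_i$. Setting $U=\coprod_{i\in I}U'_i$ gives a smooth affine variety, because it is a finite disjoint union of affine varieties. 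The group $G$ preserves each $U'_i$, so $U/G=\coprod_i U'_i/G \simeq \coprod_i U_i/G_i$.

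The one step that needs care is the identification $U'_i/G \simeq U_i/G_i$. It can be done entirely at the level of invariant rings. Write $U_i=\Spec S_i$, and choose coset representatives $g_1,\dots,g_m$ for $G_i\backslash G$. Then
$$
U'_i \simeq \coprod_{k=1}^m U_i\times\{G_ig_k\} \simeq \coprod_{k=1}^m U_i,
$$
so $\mathcal{O}(U'_i)\simeq S_i^{m}$. The group $G$ permutes these factors transitively, and the stabilizer of each factor is a conjugate of $G_i$ acting through the original $G_i$-action. It follows that $(S_i^m)^G \simeq S_i^{G_i}$, via projection to the first factor.

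Alternatively, I would avoid the explicit computation and use iterated quotients: $(U_i\times G)/(G_i\times G) \simeq \bigl((U_i\times G)/G\bigr)/G_i = U_i/G_i$, since the free left-$G_i$ and right-$G$ actions commute and $(U_i\times G)/G=U_i$. With either argument the claim follows, and I expect no genuine obstacle beyond this bookkeeping.
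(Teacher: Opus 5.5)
Your proof is correct, but it takes a longer route than the paper's. The paper uses the same group $G=\prod_{i\in I}G_i$. It simply sets $U=\coprod_{i\in I}U_i$ and lets $G$ act on $U_i$ through the projection $G\to G_i$. Since the other factors act trivially on $U_i$, one has $\mathcal{O}(U_i)^G=\mathcal{O}(U_i)^{G_i}$, so $U/G=\coprod_i U_i/G_i$ with no further bookkeeping.

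Your induced varieties $U_i\times^{G_i}G$ replace ``the extra factors act trivially on $U_i$'' by ``the extra factors permute $|G/G_i|$ copies of $U_i$''. That is what forces the coset decomposition, or the iterated quotient by the two commuting free actions, to identify $U'_i/G$ with $U_i/G_i$; both of your arguments for this identification are fine.

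The extra work buys two things, neither of which the lemma or its later use in the paper needs:
\begin{itemize}
\item If $G_i$ acts faithfully on $U_i$, then no nontrivial element of $G$ fixes a copy of $U_i$ pointwise. In the paper's construction, by contrast, $\prod_{j\neq i}G_j$ fixes all of $U_i$.
\item The construction works verbatim for any finite group containing every $G_i$ as a subgroup, not just for their product.
\end{itemize}
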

\proof Indeed, if $\coprod_iU_{i}/G_{i} \onto Y$ is an \'etale surjection as in Definition \ref{quot sing definition}, then by setting $U=\coprod_i U_i$ and $G=\prod_i G_i$, we have $U/G=\coprod_i U_i/G_i$. \qed

The rest of this section is devoted to proving the following two facts.

\begin{theorem}\label{quot sing equivalence}
A variety $Y$ has quotient singularities if and only if for every closed point $y\in Y$, there exists a finite subgroup $G \subset \GL_n(\bbc)$ and an isomorphism $\os_{Y,y}^{\wedge} \cong \bbc[[t_1,\dots,t_n]]^G$ of $\bbc$-algebras.
\end{theorem}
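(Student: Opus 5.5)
We prove the two implications separately, reducing each to a statement about completed local rings.

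For the forward direction, suppose $U/G \onto Y$ is an \'etale surjection with $U$ smooth affine and $G$ finite, as in Lemma \ref{quot sing etale cover}. Given a closed point $y\in Y$, choose a closed point $z\in U/G$ mapping to $y$. Since the map is \'etale and we work over $\bbc$, it induces an isomorphism $\os_{Y,y}^{\wedge}\cong \os_{U/G,z}^{\wedge}$. Let $x\in U$ lie over $z$, and let $H=\mathrm{Stab}_G(x)$.

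The next step is the standard Luna-type reduction. Taking invariants commutes with completion along $G$-stable ideals, because $G$ is finite and $\bbc$ has characteristic zero, so the Reynolds operator is exact. Hence
$$\os_{U/G,z}^{\wedge}\cong \Bigl(\prod_{x'\in Gx}\os_{U,x'}^{\wedge}\Bigr)^G\cong (\os_{U,x}^{\wedge})^H.$$
Since $x$ is a smooth point, $\os_{U,x}^{\wedge}\cong \bbc[[t_1,\dots,t_n]]$. Using the Reynolds operator for $H$, we can choose a regular system of parameters spanning an $H$-stable lift of $\mathfrak m/\mathfrak m^2$ on which $H$ acts linearly. This linearizes the action, so $H$ acts on $\bbc[[t_1,\dots,t_n]]$ through a homomorphism $H\to\GL_n(\bbc)$. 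If this homomorphism is not injective, we replace $H$ by its image, which has the same invariants. This gives the required isomorphism $\os^{\wedge}_{Y,y}\cong\bbc[[t]]^{G'}$ with $G'\subset \GL_n(\bbc)$ finite.

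For the converse, fix a closed point $y$ with $\os_{Y,y}^{\wedge}\cong \bbc[[t_1,\dots,t_n]]^G$, and set $W=\bba^n/G$ with $w$ the image of the origin. Then $\os_{Y,y}^{\wedge}\cong\os_{W,w}^{\wedge}$. By Artin approximation (Artin 1969, Corollary 2.6), two finite-type $\bbc$-schemes with isomorphic complete local rings at closed points have a common \'etale neighborhood. That is, there are $V$ and \'etale maps $V\to Y$ and $V\to W$ whose images contain $y$ and $w$.

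We then pull back along $\bba^n\to W$. The fiber product $\widetilde V=V\times_W\bba^n$ is smooth, carries a $G$-action, and is \'etale over $\bba^n$. Since quotienting by a finite group commutes with flat base change, $\widetilde V/G\cong V$. After shrinking to a $G$-stable affine open, we may take $\widetilde V$ to be affine. Finally, because $Y$ is quasicompact, finitely many such $V$ cover $Y$. This yields an \'etale surjection $\coprod_i U_i/G_i\onto Y$, so $Y$ has quotient singularities by Definition \ref{quot sing definition}.

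The main obstacle is the converse, specifically the Artin approximation step and the verification that $(V\times_W\bba^n)/G\cong V$. The second point needs flatness of $V\to W$ together with the Reynolds-operator description of invariants. The linearization in the forward direction is routine by comparison.
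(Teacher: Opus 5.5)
Your proposal is correct and follows essentially the same route as the paper. You reduce along the \'etale cover to $Y=U/G$, identify $\os^{\wedge}_{Y,y}$ with the $G$-invariants of $\prod_{x'\in Gx}\os^{\wedge}_{U,x'}$ and hence with $(\os^{\wedge}_{U,x})^{H}$, and linearize the stabilizer action by averaging; your equivariant section of $\mathfrak m\to\mathfrak m/\mathfrak m^2$ is exactly the paper's Cartan-type operator $\frac{1}{|G|}\sum_{g} g\circ L_g^{-1}$. For the converse you combine Artin approximation with the flat base change identity $(V\times_W\bba^n)/G\cong V$, as the paper does.
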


\begin{lemma}\label{quot sing finite type}
Let $Y$ be a variety with quotient singularities. Then the natural morphism $\rSpec_Y( \gr^F_{\bullet}\ds_Y) \to Y$ is of finite type.
\end{lemma}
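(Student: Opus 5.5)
The plan is to reduce to the case of a quotient by a small finite group, and there to identify $\gr^F_{\bullet}\ds_Y$ with an algebra of invariants that is finitely generated by Noether's theorem.

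\emph{Step 1: reduction along the \'etale cover.} The statement is local on $Y$. By Lemma \ref{quot sing etale cover} there is an \'etale surjection $p\colon U/G\onto Y$. By Lemma \ref{etale unique extension}, $\os_{U/G}\otimes_{p^{-1}\os_Y}p^{-1}\ds_Y\xrightarrow{\sim}\ds_{U/G}$. Since the unique extension of an operator of order $\le k$ again has order $\le k$, this isomorphism respects the order filtrations. Because $p$ is flat, it therefore gives
$$
\gr^F_{\bullet}\ds_{U/G}\simeq p^*\gr^F_{\bullet}\ds_Y .
$$
Being of finite type descends along faithfully flat maps for quasicoherent algebras: if $A\to B$ is faithfully flat and $B\otimes_A C$ is a finitely generated $B$-algebra, then finitely many elements of $C$ whose images generate already generate $C$ over $A$, by faithful flatness applied to the cokernel of the induced map. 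Hence it suffices to treat $Y=U/G=\Spec R$ with $R=S^G$.

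\emph{Step 2: reduction to a small group.} The claim is still local, so I work near the image of a point $x\in U$. Replacing $U$ by a $G_x$-stable affine neighbourhood on which $G_x$ acts and which maps \'etale onto its image in $U/G$, I may assume $G=G_x$. Let $H\subset G$ be the normal subgroup generated by pseudo-reflections at $x$. By Chevalley--Shephard--Todd, applied after linearizing at the fixed point (étale locally near $x$), $U/H$ is smooth near the image of $x$. Moreover $G/H$ acts on it without pseudo-reflections, and $U/G=(U/H)/(G/H)$. Thus I may assume $G$ is small, so that $\pi\colon U\to Y$ is \'etale over the complement of a closed subset of codimension $\ge 2$.

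\emph{Step 3: the small case, which I expect to be the main obstacle.} Let $Y^\circ\subset Y_{sm}$ be the open set over which $\pi$ is \'etale, with complement of codimension $\ge2$, and set $U^\circ=\pi^{-1}(Y^\circ)$. By \eqref{pull push normal isom}, $\ds_Y=j_*\ds_{Y^\circ}$ and $\ds_U=j'_*\ds_{U^\circ}$. For a Galois \'etale cover with group $G$, Lemma \ref{etale unique extension} identifies $\ds_{Y^\circ}$ with $(\pi_*\ds_{U^\circ})^G$, and this identification respects orders; restriction of invariant operators, as in the map $\res$ of \eqref{res}, is the inverse. Taking $j_*$ should give
$$
F_k\ds_Y\simeq (\pi_*F_k\ds_U)^G \quad\text{for all }k .
$$
The delicate point is to check that extending across codimension $2$ preserves the order of operators on both sides, i.e.\ that $j_*F_k$ is the order-$k$ part of $j_*\ds$. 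The averaging map $\frac1{|G|}\sum_{g\in G} g$ is exact on $\bbc[G]$-modules, so
$$
\gr^F_{\bullet}\ds_Y\simeq \bigl(\pi_*\gr^F_{\bullet}\ds_U\bigr)^G=\bigl(\pi_*\operatorname{Sym}\mathcal{T}_U\bigr)^G .
$$
Here $\operatorname{Sym}\mathcal{T}_U$ is a finitely generated $S$-algebra, hence a finitely generated $\bbc$-algebra with a $G$-action. By Noether's theorem its $G$-invariants form a finitely generated $\bbc$-algebra, and so a fortiori a finitely generated $R$-algebra. This gives the lemma.
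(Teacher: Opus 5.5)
Your proof is correct, but it follows a different route from the paper's. The paper uses that finite type is \'etale local, together with Theorem~\ref{quot sing equivalence} and the Artin approximation step in Remark~\ref{artin}, to replace $Y$ by a linear quotient $\bba^n/G$ with $G\subset\GL_n(\bbc)$. It then cites Kantor \cite[III.III Theorem 7]{Kan77} for that case.

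You never pass to the linear model. You stay on the given presentation $U/G$, localize to a stabilizer via an \'etale slice (justified on completed local rings by Lemma~\ref{stabilizer}), and remove pseudo-reflections with Chevalley--Shephard--Todd. You then prove the small case by hand: the codimension-two extension \eqref{pull push normal isom} gives $F_k\ds_Y\simeq(\pi_*F_k\ds_U)^G$, exactness of invariants gives $\gr^F_\bullet\ds_Y\simeq(\pi_*\operatorname{Sym}\mathcal{T}_U)^G$, and Noether's theorem finishes.

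In effect you reprove Kantor's result in the form needed. Your argument is therefore self-contained, avoids Artin approximation, and yields an explicit description of $\gr^F_\bullet\ds_Y$ in the small case. The paper's route is much shorter, given the structure theorem it has already established.

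A few routine points should be stated explicitly.
\begin{itemize}
\item Replace $G$ by its image acting on the connected component of $x$, so that the action is faithful. This is needed both for Chevalley--Shephard--Todd on $T_xU$ and for $U^\circ\to Y^\circ$ to be Galois with group $G$.
\item Shrink $G$-invariantly so that every $g\in G\setminus H$ has fixed locus of codimension $\geq 2$ everywhere, not only at $x$. Then the fixed locus of $gH$ on $U/H$ is the image of $\bigcup_{h\in H}U^{h^{-1}g}$, hence also of codimension $\geq 2$.
\item The delicate point you flag is harmless. An operator has order $\leq k$ if and only if all $(k+1)$-fold commutators with functions vanish. Such a commutator vanishes on $Y$ once it vanishes on the dense open $Y^\circ$, since $Y$ is reduced.
\item In Step 1, the observation that extensions preserve order only yields $\os_{U/G}\otimes_{p^{-1}\os_Y}p^{-1}F_k\ds_Y\subset F_k\ds_{U/G}$. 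You need equality for the associated graded algebras to correspond. It follows from $F_kD_R=\operatorname{Hom}_R(P^k_R,R)$, the isomorphism $P^k_S\simeq S\otimes_R P^k_R$ for \'etale $R\to S$, finite presentation of $P^k_R$, and flatness of $S$ over $R$.
\end{itemize}
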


Before we prove these two facts, we begin with a few auxiliary lemmas and remarks.

\begin{lemma}\label{invariant completion}
Let $G$ be a finite group acting on a local noetherian $\bbc$-algebra $(A,m)$. Then we have a natural isomorphism $(A^G)^{\wedge} \xrightarrow{\sim} (A^{\wedge})^G$.
\end{lemma}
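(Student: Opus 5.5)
We prove Lemma \ref{invariant completion} by comparing the two completions through the finiteness of $A$ over $A^G$. Put $B = A^G$. Since $|G|$ is invertible in $\bbc$, the Reynolds operator $\rho = \frac{1}{|G|}\sum_{g\in G} g\colon A\to B$ is a $B$-linear splitting of $B\subset A$. This makes $B$ a direct summand of $A$ as a $B$-module. The first step is to establish the standard facts about this extension.
\begin{enumerate}
\item[i)] $A$ is integral over $B$, since each $a\in A$ is a root of $\prod_{g\in G}(T - ga)$.
\item[ii)] $B$ is noetherian: for an ideal $J\subset B$ one has $JA\cap B = \rho(JA) = J$, so ascending chains in $B$ lift to $A$.
\item[iii)] $A$ is a finite $B$-module. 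Here I would use the noetherian-plus-Reynolds argument, or Noether's finiteness theorem in the form valid for a local noetherian $\bbc$-algebra, by reducing to the finitely many generators of $m$.
\item[iv)] $B$ is local with maximal ideal $n = m\cap B$, and $\sqrt{nA} = m$. Both follow because $A$ is local and integral over $B$, so $m$ is the unique prime of $A$ over $n$.
\end{enumerate}

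The second step is to compare the topologies. From $\sqrt{nA}=m$ and $A$ noetherian we get $m^c \subset nA \subset m$ for some $c$. Hence the $m$-adic and $nA$-adic topologies on $A$ coincide, and $A^{\wedge} = \varprojlim A/n^kA$. Since $A$ is a finite $B$-module, we have $A^{\wedge} \simeq A\otimes_B B^{\wedge}$, where $B^{\wedge}$ is the $n$-adic completion. The $G$-action on $A$ is by $B$-algebra automorphisms, so it extends continuously to $A^{\wedge}$, and under this identification it acts on the left tensor factor. The natural map $B^{\wedge}\to A^{\wedge}$ comes from functoriality of completion, and it lands in $(A^{\wedge})^G$.

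The third step is to take invariants. Taking $G$-invariants is the image of the idempotent $\rho$, and $\rho\otimes 1$ commutes with $-\otimes_B B^{\wedge}$. Therefore
$$
(A\otimes_B B^{\wedge})^G = \rho(A)\otimes_B B^{\wedge} = B\otimes_B B^{\wedge} = B^{\wedge}.
$$
This gives the isomorphism $(A^G)^{\wedge}\xrightarrow{\sim}(A^{\wedge})^G$. An alternative is to argue level by level: $(A/n^kA)^G = B/n^k$, because $\rho$ induces a splitting modulo $n^k$ (namely $\rho(n^kA)= n^k$). Invariants then commute with inverse limits since they form a kernel.

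The main obstacle is step iii), the finiteness of $A$ over $B$ in a general local noetherian setting, because the finite-type version of Noether's theorem is not directly available. If this proves awkward, the fallback is the level-wise argument. It avoids finiteness over $B$: it only needs that the topology on $A$ defined by the ideals $n^kA$ agrees with the $m$-adic one. The latter still requires $\sqrt{nA}=m$, which uses only the integrality in i).
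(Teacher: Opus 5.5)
Your level-wise argument (the alternative at the end of your third step, which you also offer as the fallback) is essentially the paper's proof, and it is complete on its own. The paper uses integrality to get $m^l\subset A\cdot m^G$. It then computes $(A^{\wedge})^G=\varprojlim (A/m^i)^G=\varprojlim A^G/(m^i)^G$, using that invariants are a kernel and are exact. Finally it uses $(A\cdot(m^G)^i)\cap A^G=(m^G)^i$ to get $(m^{il})^G\subset(m^G)^i\subset(m^i)^G$. You instead pass from the $m$-adic to the $nA$-adic system on $A$ before taking invariants. The same identity, in the form $(n^kA)^G=\rho(n^kA)=n^k$, then gives $(A/n^kA)^G=B/n^k$ directly. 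The ingredients are the same, and neither version needs $B$ noetherian or $A$ finite over $B$.

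Your primary route through $A^{\wedge}\simeq A\otimes_B B^{\wedge}$ is genuinely different. It works once step iii) is known, but as written iii) is not justified. Noether's theorem requires $A$ to be finitely generated as an algebra over a $G$-fixed subring, which a local ring generally is not. Finite generation of $m$ as an ideal does not substitute for this.

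The statement in iii) is true, though, and you can prove it by running your Reynolds argument from ii) on a module. Put $M=\bigoplus_{h\in G}Ae_h$ with $g\cdot(ae_h)=g(a)e_{gh}$. Then $a\mapsto\sum_h h(a)e_h$ is a $B$-linear isomorphism $A\xrightarrow{\sim}M^G$. For a $B$-submodule $N\subset M^G$, the Reynolds operator of $M$ sends $an$ to $\rho(a)n$, hence $(AN)^G=N$. So the ascending chain condition for $A$-submodules of $M$ yields it for $B$-submodules of $A$.

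With that filled in, the base-change route gives the slightly stronger statement that $A^{\wedge}\simeq A\otimes_B B^{\wedge}$, with $G$ acting on the first factor. The cost is this finiteness theorem plus Artin--Rees over $B$. For the lemma as stated, the level-wise argument is shorter and is the one to write up.
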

\proof Because $A^G \to A$ is an integral extension, $A^G$ is a local ring with maximal ideal $m^G$ and $\sqrt{A \cdot m^G} = m$. Fix $l\geq 1$ such that $m^l \subset A \cdot m^G$, which implies that $m^{il} \subset A \cdot (m^G)^i$ for every $i\geq 0$. Since inverse limits preserve left exactness, the inverse limit of the exact sequence
$$
0 \to (A/m^i)^G \to A/m^i \to \bigoplus_{g\in G} A/m^i
$$
is also exact, hence $(A^{\wedge})^G \simeq \varprojlim ( (A/m^i)^G)$. Since taking $G$-invariants is exact, we have $A^G/(m^i)^G \xrightarrow{\sim} (A/m^i)^G$, hence $(A^{\wedge})^G \simeq\varprojlim A^G/ (m^i)^G$. Since $A^G \to A$ is a direct summand inclusion of $A$-modules, we have $(A \cdot (m^G)^i) \cap A^G = (m^G)^i$, hence
$$
(m^{il})^G = m^{il} \cap A^G \subset (A \cdot (m^G)^i) \cap A^G = (m^G)^i.
$$
Because $(m^G)^{i} \subset (m^i)^G$ and $(m^{il})^G \subset (m^G)^i$ for every $i\geq 0$, we obtain
$$
(A^{\wedge})^G \simeq \varprojlim A^G/ (m^i)^G \simeq \varprojlim A^G/ (m^G)^i = (A^G)^{\wedge},
$$
as desired. \qed

\begin{lemma}\label{stabilizer}
Given a ring $A_i$ with a prime ideal $p_i \subset A_i$ for $i=1,\dots,r$ such that each $\Spec A_i$ is connected, let $G$ be a finite group acting on the product $A=\prod_{i=1}^r A_i$ by ring automorphisms such that $G$ acts transitively on the subset
$$
\textstyle \{[p_1],\dots,[p_r]\} \subset \coprod_{i=1}^r \Spec A_i = \Spec A.
$$
If we denote the set-theoretic stabilizer of $[p_1]$ by $G_1 = \{g\in G \ | \ g(p_1)=p_1\}$, then the following map induced by the projection is an isomorphism:
$$
A^G \xrightarrow{\sim} A_1^{G_1}.
$$
\end{lemma}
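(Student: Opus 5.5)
The plan is to argue directly with the decomposition of $A$ into its factors. Write $e_1,\dots,e_r$ for the primitive idempotents corresponding to the factors $A_i$. Since each $\Spec A_i$ is connected, the only idempotents of $A_i$ are $0$ and $1$. Hence the connected components of $\Spec A$ are exactly the $\Spec A_i$, and the idempotents $e_i$ are intrinsic to $A$. A ring automorphism $g$ of $A$ therefore permutes the $e_i$: it sends primitive idempotents to primitive idempotents. Consequently $g$ maps $A_i=e_iA$ isomorphically onto $A_{\sigma_g(i)}$ for a permutation $\sigma_g$ of $\{1,\dots,r\}$.

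The prime $[p_i]$ lies in the component $\Spec A_i$. So the transitivity hypothesis forces $\sigma$ to be transitive on $\{1,\dots,r\}$, and $g(p_1)$ is one of the $p_j$, namely $p_{\sigma_g(1)}$. For $g\in G_1$ we have $g(p_1)=p_1$, so $\sigma_g(1)=1$ and $G_1$ preserves $A_1$. In particular the projection sends $A^G$ into $A_1^{G_1}$. Here I should be careful about which direction the action on $\Spec$ goes, but since $G$ is a group this only replaces $g$ by $g^{-1}$ and is harmless.

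For each $j$, I choose $g_j\in G$ with $[p_1]\mapsto[p_j]$, so $g_j$ restricts to an isomorphism $A_1\to A_j$. Transitivity gives such elements, and I take $g_1=1$. Then $G=\bigsqcup_j g_jG_1$, because $g$ sends the component $1$ to $j$ exactly when $g\in g_jG_1$.

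For injectivity: if $a\in A^G$ and $e_1a=0$, then $e_ja=e_j\,g_j(a)=g_j(e_1a)=0$ for every $j$, so $a=0$.

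For surjectivity: given $b\in A_1^{G_1}$, set $a=\sum_j g_j(b)$, with each term lying in $A_j$. This is independent of the choice of coset representatives, since $b$ is $G_1$-invariant. To check that $a$ is $G$-invariant, take $h\in G$. Then $hg_j\in g_{\sigma_h(j)}G_1$, so $h(a)=\sum_j g_{\sigma_h(j)}(b)=a$. Its first component is $b$.

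The main obstacle is the bookkeeping that turns the hypothesis "$G$ permutes the $[p_i]$" into "$G$ permutes the factors $A_i$ compatibly". This rests on connectedness of each $\Spec A_i$, which forces automorphisms to permute the primitive idempotents. Everything after that is formal.
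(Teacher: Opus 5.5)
Your proof is correct and follows essentially the same route as the paper. Both arguments use connectedness of each $\Spec A_i$ to write every $g\in G$ as a product of ring isomorphisms $A_i\to A_{\sigma_g(i)}$, get injectivity by transporting the vanishing first coordinate via transitivity, and get surjectivity by transporting $b\in A_1^{G_1}$ along elements sending $1\mapsto j$, with $G_1$-invariance making the result independent of choices. Your primitive idempotents and fixed coset representatives $g_j$ just make explicit the paper's appeal to ``$G$ acts continuously on $\Spec A$'' and its ``any $g$ with $g(1)=i$'' well-definedness check.
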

\proof For each $g\in G$ and $i\in \{1,\dots,r\}$, let $g(i) \in \{1,\dots,r\}$ denote the unique index given by $g(p_{i}) = p_{g(i)}$. Since $\Spec A_i$ are the connected components of $\Spec A$ and $G$ acts continuously on $\Spec A$, the action of $g\in G$ on $A$ can be expressed as a direct product of ring isomorphisms $g_i: A_{i} \xrightarrow{\sim} A_{g(i)}$ over $i\in \{1,\dots,r\}$. Given $g\in G$, we write $g=\prod_{i=1}^r g_i$ for such a direct product.

To prove injectivity, let $x=(x_i)_i\in A^G$ be an element such that $x_1=0$. To show that $x_i=0$ for each $i$, note that by transitivity there exists $g\in G$ such that $g(1)=i$, so by $G$-invariance of $x$ we have $x_i = g_1(x_1)=0$. To prove surjectivity, let $x_1 \in A_1^{G_1}$. For each $i\in \{1,\dots,r\}$, we put $x_i = g_1(x_1)$ for any $g\in G$ such that $g(1)=i$, which exists by transitivity. This is well-defined because if $g,h\in G$ are such that $g(1)=i$ and $h(1)=i$, then $(h^{-1}g)(1)=1$ so that $h^{-1}g\in G_1$ and 
$$
g_1(x_1) = h_1 (h_i)^{-1}g_1 (x_1) =h_1(x_1).
$$
It is then immediate that $(x_i)_i \in A^G$, so we are done. \qed

\begin{lemma}[{\cite[Theorem 1]{Boc45}, \cite[Lemma 1]{Car53}}]\label{linearizing power series}
Given a finite group $G$ acting on a power series ring $D=\bbc[[t_1,\dots,t_n]]$ by $\bbc$-algebra automorphisms, there exists a $\bbc$-algebra automorphism $\phi$ of $D$ such that $g(\phi(t_i)) = \sum_j a_{g ij} \phi(t_j)$ for some $a_{g ij} \in \bbc$ for every $g\in G$, that is, the $G$-action is linear with respect to the coordinates $\phi(t_1),\dots,\phi(t_n)$.
\end{lemma}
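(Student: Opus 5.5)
The plan is the standard averaging argument of Bochner and Cartan. We first linearize the action on the cotangent space, then average a lift of it back up to $D$ using the group.

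First I will check that every $\bbc$-algebra automorphism of $D$ preserves the maximal ideal $m=(t_1,\dots,t_n)$, since $m$ is the unique maximal ideal. Hence every such automorphism preserves each $m^k$ and is $m$-adically continuous. Consequently a $\bbc$-algebra endomorphism of $D$ is determined by the images of $t_1,\dots,t_n$. Conversely, any choice of images in $m$ defines a continuous endomorphism $t_i\mapsto u_i$. Such an endomorphism is an automorphism if and only if the $u_i$ form a basis of $m$ modulo $m^2$. This is the formal inverse function theorem: the endomorphism induces an isomorphism on $\gr_m D$, hence is bijective by completeness.

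Next, the action of $G$ on $m/m^2$ is linear. For each $g\in G$ I write $g(t_i)\equiv \sum_j a_{gij}t_j \bmod m^2$ with $a_g=(a_{gij})\in \GL_n(\bbc)$. Applying $g$ to the congruence for $h(t_i)$ gives $a_{gh}=a_ha_g$, so $g\mapsto a_g$ is an anti-homomorphism and $a_{h^{-1}}a_h=I$. I then define
$$
u_i=\frac{1}{|G|}\sum_{h\in G}\sum_j (a_{h^{-1}})_{ij}\, h(t_j)\in m.
$$
Reducing modulo $m^2$ gives $u_i\equiv \frac{1}{|G|}\sum_h (a_{h^{-1}}a_h\, t)_i=t_i$. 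By the first step, $\phi\colon t_i\mapsto u_i$ is therefore a $\bbc$-algebra automorphism of $D$.

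Finally I verify equivariance. We have $g(u_i)=\frac{1}{|G|}\sum_h\sum_j (a_{h^{-1}})_{ij}\,(gh)(t_j)$. Substituting $k=gh$, so that $h^{-1}=k^{-1}g$ and $a_{k^{-1}g}=a_ga_{k^{-1}}$, turns this into $\sum_l a_{gil}u_l$. That is, $g(\phi(t_i))=\sum_l a_{gil}\phi(t_l)$, which is exactly the claim.

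The only delicate point is bookkeeping: getting the order of matrix multiplication consistent with the anti-homomorphism convention coming from the left action on functions. The analytic input is limited to the formal inverse function theorem, which I expect to be routine.
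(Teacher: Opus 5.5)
Your proof is correct and is essentially the paper's averaging argument: your $u_i=\frac{1}{|G|}\sum_{h}\sum_j (a_{h^{-1}})_{ij}\,h(t_j)$ is exactly $\sigma(t_i)$ for the paper's operator $\sigma=\frac{1}{|G|}\sum_{g\in G} g\circ L_g^{-1}$, since $L_h^{-1}=L_{h^{-1}}$. Your reindexing $k=gh$ is the coordinate form of the paper's identity $g\circ\sigma=\sigma\circ L_g$.
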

\proof Let $m=(t_1,\dots,t_n)$ denote the maximal ideal of $D$. Note that a local $\bbc$-algebra map $f: (D,m) \to (D,m)$ is an automorphism iff the induced map $\gr^1_m f: m/m^2 \to m/m^2$ is an isomorphism. For each $g\in G$, let $L_g$ denote the unique $\bbc$-algebra automorphism of $D$ such that $L_g(t_i) \in \sum_{j=1}^n \bbc t_j$ and $g(t_i) = L_g(t_i) \text{ mod } m^2$ (indeed, such a $\bbc$-algebra map is an automorphism because $L_g$ and $g$ induce the same endomorphism of $m/m^2$). By uniqueness, we have $L_{gh}=L_g \circ L_h$ for $g,h\in G$. Consider the $\bbc$-module map
$$
\sigma = \frac{1}{|G|} \sum_{g\in G} g \circ L_g^{-1}: D \to D.
$$
It is immediate that $g \circ \sigma = \sigma \circ L_g$ for every $g\in G$. Because each $g \circ L_g^{-1}$ is a $\bbc$-algebra automorphism of $D$ inducing the identity on $m/m^2$, we have that $\sigma(m^j) \subset m^j$ for all $j\geq 0$ and the induced map $\gr^1_m \sigma : m/m^2 \to m/m^2$ is the identity. Let $\phi: D \to D$ denote the $\bbc$-algebra map given by $\phi(t_i) = \sigma(t_i)$ for each $i$, which is an automorphism because $\gr^1_m \phi = \gr^1_{m}\sigma$. Write $L_g(t_i) = \sum_j a_{g ij} t_j$ with $a_{gij}\in \bbc$. Then $\phi$ is the desired automorphism because
$$
\textstyle g \phi(t_i) = g \sigma(t_i) = \sigma L_g(t_i) = \phi L_g(t_i) = \sum_j a_{g ij} \phi(t_j). \qed
$$

\begin{remark}\label{flat invariants}
Let $G$ be a finite group acting on a ring $S$, and let $R=S^G$. Given a flat $R$-module $M$, note that we have $(M\otimes_R S)^G = M$ since the tensor product of the exact sequence $0\to R \to S \to \bigoplus_{g\in G} S$ with $M$ remains exact.
\end{remark} 

\begin{remark}\label{artin}
Let $y\in Y$ be a closed point on a variety $Y$ such that $\os_{Y,y}^{\wedge} \cong \bbc[[t_1,\dots,t_n]]^G$ for some finite subgroup $G \subset \GL_n(\bbc)$. If we consider the corresponding linear action of $G$ on $\bba^n=\Spec \bbc[t_1,\dots,t_n]$, then by Lemma \ref{invariant completion}, we have 
$$
\os_{\bba^n/G, 0}^{\wedge} \simeq (\os_{\bba^n}^{\wedge})^G = \os_{Y,y}^{\wedge}.
$$
By a standard application of Artin approximation (see \cite[Corollary 1.14]{Alp15}), there exists a common \'{e}tale neighborhood $(W,w)$ of $(Y,y)$ and $(\bba^n/G,0)$. 
\end{remark}

\noindent{\it Proof of Theorem \ref{quot sing equivalence}}. Let $Y$ be a variety. First suppose that $Y$ has quotient singularities. By Lemma \ref{quot sing etale cover}, there exists an \'{e}tale surjection $U/G \onto Y$, where $G$ is a finite group acting on a smooth affine variety $U$. For any closed point $y\in Y$, if $y'\in U/G$ is any point lying above $y$, then $\os_{Y,y} \cong \os_{U/G,y'}$ since $U/G \onto Y$ is \'{e}tale. So we may assume that $Y=U/G$. Fix a closed point $y\in Y$. We denote the morphism $U \to Y=U/G$ by $\pi$ and set $\pi^{-1}(y)=\{x_1,\dots,x_r\}$. By flatness of $\os_Y(Y) \to \os_{Y,y} \to \os_{Y,y}^{\wedge}$ and \cite[Lemma 10.97.8]{Stacks}, we have
$$
\os_{Y,y}^{\wedge} = (\os_U(U) \otimes_{\os_Y(Y)} \os_{Y,y}^{\wedge})^G
= \bigg(\prod_{i=1}^r \os_{U,x_i}^{\wedge}\bigg)^G.
$$
Since local rings have connected spectrum and $G$ acts transitively on the fibers of $\pi$, by Lemma \ref{stabilizer}, we have $\os_{Y,y}^{\wedge} \simeq (\os_{U,x_1}^{\wedge})^{G_1}$, where $G_1 \subset G$ is the set-theoretic stabilizer of $x_1$. By Lemma \ref{linearizing power series}, we can write $\os_{U,x_1}^{\wedge} = \bbc[[t_1,\dots,t_n]]$ such that $G_1$ acts linearly on $t_1,\dots,t_n$. So if we let $G \subset \GL_n(\bbc)$ denote the image of $G_1$, then $\os_{Y,y}^{\wedge} \simeq \bbc[[t_1,\dots,t_n]]^{G}$. 

Conversely, assume that for every closed point $y\in Y$, there exists a finite subgroup $G \subset \GL_n(\bbc)$ and an isomorphism $\os_{Y,y}^{\wedge} \cong \bbc[[t_1,\dots,t_n]]^G$ of $\bbc$-algebras. By Remark \ref{artin}, there exists a finite collection of affine varieties $\{W_i\}_{i\in I}$, an \'{e}tale surjection $\coprod_{i\in I} W_i \onto Y$, and \'{e}tale morphisms $W_i \to \bba^{n_i}/G_i$ for some finite subgroup $G_i \subset \GL_{n_i}(\bbc)$ for each $i\in I$. Note that $U_i=W_i \times_{\bba^{n_i}/G_i}  \bba^{n_i}$ is a smooth affine variety with a natural $G_i$-action pulled back from the one on $ \bba^{n_i}$. Since $W_i \to \bba^{n_i}/G_i$ is flat, we have $U_i/G_i = W_i$ by Remark \ref{flat invariants}, hence $Y$ has quotient singularities. \qed

\noindent{\it Proof of Lemma \ref{quot sing finite type}}. Since the property of a morphism being finite type is \'{e}tale-local (see \cite[Lemma 35.23.12]{Stacks}), by Theorem \ref{quot sing equivalence} and Remark \ref{artin}, it suffices to prove the theorem when $Y=\bba^n/G$ for a finite subgroup $G\subset \GL_n(\bbc)$, but this follows from \cite[III.III Theorem 7]{Kan77}. \qed

\end{document}